\newcommand{\ifarticle}[2]{
    \csname@ifclassloaded\endcsname{beamer}{#2}{#1}
}
\newcommand{\ifbook}[2]{
    \csname@ifclassloaded\endcsname{amsbook}{#1}{#2}
}
        \setlist{topsep=2pt,itemsep=2pt,partopsep=2pt,parsep=2pt} 
        \xpretocmd{\@adminfootnotes}{\let\@makefntext\BHFN@OldMakefntext}{}{}
        \renewcommand\@makefntext[1]{%
        \@ifundefined{@makefnmark}
            {}
            {%
            \renewcommand\@makefnmark{%
            \mbox{%
                \textsuperscript{%
                \normalfont
                \hyperref[\BackrefFootnoteTag]{\@thefnmark}%
                }%
            }\,%
            }%
            \BHFN@OldMakefntext{#1}%
        }%
        }
        \LetLtxMacro{\BHFN@Old@footnotemark}{\@footnotemark}
        \renewcommand*{\@footnotemark}{%
            \refstepcounter{BackrefHyperFootnoteCounter}%
            \xdef\BackrefFootnoteTag{bhfn:\theBackrefHyperFootnoteCounter}%
            \label{\BackrefFootnoteTag}%
            \BHFN@Old@footnotemark
        }
        \def\paragraph{\@startsection{paragraph}{4}%
          \z@\z@{-\fontdimen2\font}%
          {\normalfont\bfseries}}
        \theoremstyle{plain}
            \newtheorem{theorem}{Theorem}[chapter]
            \newtheorem{theorem}{Theorem}[section]
        \newtheorem{proposition}[theorem]{Proposition}
        \newtheorem{lemma}[theorem]{Lemma}
        \newtheorem{corollary}[theorem]{Corollary}
        \newtheorem*{theorem*}{Theorem}
        \newtheorem*{corollary*}{Corollary}
        \theoremstyle{definition}
        \newtheorem{definition}[theorem]{Definition}
        \newtheorem{example}[theorem]{Example}
        \newtheorem{remark}[theorem]{Remark}
        \newenvironment{sketch}{\proof}{\endproof}
        \Crefname{theoremenumi}{Theorem}{Theorems}
            \setlist[enumerate,1]{
                ref={\csname thetheorem\endcsname.(\arabic*)}
            }%
            \setlist[enumerate,2]{
                ref={\thetheorem.(\arabic*).(\alph*)}
            }%
        \Crefname{propositionenumi}{Proposition}{Propositions}
            \setlist[enumerate,1]{
                ref={\csname theproposition\endcsname.(\arabic*)}
            }%
            \setlist[enumerate,2]{
                ref={\theproposition.(\arabic*).(\alph*)}
            }%
        \Crefname{lemmaenumi}{Lemma}{Lemmas}
            \setlist[enumerate,1]{
                ref={\csname thelemma\endcsname.(\arabic*)}
            }%
            \setlist[enumerate,2]{
                ref={\thelemma.(\arabic*).(\alph*)}
            }%
        \Crefname{corollaryenumi}{Corollary}{Corollaries}
            \setlist[enumerate,1]{
                ref={\csname thecorollary\endcsname.(\arabic*)}
            }%
            \setlist[enumerate,2]{
                ref={\thecorollary.(\arabic*).(\alph*)}
            }%
        \Crefname{definitionenumi}{Definition}{Definitions}
            \setlist[enumerate,1]{
                ref={\csname thedefinition\endcsname.(\arabic*)}
            }%
            \setlist[enumerate,2]{
                ref={\thedefinition.(\arabic*).(\alph*)}
            }%
        \Crefname{exampleenumi}{Example}{Examples}
            \setlist[enumerate,1]{
                ref={\csname theexample\endcsname.(\arabic*)}
            }%
            \setlist[enumerate,2]{
                ref={\theexample.(\arabic*).(\alph*)}
            }%
        \newcommand{\qedshift}{\vspace*{-\baselineskip}}
        \AtBeginEnvironment{\env}{%
          \pushQED{\qed}%
        }
        \AtEndEnvironment{\env}{\popQED\endexample}
    \NewDocumentCommand{\mathcommand}{mO{0}m}
     {
      \exp_args:Nc \NewCommandCopy {khue_kept_\cs_to_str:N #1} { #1 }
      \exp_args:Nc \newcommand {khue_new_\cs_to_str:N #1}[#2]{#3}
      \DeclareDocumentCommand {#1} {}
       {
        \mode_if_math:TF
         {
          \use:c {khue_new_\cs_to_str:N #1}
         }
         {
          \use:c {khue_kept_\cs_to_str:N #1}
         }
       }
     }
    \newsavebox\tikzcdbox
    \newcommand{\nbh}{\nobreakdash-\hspace{0pt}}
    \mathcommand{\h}{\textup{-}}
    \newcommand{\tx}{\mathrm}
    \mathcommand{\b}{\mathbf}
    \newcommand{\cl}{\mathcal}
    \mathcommand{\bb}{\mathbb}
    \DeclareMathAlphabet{\bbn}{U}{bbold}{m}{n}
    \mathcommand{\sf}{\mathsf}
    \mathcommand{\u}{\underline}
    \newcommand{\TODO}[1][TODO]{\textcolor{orange}{\textup{#1}}\xspace}
    \newcommand{\datetoday}{\date{\cleanlookdateon\today}}
    \newcommand{\defeq}{\mathrel{:=}}
    \mathcommand{\d}{\mathbin{;}}
    \mathcommand{\c}{\circ}
    \newcommand{\ph}[1][]{{({-}_{#1})}}
    \newcommand{\iso}{\cong}
    \renewcommand{\equiv}{\simeq}
    \newcommand{\xto}{\xrightarrow}
    \newcommand{\xfrom}{\xleftarrow}
    \newcommand{\tto}{\Rightarrow}
    \newcommand{\ffto}{\hookrightarrow}
    \newcommand{\epito}{\twoheadrightarrow}
    \newcommand{\monoto}{\rightarrowtail}
    \newcommand*\cocolon{%
            \nobreak
            \mskip6mu plus1mu
            \mathpunct{}%
            \nonscript
            \mkern-\thinmuskip
            {:}%
            \mskip2mu
            \relax
    }
    \def\slashedarrowfill@#1#2#3#4#5{%
    $\m@th\thickmuskip0mu\medmuskip\thickmuskip\thinmuskip\thickmuskip
    \relax#5#1\mkern-7mu%
    \cleaders\hbox{$#5\mkern-2mu#2\mkern-2mu$}\hfill
    \mathclap{#3}\mathclap{#2}%
    \cleaders\hbox{$#5\mkern-2mu#2\mkern-2mu$}\hfill
    \mkern-7mu#4$%
    }
    \def\rightslashedarrowfill@{%
    \slashedarrowfill@\relbar\relbar\mapstochar\rightarrow}
    \newcommand\xslashedrightarrow[2][]{%
    \ext@arrow 0055{\rightslashedarrowfill@}{#1}{#2}}
    \def\leftslashedarrowfill@{%
    \slashedarrowfill@\leftarrow\relbar\mapsfromchar\relbar}
    \newcommand\xslashedleftarrow[2][]{%
    \ext@arrow 0055{\leftslashedarrowfill@}{#1}{#2}}
    \newcommand{\xlto}{\xslashedrightarrow}
    \newcommand{\lto}{\xlto{}}
    \newcommand{\inv}{^{-1}}
    \newcommand{\op}{{}^\tx{op}}
    \newcommand{\co}{{}^\tx{co}}
    \newcommand{\tp}[1]{\langle#1\rangle}
    \newcommand{\unit}{{\tp{}}}
    \newcommand{\lx}{\mathbin{\rhd}}
    \newcommand{\plx}{\mathbin{{\rhd}\mathclap{\mspace{-17.5mu}\cdot}}}
    \newcommand{\adj}{\dashv}
    \newcommand{\ob}[1]{|#1|}
    \DeclareFontFamily{U}{min}{}
    \DeclareFontShape{U}{min}{m}{n}{<-> udmj30}{}
    \mathcommand{\comma}{\downarrow}
    \newsavebox{\whitecircstar}\sbox{\whitecircstar}{\kern.075em\tikz{\node[draw, circle,line width=.36pt, inner sep=0]{$*$};}\kern.075em}
    \newsavebox{\blackcircstar}\sbox{\blackcircstar}{\kern.075em\tikz{\node[fill, circle, line width=.36pt, inner sep=0, text=white]{$*$};}\kern.075em}
    \def\widebreve{\mathpalette\wide@breve}
    \def\wide@breve#1#2{\sbox\z@{$#1#2$}%
         \mathop{\vbox{\m@th\ialign{##\crcr
    \kern0.08em\brevefill#1{0.8\wd\z@}\crcr\noalign{\nointerlineskip}%
                        $\hss#1#2\hss$\crcr}}}\limits}
    \def\brevefill#1#2{$\m@th\sbox\tw@{$#1($}%
      \hss\resizebox{#2}{\wd\tw@}{\rotatebox[origin=c]{90}{\upshape(}}\hss$}
    \NewDocumentCommand{\jrule}{om}{%
        \IfNoValueTF{#1}
            {\textsc{#2}}
            {$#1$-\textsc{#2}}%
    }
    \newcommand{\Set}{{\b{Set}}}
    \newcommand{\Cat}{\b{Cat}}
    \newcommand{\CAT}{\b{CAT}}
    \newcommand{\Mnd}{\b{Mnd}}
    \newcommand{\RMnd}{\b{RMnd}}
    \newcommand{\Alg}{\b{Alg}}
    \newcommand{\ff}{fully faithful}
    \newcommand{\ffness}{full faithfulness}
    \newcommand{\ioo}{identity-on-objects}
    \newcommand{\eg}{e.g.\@\xspace}
    \newcommand{\ie}{i.e.\@\xspace}
    \newcommand{\cf}{cf.\@\xspace}
    \newcommand{\aka}{a.k.a.\@\xspace}
    \NewDocumentCommand{\etc}{t.}{etc.\@\xspace}
    \NewDocumentCommand{\ibid}{t.}{ibid.\@\xspace}
    \NewDocumentCommand{\loccit}{t.}{loc.\ cit.\@\xspace}
\patchcmd{\beamer@sectionintoc}{\vfill}{\vskip\itemsep}{}{}
  \colorlet{colour-bg}{black!85} 
  \definecolor{colour-primary}{HTML}{cc80ff} 
  \colorlet{colour-text}{black!10} 
  \colorlet{colour-subtle}{black!40} 
  \colorlet{colour-block-bg}{black!80} 
  \definecolor{colour-warning-bg}{HTML}{ffea80} 
  \definecolor{colour-warning-primary}{HTML}{e08152} 
  \apptocmd{\frame}{}{\justifying}{}
  \newtheorem{proposition}[theorem]{\translate{Proposition}}
  \renewenvironment<>{block}[1]{%
      \begin{actionenv}#2%
        \par%
        \usebeamertemplate{block begin}}
      {\par%
        \usebeamertemplate{block end}%
      \end{actionenv}}
  \renewenvironment<>{exampleblock}[1]{%
      \begin{actionenv}#2%
          \par%
          \only<presentation>{
            \setbeamercolor{local structure}{parent=example text}}%
          \usebeamertemplate{block example begin}}
        {\par%
          \usebeamertemplate{block example end}%
        \end{actionenv}}
\newcommand{\A}{{\cl A}}
\newcommand{\B}{{\cl B}}
\newcommand{\C}{{\cl C}}
\newcommand{\E}{{\cl E}}
\newcommand{\Kl}{\tx{Kl}}
\newcommand{\KlT}{\Kl(T)}
\newcommand{\StrAlg}{\tx{Alg}}
\newcommand{\PsAlg}{\tx{PsAlg}}
\newcommand{\Ins}{\tx{Ins}}
\newcommand{\AlgT}{\PsAlg(T)}
\newcommand{\AlgTl}{\PsAlg_l(T)}
\newcommand{\AlgTc}{\PsAlg_c(T)}
\newcommand{\AlgTs}{\PsAlg_s(T)}
\newcommand{\StrAlgT}{\StrAlg(T)}
\newcommand{\StrAlgTl}{\StrAlg_l(T)}
\newcommand{\StrAlgTc}{\StrAlg_c(T)}
\newcommand{\StrAlgTs}{\StrAlg_s(T)}
\newcommand{\Psh}{\b{Psh}}
\newcommand{\J}{J \colon \A \to \E}
\newcommand{\Hom}{\mathbf{Bicat}}
\newcommand{\COC}{\b{COC}}
\newcommand{\Fam}{{\b{Fam}}}
\renewcommand{\Im}{\tx{Im}}
\newcommand{\Res}{{\tx{Res}}}
\newcommand{\Dist}{{\b{Dist}}}
\newcommand{\DFib}{{\b{DFib}}}
\newcommand{\TSDFib}{{\b{TSDFib}}}
\newcommand{\PhiP}{\overline\Phi}
\newcommand{\El}{{\tx{El}}}
\newcommand{\term}{t}
\newcommand{\natof}[1]{\text{nat.\ of }#1}
\newcommand{\mndtheta}[1]{\hyperref[psmnd-theta]{#1\text{ unit.}}}
\newcommand{\algmu}[1]{\hyperref[psalg-mu]{#1\text{ assoc.}}}
\newcommand{\algtheta}[1]{\hyperref[psalg-theta]{#1\text{ unit.}}}
\newcommand{\algeta}[1]{\hyperref[psalg-eta]{#1\text{ unit.$'$}}}
\newcommand{\morassoc}[1]{\hyperref[lax-morph-hats]{#1\text{ assoc.}}}
\newcommand{\morunit}[1]{\hyperref[lax-morph-tildes]{#1\text{ unit.}}}
\newcommand{\trans}[1]{\hyperref[transformation]{#1\text{ trans.}}}
\DeclareMathOperator{\colim}{colim}
\newcommand{\algA}{(A, {-}^a, \hat a, \tilde a)}
\newcommand{\algB}{(B, {-}^b, \hat b, \tilde b)}
\newcommand{\algC}{(C, {-}^c, \hat c, \tilde c)}
\newcommand{\algLxA}{(A, {-}^a, \tilde a)}
\newcommand{\algLxB}{(B, {-}^b, \tilde b)}
\title{Bicategories of algebras for relative pseudomonads}
\author{Nathanael Arkor}
\address{Department of Software Science, Tallinn University of Technology, Estonia}
\author{Philip Saville}
\address{Department of Informatics, University of Sussex, United Kingdom}
\author{Andrew Slattery}
\address{Department of Computer Science and Technology, University of Cambridge, United Kingdom}
\thanks{N.\ Arkor was supported by a departmental postdoctoral grant from the Department of Software Science at Tallinn University of Technology. P.\ Saville was supported by the Air Force Office of Scientific Research (award number FA9550-21-1-0038)}
\subjclass{18C15, 18C20, 18D60, 18M50, 18N10, 18N15, 18N20}
\begin{document}

\begin{abstract}
    We introduce pseudoalgebras for relative pseudomonads and develop their theory. For each relative pseudomonad $T$, we construct a free--forgetful relative pseudoadjunction that exhibits the bicategory of $T$-pseudoalgebras as terminal among resolutions of $T$. The Kleisli bicategory for $T$ thus embeds into the bicategory of pseudoalgebras as the sub-bicategory of free pseudoalgebras. We consequently obtain a coherence theorem that implies, for instance, that the bicategory of distributors is biequivalent to the 2-category of presheaf categories. In doing so, we extend several aspects of the theory of pseudomonads to relative pseudomonads, including doctrinal adjunction, transport of structure, and lax-idempotence. As an application of our general theory, we prove that, for each class of colimits $\Phi$, there is a correspondence between monads relative to free $\Phi$-cocompletions, and $\Phi$-cocontinuous monads on free $\Phi$-cocompletions.
\end{abstract}

\maketitle

\tableofcontents


\section{Introduction}

Two-dimensional monad theory, which is the abstract study of two-dimensional algebraic structure on categories (or, more generally, the objects of a bicategory), is one of the cornerstones of two-dimensional category theory. Many fundamental structures in category theory -- such as monoidal categories, categories with a class of limits and/or colimits, and fibrations -- may be captured as the algebras for pseudomonads~\cite{street1980fibrations,blackwell1989two,lack2010companion}. The theory of two-dimensional monads and their algebras has been extensively developed (\eg~\cite{kelly1973coherence,kelly1974doctrinal,bunge1974coherent,blackwell1989two,power1989general,kelly1997property,le2002beck,lack2002codescent,bourke2013semiflexible,bourke2014two,walker2019distributive}), providing a powerful framework to study such structures.

\subsection*{Relative pseudomonads}

However, there are various important constructions in category theory that do not form pseudomonads yet are, in a certain sense, not far from doing so. A notable example is the presheaf construction, which associates to each small category $A$ the category of presheaves on $A$, \ie{} the functor category $[A\op, \Set]$. The presheaf construction is ``almost'' a pseudomonad, except that the presheaf category is generally only locally small\footnote{The exception being when $A = 0$, in which case $[A\op, \Set] \equiv 1$ is the terminal category.}, and so the construction may not be iterated. This prompted \citeauthor{fiore2018relative} to introduce \emph{relative pseudomonads}~\cite{fiore2018relative}, a generalisation to two dimensions of the relative monads of \textcite{altenkirch2010monads}. The notion of relative pseudomonad generalises that of a pseudomonad by relaxing the requirement that the underlying pseudofunctor be an endo-pseudofunctor. Thus, while the presheaf construction does not form a pseudomonad, it does give an example of a pseudomonad \emph{relative} to the inclusion $\Cat \ffto \CAT$ of small categories into locally small categories. Extending the classical setting of pseudomonads, the authors of \cite{fiore2018relative} showed that to each $J$-relative pseudomonad $T$ one may associate a \emph{Kleisli bicategory} $\KlT$, whose 1-cells $X \lto Y$ are precisely 1-cells $J(X) \to T(Y)$ in the base. In particular, the Kleisli bicategory for the presheaf construction is precisely \citeauthor{benabou1973distributeurs}'s bicategory $\Dist$ of small categories, distributors (\aka{} profunctors or (bi)modules), and natural transformations~\cite{benabou1973distributeurs}.\footnote{This fact appears to have been first observed informally by \citeauthor{guitart1975involutives} in example 1.b) of \cite[47]{guitart1975involutives}.} As explored in \cite{fiore2018relative} and subsequent work, this perspective has useful consequences in a range of areas of mathematics and computer science, for instance in the theory of combinatorial species~\cite{fiore2008cartesian}, concurrency~\cite{cattani2005profunctors}, linear logic~\cite{galal2020profunctorial,olimpieri2021intersection}, generalised operads and multicategories~\cite{cruttwell2010unified,curien2012operads,hyland2014elements}, and set theory~\cite{lewicki2020categories}.

\subsection*{Algebras for relative pseudomonads}

While \cite{fiore2018relative} develops a substantial portion of the theory of relative pseudomonads, a notable omission is any treatment of algebras. Algebras play a prominent role in two-dimensional monad theory: just as the algebras for a monad generalise the concept of algebraic structure on a set (such as monoids or rings), so algebras for a pseudomonad generalise the concept of algebraic structure on a category (such as monoidal or duoidal categories). It is therefore desirable to have a theory of algebras for relative pseudomonads, analogous to that in the one-dimensional setting~\cite{altenkirch2015monads}. This is the purpose of the present paper.

We begin by defining the notion of a pseudoalgebra for a relative pseudomonad $T$ (\cref{pseudoalgebra}) and constructing a bicategory of pseudoalgebras $\AlgT$ (\cref{AlgT}). In fact, in the spirit of two-dimensional algebra~\cite{blackwell1989two}, we define several bicategories of pseudoalgebras, in which the \mbox{1-cells} are the strict, pseudo, lax, and colax morphisms of pseudoalgebras respectively. Pseudoalgebras for relative pseudomonads jointly generalise the algebras for relative 2-monads considered by \textcite{arkor2024formal}, and the pseudoalgebras for no-iteration pseudomonads considered by \textcite{marmolejo2013no}.

We justify our definition by showing that various key results in the theory of pseudomonads extend to our setting. Often, though not always, the proof strategies in the relative setting are similar to those in the classical setting. However, care must be taken to rework statements and proofs to avoid iteration of the pseudomonad. We consider four aspects of the theory of pseudoalgebras in particular.

First, in \cref{algebras-for-relative-pseudomonads} we show that the bicategory of pseudoalgebras is part of a free--forgetful relative pseudoadjunction inducing $T$ (\cref{sec:free-forgetful-pseudoadjunction}), and take a step towards its characterisation as a three-dimensional limit (\cref{sec:bicategory-of-pseudoalgebras-as-a-limit}).
\begin{equation}
\label{eq:EM-resolution}
\begin{tikzcd}
    & \AlgT \\
    \A && \E
    \arrow[""{name=0, anchor=center, inner sep=0}, "{U_T}", from=1-2, to=2-3]
    \arrow[""{name=1, anchor=center, inner sep=0}, "{F_T}", from=2-1, to=1-2]
    \arrow["J"', from=2-1, to=2-3]
    \arrow["\dashv"{anchor=center}, shift right=2, draw=none, from=1, to=0]
\end{tikzcd}\end{equation}

Second, we study \emph{doctrinal adjunction}, which is the study of structure-preserving adjunctions between pseudoalgebras~\cite{kelly1974doctrinal}. In \cref{sec:doctrinal-adjunction} we extend the main results of doctrinal adjunction to relative pseudomonads, and use it to establish \emph{transport of structure}, which shows that pseudoalgebra structure may be transferred across equivalences -- and, more generally, across (co)reflections.

Third, we study \emph{lax-idempotence} for relative pseudomonads (\cref{lax-idempotence}). In comparison to the theory of lax-idempotent pseudomonads, which were introduced to capture pseudomonads exhibiting free cocompletions under classes of colimits~\cite{kock1995monads,zoberlein1976doctrines}, the relative setting is rather subtle. The authors of \cite{fiore2018relative} proposed a definition of lax-idempotence for relative pseudomonads and showed that the presheaf construction is lax-idempotent in this sense. However, we show that this notion is too weak to capture theorems of interest. To rectify this, we introduce lax-idempotence for pseudoalgebras and use it to recover aspects of the theory known for non-relative pseudomonads. We show that a relative pseudomonad is lax-idempotent in the sense of \cite{fiore2018relative} if and only if every \emph{free} pseudoalgebra is lax-idempotent; in contrast to the non-relative setting~\cite{kock1995monads}, this is not enough to ensure every non-free pseudoalgebra is lax-idempotent.

Finally, we prove two-dimensional analogues of the well-known facts that, among adjunctions inducing a fixed monad, the Kleisli adjunction is initial and the Eilenberg--Moore adjunction is terminal~\cite{kleisli1965every,eilenberg1965adjoint}. In \cite[Theorem~4.4]{fiore2018relative}, the Kleisli bicategory for a relative pseudomonad $T$ was shown to be part of a relative pseudoadjunction inducing $T$.
\begin{equation}
\label{eq:Kleisli-resolution}
\begin{tikzcd}
    & {\Kl(T)} \\
    \A && \E
    \arrow[""{name=0, anchor=center, inner sep=0}, "{K_T}", from=2-1, to=1-2]
    \arrow[""{name=1, anchor=center, inner sep=0}, "{V_T}", from=1-2, to=2-3]
    \arrow["J"', from=2-1, to=2-3]
    \arrow["\dashv"{anchor=center}, shift right=2, draw=none, from=0, to=1]
\end{tikzcd}
\end{equation}
We show that, among relative pseudoadjunctions inducing $T$, \eqref{eq:Kleisli-resolution} is bi-initial (\cref{initial-resolution}) and \eqref{eq:EM-resolution} is 2-terminal (\cref{terminal-resolution}). As a consequence, there is a unique, \ff{} pseudofunctor $\Kl(T) \to \PsAlg(T)$ commuting with the left and right pseudoadjoints, whose image is the full sub-bicategory of free pseudoalgebras. While this result appears analogous to the one-dimensional setting, it has a useful corollary that only emerges in two dimensions. When the codomain of $T$ is a 2-category, $\PsAlg(T)$ is also a 2-category, whereas $\Kl(T)$ need not be. Embedding of the Kleisli bicategory into the 2-category of pseudoalgebras thus exhibits a coherence theorem for $\Kl(T)$ that captures many naturally occurring situations.

This brings us to our examples. As mentioned above, a motivating example of a relative pseudomonad is the presheaf construction. It is clear what the pseudoalgebras for the presheaf construction \emph{ought} to be: namely, the locally small categories admitting small colimits. Perhaps surprisingly, establishing this is not straightforward -- nevertheless, it is true, as we show in \cref{presheaves-and-cocompleteness}. As a consequence of our coherence theorem, we conclude that the bicategory of distributors is biequivalent to the full sub-2-category of the 2-category of cocomplete categories spanned by the presheaf categories (a result present in the categorical folklore, but for which, to our understanding, no rigorous proof has previously been given). In fact, we work more generally, and characterise the pseudoalgebras not just for the presheaf relative pseudomonad but for the relative pseudomonads exhibiting the free cocompletions of small categories under a class $\Phi$ of colimits (\cref{Phi-pseudoalgebras}).

We conclude the paper in \cref{relative-monads-and-monads} with an application that demonstrates how the theory of relative \emph{pseudomonads} sheds light on the theory of relative \emph{monads}. In particular, by considering monads in the respective bicategories, we use the embedding of the Kleisli bicategory into the bicategory of pseudoalgebras to deduce that monads relative to free $\Phi$-cocompletions are equivalent to $\Phi$-cocontinuous monads on free $\Phi$-cocompletions.

\subsection*{Future directions}

This paper is a first step towards two-dimensional relative monad theory. For reasons of space there are many topics we have not covered here. For instance, a key theme in the theory of 2-monads is the study of weak structures in terms of stricter structures (\eg~representing lax morphisms as certain strict morphisms~\cite{blackwell1989two}). It would be useful to extend these ideas to the relative setting.

One of the motivations for our development is in connection to the theory of pseudocommutativity. In \cite{slattery2023pseudocommutativity,slattery2024commutativity}, the third-named author initiated the study of pseudocommutativity for relative pseudomonads. In the one-dimensional setting, it is known that commutative monads equip their categories of algebras, under mild assumptions, with monoidal structure~\cite{kock1970monads}; aspects of this theory have recently been extended to the setting of monoidal pseudomonads~\cite{miranda2024eilenberg}. We expect something similar to be true for pseudocommutative relative pseudomonads, which in particular include all lax-idempotent relative pseudomonads. This would give, for instance, an abstract construction of the tensor product of categories equipped with a class of small colimits~\cite[\S6.5]{kelly1982basic}.

Finally, in future work we intend to study the pseudoalgebras arising from distributive laws of relative pseudomonads, so as to capture a variety of structures in category theory that are sensitive to size~\cite{gabriel1971lokal,im1986universal,garner2012lex}. Such distributive laws are also relevant to recent developments in the semantics of linear and modal logics~\cite{fiore2008cartesian,olimpieri2021intersection,galal2020profunctorial,kavvos2024two1,kavvos2024two2}, and it would be natural to consider the corresponding bicategories of pseudoalgebras as candidates for models of linear logic, in the style of call-by-name models or adjunction models~\cite{levy2003call,curien2016theory}.

\subsection*{Acknowledgements}

Parts of this paper were developed as part of the third-named author's doctoral thesis~\cite{slattery2024commutativity}, under the supervision of Nicola Gambino. The authors thank Ivan Di Liberti for suggesting a simpler proof of \cref{colimit-of-dense-functor}.

\section{Background}
\label{background}

\subsection{Bicategories and pseudofunctors}
\label{notation}

We shall assume basic familiarity with two-dimensional category theory, including the definitions of 2-categories and bicategories, 2-functors and pseudofunctors, pseudonatural transformations, and modifications, as well as the theory of mates for adjunctions (see \cite{johnson20212} for a general reference).

For the most part, our notation is chosen for consistency with that of \cite{fiore2018relative}. We denote generic 2-categories and bicategories by calligraphic script, \eg $\A, \B, \C$; and their objects by uppercase Latin letters, \eg $X, Y, Z$. We denote by $\CAT$ the 2-category of locally small categories, functors, and natural transformations; and by $\Cat$ the full sub-2-category spanned by the small categories. Given a bicategory $\A$ and a pair of objects $X, Y \in \A$, we denote by $\A[X, Y]$ the category of 1-cells from $X$ to $Y$ (denoted by lowercase Latin letters, \eg $f, g, h$), and the 2-cells therebetween (denoted by lowercase Greek letters, \eg $\phi, \gamma, \eta$). Given bicategories $\A, \B$, we denote by $\Hom(\A, \B)$ the bicategory of pseudofunctors from $\A$ to $\B$ (denoted by uppercase Latin letters, \eg $F, G, H$), and the pseudonatural transformations and modifications therebetween; note that $\Hom(\A, \B)$ is a 2-category if $\B$ is. We denote the underlying collection of objects of a (bi)category $A$ by $\ob{A}$, and similarly write $\ob{F}$ for the underlying action on objects of a (pseudo)functor.

We shall denote the structural natural isomorphisms for associativity and unitality of composition in a bicategory by unnamed isomorphisms,
\begin{align*}
    (hg)f & \xto\iso h(gf) &
    1 f & \xto\iso f &
    f & \xto\iso f 1
\end{align*}
and similarly for the compositor and unitor natural isomorphisms for a pseudofunctor. A pseudofunctor is \emph{strict} when these isomorphisms are in fact identities.
\begin{align*}
    F(g) F(f) & \xto\iso F(gf) &
    1 & \xto \iso F(1)
\end{align*}
In pasting diagrams, we allow ourselves to elide the associator isomorphisms where it aids readability, and simply write, for instance, $hgf$; this is formally justified by the coherence theorem for bicategories~\cite[\S8.4]{johnson20212}. Similarly, diagrams that commute as a consequence of coherence for bicategories will generally not be explicitly labelled. We denote horizontal composition of 2-cells $\alpha$ and $\beta$ by $\beta \cdot \alpha$. We shall employ a diagrammatic convention in which rounded arrows on the outside of a diagram indicate definitional equality. For instance, if $h \defeq g \c f$, we may write the following. This makes it easier to distinguish the non-trivial equalities in a commutative diagram.
\[\begin{tikzcd}
	\cdot & \cdot & \cdot
	\arrow["f"', from=1-1, to=1-2]
	\arrow["g"', from=1-2, to=1-3]
    \arrow[
        "h",
        rounded corners,
        to path=
        { -- ([yshift=.6cm]\tikztostart.center)
        -- ([yshift=.6cm]\tikztotarget.center)
        \tikztonodes
        -- (\tikztotarget.north) },
        from=1-1, to=1-3
    ]
\end{tikzcd}\]

Finally, in an adjunction, we refer to the triangle law $\ell \tto \ell r \ell \tto \ell$ as the \emph{left triangle law}, and $r \tto r \ell r \tto r$ as the \emph{right triangle law}.

\subsection{Relative pseudoadjunctions}

We now recall the fundamental concepts introduced in \cite{fiore2018relative}. The central notions are that of a \emph{relative pseudoadjunction} and of a \emph{relative pseudomonad}.
A relative pseudoadjunction is a joint generalisation of the notion of relative adjunction~\cite{ulmer1968properties,altenkirch2015monads} and the notion of pseudoadjunction~\cite{street1980fibrations}.

\begin{definition}[{\cite[Definition~3.6]{fiore2018relative}, \cf{}~\cite[Definition~2.12]{przybylek2014analysis}}]
    \label{relative-pseudoadjunction}
     Let $\J$ be a pseudofunctor between bicategories. A \emph{$J$-relative pseudoadjunction} comprises
     \begin{enumerate}
         \item a pseudofunctor $R \colon \C \to \E$;
         \item an object $LX \in \C$ for each $X \in \A$;
         \item a 1-cell $i_X \colon JX \to RLX$ in $\E$ for each $X \in \A$;
         \item \label{relative pseudoadjunction local adjunction} an adjoint equivalence
        \[\begin{tikzcd}[column sep=7em]
        	{\E[J X, R Y]} & {\C[L X, Y]}
        	\arrow[""{name=0, anchor=center, inner sep=0}, "{\ph^\sharp_{X, Y}}", shift left=2, from=1-1, to=1-2]
        	\arrow[""{name=1, anchor=center, inner sep=0}, "{R_{LX, Y}\!\ph \c i_X}", shift left=2, from=1-2, to=1-1]
        	\arrow["\dashv"{anchor=center, rotate=-90}, draw=none, from=0, to=1]
        \end{tikzcd}\]
        for each $X \in \A$ and $Y \in \C$. \qedhere
     \end{enumerate}
\end{definition}

Recall that the presheaf construction sends a small category $X$ to the locally small presheaf category $[X\op, \Set]$. This assignment has a universal property: it is the free cocompletion of $X$ under small colimits~\cite{andre1966categories}. However, note that this universal property identifies the free cocompletion up to equivalence rather than up to isomorphism: thus, it is properly described as a bicategorical universal property. Furthermore, since the presheaf construction takes small categories to locally small categories, it does not exhibit a pseudoadjunction, but rather a relative pseudoadjunction (while the presheaf construction is defined also on large categories, it only satisfies the universal property on the small categories: see \cref{presheaf-construction-pseudomonad}). The presheaf construction thus provides one of the motivating examples of a relative pseudoadjunction~\cite{fiore2018relative}.

\begin{definition}
    \label{COC}
    Denote by $\COC$ the 2-category whose objects are locally small categories equipped with a choice of small colimits: specifically, an object is a locally small category $A$ along with, for each small category $D$ and functor $f \colon D \to A$, a cocone under $f$ that is colimiting. A 1-cell is a functor that sends the colimiting cocones to colimiting cocones (not necessarily the chosen ones). The 2-cells are natural transformations.
\end{definition}

\begin{example}[{\cite[Example~3.9]{fiore2018relative}}]
    \label{presheaf-construction-pseudoadjunction}
    Denote by $J \colon \Cat \ffto \CAT$ the inclusion of the \mbox{2-category} of small categories into the 2-category of locally small categories. The presheaf construction $X \mapsto [X\op, \Set]$ exhibits a left $J$-relative pseudoadjoint $P \colon \Cat \to \COC$ to the forgetful 2-functor $U \colon \COC \to \CAT$.
    \[\begin{tikzcd}
        & \COC \\
        \Cat && \CAT
        \arrow[""{name=0, anchor=center, inner sep=0}, "P", from=2-1, to=1-2]
        \arrow[""{name=1, anchor=center, inner sep=0}, "U", from=1-2, to=2-3]
        \arrow["J"', from=2-1, to=2-3]
        \arrow["\dashv"{anchor=center}, shift right=2, draw=none, from=0, to=1]
    \end{tikzcd}\]
    The unit $y_X \colon X \to [X\op, \Set]$ is given by the Yoneda embedding. For each small category $X$ and cocomplete locally small category $A$, the functor
    \[\CAT[X, U(A)] \xfrom{U\ph \c y_X} \COC[P(X), A]\]
    admits a left adjoint equivalence, given by left extension along $y_A$.
\end{example}

\begin{example}
    When $\J$ is a functor between categories (viewed as a 2-functor between locally discrete 2-categories), a $J$-relative pseudoadjunction is the same as a $J$-relative adjunction~\cite[Definition~2.2]{ulmer1968properties}.
\end{example}

\begin{example}
    When $\J$ is a 2-functor between 2-categories and the adjoint equivalences appearing in \cref{relative-pseudoadjunction} are isomorphisms of categories, we obtain the notion of \emph{relative 2-adjunction}, \ie{} a $\CAT$-enriched relative adjunction~\cite[Example~8.11]{arkor2024formal}.
\end{example}

For every relative pseudoadjunction as in \cref{relative-pseudoadjunction}, it follows from \cite[Lemma~3.7]{fiore2018relative} that the object assignment $L$ extends essentially uniquely to a pseudofunctor such that the family $\{ i_X \colon JX \to RLX \}_{X \in \A}$ may be equipped with the structure of a pseudonatural transformation. The action of the pseudofunctor $L$ at $X, Y \in \A$ is given by the following composite.
\begin{equation}
    \label{action-of-left-pseudoadjoint}
    L_{X, Y} \defeq
    \begin{tikzcd}[column sep=huge]
        {\A[X, Y]} & {\E[JX, JY]} & {\E[JX, RLY]} & {\C[LX, LY]}
        \arrow["{J_{X, Y}}", from=1-1, to=1-2]
        \arrow["{\E[JX, i_Y]}", from=1-2, to=1-3]
        \arrow["{\sharp_{X, LY}}", from=1-3, to=1-4]
    \end{tikzcd}
\end{equation}

While it is not proven \ibid, it also follows that $\sharp$ is pseudonatural with respect to this choice; the following generalises \cite[Lemmas~9.14 \& 9.15]{fiore2006pseudo} from pseudoadjunctions to relative pseudoadjunctions.

\begin{lemma}
    \label{sharp-is-pseudonatural}
    Let $(L, R, i, \sharp, \eta, \varepsilon)$ be a $J$-relative pseudoadjunction as in \cref{relative-pseudoadjunction}. The family of functors
    $\big\{ \sharp_{X, Y} \colon \E[JX, RY] \to \C[LX, Y] \big\}_{X \in \A, Y \in \C}$
    admits the structure of a pseudonatural equivalence $\sharp \colon \E[J{-}_1, R{-}_2] \tto \C[L{-}_1, {-}_2] \colon \A\op \times \C \to \Cat$.
\end{lemma}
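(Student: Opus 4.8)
The plan is to prove the statement first for the \emph{right} adjoint of each component and then transport it to $\sharp$. Write $\flat_{X,Y} \defeq R_{LX,Y}\ph \c i_X \colon \C[LX, Y] \to \E[JX, RY]$ for the functor appearing opposite $\sharp_{X,Y}$ in the adjoint equivalence of \cref{relative-pseudoadjunction}, so that $\sharp_{X,Y} \dashv \flat_{X,Y}$ — with unit and counit the components of $\eta$ and $\varepsilon$ — is an adjoint equivalence for every $X \in \A$ and $Y \in \C$. I would show that the family $\{\flat_{X,Y}\}$ underlies a pseudonatural transformation $\flat \colon \C[L{-}_1, {-}_2] \tto \E[J{-}_1, R{-}_2] \colon \A\op \times \C \to \Cat$, and then deduce the lemma: given such a structure, the family $\{\sharp_{X,Y}\}$ acquires pseudonaturality constraints in the opposite direction by taking, at each 1-cell $(f, g)$ of $\A\op \times \C$, the mate of the constraint of $\flat$ under the adjoint equivalences $\sharp \dashv \flat$ at the source and target of $(f, g)$. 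These mates are invertible because the adjunctions are adjoint equivalences, and each $\sharp_{X,Y}$ is already an equivalence of categories by hypothesis, so this exhibits the desired pseudonatural equivalence. (Equivalently: being a pointwise equivalence, $\flat$ is an equivalence in the 2-category of pseudofunctors $\A\op \times \C \to \Cat$, pseudonatural transformations, and modifications, so it admits a pseudo-inverse that is again a pseudonatural transformation; its pseudonatural structure may then be transported onto $\{\sharp_{X,Y}\}$ along the canonical isomorphisms $\sharp_{X,Y} \iso \flat_{X,Y}\inv$.)

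To see that $\flat$ is pseudonatural I would first exhibit its constraint 2-cells. In the $\C$-variable, for a 1-cell $g \colon Y \to Y'$, the comparison between $\E[JX, Rg] \c \flat_{X,Y}$ and $\flat_{X,Y'} \c \C[LX, g]$ is obtained by whiskering $i_X$ with the compositor of the pseudofunctor $R$. In the $\A\op$-variable, for a 1-cell $f \colon X' \to X$, the comparison between $\E[Jf, RY] \c \flat_{X,Y}$ and $\flat_{X',Y} \c \C[Lf, Y]$ is the pasting of the compositor of $R$ (used to rewrite $R(h \c Lf) \iso Rh \c R(Lf)$) with the pseudonaturality 2-cell of $i$ at $f$; this last is available precisely because $L$ has been equipped with the structure of a pseudofunctor and $i$ with that of a pseudonatural transformation, as recalled above following \cite[Lemma~3.7]{fiore2018relative}. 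The constraint at a general 1-cell $(f, g)$ is the evident pasting of these two. It then remains to check the pseudonaturality axioms for $\flat$: compatibility with identity 1-cells, compatibility with composition in each variable, and the interchange axiom asserting that the two ways of filling a square built from an $\A\op$-cell and a $\C$-cell agree. After eliding associators by coherence for bicategories, each of these reduces to an instance of the pseudofunctor axioms for $R$, the pseudofunctor axioms for $L$, or the pseudonaturality axioms for $i$.

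The bulk of the work — and the step I expect to be the main obstacle — is this last verification. It is a string of pasting-diagram manipulations in which one must simultaneously track the compositors and unitors of $R$ and of $L$, the structure 2-cells of $i$, and the associativity constraints of $\E$ and $\C$, with the added subtlety that the compositor of $R$ enters the $\A\op$-variable and the $\C$-variable constraints in different roles, so that verifying interchange amounts to reconciling those two roles. A convenient way to organise the computation is to treat first the case in which $J$, $L$, and $R$ are strict and $i$ is $2$-natural, so that all constraints are identities and the axioms are trivial, and then reintroduce the structure 2-cells one at a time; alternatively one may invoke coherence for pseudofunctors to reduce to that case. Once $\flat$ is known to be pseudonatural, the transfer to $\sharp$ is routine, using only that formation of mates is compatible with vertical composition, horizontal composition, and whiskering, so that the pseudonaturality axioms for $\{\sharp_{X,Y}\}$ follow formally from those for $\flat$. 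This adapts the argument of \cite[Lemmas~9.14 \& 9.15]{fiore2006pseudo} to the relative setting.
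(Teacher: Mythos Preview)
Your proposal is correct and follows essentially the same approach as the paper: show that the right adjoint family $\flat_{X,Y} = R_{LX,Y}\ph \c i_X$ is pseudonatural, then transport the structure to $\sharp$ via the componentwise adjoint equivalences. The only difference is one of economy: rather than constructing the constraint 2-cells of $\flat$ by hand and verifying the pseudonaturality axioms directly, the paper observes that $\flat$ factors as the composite $\C[L{-}_1,{-}_2] \xto{R} \E[RL{-}_1,R{-}_2] \xto{\E[i{-}_1,R{-}_2]} \E[J{-}_1,R{-}_2]$ of two transformations already known to be pseudonatural (the hom-action of the pseudofunctor $R$, and precomposition by the pseudonatural $i$), and cites existing lemmas for both ingredients and for the transfer step, so the pasting verification you anticipate as the main obstacle is avoided entirely.
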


\begin{proof}
    The following composite admits a canonical pseudonatural transformation structure,
    \[
        \C[L{-}_1, {-}_2] \xto{R_{L{-}_1, {-}_2}} \E[RL{-}_1, R{-}_2] \xto{\E[i{-}_1, R{-}_2]} \E[J{-}_1, R{-}_2]
    \]
    since both postcomposition and $i$ are pseudonatural~\cites[Lemma~8.2.9]{saville2019cartesian}[Lemma~3.7]{fiore2018relative}. Furthermore, since each of the components of the pseudonatural transformation are adjoint equivalences, the family of adjoints to the components is canonically equipped with the structure of a pseudonatural transformation in the opposite direction~\cite[Lemma~2.1.16]{saville2019cartesian}. Consequently, $\sharp$ is pseudonatural.
\end{proof}

\subsection{Relative pseudomonads}
\label{relative-pseudomonads}

A relative pseudomonad is a joint generalisation of the notion of relative monad~\cite{altenkirch2010monads,altenkirch2015monads} and the notion of pseudomonad~\cite{bunge1974coherent}.

\begin{definition}[{\cite[Definition~3.1]{fiore2018relative}}]
    \label{relative-pseudomonad}
    Let $\J$ be a pseudofunctor between bicategories. A \emph{$J$-relative pseudomonad} comprises
    \begin{enumerate}
        \item an object $TX \in \E$ for each $X \in \A$;
        \item a functor $\ph^*_{X, Y} \colon \E[JX, TY] \to \E[TX, TY]$ for each $X, Y \in \A$, the \emph{extension operator} (we allow ourselves to elide the subscripts where they may be inferred from context);
        \item a 1-cell $i_X \colon JX \to TX$ for each $X \in \A$, the \emph{unit};
        \item a natural family of invertible 2-cells $\mu_{g, f} \colon (g^* f)^* \tto g^* f^*$ for each $f \colon JX \to TY$ and $g \colon JY \to TZ$ in $\E$;
        \[\begin{tikzcd}[column sep=large]
            TX && TZ \\
            & TY
            \arrow["{f^*}"', curve={height=6pt}, from=1-1, to=2-2]
            \arrow["{g^*}"', curve={height=6pt}, from=2-2, to=1-3]
            \arrow[""{name=0, anchor=center, inner sep=0}, "{(g^* f)^*}", from=1-1, to=1-3]
            \arrow["{\mu_{g, f}}"', shorten <=3pt, shorten >=3pt, Rightarrow, from=0, to=2-2]
        \end{tikzcd}\]
        \item a natural family of invertible 2-cells $\eta_f \colon f \tto f^* i_X$ for each $f \colon JX \to TY$ in $\E$;
        \[\begin{tikzcd}[column sep=large]
            JX && TY \\
            & TX
            \arrow["{i_X}"', curve={height=6pt}, from=1-1, to=2-2]
            \arrow["{f^*}"', curve={height=6pt}, from=2-2, to=1-3]
            \arrow[""{name=0, anchor=center, inner sep=0}, "f", from=1-1, to=1-3]
            \arrow["{\eta_f}"', shorten <=3pt, shorten >=3pt, Rightarrow, from=0, to=2-2]
        \end{tikzcd}\]
        \item a family of invertible 2-cells $\theta_X \colon (i_X)^* \tto 1_{TX}$ for each $X \in \A$,
    \end{enumerate}
    such that
    \begin{enumerate}[resume]
        \item \label{psmnd-mu} for each $f \colon JW \to TX$, $g \colon JX \to TY$, and $h \colon JY \to TZ$ in $\E$, the following associativity law holds;
        \[\begin{tikzcd}[column sep=large]
            {((h^*g)^*f)^*} && {(h^*g)^*f^*} \\
            {((h^*g^*)f)^*} && {(h^*g^*)f^*} \\
            {(h^*(g^*f))^*} & {h^*(g^*f)^*} & {h^*(g^*f^*)}
            \arrow["{{\mu_{h^*g,f}}}", from=1-1, to=1-3]
            \arrow["{{\mu_{h,g}f^*}}", from=1-3, to=2-3]
            \arrow["\iso", from=2-3, to=3-3]
            \arrow["{{(\mu_{h,g}f)^*}}"', from=1-1, to=2-1]
            \arrow["\iso"', from=2-1, to=3-1]
            \arrow["{{\mu_{h,g^*f}}}"', from=3-1, to=3-2]
            \arrow["{{h^*\mu_{g,f}}}"', from=3-2, to=3-3]
        \end{tikzcd}\]
        \item \label{psmnd-theta} for each $f \colon JX \to TY$ in $\E$, the following unit law holds.
       \[\begin{tikzcd}
       	{f^*} & {(f^*i_{X})^*} & {f^*{(i_{X})}^*} \\
       	&& {f^*1_{TX}}
       	\arrow["{{(\eta_f)^*}}", from=1-1, to=1-2]
       	\arrow["\iso"', curve={height=12pt}, from=1-1, to=2-3]
       	\arrow["{{\mu_{f,i_{X}}}}", from=1-2, to=1-3]
       	\arrow["{f^*\theta_X}", from=1-3, to=2-3]
        \end{tikzcd}\qedshift\]
    \end{enumerate}
\end{definition}

By \cite[Theorem~3.8]{fiore2018relative}, each relative pseudoadjunction $(L, R, i, \sharp, \eta, \varepsilon)$ induces a relative pseudomonad whose underlying pseudofunctor is given by $RL$, whose extension operator is given by
\begin{equation}
    \ph^*_{X, Y} \defeq \E[JX, RLY] \xto{\ph^\sharp_{X, LY}} \E[LX, LY] \xto{R_{LX, LY}} \E[RLX, RLY]
\end{equation}
and whose unit is given by $i$.

\begin{example}
    \label{presheaf-construction-pseudomonad}
    The relative pseudoadjunction of \cref{presheaf-construction-pseudoadjunction} induces a $(\Cat \ffto \CAT)$-relative pseudomonad, which sends a small category $X$ to the presheaf category $P(X) \defeq [X\op, \Set]$. Note that relativity is crucial in this example: although the presheaf category $[A\op, \Set]$ is defined for arbitrary large categories $A$, the unit $y_A \colon A \to [A\op, \Set]$ is defined only when $A$ is locally small, and the requisite left extensions defining the extension operator exist generally only when $A$ is small. (However, there is a closely related construction -- the \emph{small presheaf} construction -- which does define a pseudomonad on $\CAT$; we discuss its relation to the presheaf construction in \cref{presheaves-and-cocompleteness}.)
\end{example}

\begin{example}
    \label{relative-monad}
    When $\J$ is a functor between categories (viewed as a 2-functor between locally discrete 2-categories), a $J$-relative pseudomonad is the same as a $J$-relative monad~\cite[Definition~2.1]{altenkirch2015monads}.
\end{example}

\begin{example}
    When $\J$ is a 2-functor between 2-categories and each of the families $\mu$, $\eta$, and $\theta$ appearing in \cref{relative-pseudoadjunction} are identities, we obtain the notion of \emph{relative 2-monad}, \ie{} a $\CAT$-enriched relative monad~\cite[Example~8.14]{arkor2024formal}.
\end{example}

For every relative pseudomonad as in \cref{relative-pseudomonad}, it follows from \cite[Proposition~4.7]{fiore2018relative} that the object assignment $T$ extends to a pseudofunctor, where the action on a 1-cell ${f \colon X \to Y}$ in $\A$ is given by $(i_Y \c Jf)^* \colon TX \to TY$, and that the families $\{ i_X \colon JX \to TX \}_{X \in \A}$ and $\{ \ph^*_{X, Y} \colon \E[J{-}_1, T{-}_2] \to \E[T{-}_1, T{-}_2] \}_{X, Y \in \A}$ may consequently be equipped with the structure of pseudonatural transformations.\footnote{\cite{fiore2018relative} do not show that the families of invertible 2-cells $\mu$, $\eta$, and $\theta$ become modifications under these assignments. That $\eta$ forms a modification will follow from \cref{algebras give pseudonat trans and modification}, and that $\theta$ forms a modification is trivial. However, to prove that $\mu$ forms a modification would require a detour into the theory of pseudodistributors, which we prefer to avoid here.}

\subsection{The Kleisli bicategory}

To every $(\J)$-relative pseudomonad $T$, we may associate a bicategory with the same objects as $\A$, whose 1-cells from $X$ to $Y$ are given by 1-cells $JX \to TY$ in $\E$.

\begin{definition}[{\cite[Theorem~4.1]{fiore2018relative}}]
    Let $\J$ be a pseudofunctor and let $T$ be a $J$-relative pseudomonad. The \emph{Kleisli bicategory} of $T$ has the same objects as $\A$ and $\KlT[X, Y] \defeq \E[JX, TY]$. The identity on an object $X \in \KlT$ is given by $i_X \colon JX \to TX$, and the composite of 1-cells $f \colon JX \to TY$ and $g \colon JY \to TZ$ is given by $g^* \c f \colon JX \to TZ$.
\end{definition}

Recall that a distributor (\aka{} profunctor) from $X$ to $Y$ is a functor $Y\op \times X \to \Set$~\cite{benabou1973distributeurs}. Small categories, distributors, and natural transformations assemble into a bicategory $\Dist$ (\aka{} $\b{Prof}$). One of the main contributions of \cite{fiore2018relative} was to give an abstract construction of the bicategory $\Dist$.

\begin{example}[{\cite[Example~4.2]{fiore2018relative}}]
    \label{Dist}
    Let $P$ be the $(\Cat \ffto \CAT)$-relative pseudomonad of \cref{presheaf-construction-pseudomonad}. Then $\Kl(P)$ is the bicategory $\Dist$.
\end{example}

Just as in the one-dimensional setting, the Kleisli bicategory exhibits every relative pseudomonad as induced by a relative pseudoadjunction.

\begin{theorem}[{\cite[Lemma~3.4, Theorem~4.4]{fiore2018relative}}]
    \label{Kleisli-resolution}
    Let $\J$ be a pseudofunctor. Every $J$-relative pseudomonad $T$ is induced by a relative pseudoadjunction
    \[\begin{tikzcd}
        & {\Kl(T)} \\
        \A && \E
        \arrow[""{name=0, anchor=center, inner sep=0}, "{K_T}", from=2-1, to=1-2]
        \arrow[""{name=1, anchor=center, inner sep=0}, "{V_T}", from=1-2, to=2-3]
        \arrow["J"', from=2-1, to=2-3]
        \arrow["\dashv"{anchor=center}, shift right=2, draw=none, from=0, to=1]
    \end{tikzcd}\]
    in which $K_T$ is identity-on-objects, and $V_T$ is given on objects by the action of $T$ and on morphisms by the extension operator $\ph^*$.
\end{theorem}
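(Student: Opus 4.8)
The plan is to construct the $J$-relative pseudoadjunction explicitly and then verify, via the recipe recalled after \cref{relative-pseudomonad}, that the relative pseudomonad it induces is $T$ itself. That $\Kl(T)$ is a bicategory is \cite[Theorem~4.1]{fiore2018relative}, so the work is to produce the pseudofunctor $V_T$, the object-assignment $K_T$ together with the unit 1-cells, and the local adjoint equivalences required by \cref{relative-pseudoadjunction}, and finally to unwind the induced relative pseudomonad.

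First I would define $V_T \colon \Kl(T) \to \E$ by $X \mapsto TX$ on objects and $f \mapsto f^*$ on a 1-cell $f \colon JX \to TY$ of $\Kl(T)$, with compositor built from the family $\mu$ and unitor built from the family $\theta$. Expanding the associator and unitors of $\Kl(T)$ --- which are themselves assembled from $\mu$, $\eta$, and $\theta$ --- one sees that the associativity and unit axioms for $V_T$ being a pseudofunctor are exactly the associativity law and the unit law of \cref{relative-pseudomonad}, together with a second unit law that is redundant just as in the one-dimensional case. I expect this step to be the main obstacle: it is a routine but careful bookkeeping exercise, precisely because $\Kl(T)$ is a genuine bicategory, so the associators and unitors of $\Kl(T)$ really do intervene and must be tracked.

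Next I would supply the remaining data of \cref{relative-pseudoadjunction}. Take $K_T$ to be the identity on objects, so that $V_T K_T X = TX$; take the 1-cell $JX \to V_T K_T X$ to be the unit $i_X \colon JX \to TX$ of $T$; and observe that $\Kl(T)[K_T X, Y]$ is by definition the category $\E[JX, TY] = \E[JX, V_T Y]$, so the adjoint equivalence required at $(X, Y)$ may be taken to be the identity functor on $\E[JX, TY]$. Its partner $h \mapsto V_T(h) \c i_X = h^* \c i_X$ is naturally isomorphic to the identity via the invertible family $\eta_h \colon h \tto h^* i_X$; taking this isomorphism and its inverse as unit and counit yields an adjoint equivalence, the triangle identities being immediate from naturality of $\eta$. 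This assembles a $J$-relative pseudoadjunction in which $\sharp$ is the identity.

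Finally I would identify the induced structure. By \cite[Lemma~3.7]{fiore2018relative} the object-assignment $K_T$ extends essentially uniquely to a pseudofunctor, and by \eqref{action-of-left-pseudoadjoint} its value on a 1-cell $f \colon X \to Y$ of $\A$ is $\sharp_{X, K_T Y}\big(\E[JX, i_Y](Jf)\big) = i_Y \c Jf$, as claimed; it is identity-on-objects by construction. By \cite[Theorem~3.8]{fiore2018relative} this relative pseudoadjunction induces a relative pseudomonad, and unwinding the construction shows that its underlying pseudofunctor is $V_T K_T$, with $(V_T K_T)(f) = (i_Y \c Jf)^* = T(f)$; that its extension operator sends $f$ to $V_T(f) = f^*$; that its unit is $i$; and that its structure 2-cells $\mu$, $\theta$, and $\eta$ come from the compositor and unitor of $V_T$ and the unit of the local adjoint equivalences, hence coincide with the original families. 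Therefore the induced relative pseudomonad is $T$.
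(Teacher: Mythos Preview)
The paper does not supply its own proof of this statement: it is quoted verbatim from \cite[Lemma~3.4, Theorem~4.4]{fiore2018relative}, and no argument is given in the present paper beyond the citation. Your proposal is the natural reconstruction of the cited proof --- defining $V_T$ with compositor $\mu$ and unitor $\theta$ (which is exactly \cite[Lemma~4.3]{fiore2018relative}, referenced later in the paper), taking $\sharp$ to be the identity with $\eta$ as the unit of the local adjoint equivalence, and then unwinding \cite[Theorem~3.8]{fiore2018relative} --- and it is correct.
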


We will show in \cref{resolutions-and-coherence} that this relative pseudoadjunction is universal in an appropriate sense.

\section{Pseudoalgebras for relative pseudomonads}
\label{algebras-for-relative-pseudomonads}

We now turn to the central notion of this paper: that of a pseudoalgebra for a relative pseudomonad. While relative pseudomonads describe what it means to freely adjoin coherent structure to an object of a bicategory, a pseudoalgebra describes what it means for an object to be equipped with such structure, not necessarily freely. For instance, we shall prove in \cref{presheaves-and-cocompleteness} that the pseudoalgebras for the presheaf construction of \cref{presheaf-construction-pseudomonad} are precisely the locally small categories with a choice of small colimits.

We begin in \cref{sec:bicategories-of-pseudoalgebras} by introducing the definitions of pseudoalgebras, their (strict, pseudo, lax, and colax) morphisms, and transformations, and establishing that they form bicategories (\cref{AlgT}). Next, we show in \cref{sec:comparison-with-no-iteration-pseudoalgebras} that this definition is a faithful generalisation of the bicategories of pseudoalgebras for non-relative pseudomonads by establishing an equivalence between pseudoalgebras for pseudomonads relative to the identity, and pseudoalgebras for no-iteration pseudomonads in the sense of \textcite{marmolejo2013no}. In \cref{sec:free-forgetful-pseudoadjunction}, we show that the construction of free $T$-pseudoalgebras exhibits a free--forgetful relative pseudoadjunction. Finally, in \cref{sec:bicategory-of-pseudoalgebras-as-a-limit}, we give a three-dimensional perspective on the bicategories of pseudoalgebras by showing that they admit strict pseudofunctors into certain three-dimensional limits.

\subsection{Bicategories of pseudoalgebras}
\label{sec:bicategories-of-pseudoalgebras}

Algebras for relative monads were first studied by \citeauthor{walters1969alternative} as the algebras for \emph{devices}~\cite{walters1969alternative,walters1970categorical}. They were introduced in full generality in \cite{altenkirch2010monads}. Our definition is a joint generalisation of the definition of an algebra for a relative 2-monad and of an algebra for a no-iteration pseudomonad~\cite{arkor2024formal,marmolejo2013no}.

\begin{definition}\label{pseudoalgebra}
    Let $\J$ be a pseudofunctor between bicategories.
    A \emph{pseudoalgebra} (or simply \emph{$T$-pseudoalgebra}) for a $J$-relative pseudomonad $T$ as in \cref{relative-pseudomonad} comprises
    \begin{enumerate}
        \item an object $A \in \E$;
        \item a functor $\ph^a_X \colon \E[JX, A] \to \E[TX, A]$ for each $X \in \A$, the \emph{extension operator} (we allow ourselves to elide the subscript when it can be inferred from context);
        \item a natural family of invertible 2-cells $\hat a_{g, f} \colon (g^a f)^a \tto g^a f^*$ for each $f \colon JX \to TY$ and $g \colon JY \to A$ in $\E$;
        \[\begin{tikzcd}[column sep=large]
            TX && A \\
            & TY
            \arrow["{f^*}"', curve={height=6pt}, from=1-1, to=2-2]
            \arrow["{g^a}"', curve={height=6pt}, from=2-2, to=1-3]
            \arrow[""{name=0, anchor=center, inner sep=0}, "{(g^a f)^a}", from=1-1, to=1-3]
            \arrow["{\hat a_{g, f}}"', shorten <=3pt, shorten >=3pt, Rightarrow, from=0, to=2-2]
        \end{tikzcd}\]
        \item a natural family of invertible 2-cells $\tilde a_f \colon f \tto f^a i_X$ for each $f \colon JX \to A$ in $\E$,
        \[\begin{tikzcd}[column sep=large]
            JX && A \\
            & TX
            \arrow["{i_X}"', curve={height=6pt}, from=1-1, to=2-2]
            \arrow["{f^a}"', curve={height=6pt}, from=2-2, to=1-3]
            \arrow[""{name=0, anchor=center, inner sep=0}, "f", from=1-1, to=1-3]
            \arrow["{\tilde a_f}"', shorten <=3pt, shorten >=3pt, Rightarrow, from=0, to=2-2]
        \end{tikzcd}\]
    \end{enumerate}
    such that
    \begin{enumerate}[resume]
        \item \label{psalg-mu} for each $f \colon JX \to TY$, $g \colon JY \to TZ$, and $h \colon JZ \to A$ in $\E$, the following associativity law holds;
        \[\begin{tikzcd}[column sep=large]
            {((h^ag)^af)^a} && {(h^ag)^af^*} \\
            {((h^ag^*)f)^a} && {(h^ag^*)f^*} \\
            {(h^a(g^*f))^a} & {h^a(g^*f)^*} & {h^a(g^*f^*)}
            \arrow["{\hat a_{h^ag,f}}", from=1-1, to=1-3]
            \arrow["{\hat a_{h,g}f^*}", from=1-3, to=2-3]
            \arrow["\iso", from=2-3, to=3-3]
            \arrow["{(\hat a_{h,g}f)^a}"', from=1-1, to=2-1]
            \arrow["\iso"', from=2-1, to=3-1]
            \arrow["{\hat a_{h,g^*f}}"', from=3-1, to=3-2]
            \arrow["{h^a\mu_{g,f}}"', from=3-2, to=3-3]
        \end{tikzcd}\]
        \item \label{psalg-theta} for each $f \colon JX \to A$ in $\E$, the following diagram unit law holds.
        \[\begin{tikzcd}
            {f^a} & {(f^ai_X)^a} & {f^a{(i_X)}^*} \\
            && {f^a1_{TX}}
            \arrow["{(\tilde a_f)^a}", from=1-1, to=1-2]
            \arrow["\iso"', shift left, curve={height=12pt}, from=1-1, to=2-3]
            \arrow["{\hat a_{f,i_X}}", from=1-2, to=1-3]
            \arrow["{f^a\theta_X}", from=1-3, to=2-3]
        \end{tikzcd}\]
    \end{enumerate}
    A pseudoalgebra is \emph{strict} (or is a \emph{strict algebra}) if each $\tilde a_f$ and each $\hat a_{f, g}$ is an identity 2-cell.
\end{definition}

For $J$ a pseudofunctor between 2-categories, pseudoalgebras for $J$-relative pseudomonads were introduced by \textcite[Definition~4.34]{lewicki2020categories}; our definition coincides in that setting. (\Citeauthor{lewicki2020categories} does not define morphisms or transformations of pseudoalgebras.)

\begin{example}
    \label{cocomplete-categories-are-P-algebras}
    Let $A$ be a locally small category with a choice of small colimits (in the sense of \cref{COC}). Then $A$ may be equipped with the structure of a $P$-pseudoalgebra. For each functor $f \colon JX \to A$, one defines $f^a$ to be the pointwise left extension $(y_X \lx f) \colon PX \to A$ of $f$ along $y_A$, which sends each $p \in PX$ to the colimit of $(f \c \pi) \colon \El(p) \to X \to A$, where $\El(p)$ is the category of elements associated to $p$. The natural families of invertible 2-cells are both induced by the universal properties of the left extensions. It is straightforward to verify directly that this structure does indeed satisfy the axioms of a pseudoalgebra, though we will give a more abstract proof in \cref{presheaves-and-cocompleteness}.
\end{example}

\begin{example}
    When $\J$ is a functor between categories and $T$ is a $J$-relative monad (\cref{relative-monad}), a $T$-pseudoalgebra is the same as an \emph{EM-algebra} in the sense of \cite[Definition~2.11]{altenkirch2015monads}.
\end{example}

\begin{remark}
    \label{lax-algebras}
    \Cref{pseudoalgebra} introduces the notions of both strict algebras and pseudoalgebras. One could also define the notions of \emph{lax} and \emph{colax algebras} for a relative pseudomonad. In fact, historically, two-dimensional notions of algebras were first considered by \textcite{bunge1974coherent}, who introduced lax algebras for \emph{lax monads} (a generalisation of pseudomonads in which the structural natural isomorphisms are weakened to non-invertible natural transformations). In this paper, we shall only consider laxity for morphisms, not for pseudomonads or pseudoalgebras. There are several reasons for this choice. First, the theory in \cite{fiore2018relative} was developed only in the context of relative pseudomonads, and it would therefore be necessary first to revisit this development in the more general context of relative (co)lax monads to obtain the appropriate generalisations of the theorems herein (necessitating lax notions of two-dimensional relative adjunctions, Kleisli lax bicategories, and so on). Second, definitions in the fully lax setting are more complicated, requiring additional coherence axioms (see \cref{psalg-eta} and the discussion preceding it). Third, the existing literature on two-dimensional monad theory focuses primarily on pseudo and strict algebras. Fourth, and most importantly, we currently have no motivating examples of (co)lax algebras in the relative setting. Nonetheless, we do expect our proofs to generalise to the lax setting.
\end{remark}

As in the non-relative setting, there are several notions of morphism between pseudoalgebras.

\begin{definition}\label{pseudomorphism}
    Let $\J$ be a pseudofunctor.
    A \emph{lax morphism} from a $T$-pseudoalgebra $\algA$ to a $T$-pseudoalgebra $\algB$ comprises
    \begin{enumerate}
        \item a morphism $h \colon A \to B$ in $\E$;
        \item a natural family of 2-cells $\overline h_f \colon (hf)^b \tto hf^a$ for each $f \colon JX \to A$ in $\E$,
    \end{enumerate}
    such that
    \begin{enumerate}[resume]
        \item \label{lax-morph-hats} for each $f \colon JX \to TY$ and $g \colon JY \to A$ in $\E$, the following associativity law holds;
        \[\begin{tikzcd}[column sep=large]
        {((hg)^bf)^b} && {(hg)^bf^*} \\
        {((hg^a)f)^b} && {(hg^a)f^*} \\
        {(h(g^af))^b} & {h(g^af)^a} & {h(g^af^*)}
        \arrow["{\hat b_{hg,f}}", from=1-1, to=1-3]
        \arrow["{\overline h_gf^*}", from=1-3, to=2-3]
        \arrow["\iso", from=2-3, to=3-3]
        \arrow["{(\overline h_gf)^b}"', from=1-1, to=2-1]
        \arrow["\iso"', from=2-1, to=3-1]
        \arrow["{\overline h_{g^af}}"', from=3-1, to=3-2]
        \arrow["{h\hat a_{g,f}}"', from=3-2, to=3-3]
        \end{tikzcd}\]
        \item \label{lax-morph-tildes} for each $f \colon JX \to A$ in $\E$, the following unit law holds.
        \[\begin{tikzcd}
            hf & {(hf)^bi_X} & {(hf^a)i_X} \\
            && {h(f^ai_X)}
            \arrow["{\tilde b_{hf}}", from=1-1, to=1-2]
            \arrow["{\overline h_f i_X}", from=1-2, to=1-3]
            \arrow["\iso", from=1-3, to=2-3]
            \arrow["{h\tilde a_f}"', curve={height=12pt}, from=1-1, to=2-3]
        \end{tikzcd}\]
    \end{enumerate}
    $(h,\overline h)$ is a \emph{pseudomorphism} if each $\overline h_f$ is invertible; and is a \emph{strict morphism} if each $\overline h_f$ is an identity 2-cell. A \emph{colax morphism} is defined analogously, except that the direction of each $\overline h$ is reversed, hence involves a natural family of 2-cells $\overline h_f \colon h f^a \tto (hf)^b$ for each $f \colon JX \to A$ in $\E$.
\end{definition}

\begin{example}
    \label{cocontinuous-functors-are-P-morphisms}
    In the context of \cref{cocomplete-categories-are-P-algebras}, every functor $f \colon A \to B$ between locally small categories may be equipped with the structure of a lax morphism of $P$-pseudoalgebras, the natural family of 2-cells being induced by the universal property of the left extensions. (This is a special property of the relative pseudomonad $P$, which we examine further in \cref{lax-idempotence,presheaves-and-cocompleteness}; in general, it is not true that, for a relative pseudomonad $T$, every 1-cell in $\E$ admits the structure of a lax morphism.) If $f$ preserves small colimits, the lax morphism is furthermore a pseudomorphism.
\end{example}

There is a single notion of 2-cell between morphisms of pseudoalgebras (modulo the direction of the data involved in a lax/colax morphism).

\begin{definition}
    \label{transformation}
    Let $(h, \overline h)$ and $(h', \overline{h'})$ be lax morphisms from $\algA$ to $\algB$. A \emph{transformation} from $(h, \overline h)$ to $(h', \overline{h'})$ is a 2-cell $\alpha \colon h \tto h'$ rendering the following diagram commutative for each $f \colon JX \to A$ in $\E$.
    \[\begin{tikzcd}
        {(hf)^b} & {(h'f)^b} \\
        {hf^a} & {h'f^a}
        \arrow["{(\alpha f)^b}", from=1-1, to=1-2]
        \arrow["{\overline{h'}_f}", from=1-2, to=2-2]
        \arrow["{\overline h_f}"', from=1-1, to=2-1]
        \arrow["{\alpha f^a}"', from=2-1, to=2-2]
    \end{tikzcd}\]
    Transformations between colax morphisms are defined in the same way, modulo reversing the horizontal arrows above.
\end{definition}

\begin{example}
    \label{natural-transformations-are-P-transformations}
    In the context of \cref{cocomplete-categories-are-P-algebras,cocontinuous-functors-are-P-morphisms}, every natural transformation between (cocontinuous) functors is a transformation of $P$-pseudoalgebras. Intuitively, this is because every natural transformation automatically coheres with colimit structure.
\end{example}

\begin{theorem}
    \label{AlgT}
    Let $\J$ be a pseudofunctor and let $T$ be a $J$-relative pseudomonad. The \mbox{$T$-pseudoalgebras} (\cref{pseudoalgebra}), lax morphisms (\cref{pseudomorphism}), and transformations (\cref{transformation}) form a bicategory $\AlgTl$, the \emph{bicategory of $T$-pseudoalgebras and lax morphisms}.
\end{theorem}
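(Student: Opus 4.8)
The plan is to verify the bicategory axioms directly, in parallel with the non-relative no-iteration case~\cite{marmolejo2013no}, but taking care that every structural 2-cell we introduce is expressed via the extension operators $\ph^a,\ph^b,\ph^c$ and $\ph^*$ rather than via $T$ applied to a 1-cell, so that $T$ is never iterated. First I would fix composition and identities. Given lax morphisms $(h,\overline h)\colon\algA\to\algB$ and $(k,\overline k)\colon\algB\to\algC$, define their composite to have underlying 1-cell $k\c h$ and, for each $f\colon JX\to A$, structural 2-cell the evident composite
\[
  \bigl((kh)f\bigr)^c \;\iso\; \bigl(k(hf)\bigr)^c \;\xtto{\overline k_{hf}}\; k(hf)^b \;\xtto{k\,\overline h_f}\; k\bigl(hf^a\bigr) \;\iso\; (kh)f^a,
\]
where the unnamed isomorphisms are associators of $\E$; naturality in $f$ is immediate from that of $\overline h$ and $\overline k$. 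The identity lax morphism on $\algA$ has underlying 1-cell $1_A$ and structural 2-cell the canonical isomorphism $(1_Af)^a\iso f^a\iso 1_Af^a$ built from functoriality of $\ph^a$ and the unitors of $\E$.

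Next I would check that these composites and identities are again lax morphisms, i.e.\ satisfy \cref{lax-morph-hats,lax-morph-tildes}. For the composite this is a pasting computation combining the two instances of \cref{lax-morph-hats} for $(h,\overline h)$ and $(k,\overline k)$, naturality of $\overline h$ and $\overline k$, the pseudoalgebra associativity \cref{psalg-mu} for $\algB$, and coherence in $\E$; the unit axiom \cref{lax-morph-tildes} similarly uses \cref{psalg-theta} for $\algB$. For the identity the verification is routine, using only functoriality of the extension operators and coherence. Then I would treat 2-cells: vertical composition, identities and whiskering of transformations are all inherited from $\E$, and the defining square of \cref{transformation} is closed under each of these operations by a short pasting argument using naturality of the relevant structural families. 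The associator and unitors of $\AlgTl$ are taken to be those of $\E$ on underlying 1-cells; what requires checking is that each is a \emph{transformation}, which reduces again to coherence in $\E$ together with naturality of the structural families.

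Finally, the pentagon and triangle identities for $\AlgTl$ follow formally: the assignment sending a pseudoalgebra to its carrier, a lax morphism to its underlying 1-cell, and a transformation to its underlying 2-cell is faithful on hom-categories and strictly preserves all the structure just chosen, so the bicategory axioms, being equations between 2-cells, descend from the corresponding identities in $\E$. I expect the main obstacle to be the verification of \cref{lax-morph-hats} for the composite structural family: it is the one genuinely large diagram, and it is precisely here that one must be careful to invoke the pseudoalgebra axioms for $\algB$ — rather than any property of $T$ on morphisms — in order to remain within the relative setting.
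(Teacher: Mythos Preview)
Your approach is essentially the same as the paper's: define composition and identities on underlying 1-cells with the structural 2-cells you wrote, verify the lax morphism axioms for each, check that transformations are closed under the relevant operations, take the associators and unitors from $\E$ and verify they are transformations, and then inherit the coherence axioms from $\E$ via the locally faithful forgetful assignment. One small correction: the verification of \cref{lax-morph-hats} and \cref{lax-morph-tildes} for the composite $(k,\overline k)\c(h,\overline h)$ does \emph{not} invoke the pseudoalgebra axioms \cref{psalg-mu} or \cref{psalg-theta} for $\algB$ directly --- it uses only the two instances of the lax morphism axioms for $h$ and $k$, naturality of $\overline k$ and of $\hat c$ (respectively $\tilde c$), and coherence in $\E$; the pseudoalgebra axioms for $B$ are already absorbed into the hypotheses on $(h,\overline h)$ and $(k,\overline k)$.
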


\begin{proof}
    We first show that, for any two pseudoalgebras $\algA$ and $\algB$, the lax morphisms from $\algA$ to $\algB$, together with their transformations, form a category. Since transformations are 2-cells in the underlying bicategory $\E$ satisfying a compatibility property, it suffices to show that this property is preserved by identities and composition in the hom-category $\E[A, B]$. This is indeed true: for lax morphisms $h, h', h'' \colon \algA \to \algB$, and transformations $\alpha \colon h \tto h'$ and $\alpha' \colon h' \tto h''$, the following diagrams commute by functoriality of $\ph^b$, exhibiting $1_h \colon h \tto h$ and $\alpha'\alpha \colon h \tto h''$ as transformations.
    \[
    \begin{tikzcd}
        {(hf)^b} & {(hf)^b} \\
        {hf^a} & {hf^a}
        \arrow["{(1_hf)^b}", curve={height=-12pt}, from=1-1, to=1-2]
        \arrow[Rightarrow, no head, from=1-1, to=1-2]
        \arrow["{\overline h_f}"', from=1-1, to=2-1]
        \arrow["{\overline h_f}", from=1-2, to=2-2]
        \arrow["{1_hf^a}"', curve={height=12pt}, from=2-1, to=2-2]
        \arrow[Rightarrow, no head, from=2-1, to=2-2]
    \end{tikzcd}
    \hspace{4em}
    \begin{tikzcd}[column sep=large, row sep = 2.3em]
        {(hf)^b} & {(h'f)^b} & {(h''f)^b} \\
        {hf^a} & {h'f^a} & {h''f^a}
        \arrow["{(\alpha f)^b}"{description}, from=1-1, to=1-2]
        \arrow["{\overline{h'}_f}"{description}, from=1-2, to=2-2]
        \arrow["{\overline h_f}"', from=1-1, to=2-1]
        \arrow["{\alpha f^a}"{description}, from=2-1, to=2-2]
        \arrow["{(\alpha' f)^b}"{description}, from=1-2, to=1-3]
        \arrow["{\overline{h''}_f}", from=1-3, to=2-3]
        \arrow["{\alpha' f^a}"{description}, from=2-2, to=2-3]
        \arrow["{\alpha'\alpha f^a}"', curve={height=18pt}, from=2-1, to=2-3]
        \arrow["{(\alpha'\alpha f)^b}", curve={height=-18pt}, from=1-1, to=1-3]
    \end{tikzcd}
    \]

    Next, we observe that, for each $T$-pseudoalgebra $\algA$, the family of 2-cells,
    \begin{equation}
        \label{identity-lax-morphism}
        (\overline{1_A})_f \defeq (1_Af)^a \xto\iso f^a \xto\iso 1_Af^a
    \end{equation}
    for each 1-cell $f \colon JX \to A$ in $\E$, equips the identity 1-cell $1_A \colon A \to A$ in $\E$ with the structure of a pseudomorphism $\algA \to \algA$.
    There are two coherence conditions to check -- (\ref{lax-morph-hats} \& \ref{lax-morph-tildes}) -- and these hold by commutativity of the following diagrams respectively.
    \[\begin{tikzcd}
        {((1_Ag)^af)^a} &&& {(1_{A} g)^af^*} \\
        {(g^af)^a} &&& {g^af^*} \\
        {((1_Ag^a)f)^a} &&& {(1_A g^a)f^*} \\
        {(1_A(g^af))^a} & {(g^af)^a} & {1_A(g^af)^a} & {1_A(g^af^*)}
        \arrow[""{name=0, anchor=center, inner sep=0}, "{\hat a_{1_A g, f}}", from=1-1, to=1-4]
        \arrow["\iso"', from=1-1, to=2-1]
        \arrow["\iso", from=1-4, to=2-4]
        \arrow["\iso"', from=2-1, to=3-1]
        \arrow[curve={height=-12pt}, Rightarrow, no head, from=2-1, to=4-2]
        \arrow["\iso", from=2-4, to=3-4]
        \arrow["\iso"{description}, curve={height=36pt}, from=2-4, to=4-4]
        \arrow["\iso"', from=3-1, to=4-1]
        \arrow["\iso", from=3-4, to=4-4]
        \arrow["\iso"', from=4-1, to=4-2]
        \arrow[""{name=1, anchor=center, inner sep=0}, "{\hat a_{g,f}}"{description}, curve={height=-12pt}, from=4-2, to=2-4]
        \arrow["\iso"', from=4-2, to=4-3]
        \arrow["{1_A\hat a_{g,h}}"', from=4-3, to=4-4]
        \arrow["{\natof\hat{a}}"'{pos=0.4}, draw=none, from=0, to=1]
    \end{tikzcd}\]
    \[\begin{tikzcd}
    	& {(1_{A}f)^ai_X} \\
    	{1_{A}f} & f & {f^ai_X} \\
    	{1_{A}(f^ai_X)} && {(1_{A}f^a)i_X}
    	\arrow["{\natof\tilde a}"{description}, draw=none, from=1-2, to=2-2]
    	\arrow["\iso", from=1-2, to=2-3]
    	\arrow["{\tilde a_{1_{A}f}}", from=2-1, to=1-2]
    	\arrow["\iso"{description}, from=2-1, to=2-2]
    	\arrow["{1_{A}\tilde a_f}"', from=2-1, to=3-1]
    	\arrow["{\tilde a_f}"{description}, from=2-2, to=2-3]
    	\arrow["\iso"{description}, from=2-3, to=3-1]
    	\arrow["\iso", from=2-3, to=3-3]
    	\arrow["\iso", from=3-3, to=3-1]
    \end{tikzcd}\]

    For each pair of lax morphisms $(h, \overline h) \colon \algA \to \algB$ and $(h', \overline{h'}) \colon \algB \to \algC$, and each $f \colon JX \to A$ in $\E$, the family of 2-cells
    \begin{equation}
        \label{composite-lax-morphism}
        \overline{h'h}_f \defeq ((h'h)f)^c \xto\iso (h'(hf))^c \xto{\overline{h'}_{hf}} h'(hf)^b \xto{h'\overline h_f} h'(hf^a) \xto\iso (h'h)f^a
    \end{equation}
    equips the composite 1-cell $h'h \colon A \to C$ in $\E$ with the structure of a lax morphism $\algA \to \algC$. There are two coherence conditions to check. To show the associativity law \eqref{lax-morph-hats} we use the corresponding law for each of $h$ and $h'$, as shown in the following diagram.
    \[\begin{tikzcd}
        {(((h'h)g)^cf)^c} &&&& {((h'h)g)^cf^*} \\
        {((h'(hg))^cf)^c} &&&& {(h'(hg))^cf^*} \\
        {((h'(hg)^b)f)^c} & {(h'((hg)^bf))^c} & {h'((hg)^bf)^b} & {h'((hg)^bf^*)} & {(h'(hg)^b)f^*} \\
        {((h'(hg^a))f)^c} & {(h'((hg^a)f))^c} & {h'((hg^a)f)^b} & {h'((hg^a)f^*)} & {(h'(hg^a))f^*} \\
        {(((h'h)g^a)f)^c} &&&& {((h'h)g^a)f^*} \\
        {((h'h)(g^af))^c} &&& {h'(h(g^af^*))} \\
        {(h'(h(g^af)))^c} \\
        {h'(h(g^af))^b} \\
        {h'(h(g^af)^a)} && {(h'h)(g^af)^a} && {(h'h)(g^af^*)}
        \arrow[""{name=0, anchor=center, inner sep=0}, "{\hat c_{(h'h)g,f}}", from=1-1, to=1-5]
        \arrow["\iso"', from=1-1, to=2-1]
        \arrow["\iso", from=1-5, to=2-5]
        \arrow[""{name=1, anchor=center, inner sep=0}, "{\hat c_{h'(hg),f}}"{description}, from=2-1, to=2-5]
        \arrow[""{name=2, anchor=center, inner sep=0}, "{(\overline{h'}_{hg}f)^c}"', from=2-1, to=3-1]
        \arrow[""{name=3, anchor=center, inner sep=0}, "{\overline{h'}_{hg}f^*}", from=2-5, to=3-5]
        \arrow["\iso"{description}, from=3-1, to=3-2]
        \arrow["{((h'\overline h_g)f)^c}"', from=3-1, to=4-1]
        \arrow["{\overline{h'}_{(hg)^bf}}", from=3-2, to=3-3]
        \arrow[""{name=4, anchor=center, inner sep=0}, "{(h'(\overline h_gf))^c}"{description}, from=3-2, to=4-2]
        \arrow["{h'\hat b_{hg,f}}", from=3-3, to=3-4]
        \arrow[""{name=5, anchor=center, inner sep=0}, "{h'(\overline h_gf)^b}"{description}, from=3-3, to=4-3]
        \arrow["\iso"{description}, from=3-4, to=3-5]
        \arrow["{h'(\overline h_gf^*)}"{description}, from=3-4, to=4-4]
        \arrow["{(h'\overline h_g)f^*}", from=3-5, to=4-5]
        \arrow["\iso"{description}, from=4-1, to=4-2]
        \arrow["\iso"', from=4-1, to=5-1]
        \arrow["{\overline{h'}_{(hg^a)f}}"', from=4-2, to=4-3]
        \arrow[""{name=6, anchor=center, inner sep=0}, "\iso"{description}, curve={height=-12pt}, from=4-2, to=7-1]
        \arrow[""{name=7, anchor=center, inner sep=0}, "\iso"{description}, curve={height=-18pt}, from=4-3, to=8-1]
        \arrow["\iso"{description}, from=4-4, to=4-5]
        \arrow["\iso"{description}, from=4-4, to=6-4]
        \arrow["\iso", from=4-5, to=5-5]
        \arrow["\iso"', from=5-1, to=6-1]
        \arrow["\iso", from=5-5, to=9-5]
        \arrow["\iso"', from=6-1, to=7-1]
        \arrow[""{name=8, anchor=center, inner sep=0}, "\iso"{description}, curve={height=6pt}, from=6-4, to=9-5]
        \arrow["{\overline{h'}_{h(g^af)}}"', from=7-1, to=8-1]
        \arrow["{h'\overline h_{g^ah}}"', from=8-1, to=9-1]
        \arrow[""{name=9, anchor=center, inner sep=0}, "{h'(h\hat a_{g,f})}"{description}, curve={height=12pt}, from=9-1, to=6-4]
        \arrow["\iso"', from=9-1, to=9-3]
        \arrow["{(h'h)\hat a_{g,f}}"', from=9-3, to=9-5]
        \arrow["{\natof\hat{c}}"{description}, draw=none, from=0, to=1]
        \arrow["{\algmu{h'}}"{description}, draw=none, from=2, to=3]
        \arrow["{\natof\overline {h'}}"{description}, draw=none, from=4, to=5]
        \arrow["{\natof\overline{h'}}"{description}, draw=none, from=6, to=7]
        \arrow["{\algmu h}"{description}, draw=none, from=7, to=6-4]
        \arrow[draw=none, from=9, to=8]
    \end{tikzcd}\]
    Similarly, for the unit law \eqref{lax-morph-tildes} we use the corresponding unit laws for $h$ and $h'$.
    \[\begin{tikzcd}
        {(h'h)f} &&& {((h'h)f)^ci_X} \\
        & {h'(hf)} && {(h'(hf))^ci_X} \\
        && {h'((hf)^bi_X)} & {(h'(hf)^b)i_X} \\
        && {h'((hf^a)i_X)} & {(h'(hf^a))i_X} \\
        & {h'(h(f^ai_X))} \\
        {(h'h)(f^ai_X)} &&& {((h'h)f^a)i_X}
        \arrow["{\tilde c_{(h'h)f}}", from=1-1, to=1-4]
        \arrow[""{name=0, anchor=center, inner sep=0}, "\iso"{description}, from=1-1, to=2-2]
        \arrow["{(h'h)\tilde a_f}"', from=1-1, to=6-1]
        \arrow[""{name=1, anchor=center, inner sep=0}, "\iso"{description}, from=1-4, to=2-4]
        \arrow[""{name=2, anchor=center, inner sep=0}, "{\tilde c_{h'(hf)}}"{description}, from=2-2, to=2-4]
        \arrow["{h' \tilde b_{hf}}"{description}, from=2-2, to=3-3]
        \arrow[""{name=3, anchor=center, inner sep=0}, "{h(h'\tilde a_f)}"{description}, from=2-2, to=5-2]
        \arrow["{\overline{h'}_{hf} i_X}"{description}, from=2-4, to=3-4]
        \arrow[""{name=4, anchor=center, inner sep=0}, "\iso"{description}, from=3-3, to=3-4]
        \arrow[""{name=5, anchor=center, inner sep=0}, "{h'(\overline{h}_f i_X)}"{description}, from=3-3, to=4-3]
        \arrow["{(h' \overline{h}_f)i_X}"{description}, from=3-4, to=4-4]
        \arrow["\iso"{description}, from=4-3, to=4-4]
        \arrow["\iso"{description}, from=4-3, to=5-2]
        \arrow["\iso"{description}, from=4-4, to=6-4]
        \arrow["\iso"{description}, from=6-1, to=5-2]
        \arrow["\iso", from=6-4, to=6-1]
        \arrow["{\natof\tilde c}"{description}, draw=none, from=0, to=1]
        \arrow["{\morunit{h'}}"{description}, draw=none, from=2, to=4]
        \arrow["{\morunit{h}}"{description}, draw=none, from=3, to=5]
        %
		%
        \arrow[
                "{\overline{h'h}_f i_X}",
                rounded corners,
                to path=
                {
                -- ([xshift=1.8cm]\tikztostart.center)
                -- ([xshift=1.8cm]\tikztotarget.center)
                \tikztonodes
                -- (\tikztotarget.east)
                },
                from=1-4, to=6-4
            ]
    \end{tikzcd}
    \]

    Now let $\alpha \colon (h, \overline h) \tto (k, \overline k)$ and $\alpha' \colon (h', \overline{h'}) \tto (k', \overline{k'})$ be transformations. We wish to show the horizontal composite $\alpha' \cdot \alpha$ is a transformation $(h'h, \overline{h'h}) \tto (k'k, \overline{k'k})$. This follows from commutativity of the following diagram.
    \[\begin{tikzcd}[column sep=5em]
    	{((h'h)f)^c} & {((k'h)f)^c} & {((k'k)f)^c} \\
    	{(h'(hf))^c} & {(k'(hf))^c} & {(k'(kf))^c} \\
    	{h'(hf)^b} & {k'(hf)^b} & {k'(kf)^b} \\
    	{h'(hf^a)} & {k'(h f^a)} & {k'(k f^a)} \\
    	{(h'h)f^a} & {(k'h)f^a} & {(k'k)f^a}
    	\arrow["{((\alpha' h)f)^c}"{description}, from=1-1, to=1-2]
    	\arrow["{((\alpha'\cdot\alpha)f)^c}", curve={height=-30pt}, from=1-1, to=1-3]
    	\arrow["\iso"{description}, from=1-1, to=2-1]
    	\arrow["{\overline{h'h}_f}"', curve={height=50pt, pos=.2}, from=1-1, to=5-1]
    	\arrow["{((k'\alpha)f)^c}"{description}, from=1-2, to=1-3]
    	\arrow["\iso"{description}, from=1-2, to=2-2]
    	\arrow["\iso"{description}, from=1-3, to=2-3]
    	\arrow["{\overline{k'k}_f}", curve={height=-50pt, pos=.2}, from=1-3, to=5-3]
    	\arrow[""{name=0, anchor=center, inner sep=0}, "{(\alpha' (hf))^c}"{description}, from=2-1, to=2-2]
    	\arrow["{\overline{h'}_{hf}}"{description}, from=2-1, to=3-1]
    	\arrow[""{name=1, anchor=center, inner sep=0}, "{(k'(\alpha f))^c}"{description}, from=2-2, to=2-3]
    	\arrow["{\overline{k'}_{hf}}"{description}, from=2-2, to=3-2]
    	\arrow["{\overline{k'}_{kf}}"{description}, from=2-3, to=3-3]
    	\arrow[""{name=2, anchor=center, inner sep=0}, "{\alpha' (hf)^b}"{description}, from=3-1, to=3-2]
    	\arrow["{h'\overline h_f}"{description}, from=3-1, to=4-1]
    	\arrow[""{name=3, anchor=center, inner sep=0}, "{k'(\alpha f)^b}"{description}, from=3-2, to=3-3]
    	\arrow["{k'\overline h_f}"{description}, from=3-2, to=4-2]
    	\arrow["{k'\overline k_f}"{description}, from=3-3, to=4-3]
    	\arrow["{\alpha' (h f^a)}"{description}, from=4-1, to=4-2]
    	\arrow["\iso"{description}, from=4-1, to=5-1]
    	\arrow[""{name=4, anchor=center, inner sep=0}, "{k'(\alpha f^a)}"{description}, from=4-2, to=4-3]
    	\arrow["\iso"{description}, from=4-2, to=5-2]
    	\arrow["\iso"{description}, from=4-3, to=5-3]
    	\arrow["{(\alpha' h)f^a}"{description}, from=5-1, to=5-2]
    	\arrow["{(\alpha'\cdot\alpha)f^a}"', curve={height=30pt}, from=5-1, to=5-3]
    	\arrow["{(k'\alpha)f^a}"{description}, from=5-2, to=5-3]
    	\arrow["{\trans{\alpha'}}"{description}, draw=none, from=0, to=2]
    	\arrow["{\natof\overline{k'}}"{description}, draw=none, from=1, to=3]
    	\arrow["{\trans{\alpha}}"{description}, draw=none, from=3, to=4]
    \end{tikzcd}\]
    Consequently, the composition functor ${\c} \colon \E[B, C] \times \E[A, B] \to \E[A, C]$ lifts to a functor
    \begin{align*}
        {\c} \colon \AlgTl[\algB, & \algC] \times \AlgTl[\algA, \algB] \\ &\hspace{3.5cm} \longrightarrow \AlgTl[\algA, \algC]
    \end{align*}
    on hom-categories of lax morphisms and transformations.

    The associators and unitors for $\AlgTl$ are inherited from $\E$: it thus remains to show that these are transformations componentwise. This follows from the commutativity of the following three diagrams, for the unitors and associator respectively.
    \[
    \begin{tikzcd}
        {((h1_A)f)^b} & {(hf)^b} \\
        {(h(1_A f))^b} \\
        {h(1_A f)^a} \\
        {hf^a} \\
        {h(1_A f^a)} \\
        {(h1_A)f^a} & {hf^a}
        \arrow["\iso", from=1-1, to=1-2]
        \arrow["\iso"{description}, from=1-1, to=2-1]
        \arrow["{\overline{h 1_A}_f}"', curve={height=50pt,pos=.2}, from=1-1, to=6-1]
        \arrow[""{name=0, anchor=center, inner sep=0}, "{\overline h_f}", from=1-2, to=6-2]
        \arrow["\iso"{description}, from=2-1, to=1-2]
        \arrow["{\overline h_{1f}}"{description}, from=2-1, to=3-1]
        \arrow[""{name=1, anchor=center, inner sep=0}, "\iso"{description}, from=3-1, to=4-1]
        \arrow["\iso"{description}, from=4-1, to=5-1]
        \arrow[Rightarrow, no head, from=4-1, to=6-2]
        \arrow["\iso"{description}, from=5-1, to=6-1]
        \arrow["\iso"', from=6-1, to=6-2]
        \arrow["{\natof\overline h}"{description}, draw=none, from=1, to=0]
    \end{tikzcd}
    \hspace{6em}
    \begin{tikzcd}
    	{(hf)^b} & {((1_B h)f)^b} \\
    	& {(1_B(hf))^b} \\
    	& {(hf)^b} \\
    	& {1_B(hf)^b} \\
    	& {1_B(hf^a)} \\
    	{hf^a} & {(1_B h)f^a}
    	\arrow["{\overline h_f}"', from=1-1, to=6-1]
    	\arrow["\iso"', from=1-2, to=1-1]
    	\arrow["\iso"{description}, from=1-2, to=2-2]
    	\arrow["{\overline{1_B h}_f}", curve={height=-50pt, pos = .2}, from=1-2, to=6-2]
    	\arrow["\iso"{description}, from=2-2, to=3-2]
    	\arrow[Rightarrow, no head, from=3-2, to=1-1]
    	\arrow["\iso"{description}, from=3-2, to=4-2]
    	\arrow["{1_B\overline h_f}"{description}, from=4-2, to=5-2]
    	\arrow["\iso"{description}, from=5-2, to=6-1]
    	\arrow["\iso"{description}, from=5-2, to=6-2]
    	\arrow["\iso", from=6-2, to=6-1]
    \end{tikzcd}\]
    \[\begin{tikzcd}
        {\left( ((fg)h)k \right)^d} & {\left( (f(gh)) k \right)^d} \\
        {\left( (fg)(hk) \right)^d} & {\left( f ((gh) k) \right)^d} \\
        {\left( f (g (hk)) \right)^d} & {f \left(((gh) k) \right)^c} \\
        {f (g (hk))^c} & {f \left( g (h k) \right)^c} \\
        {f \left( g (hk)^b \right)} & {f \left( g (h k)^b \right)} \\
        {(fg)(hk)^b} & {f \left( g (h k^a) \right)} \\
        {(fg)(hk^a)} & {f \left((gh) k^a \right)} \\
        {\left( (fg)h \right) k^a} & {\left( f(gh) \right) k^a}
        \arrow["\iso", from=1-1, to=1-2]
        \arrow["\iso"', from=1-1, to=2-1]
        \arrow["\iso", from=1-2, to=2-2]
        \arrow["\iso"{description}, from=2-1, to=3-1]
        \arrow["{\overline{fg}_{hk}}"', curve={height=50pt, pos=.2}, from=2-1, to=6-1]
        \arrow["{\overline{f}_{(gh)k}}", from=2-2, to=3-2]
        \arrow[""{name=0, anchor=center, inner sep=0}, "\iso"{description}, from=3-1, to=2-2]
        \arrow["{\overline{f}_{g(hk)}}"{description}, from=3-1, to=4-1]
        \arrow["\iso"{description}, from=3-2, to=4-2]
        \arrow["{f \overline{gh}_k}", curve={height=-50pt, pos=.2}, from=3-2, to=7-2]
        \arrow[""{name=1, anchor=center, inner sep=0}, Rightarrow, no head, from=4-1, to=4-2]
        \arrow["{f \overline{g}_{hk}}"{description}, from=4-1, to=5-1]
        \arrow["{f \overline{g}_{hk}}"{description}, from=4-2, to=5-2]
        \arrow[Rightarrow, no head, from=5-1, to=5-2]
        \arrow["\iso"{description}, from=5-1, to=6-1]
        \arrow["{f (g \overline{h}_k)}"{description}, from=5-2, to=6-2]
        \arrow["{(fg)\overline{h}_k}"', from=6-1, to=7-1]
        \arrow["\iso"{description}, from=6-2, to=7-2]
        \arrow["\iso"{description}, from=7-1, to=7-2]
        \arrow["\iso"', from=7-1, to=8-1]
        \arrow["\iso", from=7-2, to=8-2]
        \arrow["\iso"', from=8-1, to=8-2]
        \arrow["{\natof\overline{f}}"{description, pos=0.6}, draw=none, from=0, to=1]
    \end{tikzcd}\qedshift\]
\end{proof}

\begin{proposition}
    \label{AlgT-variants}
    By restricting the 1-cells of $\AlgTl$, we obtain:
    \begin{enumerate}
        \item a bicategory $\AlgT$ of \emph{$T$-pseudoalgebras and pseudomorphisms}, equipped with a locally \ff{} strict pseudofunctor $\AlgT \to \AlgTl$;
        \item assuming $\E$ is a 2-category, a 2-category $\AlgTs$ of \emph{$T$-pseudoalgebras and strict morphisms}, equipped with a locally \ff{} strict 2-functor $\AlgTs \to \AlgT$.
    \end{enumerate}
    Furthermore, by considering instead \emph{colax} morphisms in \cref{AlgT}, we obtain
    \begin{enumerate}[resume]
        \item a bicategory $\AlgTc$ of \emph{$T$-pseudoalgebras and colax morphisms}, together with a locally \ff{} strict pseudofunctor $\AlgT \to \AlgTc$.
    \end{enumerate}
    By restricting each of the above to strict algebras, we obtain full sub-bicategories $\StrAlgTl \ffto \AlgTl$, $\StrAlgT \ffto \AlgT$, $\StrAlgTc \ffto \AlgTc$; and, assuming $\E$ is a 2-category, a 2-category $\StrAlgTs \ffto \AlgTs$.
\end{proposition}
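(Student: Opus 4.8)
The plan is to derive everything from \cref{AlgT} by bookkeeping: each claimed variant arises by restricting the $1$-cells of $\AlgTl$ to a class closed under the identity lax morphism \eqref{identity-lax-morphism} and the composite lax morphism \eqref{composite-lax-morphism}, after which it remains only to note that the resulting inclusions are the identity on objects and reproduce those formulas verbatim. For (1), I would first recall that the proof of \cref{AlgT} already exhibits \eqref{identity-lax-morphism} as a \emph{pseudo}morphism, and then observe that if $(h, \overline h)$ and $(h', \overline{h'})$ are pseudomorphisms, the composite \eqref{composite-lax-morphism} is a composite of invertible $2$-cells -- the structural isomorphisms of $\E$, the component $\overline{h'}_{hf}$, and the whiskering $h'\overline h_f$ (whiskering a $2$-isomorphism by a $1$-cell yields a $2$-isomorphism) -- hence invertible; so the pseudomorphisms span a sub-bicategory $\AlgT$. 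Its hom-categories are \emph{full} subcategories of those of $\AlgTl$, since the condition in \cref{transformation} is insensitive to whether a morphism is pseudo or merely lax, which makes the inclusion locally \ff{}; and since composition and identities are given by literally the same formulas as in $\AlgTl$, the compositors and unitors of the inclusion are identities, so it is a strict pseudofunctor.

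For (2), I would assume $\E$ is a $2$-category -- so its associators and unitors are identities, whence $\AlgTl$, $\AlgT$, and $\AlgTc$ are then themselves $2$-categories, their coherence data being inherited from $\E$ -- and check that when $h$ and $h'$ are strict, every $2$-cell appearing in \eqref{composite-lax-morphism} is an identity (the structural isomorphisms, $\overline{h'}_{hf}$, and the whiskering $h'\overline h_f$, the latter two identities because $h$ and $h'$ are strict), so the composite is strict, as is \eqref{identity-lax-morphism}; hence strict morphisms span a sub-$2$-category $\AlgTs$, and $\AlgTs \to \AlgT$ is a locally \ff{} strict $2$-functor exactly as in (1). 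I expect this to be the one place where $2$-categoricity of $\E$ is genuinely used: without it, the composite of two strict morphisms still carries the nontrivial structural isomorphisms of $\E$ and is only a pseudomorphism.

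For (3), the plan is to treat colax morphisms dually: reversing every structure $2$-cell in \cref{pseudomorphism,transformation} and replacing each structural isomorphism of $\E$ occurring in \eqref{identity-lax-morphism}--\eqref{composite-lax-morphism} by its inverse (and reversing the order of composition) produces the colax identity and composite, and the coherence diagrams in the proof of \cref{AlgT} -- which are assembled from the pseudoalgebra associativity and unit laws together with naturality -- dualise verbatim, giving $\AlgTc$. The functor $\AlgT \to \AlgTc$ I would define by $(h, \overline h) \mapsto (h, \overline h^{-1})$, which sends a pseudomorphism to a colax morphism with invertible structure $2$-cells; because inversion reverses composites and whiskering preserves invertibility, it carries the lax identity and composite to their colax analogues, so it extends to a strict pseudofunctor; on each hom-category it is an isomorphism onto the full subcategory of colax morphisms with invertible structure $2$-cells -- the transformation condition being unaffected by inverting the structure $2$-cells, by cancellation -- hence it is locally \ff{}.

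Finally, restricting any of $\AlgTl$, $\AlgT$, $\AlgTc$ (or $\AlgTs$, when $\E$ is a $2$-category) to the objects that happen to be strict algebras yields a full sub-bicategory (resp.\ sub-$2$-category), since a full subclass of the objects of a bicategory always spans a full sub-bicategory: all identities, composites, associators, and unitors involve only the objects and morphisms retained. This produces $\StrAlgTl \ffto \AlgTl$, $\StrAlgT \ffto \AlgT$, $\StrAlgTc \ffto \AlgTc$, and $\StrAlgTs \ffto \AlgTs$. Overall I expect no serious obstacle -- the content of \cref{AlgT} does the real work -- with the only delicate points being the careful dualisation in (3) and keeping track of which structural isomorphisms of $\E$ survive in (2).
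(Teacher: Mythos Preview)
Your proposal is correct and follows essentially the same approach as the paper's own proof, which is a terse version of exactly what you describe: for (1) check that pseudomorphisms are closed under the identity \eqref{identity-lax-morphism} and composite \eqref{composite-lax-morphism}; for (2) observe that when $\E$ is a $2$-category those formulae reduce to identities; and for (3) invoke duality. Your treatment is considerably more detailed---in particular you spell out the map $\AlgT \to \AlgTc$ via $(h,\overline h)\mapsto(h,\overline h^{-1})$ and the reason the inclusions are locally \ff{} and strict---but there is no substantive difference in strategy.
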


\begin{proof}
    For (1), we only need check that pseudomorphisms are closed under composition, since identities are pseudomorphisms by definition. However, this is trivial, since each of the components in \eqref{composite-lax-morphism} is then invertible. (2) is similar, except that for identities and composites to be strict, we furthermore require the structural isomorphisms in $\E$ to be identities, so that \eqref{identity-lax-morphism} and \eqref{composite-lax-morphism} are identities. (3) follows by duality.
\end{proof}

\begin{remark}
    \label{EM-2-category}
    When $\E$ is a 2-category, each of the bicategories of pseudoalgebras in \cref{AlgT,AlgT-variants} is a 2-category, since the structural isomorphisms are inherited from $\E$. This is in contrast to $\KlT$, which is typically a bicategory when $T$ is not a 2-monad, even when $\E$ is a 2-category. This observation will play an important role in \cref{resolutions-and-coherence}.
\end{remark}

\begin{example}
    Following \cref{EM-2-category}, when $J$ is a 2-functor between 2-categories and $T$ is a relative 2-monad, $\StrAlgTs$ is the 2-category of $\Cat$-enriched algebras in the sense of \cite{arkor2024formal}.
\end{example}

For the next proposition, recall that an amnestic isofibration is a functor $u \colon C \to D$ with the property that, for every isomorphism $\phi \colon d \to u(c)$ in $D$ there exists a unique isomorphism $\phi' \colon d' \to c$ in $C$ such that $u(\phi') = \phi$~\cite[Lemma~2.4.7]{avery2017structure}.

\begin{proposition}
    \label{forgetful-pseudofunctor}
    The function sending each pseudoalgebra to its underlying object admits the structure of a strict pseudofunctor $U_T \colon \AlgTl \to \E$, which is locally faithful, locally conservative, and locally an amnestic isofibration.
\end{proposition}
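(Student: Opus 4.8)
The plan is to verify each asserted property in turn; all but one are essentially formal given the construction of $\AlgTl$. First, for strict pseudofunctoriality, I would invoke \cref{AlgT}: composition of lax morphisms, the identity lax morphisms, and the associators and unitors of $\AlgTl$ all have underlying $1$- and $2$-cells given by the corresponding data of $\E$, and composition of transformations is composition in $\E$. Hence $\algA\mapsto A$, $(h,\overline h)\mapsto h$, $\alpha\mapsto\alpha$ is, on each hom-category, the functor $\AlgTl[\algA,\algB]\to\E[A,B]$ that discards the coherence $2$-cells of a lax morphism and sends each transformation to its underlying $2$-cell; it strictly preserves composition and identities of $1$-cells and carries the associators and unitors of $\AlgTl$ to those of $\E$, so $U_T$ is a strict pseudofunctor (the remaining pseudofunctor axioms holding automatically). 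Local faithfulness is then immediate: a transformation between a fixed pair of lax morphisms \emph{is} a $2$-cell of $\E$ subject to the condition of \cref{transformation}, so $U_T$ is injective on each parallel pair of $2$-cells.

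For local conservativity, let $\alpha\colon(h,\overline h)\tto(h',\overline{h'})$ be a transformation whose underlying $2$-cell is invertible in $\E[A,B]$. Conjugating the compatibility square of \cref{transformation} by the invertible $2$-cells $(\alpha f)^b$ and $\alpha f^a$ — invertible since $\ph^b$ and whiskering preserve isomorphisms — shows that $\alpha\inv$ again satisfies that condition, hence is a transformation $(h',\overline{h'})\tto(h,\overline h)$; being already two-sided inverse to $\alpha$ in $\E[A,B]$, it is an inverse in $\AlgTl[\algA,\algB]$. For the local amnestic isofibration property, I would fix a lax morphism $(h,\overline h)\colon\algA\to\algB$ and an invertible $2$-cell $\phi\colon g\tto h$ in $\E[A,B]$. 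Any lift of $\phi$ to $\AlgTl[\algA,\algB]$ is a transformation with underlying $2$-cell $\phi$, so its domain must have the form $(g,\overline g)$, and the compatibility condition of \cref{transformation} forces
\[
    \overline g_f \;\defeq\; (gf)^b \xto{(\phi f)^b} (hf)^b \xto{\overline h_f} hf^a \xto{(\phi f^a)\inv} gf^a \qquad (f\colon JX\to A);
\]
thus a lift is unique if it exists.

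It then remains to check that this $\overline g$ genuinely defines a lax morphism. Naturality of the family $\overline g$ follows from naturality of $\overline h$ together with the interchange law applied to the whiskered copies of $\phi$; and the associativity and unit axioms \eqref{lax-morph-hats} and \eqref{lax-morph-tildes} for $(g,\overline g)$ are obtained from those for $(h,\overline h)$ by conjugating their defining diagrams by appropriate whiskerings of $\phi$ and $\phi\inv$ — that is, by transporting the lax-morphism structure of $h$ along the isomorphism $\phi$, in the spirit of the transport-of-structure results of \cref{sec:doctrinal-adjunction}. With this structure, $\phi$ is by construction a transformation $(g,\overline g)\tto(h,\overline h)$, invertible by the local conservativity just established, giving the required (unique) lift. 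The one non-formal step, and the main obstacle, is this last verification that $\overline g$ satisfies \eqref{lax-morph-hats} and \eqref{lax-morph-tildes}: it is a diagram chase of the same character as those in the proof of \cref{AlgT}, pushing the whiskered copies of $\phi$ through $\ph^b$ and past the pseudoalgebra constraints $\hat a$ and $\hat b$ via functoriality and interchange. Were a general transport-of-structure principle already available at this point, this step could instead simply be quoted.
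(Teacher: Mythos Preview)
Your proposal is correct and follows essentially the same approach as the paper: strict pseudofunctoriality is inherited from the construction of $\AlgTl$, local faithfulness is immediate, conservativity follows by conjugating the transformation square, and the amnestic isofibration property is obtained by defining the induced lax structure via conjugation by $\phi$ and then verifying \eqref{lax-morph-hats} and \eqref{lax-morph-tildes} directly. The paper carries out those two diagram chases explicitly (using naturality of $\tilde b$ and $\hat b$ together with the corresponding axioms for $(h,\overline h)$), which is precisely the ``non-formal step'' you identify; your description of how the chase goes is accurate, and the forward allusion to \cref{sec:doctrinal-adjunction} is only heuristic, as you note.
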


\begin{proof}
    For each pair of $T$-pseudoalgebras $\algA$ and $\algB$, we define $(U_T)_{A, B} \colon \AlgTl[\algA, \algB] \to \E[A, B]$ to be the faithful functor sending a lax morphism $(h, \overline h)$ to the 1-cell $h$.
    For conservativity, let $\alpha \colon (h, \overline h) \tto (h', \overline{h'})$ be a transformation of lax algebra morphisms $\algA \to \algB$, and suppose that the 2-cell $\alpha$ is invertible in $\E$. Then $\alpha\inv$ is also a transformation: indeed, the transformation axiom \eqref{transformation} below follows immediately from the corresponding axiom for $\alpha$.
    \[\begin{tikzcd}[column sep=4em]
    	{(h'f)^b} & {(h f)^b} \\
    	{h'f^a} & {h f^a}
    	\arrow["{(\alpha\inv f)^b}", from=1-1, to=1-2]
    	\arrow["{\overline{h'}_f}"', from=1-1, to=2-1]
    	\arrow["{\overline{h}_f}", from=1-2, to=2-2]
    	\arrow["{\alpha\inv f^a}"', from=2-1, to=2-2]
    \end{tikzcd}\]

    To see that $U_T$ locally an amnestic isofibration, let $(h', \overline{h'})$ be a lax morphism $\algA \to \algB$ and let $h \colon A \to B$ be a 1-cell in $\E$. Suppose also that there is an invertible 2-cell $\alpha \colon h \tto h'$ in $\E$.
    Define a lax morphism structure on $h$ by setting
    \begin{equation}
    \label{eq:induced-by-amnestic}
    \overline{h}_f \defeq (hf)^b \xto{(\alpha f)^b} (h'f)^b \xto{\overline{h'}_f} h'f^a \xto{\alpha\inv f^a} hf^a
    \end{equation}
    for each $f \colon JX \to A$. The unit and associativity axioms hold by the commutativity of the next two diagrams.
    \[\begin{tikzcd}[column sep=huge]
		hf & {(hf)^b i_X} \\
        {h' f} & {(h' f)^b i_X} \\
        & {h' f^a i_X} \\
        {hf } & {h f^a i_X}
        \arrow[""{name=0, anchor=center, inner sep=0}, "{\tilde{b}_{hf}}", from=1-1, to=1-2]
        \arrow["{\alpha f}"{description}, from=1-1, to=2-1]
        \arrow["{(\alpha f)^b i_X}"{description}, from=1-2, to=2-2]
        \arrow[""{name=1, anchor=center, inner sep=0}, "{\tilde{b}_{h'f}}"{description}, from=2-1, to=2-2]
        \arrow[""{name=2, anchor=center, inner sep=0}, "{h' \tilde{a}_f}"{description}, curve={height=12pt}, from=2-1, to=3-2]
        \arrow["{\alpha\inv f}"{description}, from=2-1, to=4-1]
        \arrow["{\overline{h'}_f i_X}"{description}, from=2-2, to=3-2]
        \arrow["{\alpha\inv f^a i_X}"{description}, from=3-2, to=4-2]
        \arrow["{h \tilde{a}_f}"', from=4-1, to=4-2]
        \arrow["{\natof\tilde{b}}"{description}, draw=none, from=0, to=1]
        \arrow["{\morunit{h'}}"{description, pos=0.4}, draw=none, from=2-2, to=2]
		%
	    %
	    \arrow[
	            "{\overline{h}_f i_X }",
	            rounded corners,
	            to path=
	            { -- ([xshift=1cm]\tikztostart.center)
	            -- ([xshift=1cm]\tikztotarget.center)
	   \tikztonodes
	            -- (\tikztotarget.east)
	            },
	            from=1-2, to=4-2
	        ]
	    \arrow[
	        "{1_{hf}}"',
	        rounded corners,
	        to path=
	        { -- ([xshift=-.8cm]\tikztostart.center)
	        -- ([xshift=-.8cm]\tikztotarget.center)
	        \tikztonodes
	        -- (\tikztotarget.west)
	        },
	        from=1-1, to=4-1
	    ]
	\end{tikzcd}
    \hspace{.8em}
    \begin{tikzcd}[row sep = 2.1em]
    	&& {(hg)^b f^*} \\
    	{((hg)^b f)^b} && {((h' g)^b f)^b} && {(h' g)^b f^*} \\
    	{(h g^a f)^b} && {(h' g^a f)^b} \\
    	&& {(h' g^a f)^b} \\
    	{h (g^a f)^a} && {h' (g^a f)^a} && {h' g^a f^*} \\
    	&& {h g^a f^*}
    	\arrow["{\natof\hat{b}}"{description}, draw=none, from=1-3, to=2-3]
    	\arrow["{(\alpha g)^b f}", from=1-3, to=2-5]
    	\arrow["{\hat{b}_{hg, f}}", from=2-1, to=1-3]
    	\arrow["{((\alpha g)^b f)^b}"{description}, from=2-1, to=2-3]
    	\arrow["{(\overline{h}_g f)^b}"', from=2-1, to=3-1]
    	\arrow[""{name=0, anchor=center, inner sep=0}, "{\hat{b}_{h'g, f}}"{description}, from=2-3, to=2-5]
    	\arrow["{(\overline{h'}_g f)^b}"{description}, from=2-3, to=3-3]
    	\arrow["{\overline{h'}_g f^*}", from=2-5, to=5-5]
    	\arrow["{(\alpha g^a f)^b}"{description}, from=3-1, to=4-3]
    	\arrow["{\tilde{h}_{g^a f}}"', from=3-1, to=5-1]
    	\arrow["{(\alpha\inv g^a f)^b}"{description}, from=3-3, to=3-1]
    	\arrow[Rightarrow, no head, from=3-3, to=4-3]
    	\arrow["{\overline{h'}_{g^a f}}"{description}, from=4-3, to=5-3]
    	\arrow["{h \hat{a}_{g, f}}"', from=5-1, to=6-3]
    	\arrow["{\alpha\inv (g^a f)^a}"{description}, from=5-3, to=5-1]
    	\arrow[""{name=1, anchor=center, inner sep=0}, "{h' \hat{a}_{g, f}}"{description}, from=5-3, to=5-5]
    	\arrow["{\alpha\inv g^a f^*}", from=5-5, to=6-3]
    	\arrow["{\morassoc{h'}}"{description}, draw=none, from=0, to=1]
    \end{tikzcd}\]
    Moreover, this definition of $\overline h$ makes $\alpha$ a transformation. Since $U_T$ is locally faithful, any 2-cell $(h, \overline h) \tto (h', \overline{h'})$ lying over $\alpha$ must be equal to $\alpha$.
    Thus $(U_T)_{A, B}$ is an amnestic isofibration. That this defines a strict pseudofunctor follows by inspecting the proof of \cref{AlgT}, where the identities, composites, and structural isomorphisms in $\AlgT$ are inherited from those in $\E$.
\end{proof}

\begin{definition}
    We also denote by $U_T \colon \AlgT \to \E$ the locally faithful, locally conservative, locally amnestic isofibrant, strict pseudofunctor $\AlgT \to \AlgTl \to \E$ given by composing the pseudofunctors defined in \cref{AlgT-variants,forgetful-pseudofunctor}, and similarly for the other variants.
\end{definition}

\subsection{Comparison with no-iteration pseudoalgebras}
\label{sec:comparison-with-no-iteration-pseudoalgebras}

To justify our definitions it is necessary to show that, when $J$ is the identity pseudofunctor, we recover the usual notions of pseudoalgebra and their morphisms and transformations. We therefore compare our definitions to the corresponding notions for no-iteration pseudomonads. No-iteration pseudomonads were introduced in \cite{marmolejo2013no} as an alternative presentation of pseudomonads in terms of an extension operator $\E[{-}_1, T{-}_2] \tto \E[T{-}_1, T{-}_2]$ rather than a multiplication operator $TT \tto T$. As shown in \cite[Proposition~3.3]{fiore2018relative}, these are precisely pseudomonads relative to the identity pseudofunctor. Here, we shall show that this relationship extends to the bicategories of pseudoalgebras.

We first establish a lemma that will be useful in later calculations. Conceptually, this corresponds to the additional axiom that would need to be imposed in the definition of a lax algebra for a relative pseudomonad (see \cref{lax-algebras}). However, for pseudoalgebras it is automatic.

\begin{lemma}
    \label{psalg-eta}
    Let $\algA$ be a pseudoalgebra for a $(\J)$-relative pseudomonad $T$. For each $f \colon JX \to TY$ and $g \colon JY \to A$ in $\E$, the following diagram commutes.
    \[\begin{tikzcd}[column sep=large]
        {g^af} & {(g^af)^ai_X} & {(g^af^*)i_X} \\
        && {g^a(f^*i_X)}
        \arrow["{\tilde a_{g^af}}", from=1-1, to=1-2]
        \arrow["{g^a\eta_f}"', curve={height=12pt}, from=1-1, to=2-3]
        \arrow["{\hat a_{g,f}i_X}", from=1-2, to=1-3]
        \arrow["\iso", from=1-3, to=2-3]
    \end{tikzcd}\]
\end{lemma}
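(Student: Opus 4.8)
The plan is to reduce the claimed equality — between the $2$-cell $g^a\eta_f$ and the composite $c \defeq \iso\circ(\hat a_{g,f}\, i_X)\circ\tilde a_{g^af}$ running along the other side of the triangle — to the equation $c^a = (g^a\eta_f)^a$ obtained by applying the pseudoalgebra's extension operator $\ph^a_X\colon\E[JX,A]\to\E[TX,A]$. This reduction is sound because $\ph^a_X$ is faithful: by naturality of $\tilde a$ one has $(\gamma^a i_X)\circ\tilde a_p = \tilde a_q\circ\gamma$ for every $2$-cell $\gamma\colon p\tto q$ in $\E[JX,A]$, and since every component of $\tilde a$ is invertible in a pseudoalgebra, $\gamma$ is recovered as $\tilde a_q\inv\circ(\gamma^a i_X)\circ\tilde a_p$ from $\gamma^a$. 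Moreover it is enough to verify $c^a = (g^a\eta_f)^a$ after postcomposing both sides with the invertible $2$-cell $g^a\mu_{f,i_X}\circ\hat a_{g,f^*i_X}\colon(g^a(f^*i_X))^a\tto g^a(f^*(i_X)^*)$.

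For $(g^a\eta_f)^a$: naturality of $\hat a$ in its second argument, applied to $\eta_f\colon f\tto f^*i_X$, yields $\hat a_{g,f^*i_X}\circ(g^a\eta_f)^a = (g^a(\eta_f)^*)\circ\hat a_{g,f}$; postcomposing with $g^a\mu_{f,i_X}$ and invoking the relative pseudomonad unit law \eqref{psmnd-theta} — which identifies $\mu_{f,i_X}\circ(\eta_f)^*$ with $(f^*\theta_X)\inv$ up to the unitor $f^*\tto f^*1_{TX}$ — expresses $g^a\mu_{f,i_X}\circ\hat a_{g,f^*i_X}\circ(g^a\eta_f)^a$ as a pasting of $\hat a_{g,f}$, a whiskered copy of $\theta_X\inv$, and structural isomorphisms.

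For $c^a$: functoriality of $\ph^a_X$ gives $c^a = (\iso)^a\circ(\hat a_{g,f}\, i_X)^a\circ(\tilde a_{g^af})^a$. Now apply the pseudoalgebra associativity law \eqref{psalg-mu}, taking the $1$-cells there called $h$, $g$, $f$ to be $g$, $f$, $i_X$ respectively; the resulting commuting hexagon equates $g^a\mu_{f,i_X}\circ\hat a_{g,f^*i_X}\circ(\iso)^a\circ(\hat a_{g,f}\, i_X)^a$ with $\iso\circ(\hat a_{g,f}\,(i_X)^*)\circ\hat a_{g^af,\,i_X}$. Precompose both with $(\tilde a_{g^af})^a$; the pseudoalgebra unit law \eqref{psalg-theta} applied to the $1$-cell $g^af$ identifies $\hat a_{g^af,\,i_X}\circ(\tilde a_{g^af})^a$ with $(g^af)^a\theta_X\inv$ up to the unitor $(g^af)^a\tto(g^af)^a1_{TX}$, so $g^a\mu_{f,i_X}\circ\hat a_{g,f^*i_X}\circ c^a$ is rewritten as a pasting of the very same ingredients.

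It then remains to compare the two pastings. After cancelling common invertible factors this is an identity between composites built from $\hat a_{g,f}$, a whiskered $\theta_X\inv$, and associators and unitors, which holds by the interchange law (to commute $\hat a_{g,f}$ past the whiskered $\theta_X\inv$) together with coherence for bicategories. I expect this bookkeeping — pinning down the precise structural $2$-cells so that the two sides are manifestly equal — to be the only genuine obstacle; the substantive content lies entirely in the choice of instantiation of \eqref{psalg-mu} and in the two unit laws \eqref{psalg-theta} and \eqref{psmnd-theta}.
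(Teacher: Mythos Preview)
Your argument is correct and uses the same three axioms as the paper --- the pseudoalgebra associativity law \eqref{psalg-mu} instantiated at $(g,f,i_X)$, the pseudoalgebra unit law \eqref{psalg-theta} at $g^af$, and the pseudomonad unit law \eqref{psmnd-theta} at $f$ --- together with naturality of $\hat a$ and coherence. The organisation, however, differs from the paper's. You reduce to an equality in $\E[TX,A]$ by exploiting faithfulness of $\ph^a_X$ (which you correctly derive from invertibility of $\tilde a$), and then postcompose with the invertible $g^a\mu_{f,i_X}\circ\hat a_{g,f^*i_X}$ to land in a position where the three axioms apply directly. The paper instead stays in $\E[JX,A]$: it postcomposes both sides with $\tilde a_{g^af^*i_X}$ and assembles a single large pasting diagram in which the three axioms appear as cells, then uses a second diagram with naturality of $\tilde a$ and $\hat a$ to identify the anticlockwise boundary with the desired composite. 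Your route is arguably cleaner conceptually --- the faithfulness-of-$\ph^a$ step is a reusable observation --- while the paper's diagrammatic approach makes the role of each axiom visually explicit in one picture. The final coherence bookkeeping you flag (commuting $\hat a_{g,f}$ past whiskered $\theta_X^{-1}$ via interchange, then matching unitors across an associator) is genuine but routine, exactly as you anticipate.
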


\begin{proof}
    Since $\tilde a$ is invertible, the claim holds if and only if the diagram below commutes (where, as anticipated in \cref{notation}, we drop the associators for readability).
    \[\begin{tikzcd}[column sep=large]
        {g^af} & {(g^af)^ai_X} & {g^a f^* i_X} \\
        {g^a f^* i_X} && {(g^a f^* i_X)^a i_X}
        \arrow["{\tilde a_{g^af}}", from=1-1, to=1-2]
        \arrow["{g^a\eta_f}"', from=1-1, to=2-1]
        \arrow["{\hat a_{g,f}i_X}", from=1-2, to=1-3]
        \arrow["{\tilde{a}_{g^a f^* i_X}}", from=1-3, to=2-3]
        \arrow["{\tilde{a}_{g^a f^* i_X}}"', from=2-1, to=2-3]
    \end{tikzcd}\]
    First, observe that the following diagram commutes.
    \[\begin{tikzcd}[column sep=6em]
        {g^af} && {(g^af)^ai_X} & {g^a f^* i_X} \\
        {(g^a f)^a i_X} && {((g^af)^ai_X)^a i_X} & {(g^a f^* i_X)^a i_X} \\
        {(g^a f)^a 1_{TX} i_X} && {(g^af)^a {i_X}^* i_X} \\
        {g^a f^* 1_{TX} i_X} && {g^a f^* i_X^* i_X} \\
        {g^a f^* i_X} &&& {g^a (f^* i_X)^* i_X}
        \arrow[""{name=0, anchor=center, inner sep=0}, "{\tilde a_{g^af}}", from=1-1, to=1-3]
        \arrow["{\tilde{a}_{g^a f}}"', from=1-1, to=2-1]
        \arrow[""{name=1, anchor=center, inner sep=0}, "{\hat a_{g,f}i_X}", from=1-3, to=1-4]
        \arrow["{\tilde{a}_{(g^a f)^a i_X}}"{description}, from=1-3, to=2-3]
        \arrow["{\tilde{a}_{g^a f^* i_X}}", from=1-4, to=2-4]
        \arrow[""{name=2, anchor=center, inner sep=0}, "{(\tilde{a}_{g^a f})^a i_X}"{description}, from=2-1, to=2-3]
        \arrow["\iso"', from=2-1, to=3-1]
        \arrow[""{name=3, anchor=center, inner sep=0}, "{(\hat{a}_{g,f} i_X)^a i_X}"{description}, from=2-3, to=2-4]
        \arrow["{\hat{a}_{g^a f, i_X} i_X}"{description}, from=2-3, to=3-3]
        \arrow[""{name=4, anchor=center, inner sep=0}, "{(g^a f)^a \theta^{-1}_X i_X}"{description}, from=3-1, to=3-3]
        \arrow["{\hat{a}_{g, f} 1_{TX} i_X}"', from=3-1, to=4-1]
        \arrow["{\hat{a}_{g, f} {i_X}^* i_X}"{description}, from=3-3, to=4-3]
        \arrow[""{name=5, anchor=center, inner sep=0}, "{g^a f^* \theta_X\inv i_X}"{description}, from=4-1, to=4-3]
        \arrow["\iso"', from=4-1, to=5-1]
        \arrow[""{name=6, anchor=center, inner sep=0}, "{g^a \mu_{f, i_X}^{-1} i_X}"{description}, from=4-3, to=5-4]
        \arrow[""{name=7, anchor=center, inner sep=0}, "{g^a(\eta_f)^* i_X}"', from=5-1, to=5-4]
        \arrow["{\hat{a}_{g, f^* i_X}\inv i_X}"', from=5-4, to=2-4]
        \arrow["{\natof\tilde{a}}"{description}, draw=none, from=0, to=2]
        \arrow["{\natof\tilde{a}}"{description}, draw=none, from=1, to=3]
        \arrow["{\algtheta A}"{description}, draw=none, from=2, to=4]
        \arrow["{\algmu{A}}"{description}, shift left=3, draw=none, from=3, to=6]
        \arrow["{\mndtheta T}"{description}, draw=none, from=5, to=7]
    \end{tikzcd}\]
    But, by commutativity of the following diagram, the anticlockwise path above is exactly the required composite.
    \[\begin{tikzcd}[column sep=huge]
        {g^af} && {g^a f^* i_X} \\
        {(g^a f)^a i_X} && {(g^a f^* i_X)^a i_X} \\
        {(g^a f)^a 1_{TX} i_X} \\
        {g^a f^* 1_{TX} i_X} & {g^a f^* i_X} & {g^a (f^* i_X)^* i_X}
        \arrow[""{name=0, anchor=center, inner sep=0}, "{g^a \eta_f}", from=1-1, to=1-3]
        \arrow["{\tilde{a}_{g^a f}}"', from=1-1, to=2-1]
        \arrow["{\tilde{a}_{g^a f^* i_X}}", from=1-3, to=2-3]
        \arrow[""{name=1, anchor=center, inner sep=0}, "{g^a (\eta_f)^a i_X}"{description}, from=2-1, to=2-3]
        \arrow["\iso"', from=2-1, to=3-1]
        \arrow[""{name=2, anchor=center, inner sep=0}, "{\hat{a}_{g, f} i_X}"{description}, curve={height=-12pt}, from=2-1, to=4-2]
        \arrow["{\hat{a}_{g, f} 1_{TX} i_X}"', from=3-1, to=4-1]
        \arrow["\iso"', from=4-1, to=4-2]
        \arrow["{g^a(\eta_f)^* i_X}"', from=4-2, to=4-3]
        \arrow[""{name=3, anchor=center, inner sep=0}, "{\hat{a}_{g, f^* i_X}\inv i_X}"', from=4-3, to=2-3]
        \arrow["{\natof\tilde a}"{description}, draw=none, from=1, to=0]
        \arrow["{\natof\hat a}"{description}, draw=none, from=2, to=3]
    \end{tikzcd}\qedshift\]
\end{proof}

\begin{proposition}
    \label{no-iteration-algebras}
    Under the identification of pseudomonads relative to the identity and no-iteration pseudomonads established in \cite[Proposition~3.3]{fiore2018relative}:
    \begin{enumerate}
        \item a pseudoalgebra for an identity-relative pseudomonad in the sense of \cref{pseudoalgebra} is the same as a no-iteration pseudoalgebra;
        \item a pseudomorphism between pseudoalgebras in the sense of \cref{pseudomorphism} is the same as a pseudomorphism between no-iteration pseudoalgebra;
        \item a transformation in the sense of \cref{transformation} is the same as a transformation between pseudomorphisms of no-iteration pseudoalgebras.
    \end{enumerate}
\end{proposition}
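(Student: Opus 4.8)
The plan is to prove this by directly comparing definitions, since both \cref{pseudoalgebra,pseudomorphism,transformation} and the corresponding notions of \cite{marmolejo2013no} are phrased in the same ``extension-operator'' style. The key input is \cite[Proposition~3.3]{fiore2018relative}, which identifies a pseudomonad relative to the identity pseudofunctor $1_\E \colon \E \to \E$ with a no-iteration pseudomonad on $\E$: under this identification the objects $TX$, the extension operator $\ph^*$, the unit $i_X$, and the structural 2-cells $\mu$, $\eta$, $\theta$ of \cref{relative-pseudomonad} are precisely those of the no-iteration pseudomonad. I would then unwind \cref{pseudoalgebra} with $J = 1_\E$ (reading each $JX$ as $X$) and observe that its data --- an object $A$, an extension operator $\ph^a_X \colon \E[X, A] \to \E[TX, A]$, and invertible 2-cells $\hat a_{g,f} \colon (g^af)^a \tto g^af^*$ and $\tilde a_f \colon f \tto f^ai_X$ --- is exactly the data of a no-iteration pseudoalgebra.

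For the axioms, I expect the associativity law \cref{psalg-mu} to be, term for term, the associativity coherence of \cite{marmolejo2013no}, and the unit law \cref{psalg-theta} to be one of their unit coherences. The one point needing care is that \cite{marmolejo2013no} may impose a further unit coherence --- namely commutativity of the diagram appearing in \cref{psalg-eta} --- which our definition omits; in that case \cref{psalg-eta} supplies precisely this condition, so that our notion implies theirs, while forgetting it recovers ours from theirs. Either way the two notions of pseudoalgebra agree, giving~(1). Parts~(2) and~(3) then follow in the same manner: a pseudomorphism of no-iteration pseudoalgebras is a 1-cell $h$ together with invertible 2-cells $\overline h_f \colon (hf)^b \tto hf^a$ subject to an associativity and a unit coherence, which match \cref{lax-morph-hats,lax-morph-tildes} of \cref{pseudomorphism} restricted to the invertible case; and a transformation of such pseudomorphisms is a 2-cell $\alpha \colon h \tto h'$ satisfying the single square that is verbatim \cref{transformation}. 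Again one checks that any extra derivable coherence, if present in \cite{marmolejo2013no}, follows automatically.

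The only real obstacle here is bookkeeping rather than mathematics. After unwinding \cite[Proposition~3.3]{fiore2018relative}, one must verify that the structural 2-cells and --- in particular --- the placement of the (silently suppressed) associator isomorphisms in our coherence diagrams coincide on the nose with those of \cite{marmolejo2013no}, which likewise suppress associators by coherence for bicategories; and one must pin down precisely which of their axioms, if any, is the one made automatic by \cref{psalg-eta}. No ingredient beyond \cref{psalg-eta} and the definitions involved should be required.
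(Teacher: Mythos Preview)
Your proposal is correct and takes essentially the same approach as the paper: a direct axiom-by-axiom comparison, with \cref{psalg-eta} supplying the one extra coherence (Axiom~3 of \cite{marmolejo2013no}) that their definition imposes but ours omits. The paper adds two small bookkeeping points you gloss over: the invertible 2-cells in \cite{marmolejo2013no} point in the opposite direction, and the naturality of $\hat a$ and $\tilde a$ (which for us is packaged into the requirement that they be natural families) corresponds to separate numbered axioms there.
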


\begin{proof}
    In each instance, the two notions involve exactly the same data, except for the direction of the invertible 2-cells.
    \begin{enumerate}
        \item Using the numbering of axioms for a no-iteration pseudoalgebra in \cite[\S4]{marmolejo2013no}, the equivalences between the axioms for a relative pseudomonad pseudoalgebra and those for a no-iteration pseudoalgebra are given as follows.
        \begin{center}
        \begin{tblr}{ccc}
            \emph{Pseudoalgebras for relative pseudomonads} & & \emph{No-iteration pseudoalgebras} \\
            Naturality of $\hat a$ & $\iff$ & Axioms 4 and 5 \\
            Naturality of $\tilde a$ & $\iff$ & Axiom 1 \\
            Extension axiom & $\iff$ & Axiom 6 \\
            Unit axiom & $\iff$ & Axiom 2 \\
            \Cref{psalg-eta} & $\iff$ & Axiom 3
        \end{tblr}
        \end{center}
        Note that it follows that Axiom 3 for a no-iteration pseudoalgebras in \cite[\S4]{marmolejo2013no} is redundant, in that it can be derived from the others. This was also observed in \cite[Corollary~6.4]{lack2014monads}.
        \item Using the equation numbering for a 1-cell between no-iteration pseudoalgebras in \cite[\S4]{marmolejo2013no}, the equivalence between the axioms are given as follows.
        \begin{center}
        \begin{tblr}{ccc}
            \emph{Pseudoalgebra pseudomorphisms} & & \emph{No-iteration pseudoalgebra 1-cells} \\
            Naturality of $\overline h$ & $\iff$ & Equation (17) \\
            Compatibility with $\hat a$ & $\iff$ & Equation (16) \\
            Compatibility with $\tilde a$ & $\iff$ & Equation (15)
        \end{tblr}
        \end{center}
        \item Using the equation numbering for a 2-cell in \cite[\S4]{marmolejo2013no}, the equivalence between the axioms are given as follows.
        \begin{center}
        \begin{tblr}{ccc}
            \emph{Pseudoalgebra transformations} & & \emph{No-iteration pseudoalgebra 2-cells} \\
            Compatibility condition & $\iff$ & Equation (18)
        \end{tblr}
        \end{center}
    \end{enumerate}
\end{proof}

\begin{corollary}
    Under the identification of pseudomonads relative to the identity and no-iteration pseudomonads established in \cite[Proposition~3.3]{fiore2018relative}, the 2-category of pseudoalgebras and pseudomorphisms for an identity-relative pseudomonad on a 2-category (\cref{AlgT}) is isomorphic to the 2-category of no-iteration pseudoalgebras in \cite[\S4]{marmolejo2013no}.
\end{corollary}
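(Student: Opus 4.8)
The plan is to promote the cell-by-cell dictionary of \cref{no-iteration-algebras} to a strict isomorphism of 2-categories. Write $\cl N$ for the 2-category of no-iteration pseudoalgebras, pseudomorphisms, and transformations of \cite[\S4]{marmolejo2013no}, formed for the no-iteration pseudomonad on $\E$ that corresponds to $T$ under \cite[Proposition~3.3]{fiore2018relative}. By \cref{no-iteration-algebras}, at each of the three levels --- pseudoalgebras, pseudomorphisms, transformations --- an object or cell of $\AlgT$ and an object or cell of $\cl N$ consist of exactly the same underlying data subject to equivalent axioms, the passage between the two being: keep every underlying 1-cell and replace each structural invertible 2-cell by its inverse (including the structure cells $\mu,\eta,\theta$ of $T$ itself, via \cite[Proposition~3.3]{fiore2018relative}). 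I would define $F\colon\AlgT\to\cl N$ by this recipe on objects, 1-cells, and 2-cells. Since inversion of invertible 2-cells is a bijection, the transformation axiom of \cref{transformation} is carried to that of \cite{marmolejo2013no} under it, and identity 2-cells and their vertical composites are computed in $\E$ on both sides, it follows that $F$ is a bijection on objects and an isomorphism of categories on each hom-category.

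It then remains to check that $F$ is a 2-functor, i.e.\ that it strictly preserves identity 1-cells, horizontal composition of 1-cells, and the associator and unitor 2-cells; once this is done, $F$ is bijective on objects and locally an isomorphism, hence an isomorphism of 2-categories, with inverse given by the same recipe run in reverse. The underlying 1-cell of an identity pseudomorphism is $1_A$ and that of a composite pseudomorphism is $h'h$ in both $\AlgT$ and $\cl N$, so only the coherence-2-cell data must be compared. For the identity this amounts to comparing \eqref{identity-lax-morphism} with Marmolejo's formula, and for the composite to comparing \eqref{composite-lax-morphism} with his: in each case both expressions are pasted from the same associativity and unitality isomorphisms of $\E$ together with the same components of $\overline h$ and $\overline{h'}$, and --- since inversion turns a composite of 2-cells into the reverse composite of the inverses --- Marmolejo's expression is precisely the inverse of ours, hence corresponds to ours under $F$. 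Finally, the associator and unitor 2-cells of $\AlgT$ are inherited from $\E$ by the construction in the proof of \cref{AlgT}, as are those of $\cl N$, so $F$ preserves them on the nose.

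The only genuine content is the comparison of the identity- and composite-pseudomorphism formulas \eqref{identity-lax-morphism} and \eqref{composite-lax-morphism} with their no-iteration counterparts; this is a routine diagram check, rendered essentially mechanical by the correspondence already set up in the proof of \cref{no-iteration-algebras}, and I do not expect any real obstacle.
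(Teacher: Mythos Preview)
Your proposal is correct and follows the same approach as the paper, just in considerably more detail: the paper's proof is a single sentence observing that the result follows directly from \cref{no-iteration-algebras} together with the fact that identities and composition in \cite[\S4]{marmolejo2013no} are defined in the same way as in \cref{AlgT}. Your explicit unpacking of what ``defined in the same way'' means (strict preservation of identity and composite 1-cells, matching of the structural 2-cells under the inverse-taking dictionary, and inheritance of associators and unitors from $\E$) is exactly what the paper leaves implicit.
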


\begin{proof}
    Follows directly from \cref{no-iteration-algebras}, observing that identities and composition is defined in \cite[\S4]{marmolejo2013no} in the same way as in \cref{AlgT}.
\end{proof}

Consequently, when $J$ is the identity, it follows from \cite[Theorem~5.1]{marmolejo2013no} that the bicategory of pseudoalgebras is biequivalent to the usual bicategory of pseudoalgebras for a pseudomonad in the classical sense~\cite[Definition~2.4]{bunge1974coherent}.\footnote{\cite{marmolejo2013no} consider no-iteration pseudomonads only on 2-categories, but the proof extends without fuss to bicategories.}

\begin{remark}
    \label{correspondence-between-algebra-presentations}
    Let us sketch the correspondence between pseudoalgebras for identity-relative pseudomonads, and pseudoalgebras for pseudomonads; the idea is the same as in the relationship between identity-relative pseudomonads and pseudomonads~\cite[Remarks~3.4 \& 3.5]{fiore2018relative}. Given an identity-relative pseudomonad, each pseudoalgebra $A$ is equipped with a 1-cell $(1_{A})^a \colon TA \to A$. Conversely, given a pseudomonad, each pseudoalgebra $A$ is equipped with an extension operator taking $f \colon X \to A$ to $(a \c Tf) \colon TX \to TA \to A$. The two families of invertible 2-cells are obtained in the evident way.

    Given a lax morphism of pseudoalgebras for an identity-relative pseudomonad, $\overline h_{1_A} \colon h^b \tto h (1_{A})^a$ exhibits a 2-cell as follows.
    \[\begin{tikzcd}
        TA && TB \\
        \\
        A && B
        \arrow["{(h i_B)^*}", from=1-1, to=1-3]
        \arrow["{(1_A)^a}"', from=1-1, to=3-1]
        \arrow[""{name=0, anchor=center, inner sep=0}, "{h^b}"{description}, from=1-1, to=3-3]
        \arrow["{(1_A)^b}", from=1-3, to=3-3]
        \arrow["h"', from=3-1, to=3-3]
        \arrow["{\overline h_{1_A}}"', shorten <=8pt, shorten >=8pt, Rightarrow, from=0, to=3-1]
        \arrow["\iso"{description}, draw=none, from=1-3, to=0]
    \end{tikzcd}\]
    This defines a lax morphism of pseudoalgebras for a pseudomonad in the usual sense.

    Conversely, given a lax morphism $(h, \upsilon)$ in the usual sense, for each 1-cell $f \colon X \to A$ the following 2-cell exhibits a natural family of 2-cells defining a lax morphism in the sense of \cref{pseudomorphism}.
    \[\begin{tikzcd}
        TX \\
        TA & TB \\
        A & B
        \arrow["Tf"', from=1-1, to=2-1]
        \arrow[""{name=0, anchor=center, inner sep=0}, "{T(hf)}", curve={height=-12pt}, from=1-1, to=2-2]
        \arrow["Th"{description}, from=2-1, to=2-2]
        \arrow["a"', from=2-1, to=3-1]
        \arrow["\upsilon"', shorten <=6pt, shorten >=6pt, Rightarrow, from=2-2, to=3-1]
        \arrow["b", from=2-2, to=3-2]
        \arrow["h"', from=3-1, to=3-2]
        \arrow["\iso"{description}, draw=none, from=0, to=2-1]
    \end{tikzcd}\]
    We note that, while \cite{marmolejo2013no} only consider \emph{pseudo} morphisms, the above assignments extend to a correspondence between lax morphisms for no-iteration pseudoalgebras and lax morphisms for pseudoalgebras. The correspondence for pseudo, strict, and colax morphisms is obtained analogously.
\end{remark}

\subsection{The free--forgetful relative pseudoadjunction}
\label{sec:free-forgetful-pseudoadjunction}

We now study the construction of free pseudoalgebras, and show that this construction exhibits a left relative pseudoadjoint to the forgetful functor $U_T \colon \AlgT \to \E$ of \cref{forgetful-pseudofunctor}. In \cref{resolutions-and-coherence}, we will show that this relative pseudoadjunction is universal in an appropriate sense.

\begin{proposition}
    \label{free-pseudoalgebra}
    Let $\J$ be a pseudofunctor, let $T$ be a $J$-relative pseudomonad, and let $X \in \A$. The object $TX$ admits the structure of a $T$-pseudoalgebra, the \emph{free $T$-pseudoalgebra on $X$}.
\end{proposition}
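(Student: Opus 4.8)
The plan is to transport the relevant pieces of the relative pseudomonad structure of $T$ directly onto $TX$. Concretely, fix $X \in \A$ and define a $T$-pseudoalgebra structure on $A \defeq TX$ as follows: for each $Y \in \A$, take the extension operator $\ph^a_Y \colon \E[JY, TX] \to \E[TY, TX]$ to be the extension operator $\ph^*_{Y, X}$ of $T$; for each $f \colon JY \to TZ$ and $g \colon JZ \to TX$, take $\hat a_{g,f} \defeq \mu_{g,f} \colon (g^* \c f)^* \tto g^* \c f^*$; and for each $f \colon JY \to TX$, take $\tilde a_f \defeq \eta_f \colon f \tto f^* \c i_Y$. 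These are functors, respectively natural families of invertible $2$-cells, by \cref{relative-pseudomonad}, and they have the correct sources and targets once one observes that, with this choice, $g^a = g^*$, hence $(g^a f)^a = (g^* f)^*$, $g^a f^* = g^* f^*$, and $f^a i_Y = f^* i_Y$.

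It then remains to verify the two pseudoalgebra axioms. Unwinding the definitions, the associativity axiom \eqref{psalg-mu} for the candidate pseudoalgebra $TX$ — instantiated at $f \colon JY \to TZ$, $g \colon JZ \to TW$, $h \colon JW \to TX$ — becomes, corner for corner and arrow for arrow, the associativity axiom \eqref{psmnd-mu} of $T$ (in particular the unlabelled isomorphisms $\iso$ on both sides are the same reassociation $2$-cells $(h^*g^*)f^* \xto\iso h^*(g^*f^*)$ in $\E$, and $h^a\mu_{g,f} = h^*\mu_{g,f}$); so it holds. Likewise, the unit axiom \eqref{psalg-theta} for $TX$ instantiated at $f \colon JY \to TX$ is literally the unit axiom \eqref{psmnd-theta} of $T$, since $(\tilde a_f)^a = (\eta_f)^*$, $\hat a_{f, i_Y} = \mu_{f, i_Y}$, and $f^a\theta_Y = f^*\theta_Y$. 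Naturality of $\hat a$ and $\tilde a$ is just the asserted naturality of $\mu$ and $\eta$.

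There is no real obstacle here: the content of the proposition is the purely formal observation that a $J$-relative pseudomonad $T$ \emph{is} exactly a compatible choice of $T$-pseudoalgebra structure on each $TX$, so the proof consists entirely in matching indices and checking that the structural isomorphisms appearing in \cref{pseudoalgebra} coincide with those in \cref{relative-pseudomonad}. (This mirrors the one-dimensional fact that, for a relative monad, $TX$ carries the free algebra structure~\cite{altenkirch2015monads}; the justification of the name ``free'' is deferred to the free--forgetful relative pseudoadjunction constructed later in this subsection.)
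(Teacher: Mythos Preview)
Your proof is correct and takes essentially the same approach as the paper: define the extension operator as $\ph^*_{-,X}$, set $\hat a \defeq \mu$ and $\tilde a \defeq \eta$, and observe that the pseudoalgebra axioms are then precisely the relative pseudomonad axioms. The paper states this in two lines; you have simply unpacked the index-matching in more detail.
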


\begin{proof}
    The extension operator is given by the family $\ph^*_{{-}, X} \colon \E[J{-}, TX] \to \E[T{-}, TX]$; the natural families are given by $\mu$ and $\eta$ respectively. The pseudoalgebra axioms are then exactly the axioms of a relative pseudomonad.
\end{proof}

It is a recurring pattern in the theory of relative pseudomonads that properties of a relative pseudomonad will follow from corresponding properties of their free pseudoalgebras, since the data of a relative pseudomonad is essentially an axiomatisation of the notion of free pseudoalgebra. Moreover, the proofs establishing that properties hold for arbitrary pseudoalgebras often have the same structure as proofs of properties for relative pseudomonads qua free pseudoalgebras. In these cases, we will generally defer proofs to \cite{fiore2018relative}, as it is unenlightening to reproduce the details here.

Given a 1-cell $f \colon JX \to A$ into a $T$-pseudoalgebra $\algA$, the extension $f^a \colon TX \to A$ is a 1-cell between $T$-pseudoalgebras. As expected, it is automatically structure-preserving.

\begin{corollary}
    \label{f-a has ps-morphism structure}
    Let $T$ be a $J$-relative pseudomonad and let $\algA$ be a $T$-pseudoalgebra. For each object $X \in \A$ and 1-cell $h \colon JX \to A$, the 1-cell $h^a \colon TX \to A$ admits the structure of a pseudomorphism from the free $T$-pseudoalgebra on $X$. Furthermore, given any 2-cell $\alpha \colon h \tto h' \colon JX \to A$, the 2-cell $\alpha^a \colon h^a \tto (h')^a$ is a transformation.
\end{corollary}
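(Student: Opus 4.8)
The plan is to read off the pseudomorphism structure on $h^a$ directly from the pseudoalgebra data of $\algA$, and then recognise each coherence condition as an axiom that is already to hand. Write $FX$ for the free $T$-pseudoalgebra on $X$ (\cref{free-pseudoalgebra}), whose extension operator is $\ph^*_{{-}, X}$ and whose structural families are $\mu$ and $\eta$. I would equip $h^a \colon TX \to A$ with the family of 2-cells
\[
    \overline{h^a}_f \defeq \hat a_{h, f} \colon (h^a f)^a \tto h^a f^*
    \qquad (f \colon JY \to TX).
\]
Each $\hat a_{g, f}$ is invertible by definition of a pseudoalgebra, and naturality of $\overline{h^a}$ in $f$ is exactly the stipulated naturality of $\hat a$ in its second argument; so this has precisely the shape required of a pseudomorphism $FX \to \algA$ in the sense of \cref{pseudomorphism}.

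It then remains to check the two coherence conditions, and for neither is there anything to compute: each coincides verbatim with an axiom already established. For the associativity law \eqref{lax-morph-hats}, substituting the data of $FX$ (so that the ``source'' extension operator becomes $\ph^*$ and the ``source'' associator becomes $\mu$) together with $\overline{h^a} = \hat a_{h, {-}}$ turns the diagram \eqref{lax-morph-hats} to be verified into precisely the associativity axiom \eqref{psalg-mu} of the pseudoalgebra $\algA$, at the triple consisting of the two variable 1-cells $f, g$ of \eqref{lax-morph-hats} together with the given 1-cell $h$. Likewise, for the unit law \eqref{lax-morph-tildes}, substituting the data of $FX$ (so that the ``source'' unitor becomes $\eta$) reduces the condition to exactly the statement of \cref{psalg-eta} with $g \defeq h$. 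Hence $(h^a, \hat a_{h, {-}})$ is a pseudomorphism from $FX$ to $\algA$.

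For the second assertion, given $\alpha \colon h \tto h' \colon JX \to A$, the 2-cell $\alpha^a \colon h^a \tto (h')^a$ is a transformation $(h^a, \hat a_{h, {-}}) \tto ((h')^a, \hat a_{h', {-}})$ in the sense of \cref{transformation} exactly when, for each $f \colon JY \to TX$, the square
\[\begin{tikzcd}[column sep=4em]
    {(h^a f)^a} & {((h')^a f)^a} \\
    {h^a f^*} & {(h')^a f^*}
    \arrow["{(\alpha^a f)^a}", from=1-1, to=1-2]
    \arrow["{\hat a_{h', f}}", from=1-2, to=2-2]
    \arrow["{\hat a_{h, f}}"', from=1-1, to=2-1]
    \arrow["{\alpha^a f^*}"', from=2-1, to=2-2]
\end{tikzcd}\]
commutes; but this is the naturality square of $\hat a_{{-}, f}$ --- \ie{} naturality of $\hat a$ in its \emph{first} argument --- evaluated at $\alpha$, which holds by definition of a pseudoalgebra. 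I do not anticipate a genuine obstacle: the entire content is careful bookkeeping of which axiom of $\algA$ each condition specialises to, the only mild subtlety being to keep track of which of the two arguments of $\hat a$ is being varied (the second argument governs the two pseudomorphism axioms, the first governs the transformation axiom).
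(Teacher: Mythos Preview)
Your proof is correct and follows exactly the same approach as the paper: the pseudomorphism structure is $\overline{h^a}_f \defeq \hat a_{h,f}$, the two coherence conditions are identified with \eqref{psalg-mu} and \cref{psalg-eta} respectively, and the transformation axiom for $\alpha^a$ is naturality of $\hat a$ in its first argument. The paper states this more tersely, but your more explicit bookkeeping of which argument of $\hat a$ is being varied is a helpful clarification.
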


\begin{proof}
    The pseudomorphism structure on $h^a$ is defined by
    \[\overline{h^a}_g \defeq \hat a_{h,g} \colon (h^ag)^a \to h^ag^*\]
    and the two coherence conditions are precisely \eqref{psalg-mu} and \cref{psalg-eta}. The algebra transformation axiom for $\alpha^a$ follows immediately from the naturality of $\hat{b}$.
\end{proof}

We are now in a position to justify our claim that $TX$ is the free $T$-pseudoalgebra on $X$.

\begin{theorem}
    \label{Alg-resolution}
    Let $T$ be a $J$-relative pseudomonad. The forgetful pseudofunctor ${U_T \colon \AlgT \to \E}$ admits a left $J$-relative pseudoadjoint $F_T \colon \A \to \AlgT$. Furthermore, this relative pseudoadjunction induces $T$.
    \[\begin{tikzcd}
        & \AlgT \\
        \A && \E
        \arrow[""{name=0, anchor=center, inner sep=0}, "{U_T}", from=1-2, to=2-3]
        \arrow[""{name=1, anchor=center, inner sep=0}, "{F_T}", from=2-1, to=1-2]
        \arrow["J"', from=2-1, to=2-3]
        \arrow["\dashv"{anchor=center}, shift right=2, draw=none, from=1, to=0]
    \end{tikzcd}\]
\end{theorem}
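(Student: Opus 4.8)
The plan is to instantiate \cref{relative-pseudoadjunction} directly. We take the object assignment of $F_T$ to be $X \mapsto TX$ equipped with the free $T$-pseudoalgebra structure of \cref{free-pseudoalgebra}; the pseudofunctor $R$ to be the forgetful pseudofunctor $U_T \colon \AlgT \to \E$ of \cref{forgetful-pseudofunctor}; and the 1-cell $i_X \colon JX \to U_T F_T X = TX$ to be the unit of $T$. It then remains to produce, for each $X \in \A$ and each $T$-pseudoalgebra $\algA$, an adjoint equivalence
\[
    \E[JX, A] \simeq \AlgT[F_T X, \algA]
\]
whose right-to-left leg is $U_T\ph \c i_X$, sending a pseudomorphism $(h, \overline h)$ to $h \c i_X$ and a transformation $\beta$ to $\beta \c i_X$.

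For the left-to-right leg I would take the functor $\sharp_{X, \algA}$ sending $f \colon JX \to A$ to the pseudomorphism $(f^a, \{ \hat a_{f, g} \}_g) \colon F_T X \to \algA$ supplied by \cref{f-a has ps-morphism structure}, and sending a 2-cell $\alpha \colon f \tto f'$ to the transformation $\alpha^a$; this is a functor by functoriality of $\ph^a$. The unit of the adjunction is the family $\tilde a_f \colon f \tto f^a i_X$, which is precisely the right-to-left leg applied to $\sharp_{X, \algA}(f)$, and which is invertible and natural by the axioms in \cref{pseudoalgebra}. For the counit I would take, for each pseudomorphism $(h, \overline h) \colon F_T X \to \algA$, the 2-cell $\varepsilon_{(h, \overline h)} \colon (h i_X)^a \tto h$ obtained by pasting $\overline h_{i_X} \colon (h i_X)^a \tto h (i_X)^*$ with $h \theta_X \colon h (i_X)^* \tto h 1_{TX}$ and a unitor; this is invertible since $\overline h$ and $\theta_X$ are. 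One must then check that $\varepsilon_{(h, \overline h)}$ is a transformation of pseudomorphisms, from $\sharp_{X, \algA}(h \c i_X)$ to $(h, \overline h)$ --- that is, that it is compatible with the respective pseudomorphism structures $\hat a_{h i_X, {-}}$ and $\overline h$ --- that it is natural in $(h, \overline h)$, and that the two triangle identities hold; the latter follow from the pseudoalgebra unit axiom \cref{psalg-theta} and the lax-morphism unit axiom for $h$ (together with the relative pseudomonad unit law).

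Finally, that the resulting $J$-relative pseudoadjunction induces $T$ follows by unwinding \cite[Theorem~3.8]{fiore2018relative}: its underlying pseudofunctor $U_T F_T$ acts on objects as $X \mapsto TX$; its extension operator sends $f \colon JX \to TY$ to $U_T(\sharp_{X, F_T Y}(f)) = f^*$, since the codomain pseudoalgebra is now the free one $F_T Y$, whose extension operator is $\ph^*$; its unit is $i$; and, since the pseudomorphism structures built into $\sharp$ specialise on free pseudoalgebras to the structural 2-cells $\mu$, $\eta$, and $\theta$ of $T$ (\cref{free-pseudoalgebra}), the induced structural invertible 2-cells coincide with those of $T$. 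I expect the main obstacle to be the diagram chase establishing that $\varepsilon$ is compatible with the pseudomorphism structures: this should follow from the lax-morphism associativity and unit axioms for $h$ together with the relative pseudomonad axioms, in the style of the computations in \cite{fiore2018relative} and in the proof of \cref{AlgT}.
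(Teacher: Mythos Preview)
Your proposal is correct and follows essentially the same approach as the paper: the same choice of $F_T$, $\sharp$, unit $\tilde a$, and counit $\varepsilon_{(h,\overline h)} = (h\theta_X)\circ\overline h_{i_X}$ (up to a unitor), with the same verifications. The only place where the paper is more careful than your sketch is in showing that the induced structural 2-cells agree with those of $T$: while $\eta' = \eta$ is immediate, the equalities $\theta' = \theta$ and $\mu' = \mu$ require short diagram chases (unfolding the definition of $\varepsilon$ on identities and composites and simplifying via the pseudomonad axioms), rather than following purely formally from \cref{free-pseudoalgebra}.
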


\begin{proof}
    For each object $X \in \A$, we set $F_T X$ to be the free $T$-pseudoalgebra defined in \cref{free-pseudoalgebra}. Note that, for each $X \in \A$, we have $U_T F_T X = TX$, and so the unit of the relative pseudomonad defines a family of 1-cells $i_X \colon JX \to U_T F_T X$.

    For each $X \in \A$ and $\algB \in \AlgT$, \cref{f-a has ps-morphism structure} defines a functor $\sharp_{X, \algB} \colon \E[JX, B] \to \AlgT[F_T X, \algB]$, which we must exhibit as being a left adjoint equivalence to $U_T\ph \c i_X$.
    The action of the two composites is given on objects as follows, for $f \colon JX \to B$ and $(h, \overline h) \colon F_T X \to \algB$.
    \begin{align*}
        U_T(\sharp_{X, \algB} f) \c i_X & = f^b \c i_X &
        \sharp_{X, B}{\big( U_T( (h, \overline h) ) \c i_X \big)} & = \big(  (h \c i_X)^b, \hat{b}_{h i_X, (-)} \big)
    \end{align*}
    We define the unit and counit respectively as follows.
    \begin{align*}
        \nu_f & \defeq f \xto{\tilde{b}_f} f^b i_X \\
        \varepsilon_{(h, \overline h)} & \defeq (h i_X)^b \xto{\overline{h}_{i_X}} h (i_X)^* \xto{h \theta_X} h1_{TX} \xto{\iso} h
    \end{align*}
    That $\varepsilon$ indeed defines an algebra transformation follows from commutativity of the following diagram on the left, where ($*$) is \cite[Lemma~3.2(ii)]{fiore2018relative}. Naturality of $\varepsilon$ in $(h, \overline h)$ follows from commutativity of the following diagram on the right, where $\alpha \colon (h, \overline h) \tto (k, \overline k)$ is a transformation.
    \[
    \begin{tikzcd}[row sep=2.1em]
    	{\big( (h i_X)^b f\big)^b} & {\big( h {(i_X)}^* f \big)^b} & {(h1f)^b} & {(hf)^b} \\
        & {h ((i_X)^* f)^*} \\
        {(hi_X)^b f^*} & {h (i_X)^* f^*} & {h(1f)^*} & {h f^*}
        \arrow[""{name=0, anchor=center, inner sep=0}, "{(\overline{h}_{i_X} f)^b}", from=1-1, to=1-2]
        \arrow["{\hat{b}_{h i_X, f}}"', from=1-1, to=3-1]
        \arrow[""{name=1, anchor=center, inner sep=0}, "{(h \theta_X f)^b}", from=1-2, to=1-3]
        \arrow["{\overline{h}_{(i_X)^* f}}"{description}, from=1-2, to=2-2]
        \arrow[""{name=2, anchor=center, inner sep=0}, "\iso", from=1-3, to=1-4]
        \arrow["{\overline{h}_{1f}}"{description}, from=1-3, to=3-3]
        \arrow["{\overline{h}_f}", from=1-4, to=3-4]
        \arrow["{h \mu_{i_X, f}}"{description}, from=2-2, to=3-2]
        \arrow[""{name=3, anchor=center, inner sep=0}, "{h(\theta_X f)^*}"{description}, curve={height=-6pt}, from=2-2, to=3-3]
        \arrow[""{name=4, anchor=center, inner sep=0}, "{\overline{h}_{i_X} f^*}"', from=3-1, to=3-2]
        \arrow["{h \theta_X f}"', from=3-2, to=3-3]
        \arrow[""{name=5, anchor=center, inner sep=0}, "\iso"', from=3-3, to=3-4]
        \arrow["{\morassoc{h}}"{description}, shift right=2, draw=none, from=0, to=4]
        \arrow["{\natof{\overline h}}"{description}, draw=none, from=1, to=3]
        \arrow["{\natof{\overline h}}"{description}, draw=none, from=2, to=5]
        \arrow["{\text{$*$}}"{description, pos=0.7}, draw=none, from=3, to=3-2]
        %
        %
        \arrow[
                "{ (\varepsilon_{(h, \overline h)} f)^b}",
                rounded corners,
                to path=
                { -- ([yshift=.7cm]\tikztostart.center)
                -- ([yshift=.7cm]\tikztotarget.center)
          \tikztonodes
                -- (\tikztotarget.north)
                },
                from=1-1, to=1-4
            ]
        \arrow[
                "{\varepsilon_{(h, \overline h)} f^*}"',
                rounded corners,
                to path=
                { -- ([yshift=-.7cm]\tikztostart.center)
                -- ([yshift=-.7cm]\tikztotarget.center)
                \tikztonodes
                -- (\tikztotarget.south)
                },
                from=3-1, to=3-4
            ]
    \end{tikzcd}
    \hspace{1mm}
    %
    \begin{tikzcd}[column sep = 4em]
    	{(hi_X)^a} & {(k i_X)^a} \\
        {h (i_X)^*} & {k (i_X)^*} \\
        {h1_{TX}} & {k1_{TX}} \\
        h & k
        \arrow["{(\alpha i_X)^a}", from=1-1, to=1-2]
        \arrow[""{name=0, anchor=center, inner sep=0}, "{\overline{h}_{i_X}}"{description}, from=1-1, to=2-1]
        \arrow[""{name=1, anchor=center, inner sep=0}, "{\overline{k}_{i_X}}"{description}, from=1-2, to=2-2]
        \arrow["{\alpha (i_X)^*}"{description}, from=2-1, to=2-2]
        \arrow["{h \theta_X}"{description}, from=2-1, to=3-1]
        \arrow["{k \theta_X}"{description}, from=2-2, to=3-2]
        \arrow["{\alpha1_{TX}}"{description}, from=3-1, to=3-2]
        \arrow["\iso"{description}, from=3-1, to=4-1]
        \arrow["\iso"{description}, from=3-2, to=4-2]
        \arrow["\alpha"', from=4-1, to=4-2]
        \arrow["{\trans{\alpha}}"{description}, draw=none, from=0, to=1]
    %
    %
    \arrow[
            "{ \varepsilon_{(k, \overline k)} }",
            rounded corners,
            to path=
            { -- ([xshift=.85cm]\tikztostart.center)
            -- ([xshift=.85cm]\tikztotarget.center)
   \tikztonodes
            -- (\tikztotarget.east)
            },
            from=1-2, to=4-2
        ]
    \arrow[
        "{\varepsilon_{(h, \overline h)}}"',
        rounded corners,
        to path=
        { -- ([xshift=-.85cm]\tikztostart.center)
        -- ([xshift=-.85cm]\tikztotarget.center)
        \tikztonodes
        -- (\tikztotarget.west)
        },
        from=1-1, to=4-1
    ]
    \end{tikzcd}
    \]
    It remains to check the two triangle laws, which correspond to the next two diagrams. On the right-hand side, the cell marked ($*$) is~\cite[Lemma~3.2(iii)]{fiore2018relative}.
    \[
    \begin{tikzcd}
    	{f^b} && {f^b} \\
        {(f^b i_X)^b} & {f^b (i_X)^*} & {f^b1_{TX}}
        \arrow[Rightarrow, no head, from=1-1, to=1-3]
        \arrow[""{name=0, anchor=center, inner sep=0}, "{(\tilde{b}_f)^b}"{description}, from=1-1, to=2-1]
        \arrow["{\hat{b}_{f, i_X}}"', from=2-1, to=2-2]
        \arrow["{f^b\theta_X}"', from=2-2, to=2-3]
        \arrow[""{name=1, anchor=center, inner sep=0}, "\iso"', from=2-3, to=1-3]
        \arrow["{\algtheta{A}}"{description}, draw=none, from=0, to=1]
        \arrow[
            "{\sharp(\nu_f)}"',
            rounded corners,
            to path=
            { -- ([xshift=-1.1cm]\tikztostart.center)
            -- ([xshift=-1.1cm]\tikztotarget.center)
            \tikztonodes
            -- (\tikztotarget.west)
            },
            from=1-1, to=2-1
        ]
    \end{tikzcd}
    \hspace{2mm}
    %
    \begin{tikzcd}
    	{hi_X} && {hi_X} \\
        {(hi_X)^bi_X} & {h (i_X)^* i_X} & {h1_{TX} i_X}
        \arrow[""{name=0, anchor=center, inner sep=0}, Rightarrow, no head, from=1-1, to=1-3]
        \arrow["{\tilde{b}_{h i_X}}"{description}, from=1-1, to=2-1]
        \arrow[""{name=1, anchor=center, inner sep=0}, "{h \eta_{i_X}}"{description}, curve={height=-6pt}, from=1-1, to=2-2]
        \arrow["{\overline{h}_{i_X} i_X}"', from=2-1, to=2-2]
        \arrow["{h \theta_X i_X}"', from=2-2, to=2-3]
        \arrow["\iso"', from=2-3, to=1-3]
        \arrow["{\morunit{h}}"{description, pos=0.6}, draw=none, from=1, to=2-1]
        \arrow["{\text{($*$)}}"{description}, draw=none, from=0, to=2-3]
        %
        \arrow[
            "{\nu_{U_T(h, \overline h)i_X}}"{description},
            rounded corners,
            to path=
            { -- ([xshift=-1.1cm]\tikztostart.center)
            -- ([xshift=-1.1cm]\tikztotarget.center)
            \tikztonodes
            -- (\tikztotarget.west)
            },
            from=1-1, to=2-1
        ]
        \end{tikzcd}\]
    So we do indeed have a relative pseudoadjunction. Following \cite[Theorem 3.8]{fiore2018relative}, this induces a relative pseudomonad $\big(T', \ph^{*'}, i', \eta', \mu', \theta' \big)$ with action on objects $T'X = U_TF_T X = TX$ and extension operator given by the composite
    \[
        \ph^{*'}_X \defeq \E[JX, U_TF_T Y] \xto{\sharp_{X, F_T Y}} \AlgT[F_T X, F_T Y] \xto{(U_T)_{F_T X, F_T X}} \E[TX, TY]
    \]
    which is exactly the extension operator $\ph^*_X$ of $T$. The unit $i' = i$ by definition.
    It remains to show the 2-cell data of $T'$ coincides with that for $T$. First, for each 1-cell $f \colon JX \to TY$, the 2-cell $\eta'_f$ is defined to be the unit $\nu_f$;  using the free algebra structure given by \cref{free-pseudoalgebra}, this is exactly $\eta_f$. Next, using the fact that $U_T$ is a strict pseudofunctor, $\theta'_X$ is defined to be the composite below.\footnote{Note that there is a mistake in the stated definition of $\theta$ in the proof of \cite[Theorem~3.8]{fiore2018relative}, which should be defined analogously to \eqref{Kl-!-unitor}. However, the proof of the coherence laws \ibid{} invokes the correct definition.}
    \[
        (U_T\sharp_{X, F_T Y})(i_X) \xto{\iso} U_T\sharp_{X, F_T Y}{\big( U_T(1_{F_TX}) i_X  \big)} \xto{U_T\varepsilon_{1_{TX}}} U_T(1_{F_T X}) = 1_{U_TF_T X}
    \]
    where, because $U_T$ is strict, the first isomorphism is just the structural isomorphism in $\E$. Using the definition of the identity in $\PsAlg(T)$, we get that $\theta'_X$ is the composite along the top below.
    \[\begin{tikzcd}[column sep=huge]
        {(i_X)^*} & {(1_{TX} i_X)^*} & {1_{TX}(i_X)^*} & {1_{TX}1_{TX}} & {1_{TX}} \\
        && {(i_X)^*}
        \arrow["\cong", from=1-1, to=1-2]
        \arrow[Rightarrow, no head, from=1-1, to=2-3]
        \arrow["\cong"{description}, from=1-2, to=1-3]
        \arrow["{1_{TX}\theta_X}"{description}, from=1-3, to=1-4]
        \arrow["\cong"{description}, from=1-3, to=2-3]
        \arrow["\cong"{description}, from=1-4, to=1-5]
        \arrow["{\theta_X}"', from=2-3, to=1-5]
        \arrow[
            "{\varepsilon_{1_{TX}}}",
            rounded corners,
            to path=
            { -- ([yshift=.6cm]\tikztostart.center)
            -- ([yshift=.6cm]\tikztotarget.center)
            \tikztonodes
            -- (\tikztotarget.north)
            },
            from=1-2, to=1-5
        ]
    \end{tikzcd}\]
    So $\theta'_X = \theta_X$ as required. Finally, instantiating the definition of $\mu$ in \cite[Theorem~3.8]{fiore2018relative} for $f \colon JX \to TY$ and $g \colon JY \to TX$, and using the fact $U_T$ is a strict pseudofunctor, we see the 2-cell $\mu'$ is defined to be the composite below. We suppress the subscripts for reasons of space.
    \begin{equation}\label{def of muprime}
        U\sharp{\big(U(g) f \big)} \xto{U\sharp(U\sharp(g)\nu_f)} U\sharp{\left( U{\big( \sharp(g)\sharp(f) \big)} i \right)} = U\sharp{\left(U{\big( \sharp(g)\sharp(f) \big)} i \right)} \xto{U\varepsilon_{\sharp(g)\sharp(f)}} U{\big( \sharp(g)\sharp(f) \big)} = U\sharp(g)U\sharp(f)
    \end{equation}
    To unfold this, note that $\sharp(g)\sharp(f)$ is, by the definition of composition in $\PsAlg(T)$, the pseudomorphism with carrier $g^* f^*$ and 2-cell
    $(g^* f^* k)^* \xto{\mu_{g, f^* k}} g^* (f^* k)^* \xto{g \mu_{f, k}} g^* f^* k^*$
    for each 1-cell $k \colon JW \to TX$ in $\E$. The diagram \eqref{def of muprime} therefore unfolds to the clockwise path in the following diagram, verifying that $\mu' = \mu$.
    \[\begin{tikzcd}[column sep=large]
        {(g^* f)^*} & {(g^* f^* i_X)^*} & {g^* (f^* i_X)^*} & {g^* f^* (i_X)^*} \\
        &&& {g^* f^* 1} \\
        {(g^* f)^*} &&& {g^* f^*}
        \arrow["{(g^* \eta_f)^*}", from=1-1, to=1-2]
        \arrow[Rightarrow, no head, from=1-1, to=3-1]
        \arrow["{\mu_{g, f^* i_X}}", from=1-2, to=1-3]
        \arrow[""{name=0, anchor=center, inner sep=0}, "{(g^* \eta_f\inv)^*}"{description}, curve={height=-6pt}, from=1-2, to=3-1]
        \arrow["{g^* \mu_{f, i}}", from=1-3, to=1-4]
        \arrow[""{name=1, anchor=center, inner sep=0}, "{g^* (\eta_f\inv)^*}"{description}, curve={height=6pt}, from=1-3, to=3-4]
        \arrow["{g^* f^* \theta_X}", from=1-4, to=2-4]
        \arrow["\iso", from=2-4, to=3-4]
        \arrow["{\mu_{g,f}}"', from=3-1, to=3-4]
        \arrow["{\natof{\mu}}"{description}, draw=none, from=0, to=1]
        \arrow["{\mndtheta T}"{description}, draw=none, from=1-4, to=1]
    \end{tikzcd}\qedshift\]
\end{proof}

\subsection{A three-dimensional perspective on the bicategory of pseudoalgebras}
\label{sec:bicategory-of-pseudoalgebras-as-a-limit}

As recalled in \cref{relative-pseudomonads}, while the definition of relative pseudomonad (\cref{relative-pseudomonad}) is presented solely in terms of an assignment $X \mapsto TX$ on objects, this assignment extends to a pseudofunctor. Furthermore, the families $i_X \colon JX \to TX$ and $\ph^*_{X, Y} \colon \E[JX, TY] \to \E[TX, TY]$ extend to pseudonatural transformations. In this subsection, we show something similar holds for pseudoalgebras. First, we show in \cref{algebras give pseudonat trans and modification} that the extension operator for each pseudoalgebra is automatically pseudonatural, and that the unitors form modifications. Next, we examine the global structure formed by the pseudoalgebras for a fixed relative pseudomonad. In particular, we show in \cref{algebras-into-inserter} that each of the bicategories of pseudoalgebras defined in \cref{AlgT,AlgT-variants} admit a canonical locally faithful strict pseudofunctor into a bicategory whose objects are pairs of an object $A \in \E$ and a pseudonatural transformation $\E[J{-}, A] \tto \E[T{-}, A]$. This shows that the pseudonaturality structure of pseudoalgebras is coherent in an appropriate two-dimensional sense, and explains why the composition of lax morphisms of pseudoalgebras is reminiscent of the composition of lax natural transformations. More abstractly, it lays the groundwork for presenting the bicategories of pseudoalgebras as certain three-dimensional limits. Though we do not pursue this here, it would be a natural starting point for a formal theory of relative pseudomonads (\cf~\cite{lack2000coherent,arkor2024formal}).

The following extends \cite[Proposition~4.7(iii)]{fiore2018relative} from free pseudoalgebras to arbitrary pseudoalgebras, and confirms that the unitor $\eta$ (\cref{relative-pseudomonad}) is indeed a modification.

\begin{lemma}
    \label{algebras give pseudonat trans and modification}
    Let $\J$ be a pseudofunctor,
    $T$ be a $J$-relative pseudomonad, and $\algA$ be a $T$-pseudoalgebra (\cref{pseudoalgebra}).
    \begin{enumerate}
        \item \label{algebras as pseudonat trans} The family of functors
            $\big\{ \ph^a_X \colon \E[JX, A] \to \E[TX, A] \big\}_{X \in \A}$
            admits the structure of a pseudonatural transformation $\ph^a \colon \E[J{-}, A] \tto \E[T{-}, A] \colon \A\op \to \Cat$.
        \item \label{a tilde is a modification} The family of 2-cells $\tilde{a}$ defines an invertible modification filling the triangle below in $\Hom(\A\op, \Cat)$.
        \[\begin{tikzcd}
        	{\E[J{-}, A]} & {\E[T{-}, A]} \\
        	& {\E[J{-}, A]}
        	\arrow["{\ph^a}", from=1-1, to=1-2]
        	\arrow[""{name=0, anchor=center, inner sep=0}, curve={height=6pt}, Rightarrow, no head, from=1-1, to=2-2]
        	\arrow["{\ph \c i}", from=1-2, to=2-2]
        	\arrow["{\tilde{a}}", shorten <=4pt, Rightarrow, from=0, to=1-2]
        \end{tikzcd}\]
    \end{enumerate}
\end{lemma}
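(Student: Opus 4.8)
The plan is to obtain both pieces of structure by transcribing the proof that the extension operator $\ph^*$ of a relative pseudomonad is pseudonatural, namely \cite[Proposition~4.7(iii)]{fiore2018relative}: that result is precisely the present lemma applied to the \emph{free} pseudoalgebra $(TX, \ph^*, \mu, \eta)$ of \cref{free-pseudoalgebra}, so the work is to replay it for an arbitrary pseudoalgebra $\algA$, systematically substituting $\ph^a, \hat a, \tilde a$ for $\ph^*, \mu, \eta$ and using the pseudoalgebra axioms \eqref{psalg-mu}, \eqref{psalg-theta} (together with \cref{psalg-eta}) wherever \loccit{} invokes the relative-pseudomonad axioms \eqref{psmnd-mu}, \eqref{psmnd-theta}. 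Recall that $T$ acts on a 1-cell $u \colon X \to Y$ of $\A$ by $Tu \defeq (i_Y \c Ju)^* \colon TX \to TY$. For the first claim, I would equip $\ph^a$ with the pseudonaturality constraint whose component at $u \colon X \to Y$, evaluated at $g \colon JY \to A$, is the invertible composite
\[ (g \c Ju)^a \xto{\ (\tilde a_g \c Ju)^a\ } (g^a \c i_Y \c Ju)^a \xto{\ \hat a_{g,\, i_Y Ju}\ } g^a \c (i_Y \c Ju)^* = g^a \c Tu \]
(with the associators of $\E$ elided; invert it, if one prefers the other convention for pseudonaturality). It is invertible because $\tilde a$ and $\hat a$ are, and natural in $g$ by naturality of $\tilde a$ and $\hat a$, so it is a natural isomorphism filling the naturality square for $u$.

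There then remain the two pseudonaturality coherence conditions. Compatibility with composition of 1-cells $u \colon X \to Y$ and $v \colon Y \to Z$ in $\A$ — the constraint at $vu$ must agree with the pasting of the constraints at $u$ and $v$ modulo the compositors of $T$ and $J$ — follows from the associativity axiom \eqref{psalg-mu} and the naturality of $\hat a$ and $\tilde a$; the pasting diagram is formally identical to the one in \cite[Proposition~4.7(iii)]{fiore2018relative}. Compatibility with identities follows analogously from the unit axiom \eqref{psalg-theta} and the unitors of $T$. As in \loccit{}, no iteration $TTX$ of $T$ is involved. This proves the first claim.

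For the second claim, the composite pseudonatural transformation $(\ph \c i) \c \ph^a \colon \E[J{-},A] \tto \E[J{-},A]$ has component at $X$ the functor $f \mapsto f^a \c i_X$, and its pseudonaturality constraint is the pasting of the constraint from the first claim with that of precomposition by $i$, the latter coming from the pseudonaturality of $i \colon J \tto T$ established in \cite[Lemma~3.7, Proposition~4.7]{fiore2018relative}. The family $\{\, \tilde a_f \colon f \tto f^a \c i_X \,\}$ is then exactly a natural transformation from $1_{\E[JX,A]}$ to this component — this is the naturality of $\tilde a$ — so the only remaining point is the modification axiom: for each $u \colon X \to Y$ in $\A$, these componentwise 2-cells must be compatible with the two pseudonaturality constraints. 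Unwinding the constraint of the first claim at the 1-cell $i_Y \c Ju \colon JX \to TY$, together with the constraint of $\ph \c i$, this equation is exactly an instance of \cref{psalg-eta} — which identifies the extension of $\tilde a$ with the whiskering of $\eta$ — combined with naturality of $\tilde a$ and the coherence of $i$. Hence $\tilde a$ is an invertible modification filling the stated triangle in $\Hom(\A\op, \Cat)$.

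The only genuinely laborious step is the compatibility-with-composition condition for the first claim: it is the large pasting diagram of \cite[Proposition~4.7(iii)]{fiore2018relative}, but I expect no new idea to be required, since \eqref{psalg-mu}, \eqref{psalg-theta}, and \cref{psalg-eta} are the pseudoalgebra-level counterparts of the axioms used there, and the free-algebra case is literally the cited result. As a bonus, specialising the second claim to the free pseudoalgebra $TX$ recovers the fact — asserted without proof in \cite{fiore2018relative} — that $\eta$ is a modification.
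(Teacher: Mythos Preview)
Your proposal is correct and matches the paper's proof essentially line for line: the paper defines the pseudonaturality constraint by the same composite $(\tilde a_k \c Jf)^a$ followed by $\hat a_{k, i_X Jf}$, defers the coherence conditions to \cite[Proposition~4.7(iii)]{fiore2018relative} just as you do, and for the modification axiom unwinds the pasting to a diagram whose two nontrivial cells are naturality of $\tilde a$ and \cref{psalg-eta}, exactly as you anticipate.
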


\begin{proof}
    \begin{enumerate}
        \item The 1-cell components of the pseudonatural transformation are the functors $(-)^a$. The 2-cell components must be natural isomorphisms $\ph^a_f$ filling the square below for each $f \colon Y \to X$ in $\A$.
        \[\begin{tikzcd}[column sep=large]
            {\E[JX, A]} & {\E[JY, A]} \\
            {\E[TX, A]} & {\E[TY, A]}
            \arrow["{\E[Jf, A]}", from=1-1, to=1-2]
            \arrow["{\ph^a_X}"', from=1-1, to=2-1]
            \arrow["{\ph^a_Y}", from=1-2, to=2-2]
            \arrow["{\E[Tf, h]}"', from=2-1, to=2-2]
        \end{tikzcd}\]
        For each $k \colon JX \to A$, we define $k^a_f$ to be the following composite.
        \begin{equation} \label{pseudonat trans structure on algebras}
        \begin{tikzcd}[column sep=large]
            {(k \c Jf)^a} & {\big( (k^a i_X) Jf \big)^a} & {\big( k^a (i_X Jf) \big)^a} & {k^a (i_X Jf)^* = k^a \circ Tf}
            \arrow["{(\tilde{a}_k Jf)^a}", from=1-1, to=1-2]
            \arrow["\cong", from=1-2, to=1-3]
            \arrow["{\hat{a}_{k, i_X Jf}}", from=1-3, to=1-4]
        \end{tikzcd}
        \end{equation}
        As in the proof of \cite[Proposition~4.7(iii)]{fiore2018relative}, the coherence conditions for a pseudonatural transformation are straightforward to check.
        \item The modification law amounts to verifying that the following two pasting diagrams are equal for each $f \colon Y \to X$ in $\A$.
        \[
        \begin{tikzcd}
            {\E[JX, A]} && {\E[JY, A]} & {\E[JY, A]} \\
            &&& {\E[TY, A]} \\
            {\E[JX, A]} && {E[JY, A]} & {\E[JY, A]}
            \arrow["{\E[Jf, A]}", from=1-1, to=1-3]
            \arrow[""{name=0, anchor=center, inner sep=0}, Rightarrow, no head, from=1-1, to=3-1]
            \arrow[Rightarrow, no head, from=1-3, to=1-4]
            \arrow[""{name=1, anchor=center, inner sep=0}, Rightarrow, no head, from=1-3, to=3-3]
            \arrow["{(-)^a_Y}", from=1-4, to=2-4]
            \arrow["{\ph \c i_Y}", from=2-4, to=3-4]
            \arrow["{\E[Jf, A]}"', from=3-1, to=3-3]
            \arrow[Rightarrow, no head, from=3-3, to=3-4]
            \arrow["{=}"{description}, draw=none, from=0, to=1]
            \arrow["{\tilde{a}}"'{pos=0.4}, shorten <=9pt, shorten >=13pt, Rightarrow, from=2-4, to=1]
        \end{tikzcd}
        \begin{tikzcd}
            {\E[JX, A]} & {\E[JX, A]} && {\E[JY, A]} \\
            & {\E[TX, A]} && {\E[TY, A]} \\
            {\E[JX, A]} & {\E[JX, A]} && {\E[JY, A]}
            \arrow[Rightarrow, no head, from=1-1, to=1-2]
            \arrow[""{name=0, anchor=center, inner sep=0}, Rightarrow, no head, from=1-1, to=3-1]
            \arrow["{\E[Jf, A]}", from=1-2, to=1-4]
            \arrow[""{name=1, anchor=center, inner sep=0}, "{(-)^a_X}"{description}, from=1-2, to=2-2]
            \arrow[""{name=2, anchor=center, inner sep=0}, "{(-)^a_Y}", from=1-4, to=2-4]
            \arrow["{\E[Tf, A]}"{description}, from=2-2, to=2-4]
            \arrow[""{name=3, anchor=center, inner sep=0}, "{\ph \c i_X}"{description}, from=2-2, to=3-2]
            \arrow[""{name=4, anchor=center, inner sep=0}, "{\ph \c i_Y}", from=2-4, to=3-4]
            \arrow[Rightarrow, no head, from=3-1, to=3-2]
            \arrow["{\E[Jf, A]}"', from=3-2, to=3-4]
            \arrow["{\ph^a_f}"', shorten <=33pt, shorten >=33pt, Rightarrow, from=2, to=1]
            \arrow["{\tilde{a}}"'{pos=0.4}, shorten <=6pt, shorten >=13pt, Rightarrow, from=2-2, to=0]
            \arrow["{\ph \c i_f}"', shorten <=33pt, shorten >=33pt, Rightarrow, from=4, to=3]
        \end{tikzcd}
        \]
        Evaluating at $k \colon JX \to A$, this equation becomes the commutativity of the following diagram.
        \[\begin{tikzcd}
            {k \c Jf} && {k \c Jf} \\
            {(k \c Jf)^a \c i_Y} \\
            {(k^a \c i_X \c Jf)^a \c i_Y} \\
            {k^a \c (i_X \c Jf)^* \c i_Y} && {k^a \c i_X \c Jf}
            \arrow[Rightarrow, no head, from=1-1, to=1-3]
            \arrow["{\tilde{a}_{k \circ Jf}}"', from=1-1, to=2-1]
            \arrow[""{name=0, anchor=center, inner sep=0}, "{\tilde{a}_k \c Jf}", from=1-3, to=4-3]
            \arrow["{\tilde{a}_{k \c Jf}^{-1}}"{description}, curve={height=12pt}, from=2-1, to=1-3]
            \arrow[""{name=1, anchor=center, inner sep=0}, "{(\tilde{a}_k \c Jf)^a \c i_Y}"', from=2-1, to=3-1]
            \arrow["{\hat{a}_{k, i_X \c Jf} \c i_Y}"', from=3-1, to=4-1]
            \arrow[""{name=2, anchor=center, inner sep=0}, "{\tilde{a}_{k^a \c i_X \c Jf}^{-1}}"{description}, curve={height=-12pt}, from=3-1, to=4-3]
            \arrow["{k^a \c \eta_{i_X \c Jf}^{-1}}"', from=4-1, to=4-3]
            \arrow["{\natof\tilde a}"{description}, draw=none, from=1, to=0]
            \arrow["{\algeta{A}}"{description, pos=0.7}, shift right=2, draw=none, from=2, to=4-1]
        \end{tikzcd}\qedshift\]
    \end{enumerate}
\end{proof}

Next, we study the interaction between the pseudonatural transformations induced by each pseudoalgebra. It will be useful to introduce a preliminary definition.

\begin{definition}
    Let $F, G \colon \A \to \B$ be pseudofunctors. Their \emph{lax-inserter} $\Ins_l(F, G)$ is the bicategory defined as follows.
    \begin{itemize}
        \item Objects are pairs of an object $X \in \A$ and a 1-cell $x \colon FX \to GX$ in $\B$.
        \item 1-cells from $(X, x)$ to $(Y, y)$ are pairs of a morphism $f \colon X \to Y$ in $\A$ and a 2-cell $\phi$ in $\B$ as follows.
        \[\begin{tikzcd}
            FX & FY \\
            GX & GY
            \arrow["Ff", from=1-1, to=1-2]
            \arrow["x"', from=1-1, to=2-1]
            \arrow["\phi"', shorten <=4pt, shorten >=4pt, Rightarrow, from=1-2, to=2-1]
            \arrow["y", from=1-2, to=2-2]
            \arrow["Gf"', from=2-1, to=2-2]
        \end{tikzcd}\]
        \item 2-cells from $(f, \phi)$ to $(g, \gamma)$ are 2-cells $\alpha \colon f \tto g$ in $\A$ satisfying the following compatibility condition.
        \begin{equation}\label{lax-inserter-2-cell-law}
        \begin{tikzcd}[column sep=large]
            FX & FY \\
            GX & GY
            \arrow[""{name=0, anchor=center, inner sep=0}, "Ff", curve={height=-12pt}, from=1-1, to=1-2]
            \arrow["x"', from=1-1, to=2-1]
            \arrow["y", from=1-2, to=2-2]
            \arrow[""{name=1, anchor=center, inner sep=0}, "Gf", curve={height=-12pt}, from=2-1, to=2-2]
            \arrow[""{name=2, anchor=center, inner sep=0}, "Gg"', curve={height=12pt}, from=2-1, to=2-2]
            \arrow["{\phi\:}"'{pos=0.3}, shorten <=4pt, shorten >=11pt, Rightarrow, from=0, to=1]
            \arrow["{G\alpha}"', shorten <=3pt, shorten >=3pt, Rightarrow, from=1, to=2]
        \end{tikzcd}
        \quad
        =
        \quad
        \begin{tikzcd}[column sep=large]
            FX & FY \\
            GX & GY
            \arrow[""{name=0, anchor=center, inner sep=0}, "Ff", curve={height=-12pt}, from=1-1, to=1-2]
            \arrow[""{name=1, anchor=center, inner sep=0}, "Gf"', curve={height=12pt}, from=1-1, to=1-2]
            \arrow["x"', from=1-1, to=2-1]
            \arrow["y", from=1-2, to=2-2]
            \arrow[""{name=2, anchor=center, inner sep=0}, "Gg"', curve={height=12pt}, from=2-1, to=2-2]
            \arrow["{F\alpha}", shorten <=3pt, shorten >=3pt, Rightarrow, from=0, to=1]
            \arrow["{\:\gamma}"{pos=0.7}, shorten <=11pt, shorten >=4pt, Rightarrow, from=1, to=2]
        \end{tikzcd}
		\end{equation}
        \item The composite $(X, x) \xto{(f, \phi)} (Y, y) \xto{(g, \gamma)} (Z, z)$ is defined to be $gf$ equipped with the following 2-cell.
        \begin{equation}
            z \c F(gf) \xto\iso (z \c Fg) \c Ff \xto{\gamma \c Ff} G(g \c y) \c Ff \xto\iso Gg \c (y \c Ff) \xto{Gg \c \phi} Gg \c (Gf \c x) \xto\iso G(gf) \c x
        \end{equation}
        The identity on $(X, x)$ is defined to be $1_X$ equipped with the 2-cell $x \c F(1_X) \xto\iso G(1_X) \c x$.
        \item The associators and unitors are inherited from $\A$.
    \end{itemize}
    By requiring each $\phi$ in the definition of the morphisms to be invertible, we obtain the \emph{pseudo-inserter} $\Ins_p(F, G)$. By requiring it to be the identity, we obtain the \emph{strict-inserter} $\Ins_s(F, G)$. By reversing the direction of $\phi$, we obtain the \emph{colax-inserter} $\Ins_c(F, G)$.

    For each $w \in \{ s, p, l, c \}$ we  write $\pi$ for each of the evident strict pseudofunctor $\Ins_w(F, G) \to \A$.
\end{definition}

\begin{remark}
    The lax-inserter of two pseudofunctors is a three-dimensional notion of limit living in the tricategory of bicategories, which generalises a two-dimensional notion of limit known as an \emph{inserter}~\cite[\S6.5]{lack2010companion}. Note in particular that the pseudo-inserter of two pseudofunctors is distinct from the similarly-named \emph{iso-inserter} (as studied in the context of pseudofunctors in \cite[Definition~2.14, Lemma~2.16 \& Remark~2.16]{gurski2013coherence}, for instance).
\end{remark}

The \emph{nerve} of a pseudofunctor $F \colon \A \to \B$ is the pseudofunctor $N_F \colon \B \to \Hom(\A\op, \Cat)$ defined by $N_F \defeq \B[F{-}_2, {-}_1]$. In particular, for a $J$-relative pseudomonad $T$ we have pseudofunctors $N_J, N_T \colon \E \to \Hom(\A\op, \Cat)$. The following result shows that the bicategory structure of $\AlgTl$ (\ie{} the composition, identities, and structural transformations) coincides with that of $\Ins_l(N_J, N_T)$. This demonstrates, for instance, that the composition of lax morphisms may be seen as precisely the composition of lax natural transformations.

\begin{proposition}
    \label{algebras-into-inserter}
    Let $\J$ be a pseudofunctor and let $T$ be a $J$-relative pseudomonad. For each $w \in \{ s, p, l, c \}$, there is a canonical locally faithful strict pseudofunctor $K \colon \PsAlg_w(T) \to \Ins_w(N_J, N_T)$ rendering the following diagram commutative.
    \[\begin{tikzcd}
        {\PsAlg_w(T)} & {\Ins_w(N_J, N_T)} \\
        & \E
        \arrow["K", from=1-1, to=1-2]
        \arrow["{U_T}"', from=1-1, to=2-2]
        \arrow["\pi", from=1-2, to=2-2]
    \end{tikzcd}\]
\end{proposition}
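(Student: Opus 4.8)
The plan is to define $K$ explicitly on objects, $1$-cells, and $2$-cells, and then to verify that it is a strict pseudofunctor which is locally faithful and commutes with $\pi$ and $U_T$.

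First, unwind the inserter. Since $N_J(A) = \E[J{-}, A]$ and $N_T(A) = \E[T{-}, A]$, an object of $\Ins_w(N_J, N_T)$ is a pair $(A, x)$ of an object $A \in \E$ together with a pseudonatural transformation $x \colon \E[J{-}, A] \tto \E[T{-}, A]$; a $1$-cell over $h \colon A \to B$ is a modification between the two pseudonatural transformations $\E[J{-}, A] \tto \E[T{-}, B]$ obtained by whiskering $x$ and $y$ with pre- and post-composition by $h$; and a $2$-cell is an underlying $2$-cell of $\E$ subject to \eqref{lax-inserter-2-cell-law}. Accordingly, I set $K\algA \defeq (A, \ph^a)$, which is well-defined by \cref{algebras as pseudonat trans}; $K(h, \overline h) \defeq (h, \overline h)$, now regarding $\{ \overline h_f \}_f$ as the component data of the required modification (justified in the next paragraph); and $K\alpha \defeq \alpha$. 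Evaluating the inserter $2$-cell condition \eqref{lax-inserter-2-cell-law} at a $1$-cell $f \colon JX \to A$ recovers exactly the transformation axiom of \cref{transformation}, so $K$ is well-defined on $2$-cells; and $\pi K = U_T$ holds on the nose, since $\pi$ and $U_T$ both extract the underlying object, $1$-cell, or $2$-cell in $\E$. In particular $K$ is locally faithful, as $K_{\algA, \algB}$ acts as the identity on each hom-set of $2$-cells.

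The one non-formal point is that, for a lax morphism $(h, \overline h) \colon \algA \to \algB$, the family $\{ \overline h_f \}_f$ satisfies the modification axiom. Its $X$-component is the natural transformation whose $k$-component is $\overline h_k \colon (hk)^b \tto hk^a$ (for $k \colon JX \to A$), natural in $k$ by the data of a lax morphism; the remaining content is compatibility with the pseudonaturality $2$-cells as $X$ varies. Those of $N_J(h)$ and $N_T(h)$ are whiskered associators of $\E$, while those of $\ph^a$ and $\ph^b$ are the explicit composites \eqref{pseudonat trans structure on algebras}. Substituting these in, the modification axiom at $k \colon JX \to A$ and $f \colon Y \to X$ in $\A$ unwinds into a large diagram in $\E$, which one assembles from the naturality square of $\overline h$ at the $2$-cell $\tilde a_k \c Jf$, the associativity and unit axioms of a lax morphism (\cref{pseudomorphism}), and coherence isomorphisms for $\E$. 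I expect this diagram chase to be the main obstacle; however, like the corresponding chases in the proof of \cref{AlgT} (where the same axioms are combined to check that composites of lax morphisms are lax morphisms), it requires no new ideas.

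It remains to check that $K$ is a strict pseudofunctor. The associators and unitors of $\PsAlg_w(T)$ are inherited from $\E$ (by the proof of \cref{AlgT}), and likewise those of $\Ins_w(N_J, N_T)$ by construction, so $K$ preserves all structural isomorphisms strictly and it suffices to compare identity and composite $1$-cells. The identity $1$-cell of $\Ins_w(N_J, N_T)$ at $(A, \ph^a)$ carries the $2$-cell induced by the structural isomorphism $\ph^a \c N_J(1_A) \xto\iso N_T(1_A) \c \ph^a$, whose $f$-component is precisely the composite \eqref{identity-lax-morphism}; and the $2$-cell attached to the inserter composite of $(h, \overline h)$ and $(h', \overline{h'})$ is, at each $f \colon JX \to A$, the pasting of whiskered associators of $\E$ with $\overline{h'}_{hf}$ and $h' \overline h_f$ --- which is exactly the composite \eqref{composite-lax-morphism}. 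Hence $K$ is a strict pseudofunctor. Finally, the cases $w \in \{ s, p, c \}$ follow by the same argument: requiring each $\overline h_f$ to be an identity, invertible, or of reversed direction corresponds exactly to requiring the mediating $2$-cell of $\Ins_s$, $\Ins_p$, or $\Ins_c$ to be so.
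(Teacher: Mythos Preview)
Your proposal is correct and follows essentially the same approach as the paper: define $K$ by $(A, \ph^a, \hat a, \tilde a) \mapsto (A, \ph^a)$, $(h, \overline h) \mapsto (h, \overline h)$, $\alpha \mapsto \alpha$, with the modification axiom for $\overline h$ being the one substantive check, established from naturality of $\overline h$ together with the unit and associativity laws for lax morphisms. Your treatment of strict pseudofunctoriality is in fact more explicit than the paper's, which simply says it follows by unwinding the definitions of composition and identities in both bicategories.
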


\begin{proof}
	We show the result for $w = l$; the other cases are straightforward adjustments.
    For the action on objects, we use \cref{algebras as pseudonat trans} and define $K\big(\algA\big) \defeq (A, \ph^a)$.
    For the action on 1-cells, consider a lax morphism
    $(h, \overline h) \colon \algA \to \algB$.
    We shall show that the 2-cells $\overline h_f$ form a modification filling the square on the left below in $\Hom(\A\op, \Cat)$, so that we may define $K\big((h, \overline h)\big) \defeq (h, \overline h)$.
    \[\begin{tikzcd}
    	{\E[J{-}, A]} & {\E[J{-}, B]} \\
    	{\E[T{-}, A]} & {\E[T{-}, B]}
    	\arrow["{\E[J{-}, h]}", from=1-1, to=1-2]
    	\arrow[""{name=0, anchor=center, inner sep=0}, "{\ph^a}"', from=1-1, to=2-1]
    	\arrow[""{name=1, anchor=center, inner sep=0}, "{\ph^b}", from=1-2, to=2-2]
    	\arrow["{\E[T{-}, h]}"', from=2-1, to=2-2]
    	\arrow["{\overline{h}}"', shorten <=20pt, shorten >=20pt, Rightarrow, from=1, to=0]
    \end{tikzcd}\]
    The modification axiom states that, for each $f \colon Y \to X$, the following 2-cells are equal.
    \[\begin{tikzcd}[column sep=4.2em,row sep=2.5em]
        {\E[JX, A]} & {\E[JY, A]} \\
        {\E[TX, A]} & {\E[JX, B]} & {\E[JY, B]} \\
        {\E[TX, B]} & {\E[TY, B]}
        \arrow["{{\E[Jf, A]}}", from=1-1, to=1-2]
        \arrow["{{(-)^a}}"', from=1-1, to=2-1]
        \arrow[""{name=0, anchor=center, inner sep=0}, "{{\E[JX, h]}}"{description}, from=1-1, to=2-2]
        \arrow[""{name=1, anchor=center, inner sep=0}, "{{\E[JY, h]}}", from=1-2, to=2-3]
        \arrow["{{\E[TX, h]}}"', from=2-1, to=3-1]
        \arrow["{{\overline h}}"{description}, shorten <=2pt, shorten >=2pt, Rightarrow, from=2-2, to=2-1]
        \arrow["{{\E[Jf, B]}}"{description}, from=2-2, to=2-3]
        \arrow[""{name=2, anchor=center, inner sep=0}, "{{(-)^b}}"{description}, from=2-2, to=3-1]
        \arrow[""{name=3, anchor=center, inner sep=0}, "{{(-)^b}}", from=2-3, to=3-2]
        \arrow["{{\E[Tf, B]}}"', from=3-1, to=3-2]
        \arrow["\cong"{description}, shorten <=21pt, shorten >=21pt, Rightarrow, from=1, to=0]
        \arrow["{{(-)^b_f}}"{description}, shorten <=21pt, shorten >=21pt, Rightarrow, from=3, to=2]
    \end{tikzcd}\]
    \[\begin{tikzcd}[column sep=4.2em,row sep=2.5em]
        {\E[JX, A]} & {\E[JY, A]} \\
        {\E[TX, A]} & {\E[TY, A]} & {\E[JY, B]} \\
        {\E[TX, B]} & {\E[TY, B]}
        \arrow["{{\E[Jf, A]}}", from=1-1, to=1-2]
        \arrow[""{name=0, anchor=center, inner sep=0}, "{{(-)^a}}"', from=1-1, to=2-1]
        \arrow[""{name=1, anchor=center, inner sep=0}, "{(-)^a}"{description}, from=1-2, to=2-2]
        \arrow["{{\E[JY, h]}}", from=1-2, to=2-3]
        \arrow["{\E[Tf, A]}"{description}, from=2-1, to=2-2]
        \arrow[""{name=2, anchor=center, inner sep=0}, "{{\E[TX, h]}}"', from=2-1, to=3-1]
        \arrow[""{name=3, anchor=center, inner sep=0}, "{\E[TY, h]}"{description}, from=2-2, to=3-2]
        \arrow["{\overline{h}}"{description}, Rightarrow, from=2-3, to=2-2]
        \arrow["{{(-)^b}}", from=2-3, to=3-2]
        \arrow["{{\E[Tf, B]}}"', from=3-1, to=3-2]
        \arrow["{\ph^a_f}"{description}, shorten <=21pt, shorten >=21pt, Rightarrow, from=1, to=0]
        \arrow["\iso"{description}, shorten <=21pt, shorten >=21pt, Rightarrow, from=3, to=2]
    \end{tikzcd}\]
    Using the definition of the pseudonatural transformation $\ph^a$ in \eqref{pseudonat trans structure on algebras}, for each 1-cell $k \colon JX \to A$, this equation becomes the commutativity of the following diagram.
    \[\begin{tikzcd}[column sep=large]
        {(h  k  Jf)^b} && {\big( (hk)^b  i_X  Jf \big)^b} \\
        {h  (k  Jf)^a} & {(h  k^a  i_X  Jf)^b} & {(hk)^b  (i_X  Jf)^*} \\
        {h  \big( k^a  i_X  Jf \big)^a} && {h  k^a  (i_X  Jf)^*}
        \arrow[""{name=0, anchor=center, inner sep=0}, "{{{(\tilde{b}_{hk}  Jf)^b}}}", from=1-1, to=1-3]
        \arrow["{{{\overline{h}_{k  Jf}}}}"', from=1-1, to=2-1]
        \arrow["{{{(h   \tilde{a}_k  Jf)^b}}}"{description}, from=1-1, to=2-2]
        \arrow[""{name=1, anchor=center, inner sep=0}, "{{(\overline{h}_k i_X Jf)^b}}"{description}, from=1-3, to=2-2]
        \arrow["{{\hat{b}_{hk,i_X Jf}}}", from=1-3, to=2-3]
        \arrow["{{\natof{\overline{h}}}}"{description}, draw=none, from=2-1, to=2-2]
        \arrow["{{{h  (\tilde{a}_k  Jf)^a}}}"', from=2-1, to=3-1]
        \arrow["{{\overline{h}_{k^a i_X Jf}}}"{description}, from=2-2, to=3-1]
        \arrow["{{\overline{h}_k  Tf}}", from=2-3, to=3-3]
        \arrow[""{name=2, anchor=center, inner sep=0}, "{{h \hat{a}_{k,i_X Jf}}}"', from=3-1, to=3-3]
        \arrow["{{\morunit{h}}}"{description}, draw=none, from=0, to=2-2]
        \arrow["{\morassoc{h}}"{description, pos=0.6}, shift left=3, draw=none, from=1, to=2]
    \end{tikzcd}\]
    Turning to the transformations, if $\alpha \colon (h, \overline h) \tto (h', \overline{h'})$ is an algebra transformation, then $\alpha$ is a 2-cell in $\Ins_w(N_J, N_T)$: indeed, the transformation axiom in \cref{transformation} is exactly the condition \eqref{lax-inserter-2-cell-law}. So we may define $K(\alpha) \defeq \alpha$. This is clearly functorial with respect to vertical composition, and locally faithful. Strict functoriality of $K$ follows by unwinding the definitions of composition and identities in both bicategories.
\end{proof}

\section{Doctrinal adjunction and transport of structure}
\label{sec:doctrinal-adjunction}

In this section, we explore how pseudoalgebras interact with adjunctions. It was observed by \textcite{kelly1974doctrinal} that, given an adjunction between monoidal categories, colax monoidal structures on the left adjoint are in bijection with lax monoidal structures on the right adjoint. This situation in fact holds, much more generally, for adjunctions between the pseudoalgebras for a 2-monad~\cite{kelly1974doctrinal}. This phenomenon was dubbed \emph{doctrinal adjunction} (a \emph{doctrine} being another term for a 2-monad), and proves to be a remarkably useful tool. We show in this section that doctrinal adjunction holds also for pseudoalgebras for relative pseudomonads; this generalises both \citeauthor{kelly1974doctrinal}'s original setting, and doctrinal adjunction for pseudomonads as studied by \textcite[Theorem~1.4.14]{nunes2017pseudomonads} and \textcite[Corollary~78]{walker2019distributive}. Doctrinal adjunction will be useful in \cref{lax-idempotence} when we study lax-idempotence for pseudoalgebras, and subsequently to characterise the pseudoalgebras for the presheaf construction in \cref{presheaves-and-cocompleteness}. Below, \cref{doctrinal-adjunction} is the analogue of \cite[Theorem~1.2]{kelly1974doctrinal}, while \cref{adjunctions-lift-to-algebras} is the analogue of \cite[Theorems~1.4 \& 1.5]{kelly1974doctrinal}.

\begin{theorem}
    \label{doctrinal-adjunction}
    Let $\J$ be a pseudofunctor, let $T$ be a $J$-relative pseudomonad, and let $\algA$ and $\algB$ be $T$-pseudoalgebras. For any adjunction
    $\begin{tikzcd}
        A & B
        \arrow[""{name=0, anchor=center, inner sep=0}, "\ell", shift left=2, from=1-1, to=1-2]
        \arrow[""{name=1, anchor=center, inner sep=0}, "r", shift left=2, from=1-2, to=1-1]
        \arrow["\dashv"{anchor=center, rotate=-90}, draw=none, from=0, to=1]
    \end{tikzcd}$
    between their carriers in $\E$, the following are in bijection:
    \begin{enumerate}
        \item colax morphism structures on $\ell$;
        \item lax morphism structures on $r$.
    \end{enumerate}
\end{theorem}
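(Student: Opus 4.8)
The plan is to run Kelly's mate-theoretic argument for doctrinal adjunction~\cite{kelly1974doctrinal} in the relative setting. Fix $X \in \A$. Since post-composition defines a pseudofunctor $\E[Z, {-}] \colon \E \to \Cat$ for every object $Z$, and pseudofunctors preserve adjunctions, the adjunction $\ell \adj r$ in $\E$ induces adjunctions $\E[JX, \ell] \adj \E[JX, r]$ and $\E[TX, \ell] \adj \E[TX, r]$ in $\Cat$. The key observation is then that a colax morphism structure on $\ell$ is exactly, for each $X$, a natural transformation
\[
\overline\ell \colon \E[TX, \ell] \c \ph^a \tto \ph^b \c \E[JX, \ell],
\]
while a lax morphism structure on $r$ is exactly, for each $X$, a natural transformation
\[
\overline r \colon \ph^a \c \E[JX, r] \tto \E[TX, r] \c \ph^b,
\]
and these are precisely the two transformations related by the mate bijection for the two induced adjunctions. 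Explicitly,
\begin{align*}
    \overline r_f & \defeq (rf)^a \xto{\ } r\ell(rf)^a \xto{r\,\overline\ell_{rf}} r(\ell rf)^b \xto{\ } rf^b, \\
    \overline\ell_g & \defeq \ell g^a \xto{\ } \ell(r\ell g)^a \xto{\ell\,\overline r_{\ell g}} \ell r(\ell g)^b \xto{\ } (\ell g)^b,
\end{align*}
where the outer arrows are assembled from the unit $\eta \colon 1_A \tto r\ell$ and counit $\varepsilon \colon \ell r \tto 1_B$ of $\ell \adj r$ (suitably whiskered, with $\ph^a$ applied in the first map of $\overline\ell$) together with the coherence isomorphisms of $\E$.

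That these assignments are mutually inverse, and that naturality in $f$ is automatically respected, is the standard fact that the mate correspondence is a bijection of natural transformations~\cite{johnson20212}. Moreover, since $\ell$, $r$, $\eta$, and $\varepsilon$ are independent of $X$, the construction is compatible with the pseudonaturality in $X$ of $\E[J{-}, \ell]$, $\E[T{-}, \ell]$, and of the family $\{\ph^a_X\}_X$ established in \cref{algebras give pseudonat trans and modification}; one could in fact carry out the whole argument inside $\Hom(\A\op, \Cat)$, taking mates of the induced adjunctions there.

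It remains to check that the colax-morphism axioms for $(\ell, \overline\ell)$ — the duals of \eqref{lax-morph-hats} and \eqref{lax-morph-tildes} — hold if and only if the lax-morphism axioms for $(r, \overline r)$ do. This is the substantive step. Each axiom is a pasting equation, and one passes from one to the other by applying the mate operation to both sides. The point is that the mate operation commutes with whiskering and with vertical composition, so it passes through all the structural $2$-cells that occur in the axioms — the unitors $\tilde a$, $\tilde b$, the multiplicator $\mu$, the isomorphisms $\hat a$, $\hat b$, and the coherence isomorphisms of $\E$, none of which involves $\ell$ or $r$ — and through the triangle identities for $\ell \adj r$. Under this translation the unit axiom \eqref{lax-morph-tildes} for $r$ becomes the colax unit axiom for $\ell$, and the associativity axiom \eqref{lax-morph-hats} for $r$ becomes the colax associativity axiom for $\ell$; this parallels \cite{kelly1974doctrinal} and its pseudomonad refinements~\cite{nunes2017pseudomonads,walker2019distributive}.

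The main obstacle is purely the bookkeeping of this last step: the associativity axiom is a large diagram, and transporting it across the mate correspondence requires tracking carefully how the mate construction interacts with the various whiskerings and with the naturality squares of $\overline\ell$, $\hat a$, $\hat b$, and $\mu$. There is no new conceptual ingredient beyond the non-relative case; the only change is that the single structure $2$-cell of a (co)lax $2$-monad morphism is replaced here by a family natural in $f$, and because the mate bijection and all the pseudoalgebra data are natural in that index, Kelly's strategy carries over essentially verbatim.
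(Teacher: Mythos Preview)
Your proposal is correct and follows the same approach as the paper: both construct the bijection via the mate correspondence, with the same explicit formulas for $\overline r$ and $\overline\ell$. The paper differs only in that it verifies the transport of the two coherence axioms by explicit diagram chases rather than by invoking the abstract compatibility of mates with pasting; your higher-level argument is valid, and the ``bookkeeping'' you flag is exactly what the paper's diagrams spell out.
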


\begin{proof}
    We write $\nu \colon 1_A \tto r \ell$ and $\varepsilon \colon \ell r \tto 1_B$ for the unit and counit of the adjunction respectively. Suppose that $r$ has a lax morphism structure. We define a putative colax morphism structure on $\ell$ by setting $\overline{\ell}_f$ to be the mate of $\overline{r}_f$ for each $f \colon JX \to A$ in $\E$.
    \begin{equation} \label{induced-morphism-structure-on-ell}
    \overline{\ell}_f \defeq \Big( \ell f^a \xto{\ell(\nu f)^a} \ell(r\ell f)^a \xto{\ell\overline r_{\ell f}} \ell r(\ell f)^b \xto{\varepsilon(\ell f)^b} (\ell f)^b \Big) \end{equation}
    For the associativity law, we need to fill the following diagram.
    \[\begin{tikzcd}
        {\ell \big( g^a f)^a} & {(\ell g^a f)^b} \\
        {\ell g^a f^*} \\
        {(\ell g)^a f^*} & {\big( (\ell g)^b f \big)^b}
        \arrow["{\overline{\ell}_{g^a f}}", from=1-1, to=1-2]
        \arrow["{\ell \hat{a}_{g, f}}"', from=1-1, to=2-1]
        \arrow["{(\overline{\ell}_g f)^b}", from=1-2, to=3-2]
        \arrow["{\overline{\ell}_g f^*}"', from=2-1, to=3-1]
        \arrow["{\hat{b}_{\ell g, f}}", from=3-2, to=3-1]
    \end{tikzcd}\]
    We do this as follows; the unlabelled cells use the functoriality of vertical composition and ($*$) is the right triangle law for the adjunction. Note that the boundary of the following diagram is by definition the boundary of the preceding diagram.
    \[\begin{tikzcd}[column sep=3.3em]
        {\ell \big( g^a f)^a} && {\ell ( r \ell g^a f)^a} & {\ell r (\ell g^a f)^b} & {(\ell g^a f)^b} \\
        {\ell g^a f^*} & {\ell \big( (r \ell g)^a f \big)^a} & {\ell \big( r \ell (r \ell g)^a f \big)^a} & {\ell r \big( \ell (r \ell g)^a f \big)^b} & {\big( \ell (r \ell g)^a f \big)^b} \\
        {\ell (r \ell g)^a f^*} & {\ell \big( r (\ell g)^b f \big)^a} & {\ell \big( r \ell r (\ell g)^b f \big)^a} & {\ell r \big( \ell r (\ell g)^b f \big)^b} & {\big( \ell r (\ell g)^b f \big)^b} \\
        && {\ell \big( r (\ell g)^b f \big)^a} \\
        {\ell r (\ell g)^a f^*} &&& {\ell r \big( (\ell g)^b f \big)^b} \\
        {(\ell g)^a f^*} &&&& {\big( (\ell g)^b f \big)^b}
        \arrow["{\ell (\nu g^a f)^a}", from=1-1, to=1-3]
        \arrow["{\ell \hat{a}_{g, f}}"{description}, from=1-1, to=2-1]
        \arrow["{\ell ((\nu g)^a f)^a}"{description}, from=1-1, to=2-2]
        \arrow["{\ell \overline{r}_{\ell g^a f}}", from=1-3, to=1-4]
        \arrow[""{name=0, anchor=center, inner sep=0}, "{\ell (r \ell (\nu g)^a f)^a}"{description}, from=1-3, to=2-3]
        \arrow["{\varepsilon (\ell g^a f)^b}", from=1-4, to=1-5]
        \arrow[""{name=1, anchor=center, inner sep=0}, "{\ell r( \ell (\nu g)^a f )^b}"{description}, from=1-4, to=2-4]
        \arrow["{( \ell (\nu g)^a f )^b}"{description}, from=1-5, to=2-5]
        \arrow["{\natof\hat{a}}"{description}, draw=none, from=2-1, to=2-2]
        \arrow["{\ell (\nu g)^a f^*}"{description}, from=2-1, to=3-1]
        \arrow["{\ell (\nu (r \ell g)^a f )^a}"', from=2-2, to=2-3]
        \arrow["{\ell \hat{a}_{r \ell g, f}}"{description}, from=2-2, to=3-1]
        \arrow["{\ell (\overline{r}_{\ell g} f )^b}"{description}, from=2-2, to=3-2]
        \arrow["{\ell \overline{r}_{\ell (r \ell g)^a f}}"', from=2-3, to=2-4]
        \arrow[""{name=2, anchor=center, inner sep=0}, "{\ell (r \ell \overline{r}_{\ell g} f )^b}"{description}, from=2-3, to=3-3]
        \arrow["{\varepsilon ( \ell (r \ell g)^a f )^b}"', from=2-4, to=2-5]
        \arrow[""{name=3, anchor=center, inner sep=0}, "{\ell r ( \ell \overline{r}_{\ell g} f )^b}"{description}, from=2-4, to=3-4]
        \arrow["{( \ell \overline{r}_{\ell g} f )^b}"{description}, from=2-5, to=3-5]
        \arrow[""{name=4, anchor=center, inner sep=0}, "{\ell r_{\ell g} f^*}"{description}, from=3-1, to=5-1]
        \arrow["{\ell ( \nu r (\ell g)^b f )^a}"', from=3-2, to=3-3]
        \arrow[""{name=5, anchor=center, inner sep=0}, curve={height=12pt}, Rightarrow, no head, from=3-2, to=4-3]
        \arrow["{\ell \overline{r}_{\ell r (\ell g)^b f}}"', from=3-3, to=3-4]
        \arrow[""{name=6, anchor=center, inner sep=0}, "{\ell ( r \varepsilon (\ell g)^b f )^a}"{description}, from=3-3, to=4-3]
        \arrow["{\varepsilon ( \ell r (\ell g)^b f )^b}"', from=3-4, to=3-5]
        \arrow[""{name=7, anchor=center, inner sep=0}, "{\ell r ( \varepsilon (\ell g)^b f )^b}"{description}, from=3-4, to=5-4]
        \arrow["{( \varepsilon (\ell g)^b f )^b}"{description}, from=3-5, to=6-5]
        \arrow["{\ell \overline{r}_{(\ell g)^b f}}"{description}, from=4-3, to=5-4]
        \arrow["{\varepsilon (\ell g)^a f^*}"{description}, from=5-1, to=6-1]
        \arrow["{\ell r \hat{b}_{\ell g, f}}"{description}, from=5-4, to=5-1]
        \arrow["{\varepsilon ( (\ell g)^b f )^b}"{description}, from=5-4, to=6-5]
        \arrow["{\hat{b}_{\ell g, f}}", from=6-5, to=6-1]
        \arrow["{\natof\overline{r}}"{description}, draw=none, from=0, to=1]
        \arrow["{\natof\overline{r}}"{description}, draw=none, from=2, to=3]
        \arrow["{\morassoc r}"{description}, draw=none, from=4, to=4-3]
        \arrow["{\text{($*$)}}"{description}, draw=none, from=5, to=3-3]
        \arrow["{\natof\overline{r}}"{description}, draw=none, from=6, to=7]
    \end{tikzcd}\]
    The proof of the unit law is similar, as shown below, where ($*$) is the left triangle law.
    \[\begin{tikzcd}[column sep=large]
    	{\ell f} &&& {\ell f} \\
    	& {\ell r \ell f} \\
    	{\ell f^a i_X} & {\ell ( r \ell f)^a i_X} & {\ell r (\ell f)^b i_X} & {(\ell f)^b i_X}
    	\arrow[""{name=0, anchor=center, inner sep=0}, Rightarrow, no head, from=1-1, to=1-4]
    	\arrow["{\ell \nu f}"{description}, from=1-1, to=2-2]
    	\arrow[""{name=1, anchor=center, inner sep=0}, "{\ell \tilde{a}_f}"', from=1-1, to=3-1]
    	\arrow["{\tilde{b}_{\ell f}}", from=1-4, to=3-4]
    	\arrow["{\varepsilon  \ell f}"{description}, from=2-2, to=1-4]
    	\arrow[""{name=2, anchor=center, inner sep=0}, "{\ell  \tilde{a}_{r \ell f}}"{description}, from=2-2, to=3-2]
    	\arrow[""{name=3, anchor=center, inner sep=0}, "{\ell r \tilde{b}_{\ell f}}"{description}, curve={height=-12pt}, from=2-2, to=3-3]
    	\arrow["{\ell (\nu f)^a i_X}"', from=3-1, to=3-2]
    	\arrow["{\ell \overline{r}_{\ell f} i_X}"', from=3-2, to=3-3]
    	\arrow["{\varepsilon (\ell f)^b i_X}"', from=3-3, to=3-4]
    	\arrow["{\natof\tilde{a}}"{description}, draw=none, from=1, to=2]
    	\arrow["{\text{($*$)}}"{description}, draw=none, from=2-2, to=0]
    	\arrow["{\morunit r}"{description}, draw=none, from=3, to=3-2]
        %
        %
        \arrow[
            "{\overline{\ell}_{f} i}",
            swap,
            rounded corners,
            to path=
            { -- ([yshift=-.8cm]\tikztostart.center)
            -- ([yshift=-.8cm]\tikztotarget.center)
            \tikztonodes
            -- (\tikztotarget.south) },
            from=3-1, to=3-4
        ]
    \end{tikzcd}\]
    Thus $(\ell, \overline\ell)$ is a colax morphism.
    A dual argument shows that every colax morphism structure on $\ell$ induces a lax morphism structure on $r$, by defining $\overline r_g \colon (rg)^a \to rg^b$ to be the mate of $\overline{l}_f$ for each 1-cell $g \colon JX \to B$ in $\E$.
    \begin{equation}
        \label{mate-of-r}
        \overline r_g \defeq \Big( (rg)^a \xto{\nu(rg)^a} r\ell(rg)^a \xto{r\overline\ell_{rg}} r(\ell rg)^b \xto{r(\varepsilon g)^b} rg^b \Big)
    \end{equation}
    Finally, these assignments form a bijection by the usual calculus of mates.
\end{proof}

\begin{proposition}
    \label{adjunctions-lift-to-algebras}
    Under the same assumptions as \cref{doctrinal-adjunction}:
    \begin{enumerate}
    \item \label{adjunction-lifts-to-AlgTl}
        the adjunction $\ell \adj r$ lifts to $\AlgTl$ if and only if $\overline\ell$ is invertible, \ie{} $(\ell, \overline \ell)$ is a pseudomorphism from $\algA$ to $\algB$;
    \item \label{adjunction-lifts-to-AlgTc}
        the adjunction $\ell \adj r$ lifts to $\AlgTc$ if and only if $\overline r$ is invertible, \ie{} $(r, \overline r)$ is a pseudomorphism.
    \end{enumerate}
    Moreover, in each case the inverse is constructed by taking mates.
\end{proposition}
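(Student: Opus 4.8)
The plan is to prove \cref{adjunction-lifts-to-AlgTl} and deduce \cref{adjunction-lifts-to-AlgTc} by duality, interchanging lax and colax morphisms and the roles of $\ell$ and $r$ (equivalently, applying the first part to the relative pseudomonad on $\E\co$ obtained by reversing $2$\nobreakdash-cells). Throughout I fix a pair $\overline\ell$, $\overline r$ corresponding under the bijection of \cref{doctrinal-adjunction}, so that $\overline r$ is the mate \eqref{mate-of-r} of $\overline\ell$ and $\overline\ell$ is the mate \eqref{induced-morphism-structure-on-ell} of $\overline r$. I will also use the elementary observation that, since every $2$\nobreakdash-cell other than $\overline\ell$ occurring in the colax-morphism axioms for $(\ell, \overline\ell)$ is invertible, $\overline\ell$ is invertible if and only if $(\ell, \overline\ell)$ is a pseudomorphism, and that in this case $\overline\ell\inv$ (the $2$\nobreakdash-cells reversed) is a \emph{lax} morphism structure on $\ell$.

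For the ``if'' direction I will suppose $\overline\ell$ invertible and show that $(\ell, \overline\ell\inv) \adj (r, \overline r)$ is an adjunction in $\AlgTl$ with unit $\nu$ and counit $\varepsilon$. Since $U_T \colon \AlgTl \to \E$ is a locally faithful strict pseudofunctor (\cref{forgetful-pseudofunctor}) and the two triangle laws already hold for $\ell \adj r$ in $\E$, the triangle laws in $\AlgTl$ are automatic; so it remains only to check that $\nu$ and $\varepsilon$ satisfy the compatibility condition of \cref{transformation} relative to the identity and composite lax morphism structures \eqref{identity-lax-morphism} and \eqref{composite-lax-morphism}. Unfolding those conditions and substituting the definition \eqref{induced-morphism-structure-on-ell} of $\overline\ell$ as the mate of $\overline r$, each reduces to an equation between pastings of $\overline r$, $\nu$, $\varepsilon$ and structural isomorphisms, which I expect to follow from the naturality of $\nu$ and $\varepsilon$ together with the triangle laws, by diagram chases of the same character as those in the proof of \cref{doctrinal-adjunction}.

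For the ``only if'' direction I will suppose there is an adjunction $(\ell, \overline\ell') \adj (r, \overline r)$ in $\AlgTl$ lying over $\ell \adj r$ (any lift of $\ell \adj r$ to $\AlgTl$ is of this shape for a suitable corresponding pair $\overline\ell, \overline r$), so that $\nu$ and $\varepsilon$ are transformations with respect to $\overline\ell'$ and $\overline r$. Composing these transformation axioms with the mate $\overline\ell$ of $\overline r$ and invoking the triangle laws, I will conclude that $\overline\ell'_f$ and $\overline\ell_f$ are mutually inverse for every $f \colon JX \to A$; hence $\overline\ell$ is invertible, with inverse $\overline\ell'$, and in particular $(\ell, \overline\ell)$ is a pseudomorphism. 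The final clause of the statement is then immediate: in both directions the left adjoint carries the structure $\overline\ell\inv$ and the right adjoint the structure $\overline r$, and these are mates of one another by construction.

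The main obstacle is the calculus-of-mates bookkeeping common to both directions: matching the transformation axioms of \cref{transformation} with consequences of the adjunction's naturality and triangle laws requires careful tracking of the structural isomorphisms that the bicategorical notation suppresses, but it introduces no ideas beyond those already appearing in the proof of \cref{doctrinal-adjunction}, so I would present these verifications as routine diagram chases rather than in full.
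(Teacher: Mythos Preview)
Your proposal is correct and follows essentially the same approach as the paper: prove (1) by showing that $\nu$ and $\varepsilon$ are transformations when $\overline\ell$ is invertible (using the mate description of $\overline\ell$ and the triangle laws), and conversely that the transformation axioms for a lifted adjunction force the lax structure on $\ell$ to be inverse to the colax mate of $\overline r$; then deduce (2) by the lax/colax duality. The paper carries out exactly these diagram chases explicitly, with the minor notational difference that it reuses the symbol $\overline\ell$ for the lax structure in the lifted adjunction rather than introducing your $\overline\ell'$.
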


\begin{proof}
We prove (1); the argument for (2) is dual. Suppose $(\ell, \overline \ell)$ is a pseudomorphism; we need to show that the unit $\nu \colon 1_A \tto r \ell$ and counit $\varepsilon \colon \ell r \tto 1_B$ each satisfy the transformation axiom \eqref{transformation}, so that $(\ell, \overline{\ell}^{-1}) \colon \algA \rightleftarrows \algB \cocolon (r, \overline r)$ is an adjunction in $\AlgTl$. This is shown by the two diagrams below, in which ($*$) indicates the left triangle law in each case.
\begin{center}
\begin{tikzcd}[column sep = 3.7em, row sep = 2.2em]
	{f^a} && {(r \ell f)^a} \\
	& {r \ell f^a} & {r \ell (r \ell f)^a} \\
	& {r (\ell f)^b} & {r (\ell r \ell f)^b} \\
	&& {r (\ell f)^b} \\
	{f^a} && {r \ell f^a}
	\arrow["{(\nu f)^a}", from=1-1, to=1-3]
	\arrow["{\nu f^a}"{description}, from=1-1, to=2-2]
	\arrow[Rightarrow, no head, from=1-1, to=5-1]
	\arrow["{\nu (r \ell f)^a}", from=1-3, to=2-3]
	\arrow["{r \ell (\nu f)^a}"{description}, from=2-2, to=2-3]
	\arrow["{r (\overline{\ell}_f)^{-1}}"', from=2-2, to=3-2]
	\arrow["{\natof\overline{\ell}}"{description}, draw=none, from=2-2, to=3-3]
	\arrow["{r (\overline{\ell}_{r \ell f})\inv}", from=2-3, to=3-3]
	\arrow["{r  (\ell \nu f)^b}"{description}, from=3-2, to=3-3]
	\arrow[""{name=0, anchor=center, inner sep=0}, curve={height=12pt}, Rightarrow, no head, from=3-2, to=4-3]
	\arrow["{r (\varepsilon \ell f)^b}", from=3-3, to=4-3]
	\arrow["{r \ell_f}", from=4-3, to=5-3]
	\arrow["{\nu f^a}"', from=5-1, to=5-3]
	\arrow["{(*)}"{description}, draw=none, from=3-3, to=0]
    \arrow[
        "{\overline{r}_{\ell f}}",
        rounded corners,
        to path=
        { -- ([xshift=1.4cm]\tikztostart.center)
        -- ([xshift=1.4cm]\tikztotarget.center)
        \tikztonodes
        -- (\tikztotarget.east)
        },
        from=1-3, to=4-3
    ]
\end{tikzcd}
\hspace{1mm}
\begin{tikzcd}[column sep = 3.7em, row sep = 2.2em]
	{(\ell r g)^b} && {g^b} \\
	{\ell (r g)^a} \\
	{\ell r \ell (r g)^a} & {\ell (r g)^a} \\
	{\ell r (\ell r g)^b} & {(\ell r g)^b} \\
	{\ell r g^b} && {g^b}
	\arrow["{(\varepsilon g)^b}", from=1-1, to=1-3]
	\arrow["{\overline{\ell}_{r g}}"', from=1-1, to=2-1]
	\arrow[Rightarrow, no head, from=1-3, to=5-3]
	\arrow["{\ell \nu (r g)^a}"', from=2-1, to=3-1]
	\arrow[""{name=0, anchor=center, inner sep=0}, curve={height=-12pt}, Rightarrow, no head, from=2-1, to=3-2]
	\arrow["{\varepsilon \ell (rg)^a}"{description}, from=3-1, to=3-2]
	\arrow["{\ell r (\overline{\ell}_{r g})\inv}"', from=3-1, to=4-1]
	\arrow["{(\overline{\ell}_{rg})\inv}"{description}, from=3-2, to=4-2]
	\arrow["{\varepsilon (\ell r g)^b}"{description}, from=4-1, to=4-2]
	\arrow["{\ell r (\varepsilon g)^b}"', from=4-1, to=5-1]
	\arrow["{(\varepsilon g)^b}"{description}, from=4-2, to=5-3]
	\arrow["{\varepsilon g^b}"', from=5-1, to=5-3]
	\arrow["{\text{($*$)}}"{description}, draw=none, from=0, to=3-1]
    %
    %
    \arrow[
            "{\ell \overline{r}_g}"',
            rounded corners,
            to path=
            { -- ([xshift=-1.4cm]\tikztostart.center)
            -- ([xshift=-1.4cm]\tikztotarget.center)
            \tikztonodes
            -- (\tikztotarget.west)
            },
            from=2-1, to=5-1
        ]
\end{tikzcd}
\end{center}

For the converse, suppose that $\ell$ acquires a lax algebra morphism structure $(\ell, \overline \ell)$ such that $(\nu, \varepsilon) \colon (\ell, \overline \ell) \adj (r, \overline r)$ is an adjunction in $\AlgTl$. Then the 2-cells $\nu$ and $\varepsilon$ must lift to $\AlgTl$, so the cells labelled ($*$), and hence both diagrams below, must commute.
\begin{center}
\begin{tikzcd}
	{\ell f^a} & {\ell (r \ell f)^a} & {\ell r (\ell f)^b} & {(\ell f)^b} \\
	\\
	{\ell f^a} && {\ell r \ell f^a} & {\ell f^a}
	\arrow["{\ell (\nu f)^a}", from=1-1, to=1-2]
	\arrow[""{name=0, anchor=center, inner sep=0}, Rightarrow, no head, from=1-1, to=3-1]
	\arrow["{\ell \overline{r}_{\ell f}}", from=1-2, to=1-3]
	\arrow["{\varepsilon (\ell f)^b}", from=1-3, to=1-4]
	\arrow[""{name=1, anchor=center, inner sep=0}, "{\ell r \overline{\ell}_f}"{description}, from=1-3, to=3-3]
	\arrow["{\overline{\ell}_f}", from=1-4, to=3-4]
	\arrow["{\ell \nu f^a}"', from=3-1, to=3-3]
	\arrow["{\varepsilon \ell f^a}"', from=3-3, to=3-4]
	\arrow["{\text{($*$)}}"{description}, draw=none, from=0, to=1]
\end{tikzcd}
\quad
\begin{tikzcd}
	{(\ell f)^b} & {(\ell r \ell g )^a} && { (\ell g)^b} \\
	\\
	{\ell f^a} & {\ell (r \ell g)^a} & {\ell r g^b} & {(\ell g)^b}
	\arrow["{(\ell \nu f)^b}", from=1-1, to=1-2]
	\arrow[""{name=0, anchor=center, inner sep=0}, "{\overline{\ell}_f}"', from=1-1, to=3-1]
	\arrow["{(\varepsilon \ell g)^b}", from=1-2, to=1-4]
	\arrow[""{name=1, anchor=center, inner sep=0}, "{\overline{\ell}_{r \ell g}}"{description}, from=1-2, to=3-2]
	\arrow[""{name=2, anchor=center, inner sep=0}, Rightarrow, no head, from=1-4, to=3-4]
	\arrow["{\ell (\nu f)^a}"', from=3-1, to=3-2]
	\arrow["{\ell \overline{r}_g}"', from=3-2, to=3-3]
	\arrow["{\varepsilon g^b}"', from=3-3, to=3-4]
	\arrow["{\natof\overline{l}}"{description}, draw=none, from=0, to=1]
	\arrow["{\text{($*$)}}"{description}, draw=none, from=1, to=2]
\end{tikzcd}
\end{center}
Now, the bottom composite on the left and the top composite on the right are, by the left triangle law, both equal to the identity. So $\overline{\ell}_f$ is invertible, with inverse the mate of $\overline{r}_f$ defined in \eqref{mate-of-r}.
\end{proof}

\begin{remark}
    In the preceding theorem, the induced colax morphism structure on $\overline l_f$ is invertible if and only if the composite \eqref{induced-morphism-structure-on-ell} is invertible. In the non-relative setting, Nunes calls this the \emph{Beck--Chevalley condition} on the lax morphism $r$~\cite[Definition 1.4.13]{nunes2017pseudomonads}.
\end{remark}

A variant of doctrinal adjunction concerns when an object of $\E$ is reflective (or coreflective) in a pseudoalgebra, giving sufficient conditions for pseudoalgebra structure to be inherited via the reflection. In particular, these conditions hold when the reflection is an equivalence. In this way, we obtain a \emph{transport of structure} theorem for pseudoalgebras, which shows that pseudoalgebra structure may be transferred across an equivalence in $\E$.

\begin{corollary}[Transport of structure]
    \label{transport-of-structure}
    Let $(\nu, \varepsilon) \colon \ell \colon A \leftrightarrows B \colon r$ be a coreflective adjunction in $\E$ (so that $\nu$ is invertible) and let $\algB$ be a $T$-pseudoalgebra. If the 2-cell
    \begin{equation}
        \label{transport-2-cell}
        r (\ell r f)^b \xto{r(\varepsilon f)^b} r f^b
    \end{equation}
    is invertible for every $f \colon JX \to B$, then:
    \begin{enumerate}
        \item $A$ acquires a $T$-pseudoalgebra structure;
        \item $r$ acquires the structure of an algebra pseudomorphism and $\ell$ acquires the structure of a colax algebra morphism;
        \item the unit $\nu$ and counit $\varepsilon$ are algebra transformations.
    \end{enumerate}
    Hence, the adjunction $(\nu, \varepsilon) \colon \ell \dashv r$ lifts to an adjunction in $\AlgTc$. In particular, if $(\nu, \varepsilon) \colon \ell \dashv r$ is an adjoint equivalence then it lifts to $\AlgT$. Conversely, if
    	$(\nu, \varepsilon) \colon \ell \dashv r$
    lifts to an adjunction in $\AlgTc$ then the 2-cell \eqref{transport-2-cell}
    is invertible for every $f \colon JX \to B$ in $\E$.
\end{corollary}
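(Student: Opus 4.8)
The plan is to transport the pseudoalgebra structure of $\algB$ along the adjunction and then reduce the remaining assertions to \cref{doctrinal-adjunction,adjunctions-lift-to-algebras}. Write $\nu \colon 1_A \tto r\ell$ (invertible, by coreflectivity) and $\varepsilon \colon \ell r \tto 1_B$ for the unit and counit. For each $f \colon JX \to A$ set $f^a \defeq r \c (\ell f)^b$, functorially in $f$, with structural $2$-cells
\[
    \tilde a_f \defeq \Big( f \xto{\nu f} r\ell f \xto{r\tilde b_{\ell f}} r(\ell f)^b i_X = f^a i_X \Big)
\]
and, for $f \colon JX \to TY$ and $g \colon JY \to A$,
\[
    \hat a_{g, f} \defeq \Big( (g^a f)^a = r\big(\ell r (\ell g)^b f\big)^b \xto{r(\varepsilon (\ell g)^b f)^b} r\big((\ell g)^b f\big)^b \xto{r\hat b_{\ell g, f}} r(\ell g)^b f^* = g^a f^* \Big).
\]
Here $\tilde a_f$ is invertible because $\nu$ and $\tilde b$ are, and $\hat a_{g, f}$ is invertible because $\hat b$ is and because the middle $2$-cell is an instance of \eqref{transport-2-cell} at the $1$-cell $(\ell g)^b \c f \colon JX \to B$, hence invertible by hypothesis. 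The first task is to verify that $\algA$, so defined, satisfies the associativity and unitality axioms of \cref{pseudoalgebra}; on unfolding the definitions these reduce to the corresponding axioms for $\algB$, the naturality of $\hat b$ and $\tilde b$, and the two triangle laws for $\ell \adj r$, by pasting arguments of the same character as those in the proof of \cref{doctrinal-adjunction}.

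Next, equip $r$ with the morphism structure $\overline r_g \defeq r(\varepsilon g)^b \colon (rg)^a \to rg^b$, which is precisely \eqref{transport-2-cell} and hence invertible; its two coherence conditions (again reducing to the axioms of $\algB$ and the triangle laws) make $(r, \overline r) \colon \algB \to \algA$ a pseudomorphism. Since both $A$ and $B$ now carry pseudoalgebra structure, \cref{doctrinal-adjunction} applies to $\ell \adj r$: the mate of $\overline r$ furnishes the colax morphism structure $\overline\ell$ on $\ell$ of \eqref{induced-morphism-structure-on-ell}, giving parts (1) and (2). Because $\overline r$ is invertible, \cref{adjunctions-lift-to-algebras}(2) shows $\ell \adj r$ lifts to an adjunction in $\AlgTc$; unwinding that construction, the unit and counit of the lifted adjunction are $\nu$ and $\varepsilon$, so these are algebra transformations --- part (3) --- and the ``Hence'' clause follows.

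For the ``in particular'' clause, suppose $(\nu, \varepsilon)$ is an adjoint equivalence, so $\varepsilon$ is also invertible. Then $r(\varepsilon f)^b$ is invertible by functoriality of $(-)^b$ and of $r$, so \eqref{transport-2-cell} holds automatically and the hypotheses above are in force. Moreover, in \eqref{induced-morphism-structure-on-ell} each factor of $\overline\ell_f$ is invertible ($\ell(\nu f)^a$ since $\nu$ is invertible, $\ell\overline r_{\ell f}$ since $\overline r$ is, and $\varepsilon(\ell f)^b$ since $\varepsilon$ is), so $(\ell, \overline\ell)$ is a pseudomorphism as well, and the lifted adjunction lives in $\AlgT$ by \cref{adjunctions-lift-to-algebras}(1).

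Finally, for the converse, suppose $\ell \adj r$ lifts to an adjunction in $\AlgTc$, so that $A$ carries some pseudoalgebra structure, $\ell$ and $r$ are colax morphisms, and $\nu, \varepsilon$ are transformations. Let $\overline r$ be the lax morphism structure on $r$ corresponding under \cref{doctrinal-adjunction} to the colax structure $\overline\ell$ on $\ell$; by \cref{adjunctions-lift-to-algebras}(2), $\overline r$ is invertible, hence so is the colax structure $\overline r^{-1}$ on $r$. In the mate formula \eqref{mate-of-r}, $\overline r_g = r(\varepsilon g)^b \c r\overline\ell_{rg} \c \nu(rg)^a$, the factor $\nu(rg)^a$ is invertible because $\nu$ is, and $r\overline\ell_{rg}$ is invertible because the transformation axiom for the invertible $\nu$ forces the colax structure on the composite $r\ell$ to be invertible, and that composite is built from $r\overline\ell$ together with the invertible colax structure on $r$. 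Hence $r(\varepsilon g)^b$, which is \eqref{transport-2-cell}, is invertible for every $g \colon JX \to B$. I expect the pseudoalgebra- and morphism-axiom verifications of the first two paragraphs to be the genuinely laborious step; once these are in place, all remaining assertions are formal consequences of doctrinal adjunction and the calculus of mates.
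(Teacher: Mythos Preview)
Your forward direction matches the paper's proof almost verbatim: the transported extension operator $f^a = r(\ell f)^b$, the 2-cells $\tilde a$ and $\hat a$, the pseudomorphism structure $\overline r_g = r(\varepsilon g)^b$, and the appeal to \cref{doctrinal-adjunction} and \cref{adjunctions-lift-to-algebras}(2) are exactly what the paper does. (One small slip: in the ``in particular'' clause you cite \cref{adjunctions-lift-to-algebras}(1), which lands in $\AlgTl$; the point is that once both $\overline\ell$ and $\overline r$ are invertible the adjunction lives in $\AlgT$, and the paper additionally invokes local conservativity of $U_T$ to see it remains an adjoint equivalence there.)

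For the converse you take a genuinely different route. The paper first checks that $r(\varepsilon f)^b i_X$ is invertible via the right triangle law, and then runs a 2-out-of-3 argument on the composite $r(\ell r f)^b \to r f^b \to (rf)^a$, exhibiting the latter as an explicit chain of invertible 2-cells built from $\theta$, $\hat b$, $\tilde b$, and $\overline r$. Your argument instead factors the mate formula $\overline r_g = r(\varepsilon g)^b \circ r\overline\ell_{rg} \circ \nu(rg)^a$ and shows the two rightmost factors are invertible: $\nu(rg)^a$ because $\nu$ is, and $r\overline\ell_{rg}$ because the transformation axiom for the invertible $\nu$ makes $\overline{r\ell}_f$ conjugate to the (invertible) identity colax structure, and $\overline{r\ell}_f$ decomposes as $\overline r^c_{\ell f} \circ r\overline\ell_f$ with $\overline r^c$ invertible by \cref{adjunctions-lift-to-algebras}(2). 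This is correct and arguably more conceptual --- it stays entirely at the level of mates and the transformation axiom, avoiding the paper's explicit pasting diagram --- though the paper's argument has the virtue of being self-contained and not requiring one to unwind the composite colax structure on $r\ell$.
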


 \begin{proof}
 	We start with the forward direction.
     For the algebra structure on $A$, we define an extension operator as follows:
     \begin{equation}
        \label{reflected-algebra-structure}
        (-)^a_X \defeq \E[JX, A]\xto{\ell \c -} \E[JX, B]\xto{(-)^b_X} \E[TX, B] \xto{r \c -} \E[TX, A]
     \end{equation}
     We define $\tilde a$ by $\tilde{a}_f \defeq \big(  f\xto{\nu f} r \ell f \xto{f \tilde{b}_{\ell f}} r (\ell f)^b i_X = f^a i_X \big)$ for each $f \colon JX \to A$; this is invertible since $\nu$ is. For $\hat a$, we note that $(g^ a f)^a = r \big( \ell r (\ell g)^b f  \big)^b$, so that we may define $\hat a$ as follows:
     \[
         \hat{a}_{g, f} \defeq \Big( (g^ a f)^a = r \big( \ell r (\ell g)^b f  \big)^b \xto{r(\varepsilon(\ell g)^b f)^b} r \big( (\ell g)^b f \big)^b \xto{ r\hat{b}_{\ell g, f}} r (\ell g)^b f^* = g^a f^* \Big)
     \]
    The invertibility condition \eqref{transport-2-cell} ensures this is invertible. Next, we need to verify the associativity and unit laws in \cref{pseudoalgebra}. For the associativity law, observe first that the following equations hold:
     \[
         \big( (h^a g)^a f \big)^a =  r\big[ \ell r \big( \ell r (\ell h)^b g \big)^b f \big]^b
         \qquad\qquad
         (h^a g^* f)^a = r \big( \ell r (\ell h)^b q^* f \big)^b
     \]
    The associativity law then unwinds to the next diagram, in which the unlabelled shapes are instances of the functoriality of composition.
    \[\begin{tikzcd}[column sep=5.8em, row sep = 2.3em]
    	{r \big[ \ell r  \big(  \ell r (\ell h)^b g \big)^b f \big]^b} & {r \big[ \big( \ell r (\ell h)^b g \big)^b f \big]^b} & {r \big( \ell r (\ell h)^b g \big)^b f^*} \\
    	{r \big[ \ell r  \big(  (\ell h)^b g \big)^b f \big]^b} \\
    	{r  \big[  \ell r (\ell h)^b g^* f \big]^b} & {r \big[ \big( (\ell h)^b g \big)^b f \big]^b} \\
    	{r  \big[ (\ell h)^b g^* f \big]^b} && {r  \big( (\ell h)^b g \big)^b f^*} \\
    	{r (\ell h)^b \big( g^* f \big)^*} && {r (\ell h)^b g^* f^* }
    	\arrow["{r [ \varepsilon \big(  \ell r (\ell h)^b g )^b f ]^b}", from=1-1, to=1-2]
    	\arrow["{r [  \ell r ( \varepsilon (\ell h)^b g )^b f ]^b}"{description}, from=1-1, to=2-1]
    	\arrow["{r \hat{b}_{\ell r (\ell h)^b g, f}}", from=1-2, to=1-3]
    	\arrow[""{name=0, anchor=center, inner sep=0}, "{r \big[ \big( \varepsilon (\ell h)^b g \big)^b f \big]^b}"{description}, from=1-2, to=3-2]
    	\arrow[""{name=1, anchor=center, inner sep=0}, "{r \big( \varepsilon (\ell h)^b g \big)^b f^*}"{description}, from=1-3, to=4-3]
    	\arrow["{r [  \ell r \hat{b}_{\ell h, g} f ]^b}"{description}, from=2-1, to=3-1]
    	\arrow["{r [ \varepsilon ((\ell h)^b g )^b f ]^b}"{description}, from=2-1, to=3-2]
    	\arrow["{r [ \varepsilon (\ell h)^b g^* f \big]^b}"{description}, from=3-1, to=4-1]
    	\arrow["{r \big[ \hat{b}_{\ell h, g}f \big]^b}"{description}, from=3-2, to=4-1]
    	\arrow["{r \hat{b}_{(\ell h)^b g, f}}"{description}, from=3-2, to=4-3]
    	\arrow["{r \hat{b}_{\ell h, g^*f}}"{description}, from=4-1, to=5-1]
    	\arrow["{\ell \hat{b}_{rh, g} f^*}"{description}, from=4-3, to=5-3]
    	\arrow[""{name=2, anchor=center, inner sep=0}, "{r (\ell h)^b \mu_{g, f}}"', from=5-1, to=5-3]
    	\arrow["{\natof\hat{b}}"{description}, draw=none, from=0, to=1]
    	\arrow["{\algmu{B}}"{description}, draw=none, from=3-2, to=2]
        %
        %
        \arrow[
            "{\hat{a}_{h^a g, f}}",
            rounded corners,
            to path=
            { -- ([yshift=.9cm]\tikztostart.center)
            -- ([yshift=.9cm]\tikztotarget.center)
            \tikztonodes
            -- (\tikztotarget.north)
            },
            from=1-1, to=1-3
        ]
        %
        \arrow[
                "{\hat{a}_{h,g} f^*}",
                rounded corners,
                to path=
                { -- ([xshift=1.6cm]\tikztostart.center)
                -- ([xshift=1.6cm]\tikztotarget.center)
                \tikztonodes
                -- (\tikztotarget.east)
                },
                from=1-3, to=5-3
            ]
        %
        \arrow[
                "{(\hat{a}_{h,g} f)^a}"',
                rounded corners,
                to path=
                { -- ([xshift=-1.7cm]\tikztostart.center)
                -- ([xshift=-1.7cm]\tikztotarget.center)
                \tikztonodes
                -- (\tikztotarget.west)
                },
                from=1-1, to=3-1
            ]
        %
        \arrow[
                "{\hat{a}_{h,g^* f}}"',
                rounded corners,
                to path=
                { -- ([xshift=-1.7cm]\tikztostart.center)
                -- ([xshift=-1.7cm]\tikztotarget.center)
                \tikztonodes
                -- (\tikztotarget.west)
                },
                from=3-1, to=5-1
            ]
    \end{tikzcd}\]
    For the unit law we make use of the left triangle law for the adjunction $\ell \dashv r$:
    \[\begin{tikzcd}[column sep=huge]
        	{r (\ell f)^b} & {r (\ell f)^b} & {r (\ell f)^b 1_{TX}} \\
        	{r (\ell r \ell f)^b } \\
        	{r \big( \ell r (\ell f)^b i_X \big)^b} & {r\big((\ell f)^b i_X \big)^b} & {r (\ell f)^b (i_X)^*}
        	\arrow[Rightarrow, no head, from=1-1, to=1-2]
        	\arrow["{r(\ell \nu f)^b}"', from=1-1, to=2-1]
        	\arrow[""{name=0, anchor=center, inner sep=0}, "\iso", from=1-2, to=1-3]
        	\arrow["{r (\tilde{b}_{\ell f})^b}"{description}, from=1-2, to=3-2]
        	\arrow["{r (\varepsilon \ell f)^b}"{description}, from=2-1, to=1-2]
        	\arrow["{r ( \ell r \tilde{b}_{\ell f})^b}"', from=2-1, to=3-1]
        	\arrow["{r(\varepsilon (\ell f)^b i_X)^b}"', from=3-1, to=3-2]
        	\arrow[""{name=1, anchor=center, inner sep=0}, "{r \tilde{b}_{\ell f, i_X}}"', from=3-2, to=3-3]
        	\arrow["{r (\ell f)^b \theta_X}"', from=3-3, to=1-3]
        	\arrow["{\algtheta B}"{description}, draw=none, from=1, to=0]
        %
        %
        \arrow[
                "{(\tilde{a}_f)^a}"',
                rounded corners,
                to path=
                { -- ([xshift=-1.7cm]\tikztostart.center)
                -- ([xshift=-1.7cm]\tikztotarget.center)
                \tikztonodes
                -- (\tikztotarget.west)
                },
                from=1-1, to=3-1
            ]
        \arrow[
            "{\hat{a}_{f, i}}"',
            rounded corners,
            to path=
            { -- ([yshift=-.9cm]\tikztostart.center)
            -- ([yshift=-.9cm]\tikztotarget.center)
            \tikztonodes
            -- (\tikztotarget.south)
            },
            from=3-1, to=3-3
        ]
    \end{tikzcd}\]
    So $\algA$ is indeed a $T$-pseudoalgebra. Next, we construct a pseudomorphism structure on $r$ and apply doctrinal adjunction (\cref{doctrinal-adjunction}). To this end, define $\overline r$ as follows, which is invertible since \eqref{transport-2-cell} is:
    \[
        \overline{r}_f \defeq \Big( (r f)^a = r (\ell r f)^b \xto{ r (\varepsilon f)^b } r f^b  \Big)
    \]
    For the associativity law for $(r, \overline r)$, observe first that
    $\big( (r g)^a f\big)^a = r \big[  \ell r (\ell r g)^b f \big]^b$. The required diagram is then as shown below.
    \[\begin{tikzcd}[column sep = 4.2em]
        {r \big[\ell r (\ell r g)^b f \big]^b} & {r \big[  (\ell r g)^b f \big]^b} & {r  (\ell r g)^a f^*} \\
        {r \big[\ell r g^b f \big]^b} & {r[  g^b f ]^b} & {r g^a f^*}
        \arrow["{r [ \ell r (\varepsilon g)^b f ]^b}"', from=1-1, to=2-1]
        \arrow["{r [  \varepsilon  g^b f ]^b}"', from=2-1, to=2-2]
        \arrow["{r [ \varepsilon (\ell r g)^b f ]^b}", from=1-1, to=1-2]
        \arrow["{r \hat{b}_{\ell r g, f}}", from=1-2, to=1-3]
        \arrow["{r \hat{b}_{g,f}}"', from=2-2, to=2-3]
        \arrow[""{name=0, anchor=center, inner sep=0}, "{r (\varepsilon g)^a f^*}", from=1-3, to=2-3]
        \arrow[""{name=1, anchor=center, inner sep=0}, "{r [  (\varepsilon g)^b f ]^b}"{description}, from=1-2, to=2-2]
        \arrow["{\natof \hat{b}}"{description}, draw=none, from=1, to=0]
    %
    %
    \arrow[
            "{\hat{a}_{rg, f}}",
            rounded corners,
            to path=
            { -- ([yshift=.8cm]\tikztostart.center)
            -- ([yshift=.8cm]\tikztotarget.center)
            \tikztonodes
            -- (\tikztotarget.north)
            },
            from=1-1, to=1-3
        ]
    \arrow[
            "{\overline{r}_g f^*}",
            rounded corners,
            to path=
            { -- ([xshift=1.5cm]\tikztostart.center)
            -- ([xshift=1.5cm]\tikztotarget.center)
            \tikztonodes
            -- (\tikztotarget.east)
            },
            from=1-3, to=2-3
        ]
    \arrow[
            "{(\overline{r}_f g)^a }"',
            rounded corners,
            to path=
            { -- ([xshift=-1.8cm]\tikztostart.center)
            -- ([xshift=-1.8cm]\tikztotarget.center)
            \tikztonodes
            -- (\tikztotarget.west)
            },
            from=1-1, to=2-1
        ]
    %
    \arrow[
            "{\overline{r}_{g^a f}}"',
            rounded corners,
            to path=
            { -- ([yshift=-.7cm]\tikztostart.center)
            -- ([yshift=-.7cm]\tikztotarget.center)
            \tikztonodes
            -- (\tikztotarget.south)
            },
            from=2-1, to=2-2
        ]
    \end{tikzcd}\]
    Finally, for the unit axiom we apply the right triangle law.
    \[
    \begin{tikzcd}[column sep = 4em]
		rf & rf \\
		{r \ell r f} \\
		{r(\ell r f)^a i_X} & {r f^a i_X}
		\arrow[Rightarrow, no head, from=1-1, to=1-2]
		\arrow["{\nu rf}"', from=1-1, to=2-1]
		\arrow["{r \tilde{a}_f}", from=1-2, to=3-2]
		\arrow[""{name=0, anchor=center, inner sep=0}, "{r \varepsilon f}"{description}, from=2-1, to=1-2]
		\arrow["{r\tilde{a}_{\ell r f}}"', from=2-1, to=3-1]
		\arrow[""{name=1, anchor=center, inner sep=0}, "{r(\varepsilon f)^a i_X}"', from=3-1, to=3-2]
		\arrow["{\natof{\tilde{a}}}"{description}, draw=none, from=0, to=1]
        %
        %
        \arrow[
                "{\tilde{a}_{rf}}"',
                rounded corners,
                to path=
                { -- ([xshift=-1.3cm]\tikztostart.center)
                -- ([xshift=-1.3cm]\tikztotarget.center)
                \tikztonodes
                -- (\tikztotarget.west)
                },
                from=1-1, to=3-1
            ]
        \arrow[
                "{\overline{r}_f i}"',
                rounded corners,
                to path=
                { -- ([yshift=-.9cm]\tikztostart.center)
                -- ([yshift=-.9cm]\tikztotarget.center)
                \tikztonodes
                -- (\tikztotarget.south)
                },
                from=3-1, to=3-2
            ]
    \end{tikzcd}\]
    Thus, $(r, \overline r)$ is a $T$-algebra pseudomorphism $\algB \to \algA$.

	The rest of the claim now follows. Indeed, by \cref{doctrinal-adjunction} we see that $\ell$ acquires a colax morphism structure as in \eqref{induced-morphism-structure-on-ell}. Next, applying \cref{adjunctions-lift-to-algebras}, the adjunction $(\nu, \varepsilon) \colon \ell \adj r$ lifts to $\AlgTc$. In particular, if we start with an adjoint equivalence then $\nu$, $\varepsilon$ and $\overline{r}_f$ are all invertible, so $\ell$ becomes a pseudomorphism and the adjunction lifts to $\AlgT$. Moreover, since the forgetful pseudofunctor $U_T$ is locally conservative (\cref{forgetful-pseudofunctor}), this too is an adjoint equivalence.

	Turning now to the converse, suppose that the adjunction
		$(\nu, \varepsilon) \colon \ell \dashv r$
	lifts to an adjunction
		$(\nu, \varepsilon) \colon (\ell, \overline \ell) \dashv (r, \overline r)$
	in $\AlgTc$. We want to show that $r(\varepsilon f)^b$ is invertible for every
	$f \colon JX \to B$. Observe first that $r(\varepsilon f)^b i_X$ is invertible. This follows from the commutativity of the following diagram, in which every vertical arrow is, by assumption, an isomorphism, and the unlabelled region is the right triangle law.
    \[\begin{tikzcd}
        {r f} & rf \\
        {r \ell r f} \\
        {r(\ell rf)^b i_X} & {r f^b i_X}
        \arrow[Rightarrow, no head, from=1-1, to=1-2]
        \arrow["{\nu rf}"', from=1-1, to=2-1]
        \arrow["{r \tilde{b}_f}", from=1-2, to=3-2]
        \arrow[""{name=0, anchor=center, inner sep=0}, "{r \varepsilon f}"{description}, curve={height=6pt}, from=2-1, to=1-2]
        \arrow["{r \ell r \tilde{b}_f}"', from=2-1, to=3-1]
        \arrow[""{name=1, anchor=center, inner sep=0}, "{r(\varepsilon f)^b i_X}"', from=3-1, to=3-2]
        \arrow["{\natof{\tilde{b}}}"{description}, draw=none, from=0, to=1]
    \end{tikzcd}\]
	From \cref{adjunctions-lift-to-algebras}, since the adjunction $(\ell, \overline \ell) \dashv (r, \overline r)$ lifts to $\AlgTc$, the 2-cell $\overline{r}$ is invertible. Since isomorphisms satisfy the 2-out-of-3 property, it therefore suffices to show that the composite
		$r (\ell r f)^b \xto{r (\varepsilon f)^b} r f^b \xto{\overline{r}_f} (rf)^a$
	is invertible.
	This follows from the commutativity of the following diagram, since every arrow in the clockwise path is an isomorphism.
	\[\begin{tikzcd}[column sep = 4em]
		{r(\ell rf)^b} & {r (\ell rf)^b (i_X)^*} & {r \big( (\ell r f)^b i_X \big)^b} & {\big(r (\ell r f)^b i_X \big)^a} \\
		\\
		&& {r ( f^b i_X )^b} & {(r f^b i_X )^a} \\
		{r (\ell rf)^b} && {r f^b} & {(rf)^a}
		\arrow["{r (\ell rf)^b \theta_X\inv}", from=1-1, to=1-2]
		\arrow[""{name=0, anchor=center, inner sep=0}, Rightarrow, no head, from=1-1, to=4-1]
		\arrow[""{name=1, anchor=center, inner sep=0}, "{r \hat{b}_{\ell r f, i_X}\inv}", from=1-2, to=1-3]
		\arrow[""{name=2, anchor=center, inner sep=0}, "{\overline{r}_{(\ell r f)^b i_X}}", from=1-3, to=1-4]
		\arrow["{r ( (\varepsilon f)^b i_X)^b}"{description}, from=1-3, to=3-3]
		\arrow[""{name=3, anchor=center, inner sep=0}, "{r (\tilde{b}_{\ell r f}\inv)^b}"{description}, from=1-3, to=4-1]
		\arrow["{(r (\varepsilon f)^b i_X )^a}", from=1-4, to=3-4]
		\arrow[""{name=4, anchor=center, inner sep=0}, "{\overline{r}_{f^b i_X}}"{description}, from=3-3, to=3-4]
		\arrow["{r (\tilde{b}_{f}\inv)^b}"{description}, from=3-3, to=4-3]
		\arrow["{(r \tilde{b}_f\inv)^a}", from=3-4, to=4-4]
		\arrow[""{name=5, anchor=center, inner sep=0}, "{r (\varepsilon f)^b}"', from=4-1, to=4-3]
		\arrow[""{name=6, anchor=center, inner sep=0}, "{\overline{r}_f}"', from=4-3, to=4-4]
		\arrow["{\algtheta{A}}"{description}, draw=none, from=1, to=0]
		\arrow["{\natof{\overline{r}}}"{description}, draw=none, from=2, to=4]
		\arrow["{\natof{\tilde{b}}}"{description}, draw=none, from=3, to=5]
		\arrow["{\natof{\overline{r}}}"{description}, draw=none, from=4, to=6]
	\end{tikzcd}\qedshift\]
 \end{proof}

 \begin{remark}
    The reader may observe that the proof of \cref{transport-of-structure} actually show how to construct a lax algebra structure and lax algebra morphism from an adjunction. However, since (as discussed in \cref{lax-algebras}) in this paper we are concerned with pseudoalgebras, we have imposed additional assumptions to ensure invertibility of the algebra structure.
 \end{remark}

 For completeness, we also include the dual result, which is the analogue of \cite[Theorem~3.1]{kelly1974doctrinal}.

 \begin{corollary}
 	\label{transport-of-structure-counit-invertible}
    Let $(\nu, \varepsilon) \colon \ell \colon A \rightleftarrows B \colon r$ be a reflective adjunction in $\E$ (so that $\varepsilon$ is invertible) and let $\algA$ be a $T$-pseudoalgebra. If the 2-cell
    \begin{equation}
        \label{transport-2-cell-dual}
        \ell f^a \xto{\ell (\nu f)^a} \ell (r \ell f)^a
    \end{equation}
    is invertible for every $f \colon JX \to A$, then:
    \begin{enumerate}
        \item $B$ acquires a $T$-pseudoalgebra structure;
        \item $\ell$ acquires the structure of an algebra pseudomorphism and $r$ acquires the structure of a lax algebra morphism;
        \item the unit $\nu$ and counit $\varepsilon$ are algebra transformations.
    \end{enumerate}
    Hence, the adjunction $(\nu, \varepsilon) \colon \ell \dashv r$ lifts to an adjunction in $\AlgTl$. Conversely, if $(\nu, \varepsilon) \colon \ell \dashv r$ lifts to an adjunction in $\AlgTl$ then the 2-cell \eqref{transport-2-cell-dual}
    is invertible for every $f \colon JX \to A$ in $\E$.
 \end{corollary}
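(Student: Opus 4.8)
The plan is to deduce the corollary from \cref{transport-of-structure} by duality, running that theorem inside $\E\co$ -- the bicategory with the same objects and $1$-cells as $\E$, but with $2$-cells reversed. The first step is to record how the data in play dualize. A $J$-relative pseudomonad $T = (T, \ph^*, i, \mu, \eta, \theta)$ on $\E$ induces a $J\co$-relative pseudomonad $T\co$ on $\E\co$: the object assignment, the extension operators, and the unit are unchanged -- a functor $\E[JX, TY] \to \E[TX, TY]$ is equally a functor $\E\co[JX, TY] \to \E\co[TX, TY]$, since $\E\co$ has the same $1$-cells as $\E$ -- while $\mu$, $\eta$, and $\theta$ are replaced by the inverse families $\mu\inv$, $\eta\inv$, $\theta\inv$, which now point in the direction required by \cref{relative-pseudomonad}; its two axioms for $T\co$ follow by reversing every $2$-cell in the corresponding diagrams for $T$, and these stay commutative because every cell involved is invertible. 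In the same way a $T$-pseudoalgebra $\algA$ gives a $T\co$-pseudoalgebra $(A, {-}^a, \hat{a}\inv, \tilde{a}\inv)$; a lax morphism of $T$-pseudoalgebras is precisely a colax morphism of the associated $T\co$-pseudoalgebras (and conversely), pseudomorphisms and strict morphisms are self-dual, and a transformation corresponds to a transformation with source and target interchanged. Hence $\AlgTl = \PsAlg_c(T\co)\co$, $\AlgTc = \PsAlg_l(T\co)\co$, and $\AlgT = \PsAlg(T\co)\co$, compatibly with the forgetful pseudofunctors of \cref{forgetful-pseudofunctor}; in particular adjunctions correspond to adjunctions.

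Second, the reflective adjunction $(\nu, \varepsilon) \colon \ell \adj r \colon A \rightleftarrows B$ in $\E$, in which $\varepsilon \colon \ell r \tto 1_B$ is invertible, is exactly a coreflective adjunction $(\varepsilon, \nu) \colon r \adj \ell \colon B \rightleftarrows A$ in $\E\co$: the left and right adjoints swap, the unit becomes the invertible $2$-cell $\varepsilon$, and the counit becomes $\nu$. Under the dictionary of the first step, the $T\co$-pseudoalgebra on $A$ plays the role of the given pseudoalgebra in \cref{transport-of-structure} -- which there is the algebra on the codomain of the right adjoint -- and that theorem's invertibility hypothesis, the $2$-cell \eqref{transport-2-cell}, becomes for the present data (with $\ell$, $\nu$, and $\ph^a$ in place of $r$, $\varepsilon$, and $\ph^b$) the composite $\ell(\nu f)^a$, which is precisely the $2$-cell \eqref{transport-2-cell-dual}, assumed invertible for every $f \colon JX \to A$.

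Third, \cref{transport-of-structure} applied in $\E\co$ to this coreflective adjunction, the $T\co$-pseudoalgebra on $A$, and the invertibility just noted, yields: a $T\co$-pseudoalgebra structure on $B$; a $T\co$-pseudomorphism structure on $\ell$ and a colax $T\co$-morphism structure on $r$; the fact that $\varepsilon$ and $\nu$ are $T\co$-transformations; and a lift of $r \adj \ell$ to $\PsAlg_c(T\co)$. Translating back along the dictionary turns this into exactly claims (1)--(3) together with the lift of $(\nu, \varepsilon) \colon \ell \adj r$ to $\AlgTl$, and the converse half of the corollary follows in the same way from the converse half of \cref{transport-of-structure}. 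Unwinding the construction records the explicit structure: dual to \eqref{reflected-algebra-structure}, the extension operator of $B$ is
\[
    \ph^b_X \defeq \E[JX, B] \xto{r \c \ph} \E[JX, A] \xto{\ph^a_X} \E[TX, A] \xto{\ell \c \ph} \E[TX, B],
\]
and the pseudomorphism structure on $\ell$, the lax morphism structure on $r$, and the $2$-cells $\hat b$ and $\tilde b$ are obtained from \eqref{induced-morphism-structure-on-ell}, \cref{doctrinal-adjunction}, and the formulae in the proof of \cref{transport-of-structure} by the same substitution.

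There is no substantial obstacle: the content lies entirely in the bookkeeping of the first step, and since every structural $2$-cell appearing in \cref{relative-pseudomonad,pseudoalgebra,pseudomorphism,transformation} is invertible, each identity needed there is obtained simply by reading a commutative diagram backwards -- which is also why we do not reproduce the pasting-diagram calculations. A reader who prefers a self-contained proof may instead repeat the computations of \cref{transport-of-structure} verbatim, interchanging the roles of $\nu$ and $\varepsilon$, of $\ell$ and $r$, and of lax and colax morphisms; as in \cref{transport-of-structure} itself, the only delicate points are the associativity and unit laws for the new pseudoalgebra structure on $B$ and for $(r, \overline r)$, each discharged by one of the triangle identities.
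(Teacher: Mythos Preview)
Your proposal is correct and takes essentially the same approach as the paper: both deduce the result from \cref{transport-of-structure} by passing to $\E\co$, where the reflective adjunction becomes coreflective, the roles of $\ell$ and $r$ swap, and lax/colax morphisms are interchanged. You spell out the dictionary in more detail than the paper does (in particular, that $T$ must be replaced by a $J\co$-relative pseudomonad $T\co$ on $\E\co$), but the argument is the same. One small slip in your final parenthetical remark: if one repeats the computations of \cref{transport-of-structure} with $\ell$ and $r$ interchanged, the morphism whose associativity and unit laws are checked directly is $(\ell, \overline\ell)$, not $(r, \overline r)$; but this does not affect the main argument.
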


 \begin{proof}
    Recall that an adjunction $\ell \colon A \rightleftarrows B \colon r$ in $\E$ with unit
        $\nu \colon 1_A \to r \ell$
    and counit
        $\varepsilon \colon \ell r \to 1_B$
    is equivalently an adjunction in $\E\co$ with left adjoint $r$, right adjoint $\ell$, unit $\varepsilon$, and counit $\nu$.
    In this light, the conditions given are precisely the assumptions of \cref{transport-of-structure} with respect to the adjunction
        $r \dashv \ell$
    in $\E\co$. The claim then follows once one observes that a pseudoalgebra/pseudomorphism in $\E\co$ is equivalently a pseudoalgebra/pseudomorphism in $\E$, while a colax algebra morphism in $\E\co$ is equivalently a lax algebra morphism in $\E$.
 \end{proof}

\begin{corollary}
    $U_T \colon \AlgT \to \E$ is an equifibration in the sense of \cite[Definition~4.1]{campbell2020homotopy}.
\end{corollary}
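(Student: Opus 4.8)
The plan is to unwind \cite[Definition~4.1]{campbell2020homotopy}: an \emph{equifibration} is (essentially) a strict pseudofunctor $p$ that is (i) a \emph{local isofibration} --- each hom-functor is an isofibration of categories --- and (ii) \emph{lifts equivalences on objects}: for each object $C$ in the domain and each equivalence $w$ in the codomain with $pC$ as its source, there is an equivalence $\widetilde w$ in the domain with source $C$ such that $p\widetilde w = w$. Both conditions have, in substance, already been established for $U_T \colon \AlgT \to \E$. Condition (i) is immediate: by \cref{forgetful-pseudofunctor}, the forgetful pseudofunctor is locally an amnestic isofibration --- and this descends from $\AlgTl$ to $\AlgT$, since an isomorphism of lax morphisms over an isomorphism in $\E$ has invertible structure $2$-cell on one side exactly when it does on the other (by the conservativity argument of \cref{forgetful-pseudofunctor}, $\alpha^{-1}$ is a transformation whenever $\alpha$ is), so the isofibration lifts of isomorphisms with pseudomorphism codomain stay within pseudomorphisms. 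In particular each hom-functor of $U_T \colon \AlgT \to \E$ is an isofibration.

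For condition (ii), suppose given a $T$-pseudoalgebra $\algB$ with carrier $B = U_T\algB$ and an equivalence $w \colon B \to A$ in $\E$. I would first complete $w$ to an adjoint equivalence $(\nu, \varepsilon) \colon \ell \dashv r$ in $\E$ with $w$ as its right adjoint, that is $r = w \colon B \to A$ and $\ell \colon A \to B$, with $\nu \colon 1_A \xto\iso r\ell$ and $\varepsilon \colon \ell r \xto\iso 1_B$ invertible; every equivalence extends to such an adjoint equivalence with either chosen leg as the prescribed adjoint. This adjunction is coreflective, and the comparison $2$-cell $r(\varepsilon f)^b$ of \eqref{transport-2-cell} is invertible for each $f \colon JX \to B$, since it is the image of the invertible $2$-cell $\varepsilon f$ under the functor $r \circ ({-})^b \colon \E[JX, B] \to \E[TX, A]$. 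Hence \cref{transport-of-structure} applies and transports the structure from $B$ to $A$: the object $A$ acquires a $T$-pseudoalgebra structure $\algA$, and --- since we began with an \emph{adjoint equivalence} --- the adjunction lifts to an adjoint equivalence $(\nu, \varepsilon) \colon (\ell, \overline\ell) \dashv (r, \overline r)$ in $\AlgT$. Because $U_T$ is strict and the carriers of the lifted $1$- and $2$-cells are literally $\ell, r, \nu, \varepsilon$, this lift lies over the original adjoint equivalence on the nose; its right leg $(r, \overline r) \colon \algB \to \algA$ is therefore an equivalence in $\AlgT$ with $U_T(r, \overline r) = r = w$, which is exactly the lift required by (ii). Thus $U_T$ is an equifibration.

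The proof is then essentially an assembly of \cref{forgetful-pseudofunctor,transport-of-structure}, which carry all the substantive content (the local amnestic isofibration property, and the transportability of pseudoalgebra structure across adjoint equivalences). The only genuine point of care is matching the precise phrasing of \cite[Definition~4.1]{campbell2020homotopy}: one must orient the equivalence being lifted correctly, invoking \cref{transport-of-structure-counit-invertible} in place of \cref{transport-of-structure} in the $\E\co$-dual case (lifting an equivalence \emph{into} $pC$ rather than out of it), and check whether the definition bakes in an amnesticity or normality clause --- if so, it too is supplied by \cref{forgetful-pseudofunctor}. If Campbell's notion is formulated only for $2$-functors, one additionally remarks that $U_T$ is a $2$-functor whenever $\E$ is a $2$-category (\cref{EM-2-category}), and that the definition together with the argument above extends verbatim to strict pseudofunctors of bicategories.
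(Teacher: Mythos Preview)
Your proposal is correct and matches the paper's approach: both reduce the claim to the local isofibration property (from \cref{forgetful-pseudofunctor}) and the adjoint-equivalence lifting property (from \cref{transport-of-structure}). Your extra care in checking that the isofibration lift stays within pseudomorphisms, and that the transported adjoint equivalence lies strictly over the original, is warranted but not made explicit in the paper's two-line proof.
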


\begin{proof}
    The adjoint equivalence lifting property follows from \cref{transport-of-structure}, while the local isofibration property follows from \cref{forgetful-pseudofunctor}.
\end{proof}

\section{Lax-idempotent pseudoalgebras}
\label{lax-idempotence}

As hinted at in \cref{cocontinuous-functors-are-P-morphisms,natural-transformations-are-P-transformations}, the presheaf construction is a particularly well-behaved relative pseudomonad. As shown in \cite[\S5]{fiore2018relative}, it is an example of a \emph{lax-idempotent relative pseudomonad}. In the non-relative setting, the lax-idempotent pseudomonads are, intuitively, those that correspond to cocompletions under a class of colimits~\cite{kock1995monads,zoberlein1976doctrines,power2000representation}; the notion of lax-idempotence for relative pseudomonads was introduced to capture analogous behaviour in the relative setting. To characterise the pseudoalgebras for the presheaf construction (as well as a family of related constructions exhibiting free cocompletions under other classes of colimits), it will be useful to develop the theory of lax-idempotence for relative pseudomonads further. In particular, we will observe that there is an unexpected bifurcation in the move from pseudomonads to relative pseudomonads that renders the notion of lax-idempotence introduced by \cite{fiore2018relative} too weak to establish several theorems of interest. The following notion of lax-idempotence for pseudoalgebras is intended to rectify this problem, as we will explain shortly.

\begin{definition}
    \label{lax-idempotent-pseudoalgebra}
    Let $\J$ be a pseudofunctor and let $T$ be a $J$-relative pseudomonad. A $T$-pseudoalgebra $\algA$ is \emph{lax-idempotent}\footnotemark{} if $\tilde a$ exhibits the unit of an adjunction
    \begin{equation}
    	\label{eq:lax-idempotent-adjunction}
    	\E[JX, A] \colon \ph^a \adj \ph \c i_X \cocolon \E[TX, A]
    \end{equation}
    for each $X \in \A$.%
    \footnotetext{As a compound adjective, it is appropriate always to hyphenate \emph{lax-idempotent}, and hence also \emph{lax-idempotence}. In particular, \emph{lax} is not acting as an adverb in this context: lax-idempotence should be viewed as a property reminiscent of, but not a particular case of, idempotence.}
\end{definition}

Let us immediately note that the definition of lax-idempotence for pseudoalgebras may be rephrased in terms of left extensions, as this perspective will be helpful later.

\begin{lemma}
    \label{lax-idempotent-iff-left-ext}
    A $T$-pseudoalgebra $\algA$ is lax-idempotent if and only if $\tilde{a}_f \colon f \tto f^a i_X$ exhibits $f^a$ as the left extension of $f$ along $i_X$ for each $f \colon JX \to A$ in $\E$.
\end{lemma}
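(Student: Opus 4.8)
The plan is to observe that, once both sides are unwound, they assert the very same pointwise universal property, so that the lemma reduces to the standard correspondence between units of adjunctions and universal arrows. First I would record the ambient data: precomposition with $i_X$ is a functor ${\ph \c i_X} \colon \E[TX, A] \to \E[JX, A]$, the extension operator is a functor $\ph^a_X \colon \E[JX, A] \to \E[TX, A]$, and by the naturality of $\tilde a$ demanded in \cref{pseudoalgebra} the family $\{ \tilde a_f \}$ is a natural transformation $1_{\E[JX, A]} \tto ({\ph \c i_X}) \c \ph^a_X$, with component $\tilde a_f \colon f \tto f^a i_X$. By \cref{lax-idempotent-pseudoalgebra}, $\algA$ is lax-idempotent exactly when, for every $X \in \A$, this $\tilde a$ can be completed to an adjunction $\ph^a_X \adj ({\ph \c i_X})$ having $\tilde a$ as its unit.

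Next I would invoke the standard characterisation of adjunctions in terms of universal arrows: a natural transformation $\eta \colon 1_\C \tto RL$ is the unit of an adjunction $L \adj R$ if and only if, for each object $c$, the arrow $\eta_c \colon c \to RLc$ is universal from $c$ to $R$ — that is, every $\gamma \colon c \to Rd$ factors as $R\delta \c \eta_c$ for a unique $\delta \colon Lc \to d$. Instantiating $\C = \E[JX, A]$, $L = \ph^a_X$, $R = {\ph \c i_X}$ and $c = f$, universality of $\tilde a_f$ says precisely: for every $g \colon TX \to A$ and every 2-cell $\gamma \colon f \tto g i_X$ there is a unique 2-cell $\delta \colon f^a \tto g$ with $(\delta i_X) \c \tilde a_f = \gamma$, where $\delta i_X$ denotes the whiskering of $\delta$ by $i_X$.

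The final step is the remark that this last condition is, verbatim, the statement that $\tilde a_f \colon f \tto f^a i_X$ exhibits $f^a$ as the left extension of $f$ along $i_X \colon JX \to TX$ in $\E$. Assembling the three steps gives, for each fixed $X$, the equivalence between lax-idempotence of $\algA$ at $X$ and the left-extension property for all $f \colon JX \to A$; quantifying over $X \in \A$ then yields the lemma.

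I do not anticipate a genuine obstacle here; the content is essentially the translation of \cref{lax-idempotent-pseudoalgebra} into the language of \emph{nerves} of $i_X$. The only points needing a little care are bookkeeping: confirming that the action of the functor ${\ph \c i_X}$ on 2-cells is exactly the whiskering occurring in the universal property of left extensions, and noting that no associator coherences intervene in the factorisation equation since $f^a i_X$ and $g i_X$ are parallel composites of the same bracketing.
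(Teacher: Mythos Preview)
Your proposal is correct and follows essentially the same approach as the paper: the paper's proof is a one-line appeal to the standard fact that left extensions along $i_X$ are equivalently a left adjoint to the precomposition functor $\ph \c i_X$ (citing \cite[\S2.2]{lack2010companion}), while you unwind this correspondence explicitly via the universal-arrow characterisation of adjunction units. The content is identical; you have simply made the translation step visible rather than citing it.
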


\begin{proof}
    By definition, $\E$ admits left extensions along $i_X$ if and only if the precomposition functor $\ph \c i_X$ admits a left adjoint~\cite[\S2.2]{lack2010companion}.
\end{proof}

\begin{remark}
    The pseudoalgebra is \emph{pseudo-idempotent} if \eqref{eq:lax-idempotent-adjunction} is furthermore an adjoint equivalence. Dually, a $T$-pseudoalgebra is \emph{colax-idempotent} if $\tilde a\inv$ exhibits the counit of an adjunction
    \[\E[TX, A] \colon \ph \c i_X \adj \ph^a \cocolon \E[JX, A]\]
    for each $X \in \A$. Just as lax-idempotence is intended to capture colimit-like properties, colax-idempotence is intended to limit-like properties, while pseudo-idempotence captures properties that are both limit- and colimit-like, such as free (co)completions under absolute (co)limits. While we will focus on lax-idempotence in what follows, the results of this section carry through with appropriate modifications for pseudo- and colax-idempotent pseudoalgebras.
\end{remark}

The notion of \emph{lax-idempotent relative pseudomonad} was introduced in \cite[\S5]{fiore2018relative}. We shall take the following characterisation as the definition.\footnote{In particular, this characterisation justifies the simpler definition of lax-idempotence utilised in \cite[Definition~5.1]{slattery2023pseudocommutativity} (\cf{}~\cite[(5.1)]{fiore2018relative}).}

\begin{proposition}
    \label{lax-idempotent-iff-free-algebras-are}
    A relative pseudomonad is lax-idempotent in the sense of \cite[Definition~5.1]{fiore2018relative} if and only if every free pseudoalgebra is lax-idempotent, \ie $\eta$ exhibits the unit of an adjunction
    \begin{equation*}
        \label{lax-idempotence-condition}
        \E[JX, TY] \colon \ph^*_{X, Y} \adj \ph \c i_X \cocolon \E[TX, TY]
    \end{equation*}
    for each $X, Y \in \A$.
\end{proposition}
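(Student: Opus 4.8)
The plan is to reduce the statement to a direct comparison between lax-idempotence in the sense of \cite[Definition~5.1]{fiore2018relative} and the displayed adjunction condition, with ``every free pseudoalgebra is lax-idempotent'' serving only as a convenient intermediate reformulation.

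The equivalence between the latter two formulations is essentially definitional. By \cref{free-pseudoalgebra}, the free $T$-pseudoalgebra on $Y \in \A$ has carrier $TY$, extension operator $\ph^*_{{-}, Y} \colon \E[J{-}, TY] \to \E[T{-}, TY]$, and structural families $\hat a = \mu$ and $\tilde a = \eta$. Instantiating \cref{lax-idempotent-pseudoalgebra} at this pseudoalgebra, it is lax-idempotent precisely when $\eta$ exhibits the unit of $\E[JX, TY] \colon \ph^*_{X, Y} \adj \ph \c i_X \cocolon \E[TX, TY]$ for each $X \in \A$; letting $Y$ range over $\A$ yields exactly the displayed condition.

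The substance of the proof is therefore to show that this condition is equivalent to \cite[Definition~5.1]{fiore2018relative}, which packages lax-idempotence via explicit coherence 2-cells rather than an adjunction. I would unwind that definition and translate in both directions by the calculus of mates: from the given 2-cells one constructs the counit of $\ph^*_{X, Y} \adj \ph \c i_X$, with the triangle identities falling out of the relative pseudomonad axioms (\cref{relative-pseudomonad}), in particular the associativity \eqref{psmnd-mu} and unit \eqref{psmnd-theta} laws, together with naturality of $\mu$ and $\eta$; conversely, from the adjunction one reads off the required 2-cells as components of its unit and counit and verifies the coherence axioms of \cite[Definition~5.1]{fiore2018relative} by the same calculus. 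If that definition is phrased pointwise in $f \colon JX \to TY$ (as in the left-extension reformulation of \cref{lax-idempotent-iff-left-ext}), one additionally notes that the resulting family of pointwise left extensions assembles into a genuine adjunction, the putative left adjoint $\ph^*_{X, Y}$ being already globally defined.

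The main obstacle I anticipate is bookkeeping rather than conceptual: matching the precise shape of the coherence data in \cite[Definition~5.1]{fiore2018relative} to the adjunction formulation, and keeping track of associators and of the fact that one works across the hom-categories $\E[JX, TY]$ and $\E[TX, TY]$ rather than inside a single 2-category. No ingredient beyond the mate calculus and the defining axioms of a relative pseudomonad should be required.
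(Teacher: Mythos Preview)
Your reduction of ``every free pseudoalgebra is lax-idempotent'' to the displayed adjunction condition is correct and matches the paper's (implicit) handling of that step.

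Where you diverge is in the second step. You anticipate that \cite[Definition~5.1]{fiore2018relative} packages lax-idempotence via coherence 2-cells that must be translated into an adjunction by mate calculus, with the triangle identities coming from the pseudomonad axioms. In fact the paper's proof is much shorter: it observes that \cite[Definition~5.1]{fiore2018relative} already \emph{is} the displayed adjunction condition, together with two additional compatibility conditions imposed on top of it. The entire proof therefore reduces to showing those two compatibility conditions are redundant, and the paper dispatches this by invoking \cite[Lemma~3.2(i \& iii)]{fiore2018relative}. No counit needs to be constructed and no triangle identities need to be verified from scratch.

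Your approach would succeed, but it does strictly more work than necessary because it does not exploit the precise shape of the cited definition. The paper's route buys a two-line proof; yours buys robustness against not knowing exactly how \cite{fiore2018relative} phrases things, at the cost of the bookkeeping you correctly identify as the main obstacle.
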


\begin{proof}
    \cite[Definition~5.1]{fiore2018relative} asks for the stated condition subject to two compatibility conditions. However, the two compatibility conditions are in fact redundant, following from \cite[Lemma~3.2(i \& iii)]{fiore2018relative}.
\end{proof}

Consequently, the free pseudoalgebras for a relative pseudomonad that is lax-idempotent in the sense of \cite{fiore2018relative} are themselves lax-idempotent. However, crucially, this is \emph{not} generally true for arbitary pseudoalgebras: that is, there exist lax-idempotent relative pseudomonads whose pseudoalgebras are not lax-idempotent. The following provides a particularly simple counterexample in one dimension.

\begin{example}
    \label{idempotence-counterexample}
    Let $\E$ be the free category on the following graph,
    \[\begin{tikzcd}
        J & T & E
        \arrow["i", from=1-1, to=1-2]
        \arrow["f", curve={height=-6pt}, from=1-2, to=1-3]
        \arrow["{f'}"', curve={height=6pt}, from=1-2, to=1-3]
    \end{tikzcd}\]
    subject to $f i = f' i$. We can view $J$ and $T$ as functors $1 \to \E$ from the terminal category. $T$ may then be equipped with a relative monad structure $T$: the unit is $i$, and the extension operator sends the morphism $i$ to the identity on $T$: this is trivially a bijection because $T$ has no nontrivial endomorphisms. Thus $T$ is lax-idempotent (in fact, it is strictly idempotent). However, $E$ has two distinct algebra structures: one sends the unique map $J \to E$ to $f$, and the other sends the unique map to $f'$. Thus $E$ admits an algebra structure that is not lax-idempotent.
\end{example}

The existence of \cref{idempotence-counterexample} may be surprising, because in the \emph{non-relative} setting, no counterexamples exist; this explains why the definition of \cite{fiore2018relative} seems reasonable when one does not consider non-free pseudoalgebras.

\begin{proposition}
    \label{li-implies-ali-for-non-relative}
    A (non-relative) pseudomonad $T$ is lax-idempotent if and only if every $T$\nbh{}pseudoalgebra is lax-idempotent.
\end{proposition}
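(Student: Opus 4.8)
The plan is to deduce this from the classical fact that, for a lax-idempotent pseudomonad, the structure map of every pseudoalgebra is left adjoint to the unit~\cite{kock1995monads}. The backward implication is immediate: if every $T$-pseudoalgebra is lax-idempotent then in particular every free one is, whence $T$ is lax-idempotent by \cref{lax-idempotent-iff-free-algebras-are}.

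For the forward implication, suppose $T$ is lax-idempotent, fix an arbitrary $T$-pseudoalgebra $\algA$, and recall that $J$ is the identity pseudofunctor, so $JX = X$; by \cref{lax-idempotent-iff-left-ext} it suffices to show that $\tilde a_f$ exhibits $f^a$ as the left extension of $f$ along $i_X$ for every $f \colon X \to A$ in $\E$. Write $a \defeq (1_A)^a \colon TA \to A$. First I would record two consequences of the pseudoalgebra axioms: that $a \c i_A \cong 1_A$, via $\tilde a_{1_A}$; and that $f^a \cong a \c Tf$, where $Tf = (i_A \c f)^*$ is the action of the pseudofunctor $T$ on $f$, this isomorphism being an instance of the pseudonaturality of $\ph^a$ established in \cref{algebras give pseudonat trans and modification} (cf.\ \eqref{pseudonat trans structure on algebras}). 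Next, since $T$ is lax-idempotent, every free pseudoalgebra is lax-idempotent by \cref{lax-idempotent-iff-free-algebras-are}, so $\eta_{i_A \c f}$ exhibits $Tf = (i_A \c f)^*$ as the left extension of $i_A \c f$ along $i_X$. Finally, since $a$ is left adjoint to $i_A$ -- which one may take from \cite{kock1995monads} via the equivalence between our notion of pseudoalgebra and the classical one recorded after \cref{no-iteration-algebras}, or re-derive within the present framework by building the unit $1_{TA} \tto i_A \c a$ out of the counit of the adjunction witnessing lax-idempotence of the free pseudoalgebra on $A$ -- and postcomposition with a left adjoint preserves left extensions, the $1$-cell $a \c Tf$ is the left extension along $i_X$ of $a \c i_A \c f \cong f$. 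Hence $f^a \cong a \c Tf$ is the left extension of $f$ along $i_X$.

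To complete the argument I would then verify that the exhibiting $2$-cell produced by the above chain of isomorphisms is $\tilde a_f$ itself, and not merely isomorphic data; this is a routine (if somewhat lengthy) diagram chase combining the naturality of $\tilde a$ and $\hat a$, the axioms \eqref{psalg-mu} and \eqref{psalg-theta}, \cref{psalg-eta}, and the triangle laws for the adjunction $a \adj i_A$, in the spirit of the calculations in \cref{algebras give pseudonat trans and modification}. Given this, \cref{lax-idempotent-iff-left-ext} yields that $\algA$ is lax-idempotent. I expect the genuinely load-bearing step to be the input ``$a \adj i_A$'' -- exactly the property that fails in the relative setting and produces \cref{idempotence-counterexample} -- together with its clean justification inside the relative framework; the identification of the exhibiting $2$-cell with $\tilde a_f$ should be the most tedious step, but a conceptually routine one.
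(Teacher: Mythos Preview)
Your argument is correct and takes a genuinely different route from the paper. The paper dispatches the forward direction in one line by citing \cite[\S4--5]{marmolejo2012kan}: a lax-idempotent pseudomonad is a left extension pseudomonad, and every pseudoalgebra for such is a left extension pseudoalgebra, hence lax-idempotent by \cref{lax-idempotent-iff-left-ext}. You instead unpack the mechanism: invoke the Kock-style fact $a \adj i_A$ (which the paper only records later, as \cref{lax-idempotent-iff-lax-fixed-points}), use lax-idempotence of the free pseudoalgebra on $A$ to exhibit $Tf$ as a left extension, and then transport along the left adjoint $a$ and the isomorphism $f^a \cong a \c Tf$. Your approach is more explicit about \emph{why} the result holds and makes visible the exact ingredient --- the adjunction $a \adj i_A$ --- that is unavailable in the relative setting; the paper's approach is shorter and avoids the diagram chase you flag at the end. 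One small sharpening: the preservation step really uses that $a \adj i_A$ is a \emph{coreflection} (the counit $\tilde a_{1_A}\inv$ is invertible), so that postcomposition by $i_A$ is fully faithful; this is what lets you pass from a left extension landing in $TA$ to one landing in $A$ without losing the universal property.
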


\begin{proof}
    It follows immediately from \cref{lax-idempotent-iff-free-algebras-are} that if every $T$-pseudoalgebra is lax-idempotent, then so is $T$. For the converse, note that a lax-idempotent pseudomonad is equivalently a left extension pseudomonad \cites[\S4]{marmolejo2012kan}[Theorem~5.3]{fiore2018relative}. Every pseudoalgebra for such is equivalently a left extension pseudoalgebra by \cite[\S5]{marmolejo2012kan}, hence lax-idempotent by \cref{lax-idempotent-iff-left-ext}.
\end{proof}

The above considerations suggest that a more appropriate definition of lax-idempotence for relative pseudomonads would be to ask that \emph{every} pseudoalgebra is lax-idempotent. We will see in the remainder of this section that this stronger notion satisfies various expected properties known to hold for lax-idempotent (non-relative) pseudomonads. However, to avoid confusion with \cite{fiore2018relative}, we will continue to use \emph{lax-idempotent relative pseudomonad} in this paper to refer to the weaker notion appearing in \cref{lax-idempotent-iff-free-algebras-are}.

The following characterisation of lax-idempotent pseudoalgebras in terms of lax morphisms, together with \cref{2-cells-between-lax-morphisms-of-lax-idempotent-pseudoalgebras-are-transformations}, motivates the terminology \emph{lax-idempotent} (in the terminology of \cite{kelly1997property}, this is a strengthening of \emph{property (AEL)}).

\begin{theorem}
    \label{lax-idempotent-iff-unique-lax-morphism-structure}
    Let $\J$ be a pseudofunctor and let $T$ be a $J$-relative pseudomonad. A $T$-pseudoalgebra $\algB$ is lax-idempotent if and only if, for every $T$-pseudoalgebra $\algA$, every 1-cell $f \colon A \to B$ in $\E$ admits a unique lax morphism structure $\overline f$. In this case,
    	$\overline f_g \colon (fg)^b \tto f g^a$
    is the unique 2-cell corresponding to
    	$f \tilde{a}_g \colon fg \tto f g^a i_X$
    under the adjunction \eqref{eq:lax-idempotent-adjunction}.
\end{theorem}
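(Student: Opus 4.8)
The plan is to prove both directions via the reformulation of lax-idempotence supplied by \cref{lax-idempotent-iff-left-ext}: $\algB$ is lax-idempotent exactly when, for every $f \colon JX \to B$, the 2-cell $\tilde b_f$ exhibits $f^b$ as the left extension $\mathrm{Lan}_{i_X} f$. The single observation driving the argument is that the lax-morphism unit law \eqref{lax-morph-tildes} for a prospective lax morphism $(f, \overline f) \colon \algA \to \algB$ says precisely that $\overline f_g \colon (fg)^b \tto f g^a$ is the 2-cell transposing to $f \tilde a_g \colon fg \tto f g^a i_X$ under the adjunction \eqref{eq:lax-idempotent-adjunction} for $\algB$ — equivalently, that $(\overline f_g \c i_X) \c \tilde b_{fg} = f \tilde a_g$ modulo associators. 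When $\algB$ is lax-idempotent this pins $\overline f_g$ down uniquely, giving simultaneously the uniqueness in the forward direction and the explicit formula in the final sentence of the statement.

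For the forward direction it then remains to check that this prescription does define a lax morphism. Naturality of $\overline f$ in $g$ follows from naturality of $\tilde a$ and of $\tilde b$ together with the universal property of the left extension: transpose both legs of the naturality square and compare. The unit law \eqref{lax-morph-tildes} holds by construction. The substantive point is the associativity law \eqref{lax-morph-hats}: its source is a 1-cell of the form $(\ph)^b$ applied to a 1-cell $JX \to B$, hence (by lax-idempotence of $\algB$) a left extension along $i_X$, so it suffices to check that the two legs of the hexagon agree after transposition — i.e.\ after whiskering by $i_X$ and pasting with the appropriate $\tilde b$. Carrying this out, both sides unwind, using the unit law (now valid by construction), naturality of $\tilde a$ and $\tilde b$, the pseudoalgebra associativity law \eqref{psalg-mu} and \cref{psalg-eta} for $\algA$, and bicategorical coherence, to one and the same 2-cell. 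I expect this hexagon reduction to be the main obstacle: the reduction itself is conceptual, but reassembling the transposed identity from the axioms for $\algA$ is a lengthy pasting computation.

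For the converse, assume that for every $T$-pseudoalgebra $\algA$ every 1-cell $A \to B$ carries a unique lax morphism structure, and fix $X \in \A$. Applying the hypothesis with $\algA$ the free pseudoalgebra $F_T X$ of \cref{free-pseudoalgebra}, every $k \colon TX \to B$ acquires a unique lax morphism structure $\overline k$; moreover, by uniqueness together with \cref{f-a has ps-morphism structure}, the unique structure on $f^b$ (for $f \colon JX \to B$) is $\hat b_{f, \ph}$. I would then exhibit $\ph^b \dashv \ph \c i_X$ with unit $\tilde b$, taking as counit the family
\[
\epsilon_k \defeq \Big( (k i_X)^b \xto{\overline k_{i_X}} k (i_X)^* \xto{k \theta_X} k 1_{TX} \xto{\iso} k \Big),
\]
which coincides with the counit of the free--forgetful relative pseudoadjunction of \cref{Alg-resolution}. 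The first triangle identity, $(\epsilon_k \c i_X) \c \tilde b_{k i_X} = 1_{k i_X}$, follows from the lax-morphism unit law for $(k, \overline k)$ evaluated at $i_X$ together with the relative pseudomonad coherence law \cite[Lemma~3.2(iii)]{fiore2018relative}; the second triangle identity, $\epsilon_{f^b} \c (\tilde b_f)^b = 1_{f^b}$, is precisely the pseudoalgebra unit law \eqref{psalg-theta} for $\algB$ once $\overline{f^b}_{i_X}$ is identified with $\hat b_{f, i_X}$. Together with naturality of $\tilde b$, this yields the adjunction, so $\algB$ is lax-idempotent by \cref{lax-idempotent-iff-left-ext}.

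The one place in the converse demanding care is the naturality of the counit $\epsilon$ in $k$: it must be extracted from the uniqueness of lax morphism structures (which forces the 2-cells witnessing $\epsilon$ to be coherent) rather than verified by a direct calculation, since a priori a lax morphism out of a free pseudoalgebra need not be a pseudomorphism. This is the secondary subtlety; everything else is either formal (the calculus of transposes and triangle identities) or a deferral to the corresponding computations for relative pseudomonads qua free pseudoalgebras in \cite{fiore2018relative}.
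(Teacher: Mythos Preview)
Your proposal is correct and essentially coincides with the paper's proof in both directions, including your choice of counit and the two triangle-identity checks; the only cosmetic difference is that, after transposing the associativity hexagon in the forward direction, the paper fills the resulting diagram using \cref{psalg-eta} for both $\algA$ and $\algB$ (together with the just-established unit law and naturality of $\tilde b$) rather than invoking \eqref{psalg-mu}. Your flagging of the counit's naturality as the one delicate step in the converse is accurate --- the paper treats it exactly as you anticipate, labelling the relevant square as the transformation axiom for $\rho$ and leaving the extraction from the uniqueness hypothesis implicit.
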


\begin{proof}
    ($\implies$)
    Suppose $\algB$ is a lax-idempotent $T$-pseudoalgebra, so that we have adjunctions $(\tilde b,\beta) \colon (-)^b \dashv (-) \c i_X$ for each $X \in \A$. Let $\algA$ be a $T$-pseudoalgebra and let $f \colon A \to B$ be a 1-cell in $\E$.
    If $f$ were to have lax morphism structure then each component $\overline f_g \colon (fg)^b \tto fg^a$ would correspond under the adjunction
    \eqref{eq:lax-idempotent-adjunction} to the following composite:
    \begin{equation}
        \label{eq:candidate-lax-morphism}
        fg \xto{\tilde b_{fg}} (fg)^b i_X \xto{\overline f_g i_X} fg^a i_X
    \end{equation}
    By the unit law for lax morphisms \eqref{lax-morph-tildes}, this composite is equal to $f \tilde a_g \colon fg \tto fg^a i_X$. Hence supposing lax morphism structure exists, it is necessarily unique.

    It remains to show that \eqref{eq:candidate-lax-morphism} indeed gives $f$ the structure of a lax morphism. The unit law \eqref{lax-morph-tildes} holds by definition; it remains to show the associativity law \eqref{lax-morph-hats}.
    We do so by taking transposes, and filling in the resulting diagram as shown below.
    \[\begin{tikzcd}
    	{(hg)^bf} && {((hg)^bf)^b i_X} \\
    	\\
    	{((hg)^bf)^bi_X} & {hg^af} & {(hg)^b f^* i_X} \\
    	\\
    	{(hg^af)^b i_X} & {h(g^af)^a i_X} & {hg^af^* i_X}
    	\arrow["{\tilde{b}_{(hg)^b f}}", from=1-1, to=1-3]
    	\arrow["{\tilde{b}_{(hg)^b f}}"', from=1-1, to=3-1]
    	\arrow[""{name=0, anchor=center, inner sep=0}, "{\overline{h}_g f}"{description}, from=1-1, to=3-2]
    	\arrow[""{name=1, anchor=center, inner sep=0}, "{(hg)^b\eta_f}"{description}, from=1-1, to=3-3]
    	\arrow["{\hat{b}_{hg, f} i_X}", from=1-3, to=3-3]
    	\arrow["{(\overline{h}_g f)^b i_X}"', from=3-1, to=5-1]
    	\arrow[""{name=2, anchor=center, inner sep=0}, "{\tilde{b}_{h g^a f}}"{description}, curve={height=6pt}, from=3-2, to=5-1]
    	\arrow["{h \tilde{a}_{g^a f}}"{description}, from=3-2, to=5-2]
    	\arrow[""{name=3, anchor=center, inner sep=0}, "{h g^a \eta_f}"{description}, curve={height=-6pt}, from=3-2, to=5-3]
    	\arrow["{\overline{h}_g f^* i_X}", from=3-3, to=5-3]
    	\arrow["{\overline{h}_{g^a f} i_X}"', from=5-1, to=5-2]
    	\arrow["{h \hat{a}_{g, f} i_X}"', from=5-2, to=5-3]
    	\arrow["{\algeta{B}}"{description}, draw=none, from=1-3, to=1]
    	\arrow["{\natof\tilde{b}}"{description}, shift right=4, draw=none, from=2, to=0]
    	\arrow["{\algeta{A}}"{description, pos=0.6}, draw=none, from=3, to=5-2]
    	\arrow["{\morunit{h}}"{description, pos=0.6}, draw=none, from=2, to=5-2]
    \end{tikzcd}\]
    Thus, defining $\overline f_g$ to be the transpose of $f\tilde a_g$ gives $f$ the structure of a lax morphism.

    ($\impliedby$) Let $\algB$ be a $T$-pseudoalgebra such that, for every $T$-pseudoalgebra $\algA$ and 1-cell $f \colon A \to B$ in $\E$, there is a unique choice of 2-cells $\overline f$ equipping $f$ with the structure of a lax morphism $(f, \overline f)$. Fix $X \in \A$. We must find a natural transformation
    $\beta \colon ({-} \c i_X)^b \tto 1$ forming the counit of an adjunction
    $(\tilde b,\beta) \colon (-)^b \dashv (-) \c i_X$. Taking $\algA$ to be the free $T$-pseudoalgebra on $X$ (\cref{free-pseudoalgebra}), we have that, for each $f \colon TX \to B$, there exists a unique lax morphism structure on $f$ with components
    $\overline f_g \colon (fg)^b \tto fg^*$.
    So we may define the components of $\beta$ to be as follows:
    \[\beta_f \defeq \Big( (f i_X)^b \xto{\overline f_i} f i_X^* \xto{f\theta_X} f \Big)\]

    We now need to check the triangle laws, which are the following two diagrams.
    \[
    \begin{tikzcd}
        {fi_X} && {(fi_X)^b i_X} \\
        && {f (i_X)^*i_X} \\
        && {fi_X}
        \arrow["{\tilde b_{f i_X}}", from=1-1, to=1-3]
        \arrow[""{name=0, anchor=center, inner sep=0}, "{f\eta_{i_X}}"{description}, curve={height=6pt}, from=1-1, to=2-3]
        \arrow[curve={height=12pt}, Rightarrow, no head, from=1-1, to=3-3]
        \arrow["{\overline{f}_{i_X} i_X}", from=1-3, to=2-3]
        \arrow["{f\theta_Xi_X}", from=2-3, to=3-3]
        \arrow["{\morunit{f}}"{description, pos=0.4}, draw=none, from=1-3, to=0]
    \end{tikzcd}
    \hspace{4em}
    \begin{tikzcd}
        {g^b} && {(g^bi_X)^b} \\
        && {g^b (i_X)^*} \\
        && {g^b}
        \arrow[""{name=0, anchor=center, inner sep=0}, "{(\tilde b_g)^b}", from=1-1, to=1-3]
        \arrow[""{name=1, anchor=center, inner sep=0}, curve={height=12pt}, Rightarrow, no head, from=1-1, to=3-3]
        \arrow["{\overline{g^b} = \hat b_{g,i_X}}", from=1-3, to=2-3]
        \arrow["{g^b \theta_X}", from=2-3, to=3-3]
        \arrow["{\algtheta{B}}"{description}, shift left=5, draw=none, from=0, to=1]
    \end{tikzcd}
    \]
	The unlabelled triangle in the left-hand diagram is \cite[Lemma~3.2(iii)]{fiore2018relative}.
    It remains to show that the family $\beta$ is natural. This follows from commutativity of the following diagram, for each $f, f' \colon TX \to B$ and $\rho \colon f \tto f'$.
    \[
    \begin{tikzcd}[column sep = 3.2em]
    	{(fi_X)^b} & {f (i_X)^*} & f \\
    	{(f'i_X)^b} & {f' (i_X)^*} & {f'}
    	\arrow[""{name=0, anchor=center, inner sep=0}, "{\overline{f}_{i_X}}", from=1-1, to=1-2]
    	\arrow["{(\rho i_X)^b}"', from=1-1, to=2-1]
    	\arrow["{f \theta_X}", from=1-2, to=1-3]
    	\arrow["{\rho (i_X)^*}"{description}, from=1-2, to=2-2]
    	\arrow["\rho", from=1-3, to=2-3]
    	\arrow[""{name=1, anchor=center, inner sep=0}, "{\overline{f'}_{i_X}}"', from=2-1, to=2-2]
    	\arrow["{f' \theta_X}"', from=2-2, to=2-3]
    	\arrow["{\trans{\rho}}"{description}, draw=none, from=0, to=1]
        \arrow[
            "\beta_f",
            rounded corners,
            to path=
            { -- ([yshift=.7cm]\tikztostart.center)
            -- ([yshift=.7cm]\tikztotarget.center)
            \tikztonodes
            -- (\tikztotarget.north) },
            from=1-1, to=1-3
        ]
        \arrow[
            "\beta_{f'}"',
            rounded corners,
            to path=
            { -- ([yshift=-.7cm]\tikztostart.center)
            -- ([yshift=-.7cm]\tikztotarget.center)
            \tikztonodes
            -- (\tikztotarget.south) },
            from=2-1, to=2-3
        ]
    \end{tikzcd}
    \]
    Hence $\beta$ is indeed the counit of an adjunction
    	$(\tilde b,\beta) \colon (-)^b \dashv (-) \c i_X$
    witnessing $\algB$ as lax-idempotent.
\end{proof}

As in the non-relative setting (see \cite[\S6]{kelly1997property}), the observation that 1-cells have unique lax morphism structure has a number of useful further consequences. In the terminology of \cite{kelly1997property}, the next lemma proves an analogue of \emph{property (2L)}: every 2-cell between lax morphisms with lax-idempotent codomain is a transformation.

\begin{corollary}
    \label{2-cells-between-lax-morphisms-of-lax-idempotent-pseudoalgebras-are-transformations}
    Let $\J$ be a pseudofunctor, $T$ be a $J$-relative pseudomonad, $\algA$ be an $T$-pseudoalgebra, and $\algB$ be a lax-idempotent $T$-pseudoalgebra. The forgetful functor
    \[
    	U_T \colon \AlgTl{\big[\algA, \algB\big]} \to \E[A, B]
    \]
    given by the action of the strict pseudofunctor $U_T \colon \AlgTl \to \E$ is fully faithful.
\end{corollary}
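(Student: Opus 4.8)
The plan is to leverage that $U_T$ is already known to be locally faithful (\cref{forgetful-pseudofunctor}), so that only local \emph{fullness} of $(U_T)_{\algA,\algB} \colon \AlgTl[\algA,\algB] \to \E[A,B]$ remains to be checked. Concretely: given lax morphisms $(h,\overline h), (h',\overline{h'}) \colon \algA \to \algB$ and an arbitrary 2-cell $\alpha \colon h \tto h'$ in $\E[A,B]$, I must verify that $\alpha$ satisfies the transformation axiom \eqref{transformation}, \ie $\overline{h'}_f \circ (\alpha f)^b = (\alpha f^a) \circ \overline h_f$ for every $f \colon JX \to A$ in $\E$.

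Since $\algB$ is lax-idempotent, for each $X \in \A$ the functor $\ph^b \colon \E[JX,B] \to \E[TX,B]$ is left adjoint to $\ph \c i_X$ with unit $\tilde b$ (\cref{lax-idempotent-pseudoalgebra}). Both sides of the displayed equation are 2-cells with domain $(hf)^b$, which lies in the image of $\ph^b$, so it suffices to verify the equation after transposing across this adjunction — that is, after whiskering by $i_X$ and precomposing with $\tilde b_{hf}$. First I would transpose the right-hand side: the lax-morphism unit law \eqref{lax-morph-tildes} applied to $(h,\overline h)$ says that the transpose of $\overline h_f$ is $h\tilde a_f$, so the transpose of $(\alpha f^a)\circ\overline h_f$ is $(\alpha f^a i_X)\circ(h\tilde a_f)$. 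Next I would transpose the left-hand side: naturality of the unit $\tilde b$ at the 2-cell $\alpha f$ rewrites $((\alpha f)^b i_X)\circ\tilde b_{hf}$ as $\tilde b_{h'f}\circ(\alpha f)$, after which \eqref{lax-morph-tildes} for $(h',\overline{h'})$ turns the composite into $(h'\tilde a_f)\circ(\alpha f)$. The transformation axiom has thereby been reduced to the equation $(\alpha f^a i_X)\circ(h\tilde a_f) = (h'\tilde a_f)\circ(\alpha f)$, which is precisely the interchange law computing the horizontal composite of $\alpha \colon h \tto h'$ with $\tilde a_f \colon f \tto f^a i_X$ in the two evident ways, hence holds automatically. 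Since transposition is a bijection, this establishes the desired equality and so fullness.

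I do not anticipate a genuine obstacle here; the argument is entirely formal. The two ideas carrying it are that the equation to be checked lies in the image of $\ph^b$ (so that transposition against the lax-idempotence adjunction is available) and that the lax-morphism unit law \eqref{lax-morph-tildes} identifies the transpose of each $\overline h$ with a whiskering of $\tilde a$. The only point demanding care is the bookkeeping of the associator isomorphisms suppressed in \eqref{lax-morph-tildes} and in the transformation axiom, which must be tracked faithfully when transposing. One may additionally observe, using \cref{lax-idempotent-iff-unique-lax-morphism-structure}, that $(U_T)_{\algA,\algB}$ is moreover bijective on objects, and hence in fact an isomorphism of categories — though this strengthening is not required for the statement.
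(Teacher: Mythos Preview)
Your proposal is correct and follows essentially the same approach as the paper: both use the lax-idempotence adjunction $\ph^b \adj \ph\c i_X$ to reduce the transformation axiom to an instance of interchange. The only difference is presentational: the paper first invokes \cref{lax-idempotent-iff-unique-lax-morphism-structure} to write $\overline h_f$ explicitly as $\beta_{hf^a}\circ(h\tilde a_f)^b$ (with $\beta$ the counit) and then appeals to naturality of $\beta$, whereas you transpose along the unit and use the lax-morphism unit law \eqref{lax-morph-tildes} directly; your route is slightly more self-contained since it does not rely on the explicit formula from the preceding theorem.
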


\begin{proof}
	Faithfulness follows from \cref{forgetful-pseudofunctor}.
	For fullness, let $\phi \colon f \tto f'$ be any 2-cell in $\E$. By \cref{lax-idempotent-iff-unique-lax-morphism-structure}, $f$ and $f'$ have unique lax algebra structures $\overline f$ and $\overline{f'}$, respectively. To show that $\phi$ is a transformation we need to show the square below commutes for all $g \colon JX \to A$:
    \[\begin{tikzcd}
        {(fg)^b} & {(f'g)^b} \\
        {fg^a} & {f'g^a}
        \arrow["{(\phi g)^b}", from=1-1, to=1-2]
        \arrow["{\overline f_g}"', from=1-1, to=2-1]
        \arrow["{\overline{f'}_g}", from=1-2, to=2-2]
        \arrow["{\phi g^a}"', from=2-1, to=2-2]
    \end{tikzcd}\]
    By \cref{lax-idempotent-iff-unique-lax-morphism-structure}, $(fg)^b \xto{\overline f_g} f g^a$
    is equal to the composite
    \begin{equation}
        (fg)^b \xto{(f \tilde{a}_g)^b} (f g^a i_X)^b \xto{\beta_{f g^a}} f g^a
    \end{equation}
    (and similarly for $f'$), where $\beta$ is the counit of the adjunction \eqref{eq:lax-idempotent-adjunction}.
    Hence the required naturality square decomposes as follows, and so every 2-cell is a transformation.
    \[\begin{tikzcd}
        {(fg)^b} && {(f'g)^b} \\
        {(fg^ai_X)^b} && {(f'g^ai_X)^b} \\
        {fg^a} && {f'g^a}
        \arrow["{(\phi g)^b}", from=1-1, to=1-3]
        \arrow["{(f\tilde a_g)^b}"', from=1-1, to=2-1]
        \arrow["{(f'\tilde a_g)^b}", from=1-3, to=2-3]
        \arrow[""{name=0, anchor=center, inner sep=0}, "{(\phi g^ai_X)^b}"{description}, from=2-1, to=2-3]
        \arrow["{\beta_{fg^a}}"', from=2-1, to=3-1]
        \arrow["{\beta_{f'g^a}}", from=2-3, to=3-3]
        \arrow[""{name=1, anchor=center, inner sep=0}, "{\phi g^a}"', from=3-1, to=3-3]
        \arrow["{\natof\beta}"{description}, draw=none, from=0, to=1]
    \end{tikzcd}\qedshift\]
\end{proof}

One useful consequence of the preceding results is the observation that left adjoint 1-cells between lax-idempotent algebras are always pseudomorphisms. This agrees with our intuition that lax-idempotence corresponds to colimit structure, as left adjoints always preserve colimits. The following was proven in the strict setting in \cite[Proposition~2.5]{kock1995monads} and for pseudomonads in \cite[Proposition~5.1]{marmolejo1997doctrines} (both under the stronger assumption that the pseudomonad itself is lax-idempotent). As observed in \cite{kelly1974doctrinal}, it is a consequence of doctrinal adjunction.

\begin{proposition}
    \label{left-adjoints-are-pseudomorphisms}
    Let $\J$ be a pseudofunctor, let $T$ be a $J$-relative pseudomonad, and let $\algA$ and $\algB$ be lax-idempotent $T$-pseudoalgebras. Every left adjoint 1-cell $A \to B$ in $\E$ is a pseudomorphism of $T$-pseudoalgebras.
\end{proposition}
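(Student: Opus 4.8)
The plan is to realise the ambient adjunction $\ell \adj r$ as the image under $U_T$ of an adjunction inside $\AlgTl$, and then to read off invertibility of the morphism structure on $\ell$ from the lifting criterion \cref{adjunction-lifts-to-AlgTl}. First I would fix notation: write $r \colon B \to A$ for the right adjoint, with unit $\nu \colon 1_A \tto r\ell$ and counit $\varepsilon \colon \ell r \tto 1_B$. Since $\algB$ is lax-idempotent, \cref{lax-idempotent-iff-unique-lax-morphism-structure} supplies a lax morphism structure $\overline\ell$ on $\ell$; since $\algA$ is lax-idempotent, the same theorem supplies a lax morphism structure $\overline r$ on $r$. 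This yields 1-cells $(\ell, \overline\ell) \colon \algA \to \algB$ and $(r, \overline r) \colon \algB \to \algA$ in $\AlgTl$.

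Next I would promote $\nu$ and $\varepsilon$ to 2-cells of $\AlgTl$. Equipping $1_A$ with the identity lax morphism structure \eqref{identity-lax-morphism} and $(r, \overline r) \c (\ell, \overline\ell)$ with the composite structure \eqref{composite-lax-morphism}, \cref{2-cells-between-lax-morphisms-of-lax-idempotent-pseudoalgebras-are-transformations}, applied to the lax-idempotent algebra $\algA$, shows that $\nu$ is a transformation $1_A \tto (r, \overline r) \c (\ell, \overline\ell)$; dually, applying that corollary to $\algB$ shows that $\varepsilon$ is a transformation $(\ell, \overline\ell) \c (r, \overline r) \tto 1_B$. Because $U_T \colon \AlgTl \to \E$ is strict, so it preserves identities and composites on the nose, and locally faithful (\cref{forgetful-pseudofunctor}), the triangle identities for $(\nu, \varepsilon)$, which hold in $\E$, must also hold in $\AlgTl$. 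Hence $(\nu, \varepsilon) \colon (\ell, \overline\ell) \adj (r, \overline r)$ is an adjunction in $\AlgTl$ whose image under $U_T$ is $\ell \adj r$; in particular $\ell \adj r$ lifts to $\AlgTl$. By \cref{adjunction-lifts-to-AlgTl}, lifting of $\ell \adj r$ to $\AlgTl$ forces $\overline\ell$ to be invertible, so $(\ell, \overline\ell)$ is a pseudomorphism, as required.

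I do not anticipate a genuine obstacle: once the two structural inputs about lax-idempotent pseudoalgebras (\cref{lax-idempotent-iff-unique-lax-morphism-structure,2-cells-between-lax-morphisms-of-lax-idempotent-pseudoalgebras-are-transformations}) are in hand, the argument is purely formal. The only point needing a little care is the transfer of the triangle identities from $\E$ to $\AlgTl$, which is immediate from strictness and local faithfulness of $U_T$. A more computational alternative would be to invoke doctrinal adjunction (\cref{doctrinal-adjunction}) directly and chase a pasting diagram showing that the canonical lax structure $\overline\ell_f$ -- the transpose of $\ell\tilde a_f$ under the adjunction \eqref{eq:lax-idempotent-adjunction} -- is inverse to the mate of $\overline r_f$; the route above avoids this chase.
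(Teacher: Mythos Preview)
Your argument is correct and is essentially the paper's own proof: equip $\ell$ and $r$ with their unique lax morphism structures via \cref{lax-idempotent-iff-unique-lax-morphism-structure}, use \cref{2-cells-between-lax-morphisms-of-lax-idempotent-pseudoalgebras-are-transformations} to see that $\nu$ and $\varepsilon$ are transformations so that the adjunction lifts to $\AlgTl$, and conclude by \cref{adjunctions-lift-to-algebras}. You add a little more detail about transferring the triangle identities, but the strategy is identical.
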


\begin{proof}
    Let $\ell \colon A \rightleftarrows B \cocolon r$ be an adjunction in $\E$. Since $\algA$ and $\algB$ are lax-idempotent, $\ell$ and $r$ acquire unique lax morphism structures by \cref{lax-idempotent-iff-unique-lax-morphism-structure}, for which the unit and counit 2-cells are transformations by \cref{2-cells-between-lax-morphisms-of-lax-idempotent-pseudoalgebras-are-transformations}, so that the adjunction lifts to an adjunction in $\AlgTl$. Consequently, by \cref{adjunctions-lift-to-algebras}, $\ell$ is a pseudomorphism.
\end{proof}

This in turn implies that lax-idempotent pseudoalgebra structure is essentially unique.

\begin{corollary}
    \label{lax-pseudoalgebra-structures-are-essentially-unique}
    Any two lax-idempotent pseudoalgebra structures on the same object are equivalent.
\end{corollary}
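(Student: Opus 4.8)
The plan is to prove the stronger statement that, if an object $A \in \E$ carries two lax-idempotent $T$-pseudoalgebra structures --- call the resulting pseudoalgebras $\mathbb{A}_1$ and $\mathbb{A}_2$, with extension operators $(-)^{a_1}$ and $(-)^{a_2}$ --- then the identity 1-cell $1_A$ underlies an equivalence $\mathbb{A}_1 \simeq \mathbb{A}_2$ in $\AlgT$. First I would observe that $1_A \dashv 1_A$ is an adjunction in $\E$. Since $\mathbb{A}_1$ and $\mathbb{A}_2$ have the same carrier $A$, \cref{left-adjoints-are-pseudomorphisms} applied to this adjunction (with $\mathbb{A}_1$ as domain algebra and $\mathbb{A}_2$ as codomain algebra) equips the left adjoint $1_A$ with the structure of a pseudomorphism $p \colon \mathbb{A}_1 \to \mathbb{A}_2$; applying it in the other direction equips $1_A$ with the structure of a pseudomorphism $q \colon \mathbb{A}_2 \to \mathbb{A}_1$. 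By \cref{lax-idempotent-iff-unique-lax-morphism-structure}, these are moreover the unique lax morphism structures on $1_A$ in the respective directions.

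Next I would check that $p$ and $q$ are mutually inverse up to isomorphism. The composite $q \c p \colon \mathbb{A}_1 \to \mathbb{A}_1$ is a pseudomorphism whose carrier is $1_A \c 1_A$, and the unitor of $\E$ is an invertible 2-cell $1_A \c 1_A \xto{\iso} 1_A$ between this carrier and the carrier of the identity pseudomorphism $1_{\mathbb{A}_1}$. Because $\mathbb{A}_1$ is lax-idempotent, \cref{2-cells-between-lax-morphisms-of-lax-idempotent-pseudoalgebras-are-transformations} tells us that $U_T \colon \AlgTl[\mathbb{A}_1, \mathbb{A}_1] \to \E[A, A]$ is fully faithful, so this unitor lifts uniquely to a transformation $q \c p \tto 1_{\mathbb{A}_1}$, which is invertible since $U_T$ is locally conservative (\cref{forgetful-pseudofunctor}). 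The symmetric argument with $\mathbb{A}_2$ produces an invertible transformation $p \c q \tto 1_{\mathbb{A}_2}$. Since $p$, $q$, and both inverting transformations lie between pseudomorphisms, this is an equivalence in $\AlgT$, and so the two pseudoalgebra structures are equivalent.

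The whole argument is a short diagram chase in disguise, so there is no real obstacle; the one point requiring attention is that the comparison 2-cells exhibiting $q \c p$ and $p \c q$ as isomorphic to identities are \emph{transformations} of pseudoalgebras and not merely 2-cells in $\E$, which is exactly what the fully faithfulness statement in \cref{2-cells-between-lax-morphisms-of-lax-idempotent-pseudoalgebras-are-transformations} provides. (One could alternatively work directly from \cref{lax-idempotent-iff-left-ext}: for each $f \colon JX \to A$ both $f^{a_1}$ and $f^{a_2}$ are left extensions of $f$ along $i_X$, hence canonically isomorphic, and assembling these isomorphisms yields the same equivalence --- but routing through doctrinal adjunction is the most economical.)
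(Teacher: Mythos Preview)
Your proposal is correct and follows essentially the same approach as the paper: both apply \cref{left-adjoints-are-pseudomorphisms} to the trivial adjunction $1_A \dashv 1_A$ to obtain pseudomorphism structures on $1_A$ in each direction, and then use \cref{2-cells-between-lax-morphisms-of-lax-idempotent-pseudoalgebras-are-transformations} to lift the unit/counit (equivalently, the unitors) to transformations, yielding an equivalence in $\AlgT$. Your version is simply more explicit about assembling the two pseudomorphisms and checking that the composites are isomorphic to identities, whereas the paper compresses this into the single remark that the trivial adjunction extends to an adjoint equivalence.
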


\begin{proof}
    By \cref{left-adjoints-are-pseudomorphisms,2-cells-between-lax-morphisms-of-lax-idempotent-pseudoalgebras-are-transformations}, the trivial adjunction $1_A \adj 1_A$ in $\E$ extends to an adjoint equivalence between any pair of pseudoalgebras with the same carrier in $\AlgT$.
\end{proof}

\begin{remark}
    Together with \cref{lax-idempotent-iff-unique-lax-morphism-structure}, this shows that lax-idempotent pseudoalgebra structure is \emph{property-like} in the sense of \cite[\S4]{kelly1997property}. However, we do not know whether they are furthermore \emph{fully property-like} in the sense of \cite[\S5]{kelly1997property}, which would require every 2-cell between colax morphisms to have the structure of a transformation (see \cref{lax-fixed-points} for further discussion).
\end{remark}

Finally, we observe that lax-idempotent structure transports across reflections.

\begin{proposition}
    Under the assumptions of \cref{transport-of-structure-counit-invertible}, if the $T$-pseudoalgebra $\algA$ is lax-idempotent, then so is the induced $T$-pseudoalgebra structure on $B$.
\end{proposition}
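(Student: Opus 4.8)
The plan is to verify lax-idempotence of the transported structure through its left-extension reformulation (\cref{lax-idempotent-iff-left-ext}). Write $(-)^b$, $\hat{b}$, $\tilde{b}$ for the pseudoalgebra structure on $B$ produced by \cref{transport-of-structure-counit-invertible}, so that $\algB$ denotes this $T$-pseudoalgebra. By \cref{lax-idempotent-iff-left-ext}, it suffices to show that for each $X \in \A$ and each $g \colon JX \to B$, the 2-cell $\tilde{b}_g \colon g \tto g^b i_X$ exhibits $g^b$ as the left extension of $g$ along $i_X$.

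First I would unwind the transported data. Dually to the explicit construction in the proof of \cref{transport-of-structure}, we have $g^b = \ell(rg)^a$ and, modulo the structural associativity isomorphisms, $\tilde{b}_g$ is the composite
\[
    g \xto{\varepsilon_g\inv} \ell r g \xto{\ell \tilde{a}_{rg}} \ell(rg)^a i_X = g^b i_X,
\]
where $\varepsilon_g\inv$ is invertible because the adjunction $\ell \dashv r$ is reflective, and $\tilde{a}$ is the unitor of $\algA$.

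The core of the argument is then a two-step transfer of the universal property of left extensions. Since $\algA$ is lax-idempotent, \cref{lax-idempotent-iff-left-ext} says that $\tilde{a}_{rg} \colon rg \tto (rg)^a i_X$ exhibits $(rg)^a$ as the left extension of $rg$ along $i_X$. Now $\ell$ is a left adjoint in $\E$, and left adjoints preserve left extensions: for each $k \colon TX \to B$ there is a natural chain of isomorphisms
\begin{align*}
    \E[TX, B]\big(\ell(rg)^a, k\big)
    & \iso \E[TX, A]\big((rg)^a, rk\big) \\
    & \iso \E[JX, A]\big(rg, (rk)i_X\big) \\
    & = \E[JX, A]\big(rg, r(ki_X)\big) \\
    & \iso \E[JX, B]\big(g, ki_X\big),
\end{align*}
obtained from $\ell \dashv r$ (twice) and the left-extension property of $(rg)^a$ (once), the whole isomorphism being induced by pasting with $\ell\tilde{a}_{rg}$; hence $\ell\tilde{a}_{rg}$ exhibits $\ell(rg)^a = g^b$ as the left extension of $\ell r g$ along $i_X$. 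Finally, precomposing the exhibiting 2-cell with the invertible $\varepsilon_g\inv \colon g \xto{\sim} \ell r g$ leaves the represented object unchanged, so $\tilde{b}_g$ exhibits $g^b$ as the left extension of $g$ along $i_X$. By \cref{lax-idempotent-iff-left-ext} again, $\algB$ is lax-idempotent.

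The only delicate points — and the likeliest source of friction — are bookkeeping: pinning down the precise form of the transported $\tilde{b}$ from the dual of \cref{transport-of-structure} (in particular that the invertible 2-cell appearing is $\varepsilon_g\inv$ and that it glues onto $\ell\tilde{a}_{rg}$ correctly through the associators), and the standard fact that a left adjoint preserves left extensions (which can be cited, or dispatched by the displayed hom-isomorphism chain after checking it is effected by whiskering with $\ell\tilde{a}_{rg}$). An alternative, equally short route avoids \cref{lax-idempotent-iff-left-ext} entirely: assemble the same chain of hom-isomorphisms into an adjunction $(-)^b \adj (-)\c i_X$ directly — now also using that $r$ is fully faithful, since the adjunction is reflective — and then verify, via the triangle identity for $\ell \dashv r$, that its unit is exactly $\tilde{b}$, so that $\algB$ is lax-idempotent by \cref{lax-idempotent-pseudoalgebra}.
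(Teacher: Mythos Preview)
Your argument is correct and amounts to the same idea as the paper's: transport the lax-idempotence adjunction $\ph^a_X \dashv \ph \c i_X$ from $A$ to $B$ through $\ell \dashv r$, using that the reflection makes $r$ fully faithful (equivalently, $\varepsilon$ invertible). The paper does this at the level of adjunctions, displaying the chain $\ell\c{-} \dashv r\c{-}$, $\ph^a_X \dashv \ph\c i_X$, $\ell\c{-} \dashv r\c{-}$ and invoking an external lemma to obtain $\ell(r\ph)^a_X \dashv \ell r\ph\c i_X \iso \ph\c i_X$, then checking the unit is $\tilde b$; you instead pass through \cref{lax-idempotent-iff-left-ext} and argue directly via preservation of left extensions by the left adjoint $\ell$, which is the same content unwound. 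Your alternative route is essentially the paper's proof verbatim. One small expository slip: your displayed hom-chain already lands in $\E[JX,B](g, k i_X)$, so it is induced by the full $\tilde b_g = (\ell\tilde a_{rg})\c\varepsilon_g\inv$, not by $\ell\tilde a_{rg}$ alone; the chain for $\ell r g$ stops one step earlier at $\E[JX,B](\ell r g, k i_X)$.
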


\begin{proof}
    For each object $X \in \A$, we have adjunctions as follows:
    \[\begin{tikzcd}[column sep=large]
        {\E[JX, B]} & {\E[JX, A]} & {\E[TX, A]} & {\E[TX, B]}
        \arrow[""{name=0, anchor=center, inner sep=0}, "{r \c \ph}"', shift right=2, hook', from=1-1, to=1-2]
        \arrow[""{name=1, anchor=center, inner sep=0}, "{\ell \c \ph}"', shift right=2, from=1-2, to=1-1]
        \arrow[""{name=2, anchor=center, inner sep=0}, "{\ph^a_X}", shift left=2, hook, from=1-2, to=1-3]
        \arrow[""{name=3, anchor=center, inner sep=0}, "{\ph \c i_X}", shift left=2, from=1-3, to=1-2]
        \arrow[""{name=4, anchor=center, inner sep=0}, "{\ell \c \ph}", shift left=2, from=1-3, to=1-4]
        \arrow[""{name=5, anchor=center, inner sep=0}, "{r \c \ph}", shift left=2, hook, from=1-4, to=1-3]
        \arrow["\dashv"{anchor=center, rotate=-90}, draw=none, from=2, to=3]
        \arrow["\dashv"{anchor=center, rotate=-90}, draw=none, from=1, to=0]
        \arrow["\dashv"{anchor=center, rotate=-90}, draw=none, from=4, to=5]
    \end{tikzcd}\]
    Consequently, by \cite[Lemma~2.7]{arkor2024adjoint}, we have $\ell \c (r\ph)^a_X \adj \ell r \ph \c i_X \iso \ph \c i_X$, where the left adjoint is precisely the extension operator $\ph^b_X$ for the induced pseudoalgebra structure on $B$ (\cf~\eqref{reflected-algebra-structure}), and it is straightforward to verify the unit is precisely $\tilde b$.
\end{proof}

\subsection{Left extension pseudoalgebras}

In \cite[\S5]{fiore2018relative}, building upon the work of \cite{marmolejo2012kan}, the authors showed that the definition of a relative pseudomonad could be simplified under the assumption of lax-idempotence, by taking advantage of the universal properties of left extensions. We shall show the same is true for pseudoalgebras, extending \cite[\S3 \& \S5]{marmolejo2012kan}. The following notion was considered in \cite[Definition~4.36]{lewicki2020categories} by the name of \emph{relative $T$-complete object}.

\begin{definition}
    \label{left-extension-algebra}
    Let $T$ be a relative pseudomonad. A \emph{left extension $T$-pseudoalgebra} comprises
    \begin{enumerate}
        \item an object $A$ in $\E$;
        \item a 1-cell $f^a_X \colon TX \to A$ for each $f \colon JX \to A$ in $\E$;
        \item an invertible 2-cell $\tilde a_f \colon f \tto f^a i_X$ for each $f \colon JX \to A$ in $\E$,
    \end{enumerate}
    such that
    \begin{enumerate}[resume]
        \item for each $f \colon JX \to A$ in $\E$, the 2-cell $\tilde a_f$ exhibits $f^a$ as the left extension of $f$ along $i_X$;
        \item for each $f \colon JX \to TY$ and $g \colon TY \to A$ in $\E$, the 2-cell $g^a \eta_f \colon g^a f \tto g^a f^* i_X$ exhibits $g^a f^*$ as the left extension of $g^a f$ along $i_X$.
        \qedhere
    \end{enumerate}
\end{definition}

The following is an analogue of \cite[Theorem~5.3(i \& v)]{fiore2018relative} for pseudoalgebras. That left extension $T$-pseudoalgebras induce $T$-pseudoalgebras is also proven in \cite[Lemma~4.37]{lewicki2020categories}.

\begin{proposition}
    \label{lax-idempotent-algebra-is-left-extension-algebra}
    Let $T$ be a relative pseudomonad. For each object $A$ in $\E$, there is a bijection between left extension $T$-pseudoalgebra structures on $A$ and lax-idempotent $T$-pseudoalgebra structures on~$A$.
\end{proposition}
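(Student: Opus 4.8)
The plan is to describe the two constructions and verify that they are mutually inverse, following the strategy by which \textcite{marmolejo2012kan} relates Kan-extension monads and algebras to (non-relative) lax-idempotent pseudomonads and pseudoalgebras, but taking care not to iterate $T$. The one fact used throughout is the universal property of left extensions: a $2$-cell $\ell \tto k$ out of a left extension $\ell$ of some $1$-cell along $i_X$ is uniquely determined by its pasting with the exhibiting unit; and the property of being a left extension along $i_X$ is stable under pasting with invertible $2$-cells.

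Given a lax-idempotent $T$-pseudoalgebra $\algA$, one obtains a left extension $T$-pseudoalgebra by discarding $\hat a$ and keeping $(A, (-)^a, \tilde a)$. The first left-extension axiom of \cref{left-extension-algebra} is exactly the content of \cref{lax-idempotent-iff-left-ext}. For the second, \cref{psalg-eta} exhibits $g^a \eta_f$ as the pasting of $\tilde a_{g^a f}$ with the whiskering of the invertible $\hat a_{g, f}$ by $i_X$ and an associator; since $\tilde a_{g^a f}$ exhibits $(g^a f)^a$ as the left extension of the $1$-cell $g^a f$ along $i_X$ --- that being the first axiom applied to $g^a f$ --- stability of left extensions under pasting with invertible $2$-cells shows that $g^a \eta_f$ exhibits $g^a f^*$ as the required left extension.

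Conversely, start from a left extension $T$-pseudoalgebra $(A, (-)^a, \tilde a)$. First, promote the assignment $f \mapsto f^a$ to a functor $\E[JX, A] \to \E[TX, A]$ for each $X \in \A$ by sending a $2$-cell $\alpha \colon f \tto f'$ to the unique $2$-cell $\alpha^a$ with $(\alpha^a i_X) \cdot \tilde a_f = \tilde a_{f'} \cdot \alpha$, using that $\tilde a_f$ exhibits $f^a$ as a left extension along $i_X$; functoriality follows from uniqueness, and this equation is precisely naturality of $\tilde a$. Next, define $\hat a_{g, f} \colon (g^a f)^a \tto g^a f^*$ to be the unique comparison isomorphism between the two left extensions of $g^a f$ along $i_X$ provided by the two left-extension axioms of \cref{left-extension-algebra}; it is automatically a natural family, and is exactly the $2$-cell making the triangle of \cref{psalg-eta} commute. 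The associativity law \eqref{psalg-mu} and the unit law \eqref{psalg-theta} of \cref{pseudoalgebra} then follow from the universal property of left extensions: each is an equality of $2$-cells out of some $(-)^a$, hence out of a left extension along $i_X$, so it suffices to check the identity after whiskering by $i_X$ and pasting with the relevant $\tilde a$, whereupon both sides reduce to the same $2$-cell by the relative pseudomonad axioms. Lax-idempotence of the resulting pseudoalgebra is immediate from \cref{lax-idempotent-iff-left-ext}, since the first left-extension axiom says exactly that $\tilde a$ is the unit of the required adjunction.

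Finally, the two constructions are mutually inverse. Passing from a left extension pseudoalgebra through the pseudoalgebra construction and back clearly returns the same $(-)^a$ and $\tilde a$. Passing from a lax-idempotent pseudoalgebra to a left extension pseudoalgebra and back returns the same $(-)^a$ and $\tilde a$, and recovers $\hat a$: by the universal property of $(g^a f)^a$ as a left extension along $i_X$, a $2$-cell out of it is determined by its pasting with $\tilde a_{g^a f}$, and \cref{psalg-eta} identifies this pasting with the one coming from the original $\hat a_{g, f}$. The main obstacle is the verification of the associativity law \eqref{psalg-mu} in the left-extension-to-pseudoalgebra direction: it is a pentagon-shaped coherence whose commutativity must be reduced, via the universal property of the relevant left extension, to a single equality of whiskered $2$-cells; the manipulation is intricate but routine, and mirrors the corresponding argument in \cite[\S5]{marmolejo2012kan}.
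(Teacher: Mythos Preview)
Your proposal is correct and follows essentially the same route as the paper: define $\hat a$ via the universal property of the left extension $(g^a f)^a$, verify the pseudoalgebra axioms by reducing equalities of 2-cells out of left extensions to their whiskerings with $i_X$ (which the paper outsources to \cite[Theorem~5.3(v $\Rightarrow$ i)]{fiore2018relative}), invoke \cref{lax-idempotent-iff-left-ext} for lax-idempotence, and use uniqueness of $\hat a$ via \cref{psalg-eta} for bijectivity. The paper is much terser --- calling the lax-idempotent-to-left-extension direction ``trivial'' and citing \cite{fiore2018relative} for the coherence checks --- but your unpacking of that direction via \cref{psalg-eta} and stability of left extensions under invertible pastings is exactly what underlies both the ``trivial'' direction and the bijectivity claim.
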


\begin{proof}
    Given a left-extension $T$-pseudoalgebra $\algA$, and 1-cells $f \colon JX \to TY$ and $g \colon JY \to A$ in $\E$, the 2-cell $g^a \eta_f \colon g^a f \tto g^a f^* i_X$ induces a 2-cell $\hat a_{g, f} \colon (g^a f)^a \tto g^a f^*$ by the universal property of the left extension $(g^a f)^a$. The proof that this data satisfies the axioms of a $T$-pseudoalgebra follows as in the proof of \cite[Theorem~5.3(v ${\tto}$ i)]{fiore2018relative}. That the $T$-pseudoalgebra is lax-idempotent follows from \cref{lax-idempotent-iff-left-ext}. The converse is trivial. That this correspondence is bijective follows from uniqueness of the universal property of the left extension defining $\hat a_{g, f}$.
\end{proof}

\begin{remark}
    We may define a \emph{$T$-pseudoalgebra} for a left extension relative pseudomonad $T$ in the sense of \cite[Definition~5.2]{fiore2018relative}\footnotemark{} to simply be a left extension pseudoalgebra for the induced lax-idempotent relative pseudomonad~\cite[Theorem~5.3]{fiore2018relative}.
    However, note that the definition of left extension pseudoalgebra does not require the relative pseudomonad itself to be lax-idempotent.%
    \footnotetext{\cite{fiore2018relative} uses the term \emph{relative left Kan pseudomonad}, but we prefer a more descriptive term.}
\end{remark}

We may also formulate the notion of pseudomorphism between left extension pseudoalgebras in terms of left extensions. (By \cref{lax-idempotent-iff-unique-lax-morphism-structure}, the appropriate notion of lax morphism for left extension pseudoalgebras is uninteresting, being simply a 1-cell between the carriers.)

\begin{corollary}
    Let $\J$ be a pseudofunctor, $T$ be a $J$-relative pseudomonad, and $\algLxA$ and $\algLxB$ be left extension $T$-pseudoalgebras. A 1-cell $h \colon A \to B$ admits the (necessarily unique) structure of a pseudomorphism between the induced $T$-pseudoalgebras (\cref{lax-idempotent-algebra-is-left-extension-algebra}) if and only if, for each $f \colon JX \to A$ in $\E$, the 2-cell $h\tilde a_f$ exhibits $hf^a \colon TX \to B$ as the left extension of $hf \colon JX \to B$ along $i_X \colon JX \to TX$.
\end{corollary}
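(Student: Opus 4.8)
The plan is to reduce the statement to the essential uniqueness of left extensions, using the theory of lax-idempotence already developed. First, by \cref{lax-idempotent-algebra-is-left-extension-algebra}, the left extension $T$-pseudoalgebras $\algLxA$ and $\algLxB$ induce lax-idempotent $T$-pseudoalgebra structures on $A$ and $B$; these are the ``induced $T$-pseudoalgebras'' of the statement, and by construction their extension operators and unitors are the data $\ph^a, \tilde a$ and $\ph^b, \tilde b$ of the given left extension pseudoalgebras. Since the induced structure on $B$ is lax-idempotent, \cref{lax-idempotent-iff-unique-lax-morphism-structure} applies: $h \colon A \to B$ carries a unique lax morphism structure $\overline h$, with $\overline h_f \colon (hf)^b \tto hf^a$ the unique 2-cell corresponding to $h\tilde a_f \colon hf \tto hf^a i_X$ under the adjunction \eqref{eq:lax-idempotent-adjunction} for $B$, whose unit is $\tilde b$. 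By \cref{pseudomorphism}, $h$ admits a pseudomorphism structure exactly when each $\overline h_f$ is invertible, and as such a structure is in particular a lax morphism structure it is then necessarily this unique one, which accounts for the parenthetical ``necessarily unique''. It thus remains to show that every $\overline h_f$ is invertible if and only if $h\tilde a_f$ exhibits $hf^a$ as the left extension of $hf$ along $i_X$.

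Next I would rephrase the adjunction \eqref{eq:lax-idempotent-adjunction} in terms of left extensions. By \cref{lax-idempotent-iff-left-ext} applied to the lax-idempotent pseudoalgebra $B$, for each $f$ the unit component $\tilde b_{hf} \colon hf \tto (hf)^b i_X$ exhibits $(hf)^b$ as the left extension of $hf$ along $i_X$; equivalently, \eqref{eq:lax-idempotent-adjunction} is exactly the left-extension-along-$i_X$ adjunction. Spelling out the transposition, $\overline h_f$ is therefore characterised as the unique 2-cell for which the composite $hf \xto{\tilde b_{hf}} (hf)^b i_X \xto{\overline h_f i_X} hf^a i_X$ equals $h\tilde a_f$; that is, $\overline h_f$ is precisely the comparison 2-cell obtained by factoring $h\tilde a_f$ through the universal 2-cell $\tilde b_{hf}$.

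Finally I would conclude by the standard two-way argument for left extensions. If $\overline h_f$ is invertible, then postcomposing the universal 2-cell $\tilde b_{hf}$ with the invertible 2-cell $\overline h_f i_X$ again produces a 2-cell with the universal property, so $h\tilde a_f$ exhibits $hf^a$ as the left extension of $hf$ along $i_X$. Conversely, if $h\tilde a_f$ exhibits $hf^a$ as a left extension of $hf$ along $i_X$, then $(hf)^b$ (witnessed by $\tilde b_{hf}$) and $hf^a$ (witnessed by $h\tilde a_f$) are two left extensions of $hf$ along $i_X$, so the unique comparison 2-cell between them — namely $\overline h_f$, by the factorisation above — is invertible. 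Running this equivalence over all $f \colon JX \to A$ gives the statement. No step here is a serious obstacle: the single point requiring care is the identification of the adjunction transpose $\overline h_f$ with the left-extension comparison 2-cell, and this is immediate once \cref{lax-idempotent-iff-left-ext} has identified \eqref{eq:lax-idempotent-adjunction} as the left-extension adjunction with unit $\tilde b$.
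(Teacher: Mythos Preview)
Your proposal is correct and follows essentially the same approach as the paper: both invoke \cref{lax-idempotent-iff-unique-lax-morphism-structure} to identify $\overline h_f$ as the transpose of $h\tilde a_f$ under the left-extension adjunction, then observe that this transpose is invertible precisely when $h\tilde a_f$ exhibits $hf^a$ as a left extension. The paper compresses this into a single sentence, while you have carefully unpacked the identification of the adjunction transpose with the left-extension comparison 2-cell.
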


\begin{proof}
    This follows immediately from \cref{lax-idempotent-iff-unique-lax-morphism-structure}, the family of 2-cells exhibiting the unique lax morphism structure being precisely those induced by the universal property of the left extensions.
\end{proof}

\subsection{Lax fixed points}
\label{lax-fixed-points}

We conclude this section by discussing an aspect of lax-idempotence only present for non-relative pseudomonads. We do this for two reasons. The first is that our treatment of lax-idempotence for relative pseudomonads looks quite different from the usual treatments of lax-idempotent pseudomonads (\eg~in \cite{marmolejo1997doctrines,kelly1997property}), and it is helpful to clarify the relationship. The second is that it explains why some properties of lax-idempotent pseudomonads do not appear to generalise to relative pseudomonads.

Recall from \cref{li-implies-ali-for-non-relative} that a pseudomonad is lax-idempotent if and only if each of its pseudoalgebras is lax-idempotent.
There is an alternative characterisation of lax-idempotence that is more familiar in the literature on lax-idempotent pseudomonads. We recall this in \cref{lax-idempotent-iff-lax-fixed-points} after presenting a preliminary definition. In the following, we will continue to work with pseudomonads in extension form, \ie{} as pseudomonads relative to the identity, rather than in pseudomonoid form.

Recall from \cref{correspondence-between-algebra-presentations} that, given a pseudoalgebra in extension form $\algA$, we obtain the usual pseudoalgebra structure 1-cell as $(1_A)^a \colon TA \to A$.

\begin{definition}
    Let $T$ be a pseudomonad on a bicategory $\E$. A pseudoalgebra $\algA$ is a \emph{lax fixed point} if $\tilde a_{1_A}\inv \colon (1_A)^a \c i_A \tto 1_A$ exhibits the counit of an adjunction $(1_A)^a \adj i_A \colon A \to TA$.
\end{definition}

\begin{remark}
    \label{lax-fixed-point-and-colax-algebras}
    Given a pseudomonad $T$ and an object $A$ for which $i_A \colon A \to TA$ exhibits the right adjoint of a coreflection, $A$ is automatically equipped with the structure of a colax $T$-algebra~\cite[Lemma~2.4.16]{stepan2024lax}. When $T$ is lax-idempotent, every such colax structure is automatically pseudo~\cite[Proposition~2.5.11]{stepan2024lax}.
\end{remark}

Note that the notion of lax fixed point does not make sense for relative pseudomonads, because it is not generally possibly to form $(1_A)^a$ unless $J$ is the identity. Consequently, there is no analgoue of the following characterisation of lax-idempotent pseudomonads in the relative setting.

\begin{proposition}[{\cite[Theorem~10.7]{marmolejo1997doctrines}}]
    \label{lax-idempotent-iff-lax-fixed-points}
    A (non-relative) pseudomonad is lax-idempotent if and only if every pseudoalgebra is a lax fixed point.
\end{proposition}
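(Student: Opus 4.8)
The plan is to reduce the statement, in both directions, to the left-extension characterisation of lax-idempotence in \cref{lax-idempotent-iff-left-ext}: by \cref{li-implies-ali-for-non-relative} a non-relative pseudomonad $T$ is lax-idempotent exactly when every $T$-pseudoalgebra is, and by \cref{lax-idempotent-iff-free-algebras-are} this holds exactly when every \emph{free} $T$-pseudoalgebra is lax-idempotent. Throughout I would pass freely between pseudoalgebras in extension form and the usual form, as in \cref{correspondence-between-algebra-presentations}: a pseudoalgebra $\algA$ in extension form has structure $1$-cell $a \defeq (1_A)^a \colon TA \to A$, its extension operator satisfies $f^a \iso aTf$ for $f \colon X \to A$, and the extension operator of the free pseudoalgebra on $Y$ satisfies $g^* \iso \mu_Y Tg$, where $\mu$ is the multiplication of the associated pseudomonad.

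For the forward implication, suppose $T$ is lax-idempotent and let $\algA$ be any $T$-pseudoalgebra; by the reductions above, both $\algA$ and the free pseudoalgebra $F_T A$ are lax-idempotent. Applying \cref{lax-idempotent-iff-left-ext} to $\algA$ with $f \defeq 1_A$ shows that the invertible $2$-cell $\tilde{a}_{1_A} \colon 1_A \tto a i_A$ exhibits $a$ as the left extension of $1_A$ along $i_A$; applying it to $F_T A$ and the $1$-cell $i_A \colon A \to TA$, and using the invertible $2$-cell $\theta_A \colon (i_A)^* \tto 1_{TA}$ together with the coherence of \cite[Lemma~3.2(iii)]{fiore2018relative} (that the evident composite $i_A \tto (i_A)^* i_A \tto 1_{TA} i_A$ is the identity, modulo the unitor $1_{TA} i_A \iso i_A$), shows that the \emph{identity} $2$-cell exhibits $1_{TA}$ as the left extension of $i_A$ along $i_A$. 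Feeding $i_A \tilde{a}_{1_A} \colon i_A \tto (i_A a) i_A$ through the universal property of this second extension produces a unique $2$-cell $\nu \colon 1_{TA} \tto i_A a$ with $\nu i_A = i_A \tilde{a}_{1_A}$. One triangle law for the pair $(\nu, \tilde{a}_{1_A}^{-1})$ is then immediate from this equation, and the other reduces, via the universal property of $a$ as the left extension of $1_A$ along $i_A$, to a short interchange-law computation showing that the relevant composite becomes $\tilde{a}_{1_A}$ after whiskering by $i_A$ and precomposing with $\tilde{a}_{1_A}$. Hence $a \adj i_A$ with counit $\tilde{a}_{1_A}^{-1}$; that is, $\algA$ is a lax fixed point.

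For the converse, suppose every $T$-pseudoalgebra is a lax fixed point; it suffices to show that each free pseudoalgebra $F_T Y$ is lax-idempotent. Being a lax fixed point, $F_T Y$ has $\mu_Y \adj i_{TY}$ with invertible counit, so $i_{TY}$ is fully faithful. For each $f \colon X \to TY$, composing the mate bijection for $\mu_Y \adj i_{TY}$ with full faithfulness of $i_{TY}$ and the pseudonaturality of $i$ (which rewrites $Tf \, i_X \iso i_{TY} f$) yields a natural isomorphism
\[
\E[TX, TY](\mu_Y Tf, {-}) \iso \E[X, TY](f, {-}\,i_X),
\]
and a coherence computation with the pseudomonad axioms — mirroring the proof of \cite[Theorem~5.3]{fiore2018relative} — identifies the universal $2$-cell with $\eta_f$. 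Thus $\eta_f \colon f \tto f^* i_X$ exhibits $f^* \iso \mu_Y Tf$ as the left extension of $f$ along $i_X$, so by \cref{lax-idempotent-iff-left-ext} the pseudoalgebra $F_T Y$ is lax-idempotent. By \cref{lax-idempotent-iff-free-algebras-are}, $T$ is lax-idempotent.

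The step I expect to be the main obstacle is purely one of bookkeeping: in both directions the crux is to recognise the \emph{canonical} universal $2$-cells of the relevant left extensions as the given structural data ($\tilde{a}_{1_A}$ in one direction, $\eta$ in the other). The coherence results of \cite{fiore2018relative} — especially \cite[Lemma~3.2]{fiore2018relative} — make this possible, but assembling the pasting diagrams cleanly, and tracking which occurrences of $\theta$, $\eta$, and the bicategorical structure constraints cancel, will take care. It is also worth stressing that the proposition genuinely uses the quantifier over all pseudoalgebras on each side: being a lax fixed point constrains only the data of $\algA$ at $X = A$, and the forward argument really does require the free pseudoalgebra $F_T A$ to be lax-idempotent, not merely $\algA$ itself.
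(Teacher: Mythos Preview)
The paper does not prove this proposition: it is imported wholesale from \cite[Theorem~10.7]{marmolejo1997doctrines}, so there is no argument in the paper to compare against. Your proposal is therefore an independent proof attempt, and the forward direction is fine --- using that both $\algA$ and $F_T A$ are lax-idempotent to manufacture the unit of $a \adj i_A$ from two left-extension universal properties is exactly the standard shape of argument, and your sketch of the triangle laws is plausible.

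The converse direction, however, has a genuine gap. You claim that the mate bijection for $\mu_Y \adj i_{TY}$, full faithfulness of $i_{TY}$, and the pseudonaturality square $Tf\,i_X \iso i_{TY} f$ compose to give
\[
\E[TX, TY](\mu_Y Tf, g) \;\iso\; \E[X, TY](f, g\,i_X),
\]
but they do not. The mate bijection lands you in $\E[TX, TTY](Tf, i_{TY} g)$, and from there neither of your remaining ingredients gets you to $\E[X, TY](f, g\,i_X)$: full faithfulness of $i_{TY}$ is a statement about \emph{post}composition with $i_{TY}$ on maps \emph{into} $TY$, which is the wrong shape here, and the pseudonaturality square only tells you about $Tf\,i_X$, not about $Tf$ itself. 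What is missing is a comparison $Ti_X \tto i_{TX}$ (or a counit-type 2-cell $(g i_X)^* \tto g$), and this requires also using that $F_T X$ is a lax fixed point --- your argument as written only invokes the hypothesis at $Y$. Concretely, the unit of $\mu_X \adj i_{TX}$ gives $Ti_X \tto i_{TX}\mu_X Ti_X \iso i_{TX}$, from which one can build $(g i_X)^* \iso \mu_Y Tg\, Ti_X \tto \mu_Y Tg\, i_{TX} \iso g$; with that in hand the bijection can be constructed, but it is real additional work and not the composite you describe.

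A cleaner route, available in the paper's own framework (and logically independent of the proposition you are proving), is to combine \cref{lax-fixed-point-iff-unique-lax-morphism-structure} with \cref{lax-idempotent-iff-unique-lax-morphism-structure}: if every pseudoalgebra is a lax fixed point then every 1-cell between pseudoalgebras has unique lax morphism structure, whence every pseudoalgebra is lax-idempotent, whence $T$ is lax-idempotent by \cref{li-implies-ali-for-non-relative}.
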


\begin{example}
    \Cref{li-implies-ali-for-non-relative,lax-idempotent-iff-lax-fixed-points} together say that, given a pseudomonad $T$, every $T$-pseudoalgebra is lax-idempotent if and only if every $T$-pseudoalgebra is a fixed point. However, in general, these two properties do not coincide. For instance, let $T$ be any pseudomonad on $\CAT$. Then the terminal category is necessarily a lax-idempotent $T$-pseudoalgebra (since in that case \eqref{eq:lax-idempotent-adjunction} is trivial), but is a lax fixed point if and only if $T(1)$ admits a terminal object. In particular, taking $T$ the free strict monoidal category 2-monad, we have that $T(0) = 1$ and that $T(1)$ does not admit a terminal object, so this furthermore shows that it is not even true that free pseudoalgebras are lax-idempotent if and only if they are lax fixed points.
\end{example}

Lax fixed points admit a similar, but dual, characterisation to lax-idempotent pseudoalgebras in terms of lax morphisms (\cf~\cref{lax-idempotent-iff-unique-lax-morphism-structure}); this observation appears to be new, as lax fixed points do not appear to have previously been studied for arbitrary pseudomonads, rather than lax-idempotent pseudomonads (although see \cref{lax-fixed-point-and-colax-algebras}).

\begin{theorem}
    \label{lax-fixed-point-iff-unique-lax-morphism-structure}
    Let $T$ be a (non-relative) pseudomonad on a bicategory $\E$. A $T$-pseudoalgebra $\algA$ is a lax fixed point if and only if, for every $T$-pseudoalgebra $\algB$, every 1-cell $g \colon A \to B$ in $\E$ admits a unique lax morphism structure $\overline g$. Furthermore, in this case, the forgetful functor
    \[
    	U_T \colon \AlgTl{\big[\algA, \algB\big]} \to \E[A, B]
    \]
    given by the action of the strict pseudofunctor $U_T \colon \AlgTl \to \E$ is fully faithful.
\end{theorem}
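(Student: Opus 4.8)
The plan is to mirror the proof of \cref{lax-idempotent-iff-unique-lax-morphism-structure}, the structural difference being that the hom-category adjunction $\ph^b \adj \ph \c i_X$ used there (a property of the \emph{codomain}) is replaced here by the $1$-cell adjunction $(1_A)^a \adj i_A$ in $\E$, which is a property of the \emph{domain}. Throughout I would pass to the composite presentation of pseudoalgebras furnished by \cref{correspondence-between-algebra-presentations}, under which a lax morphism $\algA \to \algB$ amounts to a $1$-cell $g \colon A \to B$ together with a single $2$-cell $\upsilon \colon b \c Tg \tto g \c a$ (writing $a \defeq (1_A)^a$ and $b \defeq (1_B)^b$) satisfying the unit and associativity laws; the extension-form family $\overline g_f \colon (gf)^b \tto g f^a$ is recovered from $\upsilon$ and the structural constraints, so that uniqueness of the lax structure on $g$ is equivalent to uniqueness of $\upsilon$.

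($\implies$) Suppose $\algA$ is a lax fixed point, so that $a \adj i_A$ in $\E$ with counit $\tilde a_{1_A}\inv \colon a i_A \tto 1_A$ and some unit $\zeta \colon 1_{TA} \tto i_A a$. Since precomposition into $\E[{-}, B]$ is a $2$-functor, this yields an adjunction $({-}\c i_A) \adj ({-}\c a) \colon \E[TA, B] \to \E[A, B]$ whose counit is $({-})\c\tilde a_{1_A}\inv$; for fixed $g \colon A \to B$ it makes $2$-cells $\upsilon \colon b\c Tg \tto g\c a$ correspond bijectively to $2$-cells $\bar\upsilon \colon (b\c Tg)\c i_A \tto g$, the transpose of $\upsilon$ being $(g\c\tilde a_{1_A}\inv)\c(\upsilon\c i_A)$. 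Unwinding the unit law \eqref{lax-morph-tildes} for lax morphisms shows it to be exactly the condition that $\bar\upsilon$ equal the canonical isomorphism $\kappa_g \colon (b\c Tg)\c i_A \xto\iso g$ assembled from the compositor of $T$, the pseudonaturality constraint $Tg\c i_A \iso i_B\c g$, and the unit isomorphism $\tilde b$ of $\algB$. Hence if a lax morphism structure on $g$ exists it is uniquely the transpose $\widehat{\kappa_g}$ of $\kappa_g$. For existence I would \emph{define} $\upsilon \defeq \widehat{\kappa_g}$; the unit law then holds by construction, and the associativity law \eqref{lax-morph-hats} is checked by transposing it along $a \adj i_A$ and filling the resulting pasting diagram using the triangle laws of $a \adj i_A$, the pseudoalgebra axioms of $\algA$ and $\algB$, and pseudonaturality of the unit and multiplication of $T$. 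This pasting computation — the mirror image of the one in the proof of \cref{lax-idempotent-iff-unique-lax-morphism-structure} — is the step I expect to be the main obstacle.

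($\impliedby$) Conversely, assume every $1$-cell into the carrier of a $T$-pseudoalgebra carries a unique lax morphism structure. I would apply this to $g = i_A \colon A \to TA$, regarded as a $1$-cell from the carrier of $\algA$ to the carrier of the free pseudoalgebra $F_T A$ on $A$ (\cref{free-pseudoalgebra}). The resulting structure $2$-cell, composed with the invertible unitor and with $\theta_A \colon (i_A)^* \tto 1_{TA}$, is a $2$-cell $\zeta \colon 1_{TA} \tto i_A\c a$; it then remains to verify that $\zeta$ and $\tilde a_{1_A}\inv$ satisfy the two triangle identities exhibiting $a \adj i_A$. One of these is immediate from the unit law for the lax morphism $(i_A,\upsilon)$, and the other follows from its associativity law together with \cite[Lemma~3.2]{fiore2018relative}, dualizing the corresponding step in the proof of \cref{lax-idempotent-iff-unique-lax-morphism-structure}. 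Only existence, not uniqueness, of the lax structure on $i_A$ is used here.

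For the ``furthermore'' clause, faithfulness of $U_T \colon \AlgTl[\algA, \algB] \to \E[A, B]$ is \cref{forgetful-pseudofunctor}. For fullness, let $\phi \colon g \tto g'$ be a $2$-cell in $\E$ between the carriers of lax morphisms $(g, \overline g)$ and $(g', \overline{g'})$; by the description obtained above, $\overline g$ and $\overline{g'}$ are the transposes of $\kappa_g$ and $\kappa_{g'}$. Transposing the transformation axiom of \cref{transformation} along $a \adj i_A$ then turns it into the naturality square of $\kappa_{({-})}$ with respect to $\phi$, which commutes by coherence. This is dual to the closing argument of \cref{2-cells-between-lax-morphisms-of-lax-idempotent-pseudoalgebras-are-transformations}.
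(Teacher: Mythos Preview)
Your forward direction and the ``furthermore'' clause are essentially what the paper does (the paper delegates to \cite[Lemma~2.4.17]{stepan2024lax}, which proceeds via the adjunction $({-}\c i_A) \adj ({-}\c a)$ on hom-categories just as you describe).

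There is, however, a real gap in your backward direction: you assert that ``only existence, not uniqueness, of the lax structure on $i_A$ is used'', but this is wrong. The paper uses uniqueness in an essential way for the \emph{left} triangle law of $(1_A)^a \adj i_A$. Concretely, the paper equips the composite $(1_A)^a i_A$ with two lax morphism structures --- one as the composite $(i_A,\overline{i_A})\c\big((1_A)^a,\hat a\big)$ in $\AlgTl$, the other induced from the identity via the amnestic isofibration property of $U_T$ along the invertible $\tilde a_{1_A}$ --- and then invokes uniqueness to identify them; the left triangle identity drops out of this identification. Your proposal to obtain the left triangle law from the associativity axiom of $(i_A,\overline{i_A})$ together with \cite[Lemma~3.2]{fiore2018relative}, by ``dualising'' the corresponding step in \cref{lax-idempotent-iff-unique-lax-morphism-structure}, does not work as stated: in that earlier proof the relevant triangle law also relied on uniqueness (to identify the unique lax structure $\overline{g^b}$ on $g^b$ with the canonical structure $\hat b_{g,{-}}$ from \cref{f-a has ps-morphism structure}), so a faithful dualisation would likewise need uniqueness. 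The associativity law of $(i_A,\overline{i_A})$ controls how $\overline{i_A}$ interacts with $\hat a$ and $\mu$, but it does not by itself give you control over $(1_A)^a\,\overline{i_A}_{1_A}$, which is what the left triangle law requires.
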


\begin{proof}
    ($\implies$) Suppose $\algA$ is a lax fixed point. That (1) $g$ admits a unique lax morphism structure, and (2) that every 2-cell between lax morphisms is a transformation both follow from the proof of \cite[Lemma~2.4.17]{stepan2024lax}, observing that the proof goes through even without the assumption that the codomain is induced by an adjunction.

    ($\impliedby$) Taking $g \defeq i_A$, we have a candidate 2-cell for the unit as follows.
    \begin{equation}
        \nu \defeq \Big( 1_{TA} \xto{\theta_A\inv} (i_A)^* \xto{\iso} (i_A 1_A)^* \xto{\overline{i_A}_{1_A}} i_A (1_A)^a \Big)
    \end{equation}
    To verify that $\nu$ and $\tilde a_{1_A}\inv$ define the unit and counit respectively of an adjunction $(1_A)^a \adj i_A$, we must show the two triangle laws. The right triangle law is straightforward once the definitions are unfolded, as shown below. The cell marked ($*$) is \cite[Lemma~3.2(iii)]{fiore2018relative}.
    \[\begin{tikzcd}[column sep=large]
        {i_A} \\
        {1_{TA}i_A} \\
        {(i_A)^* i_A} & {i_A} \\
        {(i_A 1_A)^* i_A} \\
        {i_A (1_A)^a i_A} & {i_A 1_A} & {i_A}
        \arrow["\iso"', from=1-1, to=2-1]
        \arrow[""{name=0, anchor=center, inner sep=0}, Rightarrow, no head, from=1-1, to=3-2]
        \arrow["{\theta_A^{-1} i_A}"', from=2-1, to=3-1]
        \arrow["{\eta_{i_A}\inv}"{description}, from=3-1, to=3-2]
        \arrow[""{name=1, anchor=center, inner sep=0}, "\iso"', from=3-1, to=4-1]
        \arrow[""{name=2, anchor=center, inner sep=0}, "\iso"{description}, from=3-2, to=5-2]
        \arrow[Rightarrow, no head, from=3-2, to=5-3]
        \arrow["{\overline{i}_{1_A} i_A}"', from=4-1, to=5-1]
        \arrow[""{name=3, anchor=center, inner sep=0}, "{\eta_{i_A 1_A}\inv}"{description}, curve={height=-12pt}, from=4-1, to=5-2]
        \arrow["{i_A \tilde{a}_{1_A}\inv}"', from=5-1, to=5-2]
        \arrow["{\iso }"', from=5-2, to=5-3]
        \arrow["{(\ast)}"{description}, draw=none, from=3-1, to=0]
        \arrow["{\natof{\eta}}"{description}, shift left=2, draw=none, from=1, to=2]
        \arrow["{\morunit{i}}"{description, pos=0.6}, draw=none, from=3, to=5-1]
        %
        \arrow[
        "{\nu i_A}"',
        rounded corners,
        to path=
        { -- ([xshift=-1.3cm]\tikztostart.center)
        -- ([xshift=-1.3cm]\tikztotarget.center)
        \tikztonodes
        -- (\tikztotarget.west)
        },
        from=2-1, to=5-1
        ]
    \end{tikzcd}\]
    For the left triangle law, we use the uniqueness of lax morphism structures on 1-cells out of $A$. Recall from \cref{f-a has ps-morphism structure} that $(1_A)^a$ becomes a pseudomorphism by taking
    $\hat{a}_{1_A, k} \colon (1_A k)^a \tto (1_A)^a k^*$
    for each $k \colon JX \to A$. We thus have a composite lax morphism
    	$(i_A, \overline{i_A}) \c ((1_A)^a, \hat{a}) \colon \algA \to \algA$
    whose 2-cell component is given at $k \colon JX \to A$ by the following composite:
    \begin{equation}\label{lax-fixed-point-composite-1}
    	\big( (1_A)^a i_A k \big)^a
    		\xto{\hat{a}_{1_A, i_A k}}
    		(1_A)^a (i_A k)^*
    		\xto{(1_A)^a \overline{i_A}_k}
    		(1_A)^a i_A k^a
    \end{equation}
    On the other hand, $\tilde{a}_{1_A} \colon 1_A \xto{\iso} (1_A)^a i_A$ is an isomorphism in $\E$, so by the amnestic isofibration property of $U_T$ (\cref{forgetful-pseudofunctor}), we get a lax morphism structure on $(1_A)^a i_A$ induced from that on $1_A$. Explicitly, we have that the following defines a lax morphism structure on $(1_A)^a i_A$ for each $k \colon JX \to A$ in $\E$:
    \begin{equation} \label{lax-fixed-point-composite-2}
    	\big( (1_A)^a i_A k\big)^a
    	\xto{(\tilde{a}_{1_A}\inv k)^a}
    	( 1_A k )^a
    	\xto{\iso}
    	1_A k^a
    	\xto{\tilde{a}_{1_A} k^a}
    	(1_A)^a i_A k^a
    \end{equation}
    Since we have defined two lax morphism structures on $(1_A)^a i_A$, we must have that \eqref{lax-fixed-point-composite-1} $=$ \eqref{lax-fixed-point-composite-2}. Denoting this fact by ($*$) for $k = 1_A$, the triangle law follows from commutativity of the next diagram.
    \[\begin{tikzcd}[sep=large]
        {(1_A)^a} & {(1_A)^a 1_{TA}} & {(1_A)^a(i_A)^*} & {(1_A)^a(i_A 1_A)^*} & {(1_A)^a i_A (1_A)^a} \\
        && {\big( (1_A)^a i_A \big)^a} & {\big( (1_A)^a i_A 1_A \big)^a} & {1_A(1_A)^a } \\
        {(1_A)^a} &&& {(1_A 1_A)^a} & {(1_A)^a}
        \arrow["\iso", from=1-1, to=1-2]
        \arrow[Rightarrow, no head, from=1-1, to=3-1]
        \arrow[""{name=0, anchor=center, inner sep=0}, "{(1_A)^a \theta_A\inv}", from=1-2, to=1-3]
        \arrow["\iso"{description}, from=1-2, to=3-1]
        \arrow[""{name=1, anchor=center, inner sep=0}, "\iso", from=1-3, to=1-4]
        \arrow["{\hat{a}_{1_A, i_A}}"{description}, from=1-3, to=2-3]
        \arrow["{(1_A)^a \overline{i_A}_{1_A}}", from=1-4, to=1-5]
        \arrow["{\hat{a}_{1_A, i_A 1_A}\inv}"{description}, from=1-4, to=2-4]
        \arrow["{(\ast)}"{description}, draw=none, from=1-4, to=3-5]
        \arrow["{\tilde{a}_{1_A}\inv (1_A)^a}", from=1-5, to=2-5]
        \arrow[""{name=2, anchor=center, inner sep=0}, "\iso"{description}, from=2-3, to=2-4]
        \arrow[""{name=3, anchor=center, inner sep=0}, "{(\tilde{a}_{1_A}\inv)^a}"{description}, from=2-3, to=3-1]
        \arrow["{(\tilde{a}_{1_A}\inv 1_A)^a}"{description}, from=2-4, to=3-4]
        \arrow["\iso", from=2-5, to=3-5]
        \arrow["\iso"{description}, from=3-1, to=3-4]
        \arrow[curve={height=30pt}, Rightarrow, no head, from=3-1, to=3-5]
        \arrow["\iso"{description}, from=3-4, to=3-5]
        \arrow["{\algtheta{A}}"{description}, draw=none, from=0, to=3]
        \arrow["{\natof{\hat{a}}}"{description}, draw=none, from=1, to=2]
        %
        %
        \arrow[
            "{(1_A)^a \nu}",
            rounded corners,
            to path=
            { -- ([yshift=.8cm]\tikztostart.center)
            -- ([yshift=.8cm]\tikztotarget.center)
            \tikztonodes
            -- (\tikztotarget.north)
            },
            from=1-2, to=1-5
        ]
    \end{tikzcd}\qedshift\]
\end{proof}

The above characterisation is fundamental in establishing the following useful property of lax-idempotent pseudomonads, which in particular implies they are \emph{fully property-like} in the sense of \cite[\S5]{kelly1997property}.

\begin{proposition}
    \label{colax-is-invertible}
    Let $T$ be a pseudomonad, let $\algA$ and $\algB$ be pseudoalgebras, and let $f \colon A \to B$ be a 1-cell in $\E$. Suppose that $f$ admits a colax morphism structure. If $\algA$ is a lax fixed point then the canonical lax morphism structure of \cref{lax-fixed-point-iff-unique-lax-morphism-structure} is inverse to the colax morphism structure.
\end{proposition}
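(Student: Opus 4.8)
The plan is to prove that, for every $g\colon JX\to A$, the component $\overline f_g\colon(fg)^b\tto fg^a$ of the canonical lax morphism structure and the component $\underline f_g\colon fg^a\tto(fg)^b$ of the given colax morphism structure are mutually inverse. First I would record what the hypothesis supplies, via \cref{lax-fixed-point-iff-unique-lax-morphism-structure}: writing $a\defeq(1_A)^a$, the lax fixed point condition is that $\tilde a_{1_A}\inv$ is the counit of an adjunction $a\adj i_A$ with some unit $\nu$; the $1$-cell $f$ carries a \emph{unique} lax morphism structure $\overline f$; and, unwinding the proof of that theorem, $\overline f_g$ is the transpose along $a\adj i_A$ of the structural isomorphism $fg\xrightarrow{\ \sim\ }fg^a i_X$. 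Concretely, $\overline f_g$ is the unique $2$-cell with $\overline f_g\c i_X=(f\tilde a_g)\c\tilde b_{fg}\inv$, this equation being precisely the lax unit law \eqref{lax-morph-tildes}. The (dual of the) colax unit law forces the given colax structure to satisfy $\underline f_g\c i_X=\tilde b_{fg}\c(f\tilde a_g)\inv$, the inverse isomorphism; hence $(\overline f_g\c\underline f_g)\c i_X=\mathrm{id}$ and $(\underline f_g\c\overline f_g)\c i_X=\mathrm{id}$.

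The identity $\overline f_g\c\underline f_g=\mathrm{id}_{fg^a}$ then follows by cancelling the whiskering with $i_X$. Using naturality together with the correspondence of \cref{correspondence-between-algebra-presentations} it suffices to treat $g=1_A$, where $fg^a$ is (up to coherent isomorphism) $f\c a$, which lies in the image of $({-})\c a\colon\E[A,B]\to\E[TA,B]$ — the right adjoint of the adjunction $({-})\c i_A\adj({-})\c a$ obtained from $a\adj i_A$ by precomposition. The corresponding transpose bijection $\E[TA,B](f\c a,f\c a)\cong\E[A,B](f\c a\c i_A,f)$ sends both $\overline f_{1_A}\c\underline f_{1_A}$ and $\mathrm{id}_{f\c a}$ to the counit component $f\c\tilde a_{1_A}\inv$, so the two agree.

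The remaining identity $\underline f_g\c\overline f_g=\mathrm{id}_{(fg)^b}$ is the main obstacle, and cannot be obtained by the same cancellation: $(fg)^b$ need not lie in the image of $({-})\c a$, because that would make $\overline f_g$ invertible, which is precisely what is to be shown. Instead, after reducing again to $g=1_A$, I would substitute the explicit transpose description $\overline f_{1_A}=(\tilde b_f\inv\c a)\c(f^b\c\nu)$ and then chase the diagram expressing $\underline f_{1_A}\c\overline f_{1_A}=\mathrm{id}$, using the associativity and unit axioms for the colax morphism $(f,\underline f)$ (\cref{pseudomorphism}), the explicit form of the unit $\nu$ from the proof of \cref{lax-fixed-point-iff-unique-lax-morphism-structure} (built from $\theta_A$ and the unique lax structure on $i_A$), the triangle identities for $a\adj i_A$, and coherence for bicategories. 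This chase is the bulk of the argument; what makes it go through is precisely the explicit transpose description of the canonical lax structure furnished by \cref{lax-fixed-point-iff-unique-lax-morphism-structure}. Once both identities are in hand, $\overline f_g$ and $\underline f_g$ are mutually inverse for every $g$, so in particular $(f,\underline f)$ is a pseudomorphism whose inverse structure is the canonical lax structure.
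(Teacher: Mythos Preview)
Your approach follows the paper's key idea---exploiting the adjunction $\ph \c i_A \dashv \ph \c (1_A)^a$ coming from the lax fixed point hypothesis---but fleshes it out considerably. The paper simply asserts that ``equality of 2-cells between 1-cells $TA \to B$ may be tested by precomposing $i_A$'' and then invokes the unit laws, deferring to \cite[Lemma~6.5]{kelly1997property} for details. You correctly observe that this assertion is not automatic: the left adjoint of a coreflective adjunction need not be faithful in general. Your treatment of $\overline f_{1_A} \c \underline f_{1_A} = 1$ via full faithfulness of the right adjoint $\ph \c (1_A)^a$ is exactly right (since $f(1_A)^a$ lies in its image), and you are correct that the remaining identity $\underline f_{1_A} \c \overline f_{1_A} = 1$ does not fall out the same way, since $(f1_A)^b$ need not lie in that image. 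The diagram chase you propose---using the explicit description of $\overline f_{1_A}$ as a transpose together with the colax axioms and the construction of $\nu$ from \cref{lax-fixed-point-iff-unique-lax-morphism-structure}---is a reasonable route to complete the argument, and would render the paper's appeal to \cite{kelly1997property} self-contained.

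One small wrinkle in your presentation: in the first paragraph you describe $\overline f_g$ for \emph{general} $g \colon X \to A$ as ``the transpose along $a \dashv i_A$'' and as ``the unique 2-cell with $\overline f_g \c i_X = \ldots$''. But the adjunction $a \dashv i_A$ only controls precomposition by $i_A$, not $i_X$; this characterisation is only available \emph{after} your reduction to $g = 1_A$ via \cref{correspondence-between-algebra-presentations}. Since you do carry out that reduction, this is a matter of exposition rather than a gap.
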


\begin{proof}
    The proof follows that of \cite[Lemma~6.5]{kelly1997property} exactly, observing that the full strength of the assumptions \ibid is not necessary. The key observation is that, since $\algA$ is a lax fixed point, there is an adjunction $\E[TA, B] \colon \ph \c i_A \adj \ph \c (1_A)^a \cocolon \E[A, B]$ and so equality of 2-cells between 1-cells $TA \to B$ may be tested by precomposing $i_A$. The unit laws for the lax and colax morphisms then imply that the given lax and colax morphism structures are inverse to one another.
\end{proof}

As a consequence of the characterisation theorem above, we recover the fact that colax morphisms of pseudoalgebras for lax-idempotent non-relative pseudomonads are automatically pseudomorphisms.

\begin{corollary}[{\cite[Lemma~6.5 \& Corollary~6.6]{kelly1997property}}]
    \label{colax-morphisms-are-pseudo}
    Every colax morphism between pseudoalgebras for a lax-idempotent pseudomonad is pseudo. Consequently, every 2-cell between colax morphisms is a transformation.
\end{corollary}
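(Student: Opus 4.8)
The plan is to deduce both assertions from the lax fixed point machinery of \cref{lax-fixed-point-iff-unique-lax-morphism-structure} and \cref{colax-is-invertible}, using that, for a lax-idempotent pseudomonad, \emph{every} pseudoalgebra is a lax fixed point (\cref{lax-idempotent-iff-lax-fixed-points}). For the first claim, fix a colax morphism $(f, \overline f) \colon \algA \to \algB$ between $T$-pseudoalgebras. Since $T$ is lax-idempotent, $\algA$ is a lax fixed point, so by \cref{lax-fixed-point-iff-unique-lax-morphism-structure} the $1$-cell $f$ admits a canonical (and unique) lax morphism structure $\overline f^{\mathrm{lax}}$, and by \cref{colax-is-invertible} this $\overline f^{\mathrm{lax}}$ is componentwise inverse to $\overline f$. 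In particular every $\overline f_g$ is invertible, so $(f, \overline f)$ is a pseudomorphism.

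For the second claim, recall that a $2$-cell between colax morphisms is simply a $2$-cell $\alpha \colon f \tto f'$ between the carriers satisfying the (colax) transformation axiom, so it is enough to verify that axiom for an arbitrary $\alpha$. By the first part, $(f, \overline f)$ and $(f', \overline{f'})$ are pseudomorphisms whose structure $2$-cells are the componentwise inverses of the canonical lax structures $\overline f^{\mathrm{lax}}$, $\overline{f'}^{\mathrm{lax}}$ supplied by \cref{lax-fixed-point-iff-unique-lax-morphism-structure}. By the ``furthermore'' clause of that result, the forgetful functor $U_T \colon \AlgTl[\algA, \algB] \to \E[A, B]$ is fully faithful, so $\alpha$ lifts to a transformation $(f, \overline f^{\mathrm{lax}}) \tto (f', \overline{f'}^{\mathrm{lax}})$ of lax morphisms. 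Inverting the vertical edges of the corresponding naturality square then turns the lax transformation axiom into exactly the colax one, so $\alpha$ is also a transformation between the colax morphisms $(f, \overline f)$ and $(f', \overline{f'})$.

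The proof is essentially bookkeeping: the genuine content has already been isolated in \cref{colax-is-invertible} (which rests on testing equality of $2$-cells out of $TA$ against $i_A$, via the lax fixed point adjunction $(1_A)^a \adj i_A$) and in the fully faithfulness statement of \cref{lax-fixed-point-iff-unique-lax-morphism-structure}. The only step demanding a little care is the last one — matching the colax transformation axiom with the lax one under componentwise inversion — but this is routine, being an instance of the calculus of inverses; alternatively, one may obtain the second claim by combining \cref{li-implies-ali-for-non-relative} with \cref{2-cells-between-lax-morphisms-of-lax-idempotent-pseudoalgebras-are-transformations} applied to the lax structures, and then translating back along those inverses.
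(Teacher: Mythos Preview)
Your proof is correct and follows essentially the same approach as the paper: both arguments invoke \cref{lax-idempotent-iff-lax-fixed-points} to reduce to lax fixed points, apply \cref{lax-fixed-point-iff-unique-lax-morphism-structure} and \cref{colax-is-invertible} for the first claim, and then observe that the lax and colax transformation axioms correspond under componentwise inversion for the second. The paper phrases the last step slightly more tersely (``a 2-cell is a transformation for a pseudomorphism qua colax morphism if and only if it is a transformation for a pseudomorphism qua lax morphism''), but the content is identical.
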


\begin{proof}
    By \cref{lax-idempotent-iff-lax-fixed-points}, every pseudoalgebra for a lax-idempotent pseudomonad is a lax fixed point. By \cref{lax-fixed-point-iff-unique-lax-morphism-structure}, every 1-cell between pseudoalgebras admits a unique lax morphism structure, for which every 2-cell is a transformation. The first claim thus follows from \cref{colax-is-invertible}. The second claim then follows immediately, because a 2-cell is a transformation for a pseudomorphism qua colax morphism if and only if it is a transformation for a pseudomorphism qua lax morphism.
\end{proof}

\begin{remark}
    \label{are-colax-morphisms-pseudo}
    We do not know whether an analogue of \cref{colax-morphisms-are-pseudo} holds for relative pseudomonads. As explained above, the proof strategy in the non-relative setting crucially relies on the fact that pseudoalgebras for lax-idempotent pseudomonads are lax fixed points (rather than the fact they are lax-idempotent) and so does not extend to the relative setting. However, we also do not know of an example of a colax morphism between lax-idempotent $T$-pseudoalgebras, for $T$ a relative pseudomonad, that is not a pseudomorphism.
\end{remark}

\section{Resolutions of relative pseudomonads and coherence}
\label{resolutions-and-coherence}

In the one-dimensional setting, the category of algebras for a relative monad $T$ satisfies a universal property among resolutions of $T$ (that is, relative adjunctions inducing $T$)~\cite[Theorem~2.12]{altenkirch2015monads}, generalising that for monads~\cite[Theorem~2.2]{eilenberg1965adjoint}. In the two-dimensional setting, the 2-category of strict algebras for a relative 2-monad satisfies an analogous universal property~\cite[Corollary~6.41]{arkor2024formal}. In this section, we establish the appropriate universal property in the non-strict setting. In fact, we shall establish a universal property not just for the bicategory of pseudoalgebras, but also for the Kleisli bicategory, as the universal property of the latter was merely alluded to in \cite[Remark~4.5]{fiore2018relative}. As a consequence, we shall deduce that the Kleisli bicategory embeds essentially \ff{}ly into the bicategory of pseudoalgebras,  before discussing some consequences.

\begin{definition}
    \label{resolution}
    Let $T$ be a $J$-relative pseudomonad. We define a 2-category $\Res(T)$ as follows.
    \begin{itemize}
        \item An object is a \emph{resolution} of $T$: a $J$-relative pseudoadjunction for which the induced relative pseudomonad is exactly $T$. Where there is no risk of ambiguity, we shall typically denote a resolution simply by its apex.
        \item A 1-cell from $(L \colon \A \to \C, R \colon \C \to \E, i, \sharp, \eta, \varepsilon)$ to $(L \colon \A \to \C', R \colon \C' \to \E, i', \sharp', \eta', \varepsilon')$ is a \emph{morphism} of resolutions: a pseudofunctor $M \colon \C \to \C'$ for which $R = R'M$ and for which $MLX = L'X$ for each $X \in \A$.
        \item A 2-cell from $M$ to $M'$ is a \emph{transformation} of resolutions: a pseudonatural identity\footnotemark{} $\varpi \colon M \tto M'$ such that $R'(\varpi_f) = 1_{Rf}$.
    \end{itemize}
    \footnotetext{A \emph{pseudonatural identity} is a pseudonatural transformation whose components are identities.}
    The assignment sending each resolution to its apex defines a 2-functor from $\Res(T)$ to the 2-category of bicategories, pseudofunctors, and pseudonatural identities, which is faithful on 1-cells and 2-cells (\cf~\cite[Theorem~3.2]{lack2010icons}).
\end{definition}

\Cref{resolution} defines the notion of resolution in the strictest possible sense, rather than up to a notion of equivalence. This is because we are interested in universal properties with respect to a fixed relative pseudomonad. Note that, as observed in \cite[Proposition 2.2.1(7)]{miranda2023enriched}, an analogous notion of resolution for a (non-relative) pseudomonad in pseudomonoid form (\ie in terms of a multiplication operator $\mu \colon TT \tto T$ rather than an extension operator) would be too strict: given a pseudomonad $T$, the composite pseudofunctor for the Kleisli pseudoadjunction sends a 1-cell $f \colon X \to Y$ to the 1-cell
\[TX \xto{Tf} TY \xto{Ti_Y} TTY \xto{m_Y} TY\]
which is not equal to $Tf \colon TX \to TY$, but only isomorphic via one of the unitors for $T$. Consequently, an advantage of working with pseudomonads in extension form (\ie{} as pseudomonads relative to an identity) is that the Kleisli pseudoadjunction induces $T$ strictly (\cf{}~\cite[Remark~4.5]{fiore2018relative}). This is because, in the relative setting, the pseudofunctoriality of $T$ is derived rather than extra structure.

The definition of a morphism of resolutions may appear too weak, as it does not require all of the structure of a relative pseudoadjunction to be preserved; as the following lemma shows, this is in fact automatic (for organisational convenience, we make reference to the subsequent \cref{right-adjoint-is-pseudoalgebra}, while noting that this does not depend on any of the preceding material).

\begin{lemma}
    Let $(L \colon \A \to \C, R \colon \C \to \E, i, \sharp, \eta, \varepsilon)$ and $(L \colon \A \to \C', R \colon \C' \to \E, i', \sharp', \eta', \varepsilon')$ be resolutions of a given relative pseudomonad and let $M \colon \C \to \C'$ be a morphism of resolutions.
    \begin{enumerate}
        \item \label{resolution-morphism-unit-compatibility} The pseudonatural transformations $i$ and $i'$ are equal, the modifications $\eta$ and $\eta'$ are equal, and the following two natural transformations are equal, pseudonaturally in $X \in \A$ and $C \in \C$.
        \[
        \begin{tikzcd}[column sep=2.7em]
            {\E[JX, RC]} & {\C[LX, C]} & {\C'[L'X, MC]} \\
            & {\E[JX, RC]} & {\E[JX, R'MC]}
            \arrow["{\sharp_{X, C}}", from=1-1, to=1-2]
            \arrow[""{name=0, anchor=center, inner sep=0}, Rightarrow, no head, from=1-1, to=2-2]
            \arrow["{M_{LX, C}}", from=1-2, to=1-3]
            \arrow["{R_{LX, C} \ph i_X}"{description}, from=1-2, to=2-2]
            \arrow["{R'_{L'X, MC} \ph i'_X}", from=1-3, to=2-3]
            \arrow[Rightarrow, no head, from=2-2, to=2-3]
            \arrow["{\eta_{X, C}}", shorten <=3pt, shorten >=3pt, Rightarrow, from=0, to=1-2]
        \end{tikzcd}
        \hspace{-1.2em}
        \begin{tikzcd}
            {\E[JX, R'MC]} & {\C'[L'X, MC]} \\
            & {\E[JX, R'MC]}
            \arrow["{\sharp'_{X, MC}}", from=1-1, to=1-2]
            \arrow[""{name=0, anchor=center, inner sep=0}, Rightarrow, no head, from=1-1, to=2-2]
            \arrow["{R'_{L'X, MC} \ph i'_X}", from=1-2, to=2-2]
            \arrow["{\eta'_{X, MC}}", shorten <=3pt, shorten >=3pt, Rightarrow, from=0, to=1-2]
        \end{tikzcd}
        \]
        \item \label{resolution-morphism-sharp-compatibility} The following diagram commutes, pseudonaturally in $X \in \A$ and $C \in \C$.
        \[\begin{tikzcd}[column sep=large]
            {\E[JX, RC]} & {\C[LX, C]} \\
            {\E[JX, R'MC]} & {\C'[L'X, MC]}
            \arrow["{\sharp_{X, C}}", from=1-1, to=1-2]
            \arrow[Rightarrow, no head, from=1-1, to=2-1]
            \arrow["{M_{LX, C}}", from=1-2, to=2-2]
            \arrow["{\sharp'_{X, MC}}"', from=2-1, to=2-2]
        \end{tikzcd}\]
        \item \label{morphism-of-resolutions-left-pseudoadjoints-commute} The pseudofunctors $ML$ and $L'$ are equal.
        \item \label{resolution-morphism-counit-compatibility} The following two natural transformations are equal, pseudonaturally in $X \in \A$ and $C \in \C$.
        \[
        \begin{tikzcd}
            {\C[LX, C]} \\
            {\E[JX, RC]} & {\C[LX, C]} & {\C'[L'X, MC]}
            \arrow["{R_{LX, C} \ph i_X}"', from=1-1, to=2-1]
            \arrow[""{name=0, anchor=center, inner sep=0}, Rightarrow, no head, from=1-1, to=2-2]
            \arrow["{\sharp_{X, C}}"', from=2-1, to=2-2]
            \arrow["{M_{LX, C}}"', from=2-2, to=2-3]
            \arrow["{\varepsilon_{X, C}}"', shorten >=3pt, Rightarrow, from=2-1, to=0]
        \end{tikzcd}
        \hspace{-4em}
        \begin{tikzcd}
            {\C[LX, C]} & {\C'[L'X, MC]} \\
            & {\E[JX, R'MC]} & {\C'[L'X, MC]}
            \arrow["{M_{LX, C}}", from=1-1, to=1-2]
            \arrow["{R'_{L'X, MC} \ph i'_X}"', from=1-2, to=2-2]
            \arrow[""{name=0, anchor=center, inner sep=0}, Rightarrow, no head, from=1-2, to=2-3]
            \arrow["{\sharp'_{X, MC}}"', from=2-2, to=2-3]
            \arrow["{\varepsilon'_{X, MC}}"', shorten >=3pt, Rightarrow, from=2-2, to=0]
        \end{tikzcd}
        \]
        \item \label{resolution-morphism-coincident-algebras} For each $C \in \C$, the canonical pseudoalgebra structure on $R(C)$ induced by \cref{right-adjoint-is-pseudoalgebra} is equal to the canonical pseudoalgebra structure on $R'(MC)$.\footnote{The analogous condition in dimension one was taken as an assumption in \cite[Theorem~2.12]{altenkirch2015monads}, but is automatic, as observed in \cite[Lemma~5.26]{arkor2024formal}.} Furthermore, for each 1-cell $h \colon C \to D$ in $\C$, the canonical pseudomorphism structure on $R(h)$ induced by \cref{right-adjoint-is-pseudoalgebra} is equal to the canonical pseudomorphism structure on $R(Mh)$.
    \end{enumerate}
\end{lemma}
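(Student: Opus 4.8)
The plan is to exploit the fact that a resolution packages two layers of data: the relative pseudomonad $T$ that it induces, and the ``apex\nbh{}specific'' data $(\C, L, \sharp, \eta, \varepsilon)$. By definition a morphism of resolutions already matches part of the second layer --- $R = R'M$, and $M$ agrees with $L$ on objects --- while, by the definition of $\Res(T)$, both resolutions induce $T$ strictly; every remaining compatibility is then forced, because by \cite[Lemmas~3.4~\&~3.7, Theorem~3.8]{fiore2018relative} the apex\nbh{}specific data is determined (essentially uniquely) by $R$, the object\nbh{}assignment of $L$, and the shared data of $T$.

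First I would fix notation. Since both relative pseudoadjunctions induce $T$ exactly, $RLX = TX = R'L'X$ for each $X$; the extension operators agree, $R_{LX, LY} \c \sharp_{X, LY} = \ph^*_{X, Y} = R'_{L'X, L'Y} \c \sharp'_{X, L'Y}$; and the unit families together with the structural $2$\nbh{}cell data $\mu, \eta, \theta$ of $T$ literally coincide. In particular $i = i'$, with its derived pseudonatural structure (\cite[Lemma~3.7 \& Proposition~4.7]{fiore2018relative}), and the $\eta$\nbh{}modifications of the two induced relative pseudomonads coincide, which disposes of the first two assertions of \cref{resolution-morphism-unit-compatibility}.

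Next I would prove \cref{resolution-morphism-sharp-compatibility}. From $R = R'M$ and $i = i'$ one reads off, as an identity of functors $\C[LX, C] \to \E[JX, RC]$, that $(R'_{L'X, MC}\ph \c i'_X) \c M_{LX, C} = R_{LX, C}\ph \c i_X$. Since $\sharp_{X, C}$ is a quasi\nbh{}inverse equivalence to $R_{LX, C}\ph \c i_X$ and $\sharp'_{X, MC}$ a quasi\nbh{}inverse to $R'_{L'X, MC}\ph \c i'_X$, this identity shows simultaneously that $M_{LX, C}$ is itself an equivalence and that $M_{LX, C} \c \sharp_{X, C}$ is again a quasi\nbh{}inverse to $R'_{L'X, MC}\ph \c i'_X$; comparing chosen quasi\nbh{}inverses and tracking the units (available from $i = i'$) yields $M_{LX, C} \c \sharp_{X, C} = \sharp'_{X, MC}$, pseudonaturally in $X$ and $C$ since every functor in sight is. \Cref{morphism-of-resolutions-left-pseudoadjoints-commute} is then immediate from the formula \eqref{action-of-left-pseudoadjoint}: $L$ acts on a $1$\nbh{}cell $f$ by $\sharp_{X, LY}(i_Y \c Jf)$ and $L'$ by $\sharp'_{X, L'Y}(i_Y \c Jf)$, so $ML = L'$ on $1$\nbh{}cells by \cref{resolution-morphism-sharp-compatibility} with $C = LY$; the same computation, using functoriality of $\sharp$, $\sharp'$, and $J$, handles $2$\nbh{}cells and the compositor and unitor data.

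Finally, the displayed equation of \cref{resolution-morphism-unit-compatibility} and \cref{resolution-morphism-counit-compatibility} both express that $M$ transports the units and counits of the two local adjoint equivalences; granting \cref{resolution-morphism-sharp-compatibility} and $i = i'$, they follow from the essential uniqueness of an adjoint equivalence over a fixed right adjoint (equivalently, from the triangle identities, which pin down each of $\eta$, $\varepsilon$ from the other). For \cref{resolution-morphism-coincident-algebras} I would expand the construction of \cref{right-adjoint-is-pseudoalgebra}: the extension operator of the canonical pseudoalgebra on $RC$ sends $g \colon JX \to RC$ to $R_{LX, C}(g^\sharp) \colon TX \to RC$, and its structural $2$\nbh{}cells $\hat a$, $\tilde a$ are assembled from $\eta$, $\varepsilon$ and the pseudofunctor structure of $R$; transporting along $R = R'M$, $i = i'$, \cref{resolution-morphism-sharp-compatibility}, and \cref{resolution-morphism-counit-compatibility} identifies these term by term with the corresponding structure on $R'(MC)$, and the identical bookkeeping matches the induced pseudomorphism structure on $R(h)$ with that on $R(Mh)$. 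I expect the main obstacle to be the careful distinction between genuinely strict identities and equalities holding only up to coherent isomorphism --- in particular pinning down \cref{resolution-morphism-sharp-compatibility} as an identity of functors rather than merely a pseudonatural equivalence --- together with the routine but lengthy diagram chases in \cref{resolution-morphism-coincident-algebras}.
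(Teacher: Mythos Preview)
Your overall strategy matches the paper's, and your treatment of \cref{morphism-of-resolutions-left-pseudoadjoints-commute} and \cref{resolution-morphism-coincident-algebras} is essentially what the paper does. The difficulty is in your handling of \cref{resolution-morphism-unit-compatibility} and \cref{resolution-morphism-sharp-compatibility}, where your reordering introduces a circularity.

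Your argument for \cref{resolution-morphism-sharp-compatibility} runs: from $(R'\ph \c i'_X)\,M_{LX,C} = R_{LX,C}\ph \c i_X$ one sees $M_{LX,C}$ is an equivalence, so $M\sharp$ and $\sharp'$ are both quasi-inverses to $R'\ph \c i'_X$, and ``tracking the units'' gives $M\sharp = \sharp'$. But two quasi-inverses to the same functor are only canonically \emph{isomorphic}; to upgrade this to a strict equality you must know that the units of the two adjoint equivalences coincide, i.e.\ that $\eta_{X,C} = \eta'_{X,MC}$ for arbitrary $C$. That is exactly the displayed equation in \cref{resolution-morphism-unit-compatibility}, which you postpone and then claim to derive \emph{from} \cref{resolution-morphism-sharp-compatibility} via ``essential uniqueness of an adjoint equivalence over a fixed right adjoint''. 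This last step also fails: a pair $(L,R)$ does not determine the unit of $L \dashv R$. So the argument is circular, and the parenthetical ``available from $i = i'$'' does not help, since equality of the unit \emph{1-cells} $i_X$ is much weaker than equality of the unit \emph{natural transformations} $\eta_{X,C}$.

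The paper breaks the circle by establishing \cref{resolution-morphism-unit-compatibility} first (including the displayed equality of units at arbitrary $C$) and then deducing \cref{resolution-morphism-sharp-compatibility} via an explicit pasting: one forms the canonical 2-cell $\sharp' \Rightarrow M\sharp$ from $\eta_{X,C}$ and $\varepsilon'_{X,MC}$, uses \cref{resolution-morphism-unit-compatibility} to rewrite the $\eta_{X,C}$-part as $\eta'_{X,MC}$, and recognises the result as one of the triangle laws for $\sharp' \dashv R'\ph \c i'_X$, hence the identity. \Cref{resolution-morphism-counit-compatibility} is then obtained by pasting $\varepsilon_{X,C}$ onto the same diagram and applying the other triangle law, rather than by an abstract uniqueness argument.
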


\begin{proof}
    \begin{enumerate}
        \item By the definition of a resolution, the units of the relative pseudomonads induced by both relative pseudoadjunctions are equal, hence induce identical pseudonatural transformations; and similarly for the modifications. The equality of the two induced natural transformations is then trivial.
        \item We have the following pasting diagram, where the middle square commutes since $M$ is a morphism of resolutions.
        \begin{equation}
        \label{eq:resolution-morphism-unit-counit-compatibility}
        \begin{tikzcd}
            {\E[JX, RC]} & {\C[LX, C]} & {\C'[L'X, MC]} \\
            \\
            & {\E[JX, RC]} & {\E[JX, R'MC]} & {\C[LX, MC]}
            \arrow["{\sharp_{X, C}}", from=1-1, to=1-2]
            \arrow[""{name=0, anchor=center, inner sep=0}, Rightarrow, no head, from=1-1, to=3-2]
            \arrow["{M_{LX, C}}", from=1-2, to=1-3]
            \arrow[""{name=1, anchor=center, inner sep=0}, "{R_{LX, C} \ph i_X}"{description}, from=1-2, to=3-2]
            \arrow[""{name=2, anchor=center, inner sep=0}, "{R'_{L'X, MC} \ph i'_X}"{description}, from=1-3, to=3-3]
            \arrow[""{name=3, anchor=center, inner sep=0}, Rightarrow, no head, from=1-3, to=3-4]
            \arrow[Rightarrow, no head, from=3-2, to=3-3]
            \arrow["{\sharp'_{X, MC}}"', from=3-3, to=3-4]
            \arrow["{\eta_{X, C}}", shorten <=7pt, shorten >=7pt, Rightarrow, from=0, to=1-2]
            \arrow["{=}"{description}, draw=none, from=1, to=2]
            \arrow["{\varepsilon'_{X, MC}}"', shorten <=8pt, shorten >=8pt, Rightarrow, from=3-3, to=3]
        \end{tikzcd}
        \end{equation}
        Using \cref{resolution-morphism-unit-compatibility}, and the right triangle law for $\eta'$ and $\varepsilon'$, this 2-cell is equal to the identity.
        \item By definition, we have that $MLX = L'X$ for each $X \in \A$. We also have that the following diagram commutes pseudonaturally in $X, Y \in \A$.
        \[\begin{tikzcd}[column sep=huge]
            {\A[X, Y]} & {\E[JX, JY]} & {\E[JX, RLY]} & {\C[LX, LY]} \\
            {\A[X, Y]} & {\E[JX, JY]} & {\E[JX, R'L'Y]} & {\C'[L'X, L'Y]}
            \arrow["{J_{X, Y}}", from=1-1, to=1-2]
            \arrow[Rightarrow, no head, from=1-1, to=2-1]
            \arrow["{\E[JX, i_Y]}", from=1-2, to=1-3]
            \arrow[Rightarrow, no head, from=1-2, to=2-2]
            \arrow["{\sharp_{X, LY}}", from=1-3, to=1-4]
            \arrow[Rightarrow, no head, from=1-3, to=2-3]
            \arrow["{M_{LX, LY}}", from=1-4, to=2-4]
            \arrow["{J_{X, Y}}"', from=2-1, to=2-2]
            \arrow["{\E[JX, i_Y]}"', from=2-2, to=2-3]
            \arrow["{\sharp'_{X, L'Y}}"', from=2-3, to=2-4]
        \end{tikzcd}\]
        By \eqref{action-of-left-pseudoadjoint}, the top composite defines $L_{X, Y} \colon \A[X, Y] \to \C[LX, LY]$, while the bottom composite defines $L'_{X, Y} \colon \A[X, Y] \to \C'[L'X, L'Y]$.
        \item This follows from \cref{resolution-morphism-sharp-compatibility}, pasting $\varepsilon_{X, C}$ on the left-hand side of \eqref{eq:resolution-morphism-unit-counit-compatibility}, and then using the right triangle law.
        \item The following diagram commutes: the left square by \cref{resolution-morphism-sharp-compatibility} and the right square since $M$ is a morphism of resolutions.
        \[\begin{tikzcd}
            {\E[JX, RC]} & {\C[LX, C]} & {\E[RLX, RC]} \\
            & {\C'[MLX, MC]} \\
            {\E[JX, R'MC]} & {\C'[L'X, MC]} & {\E[R'L'X, R'MC]}
            \arrow["{\sharp_{X, C}}", from=1-1, to=1-2]
            \arrow[Rightarrow, no head, from=1-1, to=3-1]
            \arrow["{R_{LX, C}}", from=1-2, to=1-3]
            \arrow["{M_{LX, C}}"{description}, from=1-2, to=2-2]
            \arrow[Rightarrow, no head, from=1-3, to=3-3]
            \arrow[Rightarrow, no head, from=2-2, to=3-2]
            \arrow["{\sharp'_{X, MC}}"', from=3-1, to=3-2]
            \arrow["{R'_{L'X, MC}}"', from=3-2, to=3-3]
        \end{tikzcd}\]
        Hence the induced pseudoalgebras have the same extension operator.
        It remains to show that the associativity \eqref{right-adjoint-algebra-multiplicator} and unitality structure \eqref{right-adjoint-algebra-unitor} of the induced pseudoalgebras are equal: the former follows from \cref{resolution-morphism-sharp-compatibility,resolution-morphism-counit-compatibility}; the latter follows trivially from \cref{resolution-morphism-unit-compatibility}.

        That the pseudomorphism structures on $R(h)$ and $R'(Mh)$ coincide follows from the pseudonaturality of \cref{resolution-morphism-sharp-compatibility} together with the right pseudoadjoint compatibility condition for $M$.
        \qedhere
    \end{enumerate}
\end{proof}

\subsection{The bi-initial resolution}

We now show that the Kleisli bicategory for a relative pseudomonad $T$ admits a universal property with respect to resolutions of $T$. For non-relative pseudomonads, this was alluded to in \cite[Corollary~4.4]{cheng2003pseudo}, but does not appear to have been explicitly proven.

\begin{theorem}
    \label{initial-resolution}
    Let $\J$ be a pseudofunctor and let $T$ be a $J$-relative pseudomonad. The Kleisli resolution is bi-initial in $\Res(T)$.
\end{theorem}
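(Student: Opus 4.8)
The plan is to exhibit, for each resolution $C = (L, R, i, \sharp, \eta, \varepsilon)$ of $T$, a canonical \emph{comparison pseudofunctor} $E_C \colon \Kl(T) \to \C$, and then to show both that $E_C$ is a morphism of resolutions and that the hom-category $\Res(T)[\Kl(T), C]$ is terminal. On objects $E_C$ sends $X$ to $LX$; on a $1$-cell $f \colon JX \to TY$ of $\Kl(T)$ (that is, a $1$-cell $JX \to RLY$ of $\E$) it returns the transpose $\sharp_{X, LY}(f) \colon LX \to LY$; on $2$-cells it applies the functor $\sharp_{X, LY}$. The compositor of $E_C$ is supplied by the pseudonaturality of $\sharp$ (\cref{sharp-is-pseudonatural}): since the extension operator of the relative pseudomonad induced by $C$ — which by hypothesis is exactly that of $T$ — is $\ph^* = R_{LX, \ph} \circ \sharp_{X, \ph}$, the Kleisli composite $g \c f = g^* \c f$ equals $R(\sharp g) \c f$, and pseudonaturality of $\sharp$ at the $1$-cell $\sharp g \colon LY \to LZ$ supplies an invertible $2$-cell $\sharp(R(\sharp g) \c f) \tto \sharp(g) \c \sharp(f)$. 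The unitor is obtained from the counit $\varepsilon$ together with the unitor of $R$. The pseudofunctor coherence axioms for $E_C$ are then a routine diagram chase, following the pattern of the construction of the Kleisli resolution in \cite{fiore2018relative}.

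Next I would verify that $E_C$ is a morphism of resolutions. The condition on objects, $E_C(K_T X) = LX$, holds by construction. The condition $V_T = R \circ E_C$ holds on objects since $R(LX) = TX$, and on $1$-cells since $R(\sharp f) = f^*$ by the definition of the extension operator of the induced relative pseudomonad (which is that of $T$); the compatibility of compositors reduces to the formula for the induced multiplicator $\mu$ in \cite[Theorem~3.8]{fiore2018relative}, and likewise for the unitors. This establishes that $\Res(T)[\Kl(T), C]$ is nonempty.

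For the rigidity, let $M \colon \Kl(T) \to \C$ be any morphism of resolutions. Since the transpose operator of the Kleisli resolution is the identity functor on each hom-category, \cref{resolution-morphism-sharp-compatibility} forces $M$ to agree with $E_C$ on every hom-category (hence on objects, $1$-cells, and $2$-cells). The pseudofunctorial structure is then also determined: for $1$-cells $g, f$ of $\Kl(T)$, applying the functor $U_{LX, \ph} \defeq R_{LX, \ph}\, \ph \c i_X$ to the compositor $2$-cell of $M$ gives, using $R M = V_T = R E_C$, the same result as applying it to the compositor of $E_C$; since $U_{LX, \ph}$ is the right adjoint of an adjoint equivalence, hence faithful, the two compositors coincide, and similarly for the unitors. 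Thus $M = E_C$. Finally, a transformation of resolutions $\varpi \colon E_C \tto E_C$ is a pseudonatural identity whose naturality $2$-cells $\varpi_f \colon E_C f \tto E_C f$ satisfy $R(\varpi_f) = 1$; whiskering this equation with $i_X$ gives $U_{LX, LY}(\varpi_f) = 1 = U_{LX, LY}(1_{E_C f})$, and faithfulness of $U_{LX, LY}$ forces $\varpi_f = 1$. Hence $\Res(T)[\Kl(T), C]$ has exactly one object and exactly one morphism, i.e. is terminal, so the Kleisli resolution is bi-initial.

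The main obstacle is the bookkeeping in the second paragraph: checking that $E_C$ is genuinely a pseudofunctor and that $R \circ E_C$ agrees with $V_T$ \emph{as pseudofunctors} (not merely on underlying data) requires matching $E_C$'s structural $2$-cells — built from $\sharp$, $\eta$, and $\varepsilon$ — against the derived pseudofunctoriality of $T$ and the induced multiplicator and unitors of \cite[Theorem~3.8]{fiore2018relative}. Everything else is formal, resting only on \cref{sharp-is-pseudonatural} and the adjoint-equivalence structure of \cref{relative-pseudoadjunction}.
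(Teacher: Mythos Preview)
Your construction of the comparison pseudofunctor $E_C$ matches the paper's $!_{L,R}$: the same action on cells, with compositor and unitor built from the adjoint-equivalence data $(\sharp,\eta,\varepsilon)$ and the pseudofunctoriality of $R$. The verification that $E_C$ is a pseudofunctor and a morphism of resolutions is, as you say, the routine part; the paper likewise defers to the analogous arguments for $V_T$ in \cite{fiore2018relative}.

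The interesting divergence is in the uniqueness step. The paper does \emph{not} argue that an arbitrary morphism of resolutions $M$ equals $!_{L,R}$; instead it constructs an explicit invertible transformation $!_{L,R} \tto M$ componentwise via
\[
  f^\sharp \xto{(\eta_f)^\sharp} (R(f^\sharp)\,i_X)^\sharp = (R(Mf)\,i_X)^\sharp \xto{\varepsilon_{Mf}} Mf,
\]
checks that applying $R$ annihilates it (so it is a transformation of resolutions), and then separately shows $!_{L,R}$ has no nontrivial endo-transformations. This gives bi-initiality: the hom-category is \emph{equivalent} to the terminal category. Your route, by contrast, invokes \cref{resolution-morphism-sharp-compatibility} with the Kleisli $\sharp$ equal to the identity to force $M = E_C$ on hom-categories, and then uses the faithfulness of $R\ph\,i_X$ (as part of an adjoint equivalence) to pin down the compositors and unitors. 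That argument actually proves the stronger statement that the hom-category is \emph{isomorphic} to the terminal category, i.e.\ $2$-initiality rather than bi-initiality.

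Both routes are sound; yours is arguably cleaner since it leverages the preparatory \cref{resolution-morphism-sharp-compatibility} rather than rebuilding the comparison isomorphism by hand. One point worth making explicit in your write-up: the claim that the Kleisli transpose $\sharp^{\Kl}_{X,Y}$ is literally the identity on $\E[JX,TY]$ is what makes your strictness argument go through, so you should cite or verify that this is how the Kleisli resolution is set up in \cite{fiore2018relative}.
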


\begin{proof}
    Let $(L, R, i, \sharp, \eta, \varepsilon)$ be a resolution of $T$ with apex $\C$. We construct a mediating pseudofunctor ${!}_{L, R} \colon \Kl(T) \to \C$ as follows.
    \begin{align*}
        \ob{{!}_{L, R}} & \defeq \ob L &
        ({!}_{L, R})_{X, Y} & \defeq \sharp_{X, LX} \colon \KlT[X, Y] = \E[JX, RLX] \to \C[LX, LY]
    \end{align*}
    Since $(L, R)$ is a resolution of $T$, we have $g^* = R(g^\sharp)$. Thus, given 1-cells $f \colon JX \to TY$ and $g \colon JY \to TZ$ in $\E$, the unit of the adjoint equivalence, the compositor of $R$, and the counit of the adjoint equivalence provide an invertible natural transformation:
    \begin{equation}
        \label{Kl-!-multiplicator}
        (g^* f)^\sharp = (R(g^\sharp)f)^\sharp \xto{\eta} (R(g^\sharp) R(f^\sharp) i_X)^\sharp \xto{\iso} (R(g^\sharp f^\sharp)i_X)^\sharp \xto{\varepsilon} g^\sharp f^\sharp
    \end{equation}
    A unitor of $\E$, the unitor of $R$, and the counit of the adjoint equivalence defining the pseudoadjunction provide an invertible natural transformation:
    \begin{equation}
        \label{Kl-!-unitor}
        {i_X}^{\sharp_{X, LX}} \xto{(\iso)^{\sharp_{X, LX}}} (1_{RLX} \c i_X)^{\sharp_{X, LX}} \xto{(\iso)^{\sharp_{X, LX}}} (R(1_{LX}) \c i_X)^{\sharp_{X, LX}} \xto{\varepsilon_{X, LX}} 1_{LX}
    \end{equation}
    The proof that these data define a pseudofunctor is directly analogous to the proof that $V_T \colon \KlT \to \E$ forms a pseudofunctor (\cite[Lemma~4.3]{fiore2018relative}), noting that the relevant structural isomorphisms $\mu$ and $\theta$ are induced by \eqref{Kl-!-multiplicator} and \eqref{Kl-!-unitor} respectively (\cf~\cite[Theorem~3.8]{fiore2018relative}).

    ${!}_{L, R} \colon \Kl(T) \to \C$ forms a morphism of resolutions of $T$: the compatibility condition for the left pseudoadjoints is immediate from the definition of $\ob{{!}_{L, R}}$, while the compatibility condition for the right pseudoadjoints follows from the fact that $(L, R, i, \sharp, \eta, \varepsilon)$ is a resolution of $T$.

    It remains to show that every morphism of resolutions is uniquely isomorphic to ${!}_{L, R}$. Let $M \colon \KlT \to \C$ be a morphism of resolutions. Since $\ob{\KlT} = \ob\A$, the compatibility condition for the left pseudoadjoints implies that
    \begin{equation}
        M(X) = M(K_T(X)) = L(X) = {!}_{L, R}(X)
    \end{equation}
    for each $X \in \A$. Now, for each 1-cell $f \colon JX \to TY$, the compatibility condition for the right pseudoadjoints states that $R(M(f)) = V_T(f) = f^* = R(f^\sharp)$. Therefore, we have an invertible 2-cell
    \begin{equation}
        \label{comparison-transformation}
        f^\sharp \xto{(\eta_f)^\sharp} (R(f^\sharp) i_X)^\sharp = (R(M(f)) i_X)^\sharp \xto{\varepsilon_{Mf}} Mf
    \end{equation}
    natural in $f$, defining a pseudonatural identity ${!}_{L, R} \tto M$. By \cref{resolution-morphism-counit-compatibility}, $\varepsilon_{Mf} = M(\eta_f\inv)$, and so applying $R$ to \eqref{comparison-transformation} gives the identity, as does applying $R$ to the inverse of \eqref{comparison-transformation}. Consequently, \eqref{comparison-transformation} defines an invertible transformation of resolutions.

    Finally, let $\varpi \colon {!}_{L, R} \tto {!}_{L, R}$ be a transformation of resolutions, so that, for each 1-cell ${f \colon JX \to RLY}$, we have an invertible 2-cell on the left below, for which the 2-cell on the right below is the identity.
    \[
    \begin{tikzcd}
        LX & LY \\
        LX & LY
        \arrow["{f^\sharp}", from=1-1, to=1-2]
        \arrow[""{name=0, anchor=center, inner sep=0}, Rightarrow, no head, from=1-1, to=2-1]
        \arrow[""{name=1, anchor=center, inner sep=0}, Rightarrow, no head, from=1-2, to=2-2]
        \arrow["{f^\sharp}"', from=2-1, to=2-2]
        \arrow["{\varpi_f}"{description}, draw=none, from=0, to=1]
    \end{tikzcd}
    \hspace{2cm}
    \begin{tikzcd}
        JX & RLX & RLY \\
        JX & RLX & RLY
        \arrow["{i_X}", from=1-1, to=1-2]
        \arrow[Rightarrow, no head, from=1-1, to=2-1]
        \arrow["{R(f^\sharp)}", from=1-2, to=1-3]
        \arrow[""{name=0, anchor=center, inner sep=0}, Rightarrow, no head, from=1-2, to=2-2]
        \arrow[""{name=1, anchor=center, inner sep=0}, Rightarrow, no head, from=1-3, to=2-3]
        \arrow["{i_X}"', from=2-1, to=2-2]
        \arrow["{R(f^\sharp)}"', from=2-2, to=2-3]
        \arrow["{R(\varpi_f)}"{description}, draw=none, from=0, to=1]
    \end{tikzcd}
    \]
    Now, since $\sharp_{X, RL}$ is functorial, applying it to the 2-cell above right gives the identity 2-cell. By pre- and postcomposing by the isomorphism $R(f^\sharp)^\sharp \iso f^\sharp$, we get that $\varpi_f$ is the identity. Consequently, ${!}_{L, R}$ has no nontrivial endomorphisms, which implies the uniqueness of the isomorphisms of morphisms of resolutions.
\end{proof}

We may use \cref{initial-resolution} to give a characterisation of those pseudoadjunctions that exhibit the Kleisli resolution: this is dual in a certain sense to the question of relative pseudomonadicity, and might thus be called a \emph{relative pseudo-opmonadicity theorem}. For relative monads, this appears as \cite[Proposition~3.1.11]{arkor2022monadic}, and for non-relative pseudomonads, appears as \cite[Theorem~4.2.2]{miranda2023enriched}.

Recall that a pseudofunctor $F \colon \A \to \B$ is \emph{essentially surjective} if, for each object $X \in \A$, there exists an object $Y \in \B$ such that $F(X) \equiv Y$; and is \emph{essentially \ff} if, for each pair of objects $X, Y \in \A$, the functor $F_{X, Y} \colon \A[X, Y] \to \B[FX, FY]$ is an equivalence.

\begin{theorem}
    \label{relative-opmonadicity}
    A relative pseudoadjunction exhibits the Kleisli resolution of a relative pseudomonad, up to biequivalence, if and only if the left pseudoadjoint is essentially surjective on objects.
\end{theorem}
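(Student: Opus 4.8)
The plan is to deduce the result from \cref{initial-resolution} together with one structural observation: the canonical comparison pseudofunctor out of the Kleisli bicategory is \emph{automatically} locally an equivalence, because local adjoint equivalences are part of the very definition of a relative pseudoadjunction. Fix a resolution $(L, R, i, \sharp, \eta, \varepsilon)$ of $T$ with apex $\C$. By \cref{initial-resolution} the Kleisli resolution is bi-initial, so there is an essentially unique morphism of resolutions ${!}_{L, R} \colon \Kl(T) \to \C$. First I would fix the meaning of the statement: ``the relative pseudoadjunction exhibits the Kleisli resolution up to biequivalence'' means precisely that ${!}_{L, R}$ is a biequivalence. This reading is robust, since any morphism of resolutions $\Kl(T) \to \C$ that happens to be a biequivalence is isomorphic to ${!}_{L, R}$ by bi-initiality, and — using that the only endomorphism of resolutions of $\Kl(T)$ is the identity (also from \cref{initial-resolution}) — a morphism of resolutions $\C \to \Kl(T)$ that is a biequivalence forces ${!}_{L, R}$ to be one, via composition with a pseudo-inverse. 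So it suffices to characterise when ${!}_{L, R}$ is a biequivalence.

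The key step is then immediate. Recalling the construction in the proof of \cref{initial-resolution}, ${!}_{L, R}$ acts as $\ob{L}$ on objects and, on hom-categories, as $\sharp_{X, LY} \colon \Kl(T)[X, Y] = \E[JX, TY] = \E[JX, RLY] \to \C[LX, LY]$. By the local adjoint equivalence appearing in \cref{relative-pseudoadjunction}, each $\sharp_{X, LY}$ is an equivalence of categories, so ${!}_{L, R}$ is essentially \ff{} with no hypothesis at all. Since a pseudofunctor is a biequivalence exactly when it is essentially \ff{} and essentially surjective on objects \cite{johnson20212}, it follows that ${!}_{L, R}$ is a biequivalence if and only if it is essentially surjective on objects. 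Finally, $\Kl(T)$ has the same objects as $\A$ and $\ob{{!}_{L, R}} = \ob{L}$, so ${!}_{L, R}$ is essentially surjective on objects if and only if $L$ is. Chaining these equivalences yields the theorem.

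I do not expect a genuinely hard step here: the ``local equivalence'' half of being a biequivalence is built into the definition of a relative pseudoadjunction, so the theorem is essentially a corollary of bi-initiality. The only point requiring a little care is the bookkeeping in the first paragraph — pinning down ``up to biequivalence'' and verifying its insensitivity to the direction of the comparison — which I would dispatch using the faithfulness of $\Res(T)$ on $1$- and $2$-cells and the triviality of endomorphisms of the bi-initial object, both already in hand from \cref{initial-resolution}.
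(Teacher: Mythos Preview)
Your proof is correct and follows essentially the same approach as the paper's. Both rely on the observation that the comparison pseudofunctor $!_{L,R}$ is automatically essentially \ff{} because its hom-actions are the equivalences $\sharp_{X,LY}$, so biequivalence reduces to essential surjectivity of $\ob{!_{L,R}} = \ob{L}$; the paper attributes the essential \ffness{} to \cref{initial-resolution} without spelling out the reason as you do, and omits your first-paragraph discussion of interpretational robustness.
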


\begin{proof}
    By definition, $K_T \colon \A \to \KlT$ is the identity on objects (\cref{Kleisli-resolution}). Conversely, suppose that the left pseudoadjoint $L \colon \A \to \C$ of a relative pseudoadjunction is essentially surjective on objects. Writing $T$ for the induced pseudomonad, there is an essentially \ff{} pseudofunctor $!_{L, R} \colon \KlT \to \C$ by \cref{initial-resolution}. By assumption, and the fact that $K_T$ is \ioo{}, this pseudofunctor is also essentially surjective on objects. Thus $!_{L, R}$ is a biequivalence~\cite[Theorem~7.4.1]{johnson20212}.
\end{proof}

We shall use the relative pseudo-opmonadicity theorem to give an alternative characterisation of the Kleisli bicategory for a relative pseudomonad.

\begin{lemma}
    \label{full-image}
    Every pseudofunctor $F \colon \A \to \B$ admits a unique factorisation, the \emph{full image factorisation}\footnote{The full image factorisation of a pseudofunctor was called the \emph{Gabriel factorization} of a pseudofunctor in \cite{gambino2017operads}.}, into an \ioo{} pseudofunctor followed by a \ff{} pseudofunctor.
    \[\A \epito \Im(F) \monoto \B\]
    Furthermore, if $\B$ is a 2-category, then so is $\Im(F)$.
\end{lemma}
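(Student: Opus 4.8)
The plan is to build $\Im(F)$ by hand and then verify that it has the claimed universal property. First I would take $\Im(F)$ to be the bicategory with $\ob{\Im(F)} \defeq \ob\A$ and hom-categories $\Im(F)[X, Y] \defeq \B[FX, FY]$, with composition, identities, associators and unitors transported from $\B$ along the object-assignment $X \mapsto FX$; since these are literally the composition law and coherence isomorphisms of $\B$ between objects of the form $FX$, the bicategory axioms for $\Im(F)$ are inherited for free. Next I would take the first leg $E \colon \A \to \Im(F)$ to be the \ioo{} pseudofunctor acting as $F_{X,Y}$ on hom-categories and carrying exactly the compositor and unitor of $F$, so that its coherence axioms are precisely those of $F$; and the second leg $I \colon \Im(F) \to \B$ to be the strict pseudofunctor sending $X \mapsto FX$ and acting as the identity functor $\B[FX, FY] \to \B[FX, FY]$ on homs. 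By construction $E$ is \ioo{} and $I$ is \ff{} (each $I_{X,Y}$ is an identity, hence an isomorphism of categories), and a short unwinding of definitions shows $I \c E = F$ on objects, on hom-functors, and on compositor and unitor.

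For uniqueness I would argue as for a $(\ioo,\ff)$-factorisation system. Given a second factorisation $\A \xto{P} \C \xto{Q} \B$ of the same shape, with $P$ \ioo{}, $Q$ \ff{}, and $Q \c P = F$, the \ioo{} condition gives $\ob\C = \ob\A$, and $Q(X) = (QP)(X) = F(X)$ for each $X$, so each $Q_{X,Y}$ is an isomorphism of categories $\C[X, Y] \to \B[FX, FY] = \Im(F)[X, Y]$. Defining $G \colon \C \to \Im(F)$ to be \ioo{} and to act as $Q_{X,Y}$ on homs, carrying the compositor and unitor of $Q$, then produces an isomorphism of bicategories, and one checks directly that $G \c P = E$ (this reads $Q_{X,Y} \c P_{X,Y} = F_{X,Y}$ on homs, the hom-level content of $QP = F$) and $I \c G = Q$ (this reads $Q_{X,Y} = Q_{X,Y}$, since $I$ is the identity on homs). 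Since $I$ is locally an isomorphism, any $G'$ satisfying the same two equations must agree with $G$ on objects, hom-functors, and structural cells, so $G$ is the unique comparison; this is the sense in which the factorisation is unique. The final assertion is then immediate: if $\B$ is a $2$-category its associators and unitors are identities, and those of $\Im(F)$ are component-wise associators and unitors of $\B$, hence also identities, so $\Im(F)$ is a $2$-category.

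The only place any care is needed — and hence the main obstacle, such as it is — is the bookkeeping around the structural $2$-cells: one must check that $I \c E = F$, that $G \c P = E$, and that $I \c G = Q$ hold \emph{strictly}, that is, at the level of compositors and unitors and not merely up to invertible modification. This works out because the transport defining $\Im(F)$, the strict leg $I$, and the comparison $G$ introduce no new coherence data, so each equation is a direct unwinding of definitions; but it is worth being explicit about the convention that a \ff{} pseudofunctor is one whose hom-functors are isomorphisms of categories, since that is what makes ``unique factorisation'' literally correct rather than merely up to equivalence.
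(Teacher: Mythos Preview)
Your construction is exactly the paper's: $\Im(F)$ has the objects of $\A$ and hom-categories $\B[FX,FY]$ with structure inherited from $\B$, the first leg acts as $F$ on homs, and the second leg is the strict pseudofunctor given by $F$ on objects. The paper dispatches uniqueness with the phrase ``Uniqueness follows by definition'', whereas you spell out the comparison isomorphism $G$ explicitly; your more careful treatment is correct and compatible with the paper's convention that \ff{} means each hom-functor is an isomorphism (as opposed to the weaker ``essentially \ff'' used elsewhere in the paper).
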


\begin{proof}
    The bicategory $\Im(F)$ has the same objects as $\A$ and $\Im(F)[X, Y] \defeq \B[FX, FY]$, with the bicategory structure being inherited from $\B$. The pseudofunctor $\A \epito \Im(F)$ is given on hom-categories by $F$; whereas the (strict) pseudofunctor $\Im(F) \monoto \B$ is given on objects by $F$. Uniqueness follows by definition.
\end{proof}

Extending an observation in \cite[\S6.1]{arkor2022monadic}, taking the full image factorisation of a left relative pseudoadjoint produces a new relative pseudoadjunction inducing the same relative pseudomonad.

\begin{proposition}
    \label{full-image-resolution}
    Let $T$ be a $J$-relative pseudomonad and let $(L, R, i, \sharp, \eta, \varepsilon)$ be a resolution of $T$. Denote by $IL' = L$ the full image factorisation of $L$. Then:
    \begin{enumerate}
        \item $X \mapsto L'X$ defines a left $J$-relative pseudoadjoint to $RI$;
        \item the corresponding relative pseudoadjunction is a resolution of $T$;
        \item $I$ is a morphism of resolutions, and is the unique such.
    \end{enumerate}
\end{proposition}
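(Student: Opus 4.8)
The plan is to exploit that the factorisation $L = I L'$ builds the new relative pseudoadjunction by simply restricting the codomain of $(L, R, i, \sharp, \eta, \varepsilon)$, so that every piece of structure is inherited verbatim. Recalling from \cref{full-image} that $\Im(L)$ has the objects of $\A$ with $\Im(L)[X, Y] = \C[LX, LY]$, while $I$ is the identity on hom-categories and sends the object $X$ to $LX$, the data of \cref{relative-pseudoadjunction} for $RI \colon \Im(L) \to \E$ is: the object assignment $X \mapsto L'X = X$; the unit $1$-cells $i'_X \defeq i_X \colon JX \to R(LX) = (RI)(L'X)$; and, using $\Im(L)[L'X, C] = \C[LX, LC]$ and $(RI)(C) = R(LC)$, the adjoint equivalence $\E[JX, (RI)(C)] \rightleftarrows \Im(L)[L'X, C]$ taken to be precisely the one supplied by the original resolution at $LC \in \C$, so that $\sharp'_{X, C} \defeq \sharp_{X, LC}$ and the right adjoint $(RI)_{L'X, C}\ph \c i'_X$ is literally $R_{LX, LC}\ph \c i_X$. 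Part (1) is then immediate: the three axioms of a $J$-relative pseudoadjunction hold because this data is a sub-collection of the data witnessing $(L, R, i, \sharp, \eta, \varepsilon)$ (note that, $I$ being strict, $RI$ carries the compositor and unitor of $R$).

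For part (2), I would unwind the relative pseudomonad induced by the new resolution, as in \cite[Theorem~3.8]{fiore2018relative} (recalled before \cref{presheaf-construction-pseudomonad}), and check it is $T$ on the nose. Its object assignment is $X \mapsto (RI)(L'X) = TX$; its extension operator is the composite
\[
\E[JX, TY] \xto{\sharp'_{X, L'Y}} \Im(L)[L'X, L'Y] \xto{(RI)_{L'X, L'Y}} \E[TX, TY],
\]
which equals $R_{LX, LY} \c \sharp_{X, LY} = \ph^*_{X, Y}$ since $I$ is the identity on hom-categories; and its unit is $i'_X = i_X$. The invertible $2$-cells $\eta'$, $\theta'$, $\mu'$ are, by the formulas in \cite[Theorem~3.8]{fiore2018relative}, assembled from the unit and counit of the adjoint equivalence together with the compositor and unitor of $RI$; since all of these coincide with the corresponding data of $(L, R, i, \sharp, \eta, \varepsilon)$, we obtain $\eta' = \eta$, $\theta' = \theta$, and $\mu' = \mu$. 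Hence the induced relative pseudomonad is exactly $T$ (the derived pseudofunctoriality then matching automatically), so the new relative pseudoadjunction is a resolution of $T$. This mirrors the corresponding verification in the proof of \cref{Alg-resolution}.

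For part (3), the easy direction is that $I$ is a morphism of resolutions from the new resolution to $(L, R, i, \sharp, \eta, \varepsilon)$: the two conditions of \cref{resolution} are $R \c I = RI$, which is trivially true, and $I(L'X) = LX$, which holds by construction. For uniqueness, let $M \colon \Im(L) \to \C$ be any morphism of resolutions from the new resolution to the original. Since $L'$ is \ioo{}, $M$ is forced on objects: $M(Z) = M(L'Z) = LZ = I(Z)$. To pin $M$ down up to a (necessarily unique) invertible transformation of resolutions, I would route through the Kleisli resolution. As $L'$ is \ioo{}, hence essentially surjective on objects, \cref{relative-opmonadicity} applied to the new resolution (which by part (2) also induces $T$) shows that the mediating pseudofunctor ${!}_{L', RI} \colon \KlT \to \Im(L)$ of \cref{initial-resolution} is a biequivalence; comparing the two constructions gives $I \c {!}_{L', RI} = {!}_{L, R}$. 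Bi-initiality of $\KlT$ in $\Res(T)$ (\cref{initial-resolution}) then makes $M \c {!}_{L', RI}$ uniquely isomorphic, as a morphism of resolutions, to ${!}_{L, R} = I \c {!}_{L', RI}$, and transporting this isomorphism back along the biequivalence ${!}_{L', RI}$ exhibits $M$ as uniquely isomorphic to $I$ in $\Res(T)$.

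The main obstacle is the uniqueness clause of part (3): it is the one place where the strictest-possible conventions of \cref{resolution} stop short of forcing a literal equality $M = I$, because $\sharp_{X, LC}$ is only essentially (not literally) surjective on objects of $\C[LX, LC]$, so — exactly as for the Kleisli resolution — "the unique such" must be read up to a unique invertible transformation of resolutions. A self-contained alternative to the Kleisli detour is to build the comparison $2$-cell by hand: for $g \colon LX \to LC$ put $\varpi_g \defeq M(\varepsilon_g) \c \varepsilon_g^{-1} \colon g \tto M(g)$, with $\varepsilon$ the counit of the original resolution; \cref{resolution-morphism-sharp-compatibility} shows that $M$ fixes $\sharp_{X, LC}(R(g) \c i_X)$, so $\varpi_g$ is well defined, while $R \c M = R$ on hom-categories gives $R(\varpi_g) = 1$, making $\varpi$ a transformation of resolutions, its pseudonaturality axioms following from naturality of $\varepsilon$ and coherence of $M$. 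Parts (1) and (2) are essentially bookkeeping.
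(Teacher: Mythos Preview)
Your treatment of parts (1), (2), and the existence half of (3) matches the paper's, which disposes of all three in two sentences: the unit is $i$, the adjoint equivalence is $\sharp_{X,IY}$ (identifying $\Im(L)[L'X,Y] = \C[LX,LY]$), it is trivially a resolution of $T$, and $I$ is a morphism of resolutions by construction. Your more explicit unpacking is fine.

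For uniqueness the paper's argument is one line and quite different from yours: any morphism of resolutions $I'$ satisfies $I'L' = L$ by \cref{morphism-of-resolutions-left-pseudoadjoints-commute}, and this determines $I'$ because $L'$ is the full image factor. There is no detour through the Kleisli resolution. Your route via \cref{initial-resolution} and \cref{relative-opmonadicity} is logically available (both are proved earlier), but the ``transport back'' step is not justified: ${!}_{L',RI}$ is a biequivalence of bicategories, but you have not shown it is an equivalence \emph{in} $\Res(T)$, so cancelling it on the right of an isomorphism of morphisms of resolutions needs an argument you do not give. Your hand-built alternative $\varpi_g \defeq M(\varepsilon_g)\c\varepsilon_g^{-1}$ is much closer to a clean proof and is the one worth keeping; note that \cref{resolution-morphism-counit-compatibility} gives $M(\varepsilon_g)=\varepsilon_{M(g)}$ directly, which simplifies the verification.

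Your observation that literal equality $M=I$ need not hold --- because $\sharp_{X,LC}$ is only essentially surjective, so $M$ is only pinned down on its essential image --- is a genuine point the paper's one-liner glosses over. The paper's phrasing ``the unique such'' together with its appeal to the uniqueness of the full image factorisation reads as claiming strict uniqueness, but the full image factorisation is only unique among factorisations through a \emph{fully faithful} second factor, and nothing forces an arbitrary morphism of resolutions $I'$ to be fully faithful. So either the paper intends uniqueness up to a unique invertible transformation of resolutions (which your $\varpi$ supplies), or its argument is incomplete as stated. Either way, your instinct here is sound; just replace the Kleisli detour with the direct construction.
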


\begin{proof}
    The unit of the relative pseudoadjunction is given by $i$, and the adjoint equivalence is given by the following.
    \[\E[JX, RIY] \xto{\ph^\sharp_{X, IY}} \C[IL'X, IY] = \Im(L)[L'X, Y]\]
    It is trivial that this defines a resolution of $T$. That $I$ defines a morphism of resolutions is by definition of the factorisation. Finally, any morphism of resolutions $I'$ must satisfy $I'L' = L$ by \cref{morphism-of-resolutions-left-pseudoadjoints-commute}, which implies uniqueness given that $L'$ is the full image.
\end{proof}

Consequently, every resolution of a relative pseudomonad gives an alternative presentation of the Kleisli bicategory.

\begin{corollary}
    \label{Kleisli-is-full-image-resolution}
    Let $T$ be a $J$-relative pseudomonad and let $(L \colon \A \to \C, R \colon \C \to \E, i, \sharp, \eta, \varepsilon)$ be a resolution of $T$. Then $\KlT$ is biequivalent to $\Im(L)$.
\end{corollary}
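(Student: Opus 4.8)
The plan is to deduce the statement by chaining the two results immediately preceding it. The key observation is that the full image factorisation of $L$ is itself a resolution of $T$ (\cref{full-image-resolution}), and that its left pseudoadjoint is identity-on-objects, hence essentially surjective on objects, so that the relative pseudo-opmonadicity theorem (\cref{relative-opmonadicity}) applies directly.

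Concretely, first I would write $IL' = L$ for the full image factorisation of the left pseudoadjoint $L$, as in \cref{full-image}. By \cref{full-image-resolution}, the assignment $X \mapsto L'X$ extends to a left $J$-relative pseudoadjoint to $RI \colon \Im(L) \to \E$, and the resulting relative pseudoadjunction is again a resolution of $T$, with apex $\Im(L)$. Next I would note that, by construction of the full image (\cref{full-image}), the pseudofunctor $L' \colon \A \to \Im(L)$ is identity-on-objects, and therefore \emph{a fortiori} essentially surjective on objects.

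Applying \cref{relative-opmonadicity} to the resolution $(L', RI, i, \sharp, \eta, \varepsilon)$ then shows that it exhibits the Kleisli resolution of $T$ up to biequivalence; equivalently, the canonical comparison pseudofunctor ${!}_{L', RI} \colon \KlT \to \Im(L)$ furnished by \cref{initial-resolution} is a biequivalence. Hence $\KlT \biequiv \Im(L)$, as required.

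Since every nontrivial step has already been carried out in \cref{initial-resolution,relative-opmonadicity,full-image,full-image-resolution}, I do not anticipate a genuine obstacle: the proof is purely a matter of assembling these facts. The only point warranting a brief check is that the biequivalence obtained is indeed the expected comparison pseudofunctor, which follows from the uniqueness clause of \cref{initial-resolution} together with \cref{full-image-resolution}(3); this requires no new argument.
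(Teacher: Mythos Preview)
Your proposal is correct and follows essentially the same approach as the paper: apply \cref{full-image-resolution} to obtain a resolution of $T$ with apex $\Im(L)$ whose left pseudoadjoint $L'$ is identity-on-objects, and then invoke \cref{relative-opmonadicity}. The paper's proof is a one-line version of exactly this argument; your additional remark about the comparison pseudofunctor being the canonical one is a reasonable elaboration but not needed for the bare statement.
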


\begin{proof}
    Immediate from \cref{relative-opmonadicity}, since $\A \epito \Im(L)$ is \ioo{} by definition and forms a resolution of $T$ by \cref{full-image-resolution}.
\end{proof}

\begin{remark}
    A word of caution: while $\Im(L)$ forms a resolution of $T$ and is biequivalent to $\KlT$, it is not equivalent to $\KlT$ as an object of $\Res(T)$. A morphism of resolutions from $\Im(L)$ to $\KlT$ should send each 1-cell $f \colon LX \to LY$ to $Rf \c i_X \colon JX \to TY$ in $\KlT$. However, this does not satisfy the required compatibility condition for the right pseudoadjoints, since $Rf$ is merely isomorphic to $V_T(Rf \c i_X) = (Rf \c i_X)^*$. One could presumably rectify this by working with \emph{pseudomorphisms} of resolutions, which require the pseudoadjoints to commute only up to coherent pseudonatural isomorphism, but we shall have no need to do so here.
\end{remark}

\subsection{The 2-terminal resolution}

We now turn to the universal property of the bicategory of pseudoalgebras. First, we observe that right pseudoadjoints exhibit pseudoalgebras in the following sense.

\begin{lemma}
    \label{right-adjoint-is-pseudoalgebra}
    Let $(L, R, i, \sharp, \eta, \varepsilon)$ be a $J$-relative pseudoadjunction inducing a $J$-relative pseudomonad~$T$.
    \begin{enumerate}
        \item For each object $C \in \C$, the object $RC \in \E$ is canonically equipped with the structure of a $T$-pseudoalgebra.
        \item For each 1-cell $h \colon C \to D$ in $\C$, the 1-cell $Rh \colon RC \to RD$ in $\E$ is canonically equipped with the structure of a pseudomorphism.
        \item For each 2-cell $\alpha \colon h \tto h'$ in $\C$, the 2-cell $R\alpha \colon Rh \tto Rh'$ in $\E$ forms a transformation.
    \end{enumerate}
\end{lemma}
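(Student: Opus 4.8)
The plan is to observe that \cref{right-adjoint-is-pseudoalgebra} is a routine generalisation of \cite[Theorem~3.8]{fiore2018relative} --- which exhibits $RLX$ as the free $T$-pseudoalgebra --- with the object $LY$ appearing there replaced throughout by an arbitrary $C \in \C$; this is dual, in a sense, to the way \cref{Alg-resolution} identifies $F_T X$ with $RLX$. Explicitly, for $C \in \C$ I would equip $RC$ with the extension operator $(-)^c_X \colon \E[JX, RC] \to \E[RLX, RC]$, $f \mapsto R(f^\sharp)$, where $f^\sharp \defeq \sharp_{X, C}(f) \colon LX \to C$; take $\tilde{c}_f \colon f \tto R(f^\sharp) i_X = f^c i_X$ to be the component $\eta_f$ of the unit of the local adjoint equivalence; and, for $f \colon JX \to TY$ and $g \colon JY \to RC$, define $\hat{c}_{g, f} \colon (g^c f)^c \tto g^c f^*$ as the image under $R$ of the composite
\[
(R(g^\sharp) f)^\sharp \xto{\eta} (R(g^\sharp) R(f^\sharp) i_X)^\sharp \xto{\iso} (R(g^\sharp f^\sharp) i_X)^\sharp \xto{\varepsilon} g^\sharp f^\sharp
\]
of \eqref{Kl-!-multiplicator}, post-composed with the compositor $R(g^\sharp f^\sharp) \xto{\iso} R(g^\sharp) R(f^\sharp)$. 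With this data, the pseudoalgebra axioms \eqref{psalg-mu} and \eqref{psalg-theta} become, symbol for symbol, the associativity and unit laws for $T$ as verified \ibid{}: that verification uses only that $\sharp$ is an adjoint equivalence and that $R$ is a pseudofunctor, never that the target object lies in the image of $L$, so it transfers unchanged. (Taking $C = LX$ recovers \cref{free-pseudoalgebra}.)

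For (2) and (3) I would invoke the pseudonaturality of $\sharp$ in its $\C$-variable (\cref{sharp-is-pseudonatural}). Given $h \colon C \to D$ in $\C$ and $f \colon JX \to RC$, the 2-cell component of $\sharp$ at $h$ is an invertible 2-cell $\chi_{h, f} \colon (Rh \c f)^\sharp \xto{\iso} h \c f^\sharp$, and I would set $\overline{Rh}_f$ to be $R(\chi_{h, f})$ followed by the inverse compositor of $R$, yielding an invertible 2-cell $\big((Rh) f\big)^d = R\big((Rh \c f)^\sharp\big) \tto R(h) R(f^\sharp) = (Rh) \c f^c$. Invertibility makes $Rh$ automatically a pseudomorphism; the lax-morphism axioms \eqref{lax-morph-hats} and \eqref{lax-morph-tildes} then unwind to the coherence axioms for $\sharp$ (as a pseudonatural transformation) together with the compositor coherence of $R$ and the definition of $\hat{c}$ above. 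For (3), given a 2-cell $\alpha \colon h \tto h'$ in $\C$, the transformation axiom \eqref{transformation} for $R\alpha$ at each $f \colon JX \to RC$ reduces, after cancelling the compositor isomorphisms, to the compatibility of $\chi_{h, f}$ and $\chi_{h', f}$ with $\alpha$, which is exactly part of the data witnessing $\sharp$ as pseudonatural.

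The whole argument is bookkeeping, with no conceptual difficulty; the main obstacle is modest. It amounts to two things: (i) confirming that the coherence computations of \cite[Theorem~3.8]{fiore2018relative} genuinely are insensitive to replacing $LY$ by a general $C$ --- they are, since they are phrased entirely in terms of the local adjoint equivalence and the pseudofunctoriality of $R$ --- and (ii) in parts (2) and (3), keeping straight that it is the pseudonaturality of $\sharp$ in the $\C$-variable that is in play (rather than the pseudonaturality in the $\A\op$-variable used in \cref{algebras give pseudonat trans and modification}) when matching its coherences against the lax-morphism and transformation axioms.
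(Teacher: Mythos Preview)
Your proposal is correct and follows essentially the same approach as the paper. The paper defines the data identically (extension operator $f \mapsto R(f^\sharp)$, unitor $\eta_f$, associator as the image under $R$ of the composite you wrote followed by the compositor of $R$), defers the pseudoalgebra axioms to the computation in \cite[Theorem~3.8]{fiore2018relative} exactly as you propose, and derives the pseudomorphism and transformation structures from the pseudonaturality of $\sharp$ in the $\C$-variable together with the pseudofunctoriality of $R$; your unfolding of this into the 2-cell components $\chi_{h,f}$ is merely a slightly more explicit phrasing of what the paper sketches with a pasting diagram.
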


\begin{proof}
    Given an object $C \in \C$, we define an extension operator by the following assignment:
    \begin{equation}
        \label{right-adjoint-algebra-structure}
        \E[JX, RC] \xto{\sharp_{X, C}} \C[LX, C] \xto{R_{LX, C}} \E[RLX, RC]
    \end{equation}
    Observing that, since $(L, R)$ is a resolution of $T$, we have $f^* = R(f^\sharp)$ and hence $R(g^\sharp) R(f^\sharp) = R(g^\sharp) f^*$, the associativity structure is given by the following:
    \begin{equation}
        \label{right-adjoint-algebra-multiplicator}
        R((R(g^\sharp)f)^\sharp) \xto{R((R(g^\sharp)\eta)^\sharp)} R((R(g^\sharp) R(f^\sharp) i_X)^\sharp) \xto{\iso} R((R(g^\sharp f^\sharp)i_X)^\sharp) \xto{R(\varepsilon)} R(g^\sharp f^\sharp) \xto{\iso} R(g^\sharp) R(f^\sharp)
    \end{equation}
    and the unitality structure is given by the unit of the adjoint equivalence:
    \begin{equation}
        \label{right-adjoint-algebra-unitor}
        f \xto{\eta_f} R(f^\sharp) i_X
    \end{equation}
    The proof is directly analogous to the proof that relative pseudoadjunctions induce relative pseudomonads (\cite[Theorem~3.8]{fiore2018relative}), noting that the data of the pseudoalgebra \eqref{right-adjoint-algebra-structure}, \eqref{right-adjoint-algebra-multiplicator}, and \eqref{right-adjoint-algebra-unitor} correspond respectively to the data $\ph^*$, $\mu$, and $\eta$ of the relative pseudomonad induced by the relative pseudoadjunction.

    Given a 1-cell $h \colon C \to D$ in $\C$, the pseudomorphism structure on $R h \colon RC \to RD$ is given by the following invertible 2-cell, using pseudonaturality of $\sharp$ and pseudofunctoriality of $R$:
    \[\begin{tikzcd}[column sep=6em]
        {\E[JX, RC]} & {\E[JX, RD]} \\
        {\C[LX, C]} & {\C[LX, D]} \\
        {\E[RLX, RC]} & {\E[RLX, RD]}
        \arrow[""{name=0, anchor=center, inner sep=0}, "{\sharp_{X, C}}"', from=1-1, to=2-1]
        \arrow[""{name=1, anchor=center, inner sep=0}, "{R_{LX, C}}"', from=2-1, to=3-1]
        \arrow[""{name=2, anchor=center, inner sep=0}, "{\sharp_{X, D}}", from=1-2, to=2-2]
        \arrow[""{name=3, anchor=center, inner sep=0}, "{R_{LX, D}}", from=2-2, to=3-2]
        \arrow["{\E[RLX, Rh]}"', from=3-1, to=3-2]
        \arrow["{\E[JX, Rh]}", from=1-1, to=1-2]
        \arrow["{\C[LX, h]}"{description}, from=2-1, to=2-2]
        \arrow["\iso"{description}, draw=none, from=0, to=2]
        \arrow["\iso"{description}, draw=none, from=1, to=3]
    \end{tikzcd}\]
    That this defines a pseudomorphism follows from the adjoint equivalence $\ph^\sharp \adj R\ph i$. That, for each 2-cell $\alpha \colon h \tto h'$, the 2-cell $R\alpha \colon Rh \tto Rh'$ forms a transformation follows from pseudonaturality of $\sharp$.
\end{proof}

As anticipated in the introduction, the resolution associated to the bicategory of pseudoalgebras (\cref{Alg-resolution}) admits a universal property dual to that of the Kleisli bicategory (in fact, it is stricter).

\begin{theorem}
    \label{terminal-resolution}
    Let $\J$ be a pseudofunctor and let $T$ be a $J$-relative pseudomonad. The pseudoalgebra resolution is 2-terminal in $\Res(T)$.
\end{theorem}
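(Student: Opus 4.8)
The plan is to prove $2$-terminality directly: for every resolution $(L \colon \A \to \C, R \colon \C \to \E, i, \sharp, \eta, \varepsilon)$ of $T$ I will exhibit a morphism of resolutions $\overline R \colon \C \to \AlgT$, show it is the unique such, and show its only endo-transformation in $\Res(T)$ is the identity; equivalently, that the hom-category $\Res(T)[\C, \AlgT]$ is terminal. The functor $\overline R$ is built from \cref{right-adjoint-is-pseudoalgebra}: it sends $C \in \C$ to the canonical $T$-pseudoalgebra on $RC$ of \eqref{right-adjoint-algebra-structure}--\eqref{right-adjoint-algebra-unitor}, each $h \colon C \to D$ to the induced pseudomorphism on $Rh$, and each $2$-cell $\alpha$ to the transformation $R\alpha$. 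First I would check $\overline R$ is a pseudofunctor $\C \to \AlgT$: its compositor and unitor are taken to be those of $R$, so that $U_T \overline R = R$ strictly (using that $U_T$ of \cref{forgetful-pseudofunctor} is strict and forgets to carriers), and one must verify these structural isomorphisms satisfy the transformation axiom of \cref{transformation} — a routine diagram chase via pseudonaturality of $\sharp$ and pseudofunctoriality of $R$, of the same shape as the corresponding checks in \cref{right-adjoint-is-pseudoalgebra}; the pseudofunctor coherence axioms then hold because they hold for $R$ and $U_T$ is locally faithful. It remains to check $\overline R L = F_T$: for $X \in \A$, the pseudoalgebra $\overline R(LX)$ has carrier $R(LX) = TX$, extension operator $R_{LX, LY} \circ \sharp_{X, LY} = \ph^*_{X, Y}$ (since $(L, R)$ is a resolution of $T$), and associativity/unitality data $\mu$, $\eta$, which is exactly the free pseudoalgebra of \cref{free-pseudoalgebra}. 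Hence $\overline R \in \Res(T)[\C, \AlgT]$.

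\textbf{Uniqueness of the morphism.} Let $M \colon \C \to \AlgT$ be any morphism of resolutions. On $2$-cells, $U_T(M\alpha) = R\alpha = U_T(\overline R\alpha)$, so $M\alpha = \overline R\alpha$ by local faithfulness of $U_T$. On objects and $1$-cells I would appeal to \cref{resolution-morphism-coincident-algebras}, applied to the given resolution together with the pseudoalgebra resolution: it says the canonical pseudoalgebra (resp.\ pseudomorphism) structure on $RC$ (resp.\ $Rh$) induced by $(L, R)$ — which by construction is $\overline R(C)$ (resp.\ $\overline R(h)$) — coincides with the canonical structure on $U_T(MC)$ (resp.\ on $U_T(Mh) = Rh$) induced by the free--forgetful resolution of \cref{Alg-resolution}. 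So it suffices to observe that \cref{right-adjoint-is-pseudoalgebra} applied to the resolution $F_T \dashv U_T$ recovers the original structure: at $\mathcal B \in \AlgT$ the extension operator $f \mapsto U_T(\sharp_{X, \mathcal B} f) = f^b$ is that of $\mathcal B$, and using \cref{f-a has ps-morphism structure} together with the descriptions of $\sharp$, $\nu$, $\varepsilon$ in the proof of \cref{Alg-resolution} the associativity and unitality data reduce to $\hat b$ and $\tilde b$; the $1$-cell case is analogous. Therefore $M(C) = \overline R(C)$, $M(h) = \overline R(h)$, and $M = \overline R$.

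\textbf{Uniqueness of $2$-cells, and the obstacle.} A transformation of resolutions $\varpi \colon \overline R \tto \overline R$ is, by \cref{resolution}, a pseudonatural identity with $U_T(\varpi_h) = 1$ for every $1$-cell $h$ of $\C$; its components are identities and each $\varpi_h$ is a $2$-cell of $\AlgT$ lying over an identity of $\E$, so local faithfulness of $U_T$ forces $\varpi_h = 1$. Hence $\varpi$ is the identity, and $\Res(T)[\C, \AlgT]$ is terminal, which is exactly $2$-terminality. The main obstacle is the bookkeeping in the existence step — confirming that the compositor and unitor of $\overline R$, and more generally the structure transported from \cref{right-adjoint-is-pseudoalgebra}, genuinely define algebra transformations, and that \cref{right-adjoint-is-pseudoalgebra} applied to the pseudoalgebra resolution is the identity on structure — but this is diagram-heavy routine, closely modelled on \cref{right-adjoint-is-pseudoalgebra} and the proof of \cref{Alg-resolution}, rather than conceptually hard.
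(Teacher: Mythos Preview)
Your proposal is correct and follows essentially the same approach as the paper: construct the comparison via \cref{right-adjoint-is-pseudoalgebra}, verify it is a morphism of resolutions, use \cref{resolution-morphism-coincident-algebras} together with local faithfulness of $U_T$ for uniqueness, and again local faithfulness for triviality of endo-transformations. You are slightly more explicit than the paper about the need to check that \cref{right-adjoint-is-pseudoalgebra} applied to the free--forgetful resolution recovers the given pseudoalgebra structure, but this is an elaboration of the same argument rather than a different route.
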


\begin{proof}
    Let $(L, R, i, \sharp, \eta, \varepsilon)$ be a resolution of $T$. We construct a pseudofunctor ${!}_{L, R} \colon \C \to \AlgT$. The action of ${!}_{L, R}$ is given by equipping each object $RC \in \E$, for $C \in \C$, with the canonical pseudoalgebra structure of \cref{right-adjoint-is-pseudoalgebra}, and likewise for the 1-cells and 2-cells. Pseudofunctoriality of ${!}_{L, R}$ follows from that of $R$. The pseudofunctor $!_{L, R}$ forms a morphism of resolutions: the compatibility condition for the left pseudoadjoints follows since $(L, R, i, \sharp)$ is a resolution of $T$ and $!_{L, R}$ is defined in terms of the canonical pseudoalgebra structure on $R$, so that the pseudoalgebra structure on each $RLX$ is precisely the free pseudoalgebra structure on $X \in \A$ given by \cref{free-pseudoalgebra}; the compatibility condition for the right pseudoadjoints follows trivially.

    It remains to show that every morphism of resolutions is equal to $!_{L, R}$. Let $M \colon \C \to \AlgT$ be a morphism of resolutions. From \cref{Alg-resolution,resolution-morphism-coincident-algebras}, we have that, for each $C \in \C$, the pseudoalgebra structure on $U_TMC$ is equal to the canonical pseudoalgebra structure on $RC$, and that, for each 1-cell $h \colon C \to D$ in $\C$, the pseudomorphism structure on $U_TMh$ is equal to the canonical pseudomorphism structure on $Rh$. Since $U_T$ is locally faithful (\cref{forgetful-pseudofunctor}), this shows that ${!}_{L, R} = M$, which is automatically a transformation of resolutions.

    Finally, let $\varpi \colon !_{L, R} \tto !_{L, R}$ be a transformation of resolutions. We have that $U_T(\varpi_f) = 1_{Rf}$. Since $U_T$ is locally faithful, this implies that $\varpi_f$ is the identity.
\end{proof}

A resolution of a relative pseudomonad $T$ will be called \emph{relatively pseudomonadic} when the unique mediating pseudofunctor into the bicategory of pseudoalgebras is a biequivalence.

As a consequence of \cref{initial-resolution,terminal-resolution}, we deduce the existence of a canonical \emph{comparison pseudofunctor} from the Kleisli bicategory to the bicategory of algebras.

\begin{corollary}
    \label{comparison-pseudofunctor}
    There is a unique, \ff{} pseudofunctor $I_T \colon \KlT \to \AlgT$ such that the following diagrams commute: explicitly, $I_T$ is the embedding of the free $T$-pseudoalgebras.
    \[
    \begin{tikzcd}
        \KlT & \AlgT \\
        \A
        \arrow["{I_T}", from=1-1, to=1-2]
        \arrow["{K_T}", from=2-1, to=1-1]
        \arrow["{F_T}"', from=2-1, to=1-2]
    \end{tikzcd}
    \hspace{4em}
    \begin{tikzcd}
        \KlT & \AlgT \\
        & \E
        \arrow["{I_T}", from=1-1, to=1-2]
        \arrow["{V_T}"', from=1-1, to=2-2]
        \arrow["{U_T}", from=1-2, to=2-2]
    \end{tikzcd}
    \]
\end{corollary}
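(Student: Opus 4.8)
The plan is to obtain $I_T$ as the unique mediating morphism of resolutions between the two universal resolutions of $T$, and then to read off its explicit form.

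First, by \cref{Kleisli-resolution} the Kleisli relative pseudoadjunction is a resolution of $T$, and by \cref{Alg-resolution} so is the free--forgetful relative pseudoadjunction. By \cref{terminal-resolution} the latter is $2$-terminal in $\Res(T)$, so there is a \emph{unique} morphism of resolutions $I_T \colon \KlT \to \AlgT$, and the only transformation of resolutions $I_T \tto I_T$ is the identity. By the definition of a morphism of resolutions we have $U_T I_T = V_T$, which is the second triangle; and by \cref{morphism-of-resolutions-left-pseudoadjoints-commute} the pseudofunctors $I_T K_T$ and $F_T$ are equal, which is the first. Conversely, any pseudofunctor $\KlT \to \AlgT$ making both triangles commute is in particular a morphism of resolutions from the Kleisli resolution to the pseudoalgebra resolution (the second triangle supplies the right-pseudoadjoint compatibility $R = R'M$, and the first triangle forces the left-pseudoadjoint compatibility on objects, as $K_T$ is identity-on-objects); hence the uniqueness clause follows from $2$-terminality.

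Next I would unwind the construction of the mediating pseudofunctor in the proof of \cref{terminal-resolution}, specialised to the Kleisli resolution as source. It sends each object $X$ of $\KlT$ to $V_T X = TX$ equipped with the canonical pseudoalgebra structure of \cref{right-adjoint-is-pseudoalgebra}; since the adjoint equivalence $\sharp$ for the Kleisli resolution is (essentially) the identity and $V_T$ acts by $\ph^*$ on $1$-cells, this canonical structure is precisely the free pseudoalgebra structure of \cref{free-pseudoalgebra}, so $I_T X = F_T X$. Likewise, for a $1$-cell $f \colon JX \to TY$ of $\KlT$, the pseudomorphism $I_T f$ has carrier $f^*$ and the canonical pseudomorphism structure of \cref{right-adjoint-is-pseudoalgebra}, which one checks to be the structure $\hat a = \mu$ appearing in \cref{f-a has ps-morphism structure}; hence $(I_T)_{X, Y}$ coincides with the functor $\sharp_{X, F_T Y} \colon \E[JX, TY] \to \AlgT[F_T X, F_T Y]$ of \cref{Alg-resolution}. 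This exhibits $I_T$ as the embedding of the free $T$-pseudoalgebras.

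Full faithfulness is then immediate: by \cref{Alg-resolution}, $\sharp_{X, \algB}$ is (one half of) an adjoint equivalence for every $T$-pseudoalgebra $\algB$, so in particular $\sharp_{X, F_T Y} = (I_T)_{X, Y}$ is an equivalence of categories, hence fully faithful, for all $X, Y \in \A$. (Local faithfulness of $I_T$ alone would also follow from $U_T I_T = V_T$ together with local faithfulness of $U_T$ from \cref{forgetful-pseudofunctor} and of $V_T$ from the invertibility of $\eta$, but the identification above yields full faithfulness directly.) I expect the only mildly delicate point to be matching the ``canonical'' pseudomorphism structure produced by the $2$-terminality argument with the concrete $\mu$-structure on $f^*$; this is a routine but careful comparison of the definitions in \cref{right-adjoint-is-pseudoalgebra} and \cref{f-a has ps-morphism structure}, after which everything follows formally.
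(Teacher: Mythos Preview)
Your argument is correct and follows essentially the same route as the paper: existence and uniqueness come from the $2$-terminality of the pseudoalgebra resolution (\cref{terminal-resolution}), and full faithfulness comes from identifying the hom-action of $I_T$ with the adjoint equivalence $\sharp_{X, F_T Y}$. The paper condenses this last step into a single citation of \cref{initial-resolution} (whose mediating pseudofunctor out of the Kleisli resolution is built with $\sharp$ on hom-categories), whereas you unwind the identification explicitly via \cref{Alg-resolution}; this is the same content presented more directly.
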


\begin{proof}
    The existence and uniqueness of such a pseudofunctor follows from \cref{terminal-resolution}: $I_T = {!}_{F_T, U_T}$ is the unique mediating pseudofunctor induced by the resolution associated to the bicategory of $T$-pseudoalgebras; \ffness{} follows from \cref{initial-resolution}. Explicitly, the action of $I_T$ is induced by the pseudofunctor $F_T$, which sends objects of $\A$ to free $T$-pseudoalgebras.
\end{proof}

\subsection{Relative pseudomonads induced by pseudomonads}

We use the universal properties exhibited above to make some observations regarding relative pseudomonads induced from pseudomonads by precomposition; these observations will be useful in \cref{presheaves-and-cocompleteness} when we study the relationship between the presheaf construction and the small presheaf construction.

\begin{lemma}
    \label{precomposition}
    Let $\J$ be a pseudofunctor.
    \begin{enumerate}
        \item Let $T$ be a pseudomonad on $\E$. The assignment $(A \in \A) \mapsto (TJA \in \E)$ extends to a $J$-relative monad.
        \item Let $R \colon \C \to \E$ be a pseudofunctor admitting a left pseudoadjoint $L \colon \E \to \C$. The assignment $(A \in \A) \mapsto (LJA \in \C)$ defines a left $J$-relative adjoint to $R$.
    \end{enumerate}
    Furthermore, if $(L, R)$ is a resolution of $T$, then $(LJ, R)$ is a resolution of $TJ$.
\end{lemma}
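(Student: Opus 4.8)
The plan is to prove all three assertions uniformly, as instances of \emph{restriction along $J$}, using that a pseudomonad is the same as an identity-relative pseudomonad \cite[Proposition~3.3]{fiore2018relative} and that a pseudoadjunction between bicategories supplies adjoint equivalences of hom-categories, pseudonatural in both variables. For (1), I would present $T$ in no-iteration form, so that its data consist of an object assignment $E \mapsto TE$ on $\E$, an extension operator $\ph^* \colon \E[E, TE'] \to \E[TE, TE']$, a unit $i_E \colon E \to TE$, and invertible-2-cell families $\mu, \eta, \theta$ subject to \cref{psmnd-mu,psmnd-theta} (with $\A = \E$ and $J = \mathrm{id}_\E$). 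I then define $TJ$ by precomposing with $J$ in every slot where an object of $\A$ can occur: the object assignment is $X \mapsto T(JX)$, the extension operator is $\ph^*_{JX, JY} \colon \E[JX, TJY] \to \E[TJX, TJY]$, the unit is $i_{JX} \colon JX \to TJX$, and the structural families are $\mu_{g, f}$, $\eta_f$, $\theta_{JX}$ for $f \colon JX \to TJY$ and $g \colon JY \to TJZ$. Naturality of these families and the axioms \cref{psmnd-mu,psmnd-theta} for $TJ$ are then literally the instances of the corresponding statements for $T$ in which all source and target objects lie in the image of $J$, so no real work is needed.

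For (2), the pseudoadjunction $L \dashv R$ comes equipped with adjoint equivalences $\E[E, RC] \simeq \C[LE, C]$, pseudonatural in $E \in \E$ and $C \in \C$, and with a unit $\beta_E \colon E \to RLE$. I would take $LX \defeq L(JX)$, $i_X \defeq \beta_{JX} \colon JX \to RL(JX)$, and restrict the first variable of the adjoint equivalence along $J$ to obtain, for each $X \in \A$ and $C \in \C$, an adjoint equivalence $\E[JX, RC] \simeq \C[L(JX), C]$. This is exactly the data demanded by \cref{relative-pseudoadjunction}, and the required compatibility is a special case of that for the pseudoadjunction; hence $X \mapsto L(JX)$ is a left $J$-relative pseudoadjoint to $R$.

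For the final clause, suppose $(L, R, i, \sharp, \eta, \varepsilon)$ is a resolution of $T$, so that the identity-relative pseudoadjunction it names induces exactly the identity-relative pseudomonad $T$; in particular $f^* = R(f^\sharp)$ for every 1-cell $f$, and $\mu, \eta, \theta$ are the 2-cells built from $\eta, \varepsilon$ and the compositor and unitor of $R$ by the formulas of \cite[Theorem~3.8]{fiore2018relative}. By (2), $(LJ, R)$ is a $J$-relative pseudoadjunction; let $T'$ denote the $J$-relative pseudomonad it induces via \cite[Theorem~3.8]{fiore2018relative}. Unwinding that construction for $(LJ, R)$, the underlying object assignment of $T'$ is $X \mapsto R(L(JX)) = (RL)(JX) = T(JX)$, its extension operator sends $f \colon JX \to TJY$ to $R(f^\sharp) = f^*$, its unit is $i_{JX}$, and its structural 2-cells are given by the very same formulas as those for $T = RL$, with the index objects merely restricted to lie in the image of $J$. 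Comparing with the explicit description of $TJ$ from the proof of (1) then yields $T' = TJ$, i.e.\ $(LJ, R)$ is a resolution of $TJ$. The one step that requires care is this last comparison of the structural 2-cells; I expect it to be routine rather than hard, since every piece of pseudoadjunction data feeding the \cite{fiore2018relative} construction for $(LJ, R)$ already occurs, at the same arguments, in the construction for $(L, R)$ — so the comparison just records that restriction along $J$ commutes with the passage from a pseudoadjunction to its induced (relative) pseudomonad.
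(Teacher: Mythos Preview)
Your proposal is correct and follows essentially the same approach as the paper: the paper's proof is a single sentence stating that it is ``trivial to check that the data of a pseudomonad and pseudoadjunction (viewed as being relative to the identity pseudofunctor on $\E$) restrict along $J$ as described,'' and your proposal is precisely an unwinding of what that restriction means in each case.
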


\begin{proof}
    It is trivial to check that the data of a pseudomonad and pseudoadjunction (viewed as being relative to the identity pseudofunctor on $\E$) restrict along $J$ as described.
\end{proof}

Given a pseudofunctor $\J$ and a pseudomonad $T$ on its codomain, it is natural to compare the Kleisli bicategories and bicategories of pseudoalgebras for $T$ and $TJ$. First, the Kleisli bicatgory for $TJ$ may be characterised simply in terms of that for $T$.

\begin{corollary}
    \label{restriction-of-Kleisli}
    Let $\J$ be a pseudofunctor and let $T$ be a pseudomonad on $E$. The Kleisli bicategory $\Kl(TJ)$ is biequivalent to the full image of the composite $\A \xto J \E \xto{K_T} \Kl(T)$. Consequently, the full image of the comparison pseudofunctor $!_{TJ} \colon \Kl(TJ) \to \PsAlg(TJ)$ is biequivalent to $\Im(F_T J)$.
\end{corollary}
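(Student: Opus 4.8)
The plan is to obtain both statements by combining \cref{precomposition} (which restricts a resolution along $J$), \cref{Kleisli-is-full-image-resolution} (which exhibits the Kleisli bicategory as the full image of any left relative pseudoadjoint inducing it) and \cref{comparison-pseudofunctor} (which provides the fully faithful comparison pseudofunctor), using throughout that, by the construction in \cref{full-image}, the bicategory $\Im(F)$ depends only on the action of $F$ on objects.

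For the first statement I would argue as follows. By \cref{precomposition}, $TJ$ is a $J$-relative pseudomonad, and applying the last clause of \cref{precomposition} to the Kleisli resolution $(K_T, V_T)$ of $T$ produces a resolution of $TJ$ with apex $\Kl(T)$ whose left pseudoadjoint $L$ sends $A$ to $K_T(JA)$. Since $K_T$ is identity-on-objects, $\ob{L}$ agrees with the object part of the composite $K_T \c J \colon \A \to \Kl(T)$, so $\Im(L) = \Im(K_T \c J)$; and \cref{Kleisli-is-full-image-resolution} then gives $\Kl(TJ) \biequiv \Im(L) = \Im(K_T \c J)$.

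For the second statement, the comparison pseudofunctor $!_{TJ} = I_{TJ} \colon \Kl(TJ) \to \PsAlg(TJ)$ is fully faithful (\cref{comparison-pseudofunctor}), so the left-hand factor $\Kl(TJ) \epito \Im(!_{TJ})$ of its full image factorisation is identity-on-objects and essentially fully faithful, hence a biequivalence~\cite[Theorem~7.4.1]{johnson20212}; thus $\Im(!_{TJ}) \biequiv \Kl(TJ)$. On the other side, the left-hand triangle of \cref{comparison-pseudofunctor} gives $F_T \c J = I_T \c K_T \c J$, and since $I_T$ is fully faithful the induced identity-on-objects pseudofunctor $\Im(K_T \c J) \to \Im(I_T \c K_T \c J) = \Im(F_T J)$ is essentially fully faithful, hence again a biequivalence; so $\Im(F_T J) \biequiv \Im(K_T \c J)$. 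Chaining with the first statement, $\Im(!_{TJ}) \biequiv \Kl(TJ) \biequiv \Im(K_T \c J) \biequiv \Im(F_T J)$. (Alternatively, one could obtain $\Kl(TJ) \biequiv \Im(F_T J)$ directly by instead restricting the pseudoalgebra resolution $(F_T, U_T)$ along $J$ and invoking \cref{Kleisli-is-full-image-resolution}.)

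The main point to be careful about is bookkeeping rather than anything deep: the left pseudoadjoint delivered by \cref{precomposition} is only specified on objects and then extended canonically, so it need not be literally the composite $K_T \c J$ (respectively $F_T \c J$). The resolution is the observation, extracted from \cref{full-image}, that $\Im(-)$ is built solely from the object-map of its argument, so the two full images coincide on the nose; once this is recorded, both claims are a direct application of results already established.
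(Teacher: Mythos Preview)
Your proposal is correct and matches the paper's approach. The paper's one-line proof cites \cref{precomposition} and \cref{Kleisli-is-full-image-resolution} together with \cref{Kleisli-resolution} (for the first claim) and \cref{Alg-resolution} (for the second), which is exactly what you unpack; your alternative route for the second claim---restricting the pseudoalgebra resolution $(F_T,U_T)$ along $J$ and applying \cref{Kleisli-is-full-image-resolution}---is precisely the paper's intended argument, and your bookkeeping remark about $\Im(\ph)$ depending only on the object-map is the right way to make the identifications precise.
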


\begin{proof}
    Both claims follows immediately from \cref{precomposition,Kleisli-is-full-image-resolution}, together with \cref{Kleisli-resolution,Alg-resolution} respectively.
\end{proof}

The bicategory of pseudoalgebras for $T$ is not in general biequivalent to that for $TJ$ (as is easily seen, for instance, by taking $J$ to be the unique pseudofunctor from the empty bicategory). However, there is a canonical pseudofunctor from the former to the latter.

\begin{corollary}
    \label{realisation-comparison}
    Let $T$ be a pseudomonad and let $\J$ be a pseudofunctor. There is a unique pseudofunctor $!_J \colon \AlgT \to \PsAlg(TJ)$ that commutes with the free and forgetful pseudofunctors.
    \[
    \begin{tikzcd}
        \AlgT & {\PsAlg(TJ)} \\
        \A
        \arrow["{!_J}", from=1-1, to=1-2]
        \arrow["{F_TJ}", from=2-1, to=1-1]
        \arrow["{F_{TJ}}"', from=2-1, to=1-2]
    \end{tikzcd}
    \hspace{4em}
    \begin{tikzcd}
        \AlgT & {\PsAlg(TJ)} \\
        & \E
        \arrow["{!_J}", from=1-1, to=1-2]
        \arrow["{U_T}"', from=1-1, to=2-2]
        \arrow["{U_{TJ}}", from=1-2, to=2-2]
    \end{tikzcd}
    \]
\end{corollary}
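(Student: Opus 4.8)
The plan is to obtain $!_J$ directly from the 2-terminality of the pseudoalgebra resolution established in \cref{terminal-resolution}. First I would record that, viewing $T$ as a pseudomonad relative to the identity on $\E$, \cref{Alg-resolution} exhibits $(F_T, U_T)$ as a resolution of $T$. Applying \cref{precomposition} with $(L, R) = (F_T, U_T)$ then shows that $(F_T J, U_T)$ --- where $F_T J \colon \A \to \AlgT$ sends $X$ to the free $T$-pseudoalgebra on $JX$, and $U_T$ is unchanged --- is a resolution of the $J$-relative pseudomonad $TJ$, so it is an object of $\Res(TJ)$.

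Next I would invoke \cref{terminal-resolution} for $TJ$: the free--forgetful resolution $(F_{TJ}, U_{TJ})$ is 2-terminal in $\Res(TJ)$. Hence the hom-category from $(F_T J, U_T)$ to $(F_{TJ}, U_{TJ})$ in $\Res(TJ)$ is terminal, and so there is a unique morphism of resolutions $!_J \colon \AlgT \to \PsAlg(TJ)$. Unwinding \cref{resolution}, such a morphism is precisely a pseudofunctor $M$ satisfying $U_{TJ} \c M = U_T$ and $M \c F_T J = F_{TJ}$ on the nose --- exactly the commutativity of the two triangles in the statement --- and its uniqueness as a morphism of resolutions is exactly the uniqueness of $!_J$. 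This proves both existence and uniqueness, in the same manner as \cref{comparison-pseudofunctor}.

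I do not expect a genuine obstacle here; the argument is a bookkeeping application of an already-established universal property. The only point needing care is verifying that $(F_T J, U_T)$ is a resolution of $TJ$ in the strict sense demanded by \cref{resolution} --- that the induced relative pseudomonad is literally $TJ$ rather than merely biequivalent to it. This is precisely the content of the concluding clause of \cref{precomposition}, and it works because, as noted in the discussion following \cref{resolution}, in extension form the pseudofunctoriality of $TJ$ is derived from the extension operator rather than imposed as extra structure, so no new coherence diagrams arise.
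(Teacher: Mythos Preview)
Your proposal is correct and follows essentially the same route as the paper: exhibit $(F_T J, U_T)$ as a resolution of $TJ$ via \cref{precomposition}, then apply the 2-terminality of the pseudoalgebra resolution (\cref{terminal-resolution}). The paper's proof is a two-line version of exactly this argument.
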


\begin{proof}
    By \cref{precomposition}, the following relative pseudoadjunction forms a resolution of $TJ$, after which the result follows from \cref{terminal-resolution}.
    \[\begin{tikzcd}
        & \AlgT \\
        \A && \E
        \arrow[""{name=0, anchor=center, inner sep=0}, "{F_TJ}", from=2-1, to=1-2]
        \arrow[""{name=1, anchor=center, inner sep=0}, "{U_T}", from=1-2, to=2-3]
        \arrow["J"', from=2-1, to=2-3]
        \arrow["\dashv"{anchor=center}, shift right=2, draw=none, from=0, to=1]
    \end{tikzcd}\qedshift\]
\end{proof}

\subsection{Coherence}

We end this section with an observation regarding an aspect of the embedding of free pseudoalgebras that does not arise in the one-dimensional setting. As mentioned in \cref{EM-2-category}, when $\E$ is a 2-category, so too is $\AlgT$. However, the same is not true for $\KlT$, which is a bicategory regardless of the strictness of the domain or codomain. The comparison pseudofunctor $I_T \colon \KlT \to \AlgT$ therefore provides an essentially \ff{} embedding of a bicategory into a 2-category. It is well known that every bicategory is biequivalent to a 2-category~\cite{maclane1985coherence,power1989general}. However, in general, the 2-category to which a given bicategory is biequivalent may not admit a convenient direct description. For instance, syntactic approaches to coherence give a description in terms of free 2-categories, whereas an approach via the bicategorical Yoneda embedding gives a description in terms of pseudopresheaves~\cite[Theorem~1.4]{gordon1995coherence}. Yet it is known that, somewhat surprisingly, in many cases naturally occurring bicategories do admit a natural strictification, in that there exists a biequivalent 2-category that arises by natural considerations, rather than for the purposes of strictification (\cf{}~\cite[\S1.5]{lack2010companion}). Our contention is that this tends to occur due to the following consequence of the following observation, which may be viewed as a coherence theorem associated to a relative pseudomonad, and does not appear to have been explicitly stated even for non-relative pseudomonads.

\begin{corollary}
    \label{coherence-theorem}
    Let $\J$ be a pseudofunctor and let $T$ be a $J$-relative pseudomonad. Suppose that $\E$ is a 2-category. The bicategory $\KlT$ is biequivalent to $\Im(F_T)$, the full and locally full sub-2-category of $\AlgT$ spanned by the free $T$-pseudoalgebras.
\end{corollary}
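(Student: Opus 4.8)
The plan is to assemble the statement from the universal properties already established in \cref{resolutions-and-coherence}, with essentially no new computation; this is genuinely a corollary, the heavy lifting having been done in \cref{initial-resolution,terminal-resolution} and \cref{Kleisli-is-full-image-resolution}. First I would record that, by \cref{Alg-resolution}, the free--forgetful pair $(F_T, U_T)$ is a resolution of $T$, i.e.\ an object of $\Res(T)$ (where it is checked that the induced relative pseudomonad is exactly $T$ on the nose, as required by the strict sense of \cref{resolution}). Then \cref{Kleisli-is-full-image-resolution}, applied to this resolution, yields directly that $\KlT$ is biequivalent to $\Im(F_T)$, the full image of $F_T \colon \A \to \AlgT$. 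Alternatively, one may obtain the same biequivalence from the comparison pseudofunctor $I_T \colon \KlT \to \AlgT$ of \cref{comparison-pseudofunctor}: since $I_T$ is \ff{} and agrees on objects with $F_T$ (because $I_T K_T = F_T$ and $K_T$ is \ioo{}), its full image factorisation $\KlT \epito \Im(I_T) \monoto \AlgT$ has $\Im(I_T) = \Im(F_T)$, and the \ioo{} leg $\KlT \epito \Im(F_T)$ acts on hom-categories as $(I_T)_{X, Y}$, which is an equivalence; an \ioo{} pseudofunctor that is a local equivalence is a biequivalence by \cite[Theorem~7.4.1]{johnson20212}.

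Next I would identify $\Im(F_T)$ with the full and locally full sub-2-category of $\AlgT$ on the free pseudoalgebras. By the construction in \cref{full-image}, $\Im(F_T)$ has the same objects as $\A$, with $F_T$ sending each such object to a free $T$-pseudoalgebra, and its hom-categories, composition, and coherence data are inherited from $\AlgT$ via $\Im(F_T)[X, Y] = \AlgT[F_T X, F_T Y]$. Thus $\Im(F_T)$ is exactly the full subbicategory of $\AlgT$ spanned by the free pseudoalgebras (or, should $F_T$ fail to be injective on objects, canonically biequivalent to it), matching the reading of ``free $T$-pseudoalgebra'' in the statement as ``a pseudoalgebra of the form $F_T X$''.

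Finally I would add the 2-categorical refinement: since $\E$ is assumed to be a 2-category, $\AlgT$ is a 2-category by \cref{EM-2-category}; hence by the last clause of \cref{full-image}, $\Im(F_T)$ is again a 2-category. Combining the three observations gives the claim. I do not anticipate any real obstacle here beyond bookkeeping: the only points requiring care are that $(F_T, U_T)$ is a resolution in the precise sense of \cref{resolution} so that \cref{Kleisli-is-full-image-resolution} is applicable, and that the objects produced by the full-image construction of \cref{full-image} are correctly matched up with the free pseudoalgebras when reading off the final statement.
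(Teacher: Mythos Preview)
Your proposal is correct and takes essentially the same approach as the paper: the paper's proof is the one-line ``From \cref{EM-2-category,comparison-pseudofunctor}'', and your ``alternative'' route via $I_T$ is exactly that argument spelled out. Your primary route via \cref{Kleisli-is-full-image-resolution} applied to the pseudoalgebra resolution is an equally valid (and equivalent) way to reach the same conclusion.
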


\begin{proof}
    From \cref{EM-2-category,comparison-pseudofunctor}.
\end{proof}

We shall give several applications of \cref{coherence-theorem} in \cref{examples-of-coherence}.

\section{Presheaves and cocompleteness}
\label{presheaves-and-cocompleteness}

In \cite[Example~4.2]{fiore2018relative}, it is established that the Kleisli bicategory $\Kl(P)$ associated to the presheaf construction is the bicategory $\Dist$ of distributors (\aka{} the bicategory $\b{Prof}$ of profunctors) between small categories (\cf{}~\cite{benabou1973distributeurs}). The objective of this section is to characterise the 2-category $\PsAlg(P)$ of pseudoalgebras and pseudomorphisms for the presheaf construction. In contrast to characterising the Kleisli bicategory, this is not a straightforward task. To illustrate why, we contrast it with a closely related construction, which we now recall.

\begin{definition}[{\cites[Remark~2.29]{ulmer1968properties}[Proposition~4.83]{kelly1982basic}[\S2]{day2007limits}}]
    The \emph{small presheaf construction} $A \mapsto [A\op, \Set]_s$ sends a locally small category $A$ to the full subcategory of $[A\op, \Set]$ spanned by presheaves that are small colimits of representables. This defines a pseudomonad $P_s$ on the 2-category $\CAT$ of locally small categories.
\end{definition}

Characterising the pseudoalgebras for $P_s$ is straightforward: since $P_s$ is a lax-idempotent pseudomonad, a locally small category $A$ admits the structure of a a $P_s$-pseudoalgebra if and only if the unit $A \to P_s(A)$ admits a left adjoint (\cref{lax-fixed-point-and-colax-algebras}). Every such category is reflective in a cocomplete category, so is itself cocomplete; conversely, every cocomplete category admits such a left adjoint via left extension. Consequently, a locally small category admits the structure of a $P_s$-pseudoalgebra if and only if it admits all small colimits. Similarly, the pseudomorphisms are precisely the cocontinuous functors, and the transformations are the natural transformations.

Following \cref{precomposition}, we may precompose any pseudomonad on $\CAT$ with the inclusion $J \colon \Cat \ffto \CAT$. Applying this construction to the small presheaf construction recovers the presheaf construction (\cref{presheaf-construction-pseudomonad}): in other words, $P = P_s J$.

In light of this observation, one might expect it to be similarly simple to prove that, like the $P_s$-pseudoalgebras, the $P$-pseudoalgebras are precisely the cocomplete locally small categories. However, there is a crucial difference. While $P$ is a lax-idempotent relative pseudomonad~\cite[Example~5.4]{fiore2018relative}, the pseudoalgebras for such do not admit a convenient characterisation in terms of reflectivity in a free pseudoalgebra, which was crucial for characterising the $P_s$-pseudoalgebras. Furthermore, although lax-idempotent pseudoalgebras \emph{do} admit a simpler description than arbitrary pseudoalgebras by \cref{lax-idempotent-algebra-is-left-extension-algebra}, the pseudoalgebras for a lax-idempotent relative pseudomonad are not necessarily also lax-idempotent (\cref{idempotence-counterexample}). Since we do not know a priori that the $P$-pseudoalgebras are lax-idempotent, we cannot use this description.

Consequently, a characterisation of the $P$-pseudoalgebras necessitates a fundamentally different proof strategy to the characterisation of the $P_s$-pseudoalgebras. We shall illustrate our approach by starting with a simpler example.

\begin{example}
    \label{adjoin-initial-object}
    Let $\ph_\bot$ be the 2-monad on $\CAT$ that freely adjoins an initial object. Suppose that we wish to characterise its algebras, but only permit ourselves to apply $\ph_\bot$ to small categories (in preparation for considering the restriction along $J \colon \Cat \ffto \CAT$).

    Let $\algA$ be a $\ph_\bot$-algebra (where $A$ is not necessarily small). The simplest example of a functor into $A$ with small domain is the unique functor $[]_A \colon 0 \to A$ from the empty category. Using the algebra structure, we obtain a functor $([]_A)^a \colon 0_\bot = 1 \to A$, which picks out an object $0_A$ in $A$.

    The next simplest example of a functor into $A$ with small domain is a functor $f \colon 1 \to A$ from the terminal category. Such a functor picks out an object $f(*)$ in $A$. Using the algebra structure, we obtain a functor $f^a \colon 1_\bot = \{ 0 \to 1 \} \to A$ from the interval category. Such a functor picks out a morphism in $A$. Furthermore, the unit law for the algebra implies that its codomain must be $f(*)$, while the extension law implies that its domain must be $0_A$. In other words, the structure of an algebra for $\ph_\bot$ exhibits a specified object $0_A$ and morphisms therefrom to each object in $A$. Furthermore, a calculation with the algebra axioms shows that every morphism from $0_A$ is given in this way (see \cref{colimits-in-pseudoalgebras}).

    Consequently, it is possible to prove that $\ph_\bot$-algebras admit initial objects, by extending only functors with domain $0$ and $1$. In the other direction, an easy calculation shows that categories with initial objects admit $\ph_\bot$-algebra structures. This shows that, for any pseudofunctor $J \colon \A \to \CAT$ with $\A$ a full sub-2-category of $\CAT$ containing $0$ and $1$, the algebras for the $J$-relative pseudomonad $J\ph_\bot$ are locally small categories with initial objects.
\end{example}

This example suggests it should be possible to characterise the $P$-pseudoalgebras by using their extension operators to construct colimiting cocones for any functor thereinto with small domain. We will show this is indeed the case in \cref{Phi-pseudoalgebras}. In fact, we will work more generally, and consider the relative pseudomonad for the free cocompletion under a class $\Phi$ of small colimits, in doing so characterising the pseudoalgebras for several more examples of relative pseudomonads from \cite{fiore2018relative}.

\begin{remark}
    It is perhaps worth saying a few words regarding the relationship between the presheaf construction and the small presheaf construction. Since the presheaf construction may be obtained from the small presheaf construction by restricting along $\Cat \ffto \CAT$, one might suppose the small presheaf construction to be the more fundamental of the two. However, as will be justified in \cref{Phi-pseudoalgebras}, the small presheaf construction may also be obtained from the presheaf construction. Thus the two constructions are, in an appropriate sense, equivalent. That the presheaf construction is the simpler of the two constructions suggests that it is the presheaf construction that deserves to be viewed as the fundamental construction, and the small presheaf construction as derived therefrom. This aligns with the perspective in \cite{altenkirch2015monads,arkor2022monadic} that relative monads may be fruitfully considered as presentations of monads on their codomain. As further evidence to support this perspective, it is really the Kleisli bicategory of $P$ (\ie the bicategory of small categories and distributors) rather than of $P_s$ (whose Kleisli bicategory comprises the locally small categories and \emph{small} distributors) that is of importance in category theory.
\end{remark}

\begin{remark}
    It was proven abstractly in \cite[Theorem~7.1]{kelly2000monadicity} that the small presheaf construction may be strictified, in the sense that there is a 2-monad on $\CAT$ whose 2-category of (strict) algebras and pseudomorphisms is biequivalent to $\PsAlg(P_s)$ over $\CAT$. By precomposing the inclusion $\Cat \ffto \CAT$, the presheaf construction $P$ may therefore similarly be strictified into a relative 2-monad. However, in doing so we lose the simplicity of the concrete description of the presheaf construction. An explicit description of the free strict cocompletion is given in \cite{beurier2021presentations}; the reader will note that it is significantly more complicated. Furthermore, such a description does not simplify the description of the algebras.
\end{remark}

\subsection{Colimits in pseudoalgebras}

We aim to carry out the construction of \cref{adjoin-initial-object} for more general classes of colimits. In fact, we will show that the argument therein applies under relatively few assumptions on the relative pseudomonad in question. The construction is essentially a study of cocones in pseudoalgebras. We recall in \cref{cocone} a description of cocones that will be convenient for what follows. This in turn requires a preparatory definition.

\begin{definition}
    Denote by $j_D \colon D \ffto D_\top$ the free category with a terminal object on $D$. Explicitly,
    \[\ob{D_\top} \defeq \ob D + \{ \top \}\]
    \begin{align*}
        D_\top(\top, \top) & \defeq \{ 1_{\top} \} &
        D_\top(d, \top) & \defeq \{ \unit_d \}  \\
        D_\top(d, d') & \defeq D(d, d') &
        D_\top(\top, d) & \defeq \varnothing
    \end{align*}
    where $d, d' \in \ob D$. For each functor $f \colon D \to A$ whose codomain has a chosen terminal object, denote by $f^\top \colon D_\top \to A$ the unique terminal object-preserving functor rendering the following diagram commutative.
    \[\begin{tikzcd}
        {D_\top} \\
        D & A
        \arrow["{f^\top}", dashed, from=1-1, to=2-2]
        \arrow["{j_D}", from=2-1, to=1-1]
        \arrow["f"', from=2-1, to=2-2]
    \end{tikzcd}\qedshift\]
\end{definition}

We will need a simple property of this construction.

\begin{lemma}
    \label{lax-factorisation-property}
    Let $g \colon A_\top \to B$ be a functor preserving the terminal object. Given any functor $f \colon A_\top \to B$ and a natural transformation $\phi \colon f j_A \tto g j_A$ as follows, there is a unique natural transformation $\tau_\phi \colon f \tto g$ such that $\phi = \tau_\phi j_A$.
\end{lemma}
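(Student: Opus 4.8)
The plan is to define the natural transformation $\tau_\phi$ componentwise and observe that every choice is forced. On the objects lying in the image of $j_A$, i.e.\ the $d \in \ob A$, we are obliged to set $(\tau_\phi)_d \defeq \phi_d$ in order to have $\tau_\phi j_A = \phi$. It remains only to specify the component at the adjoined object $\top$. Since $g$ preserves the terminal object and $\top$ is terminal in $A_\top$, the object $g(\top)$ is terminal in $B$; hence there is a \emph{unique} morphism $(\tau_\phi)_\top \colon f(\top) \to g(\top)$. This already yields uniqueness of $\tau_\phi$: any natural transformation $\tau \colon f \tto g$ with $\tau j_A = \phi$ must have $\tau_d = \phi_d$ for $d \in \ob A$, and its component at $\top$ is a morphism into the terminal object $g(\top)$, so is necessarily the one just described.

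The substantive (but short) step is to check that this family of components is natural. I would argue by cases on the morphisms of $A_\top$, of which there are exactly three kinds: (i) a morphism $u \colon d \to d'$ with $d, d' \in \ob A$; (ii) the identity $1_\top$; (iii) one of the morphisms $\unit_d \colon d \to \top$ for $d \in \ob A$. For (i) the naturality square for $\tau_\phi$ coincides with the naturality square for $\phi$ at $u$, hence commutes by hypothesis. Case (ii) is trivial. For (iii) the square in question has top edge $f(\unit_d)$, bottom edge $g(\unit_d)$, left edge $\phi_d$, right edge $(\tau_\phi)_\top$; both composites around it are morphisms $f(d) \to g(\top)$, and since $g(\top)$ is terminal any two such morphisms agree, so the square commutes. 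Thus $\tau_\phi$ is a well-defined natural transformation with $\tau_\phi j_A = \phi$, as required.

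There is no genuine obstacle here: the argument is purely formal, driven entirely by terminality of the codomain $g(\top)$. The only point worth flagging is that $f$ is not assumed to preserve the terminal object (so $f(\top)$ need not be terminal); this is harmless, because naturality at the new morphisms $\unit_d$ and $1_\top$ is witnessed by terminality of the \emph{target} $g(\top)$ alone, and the existence and uniqueness of $(\tau_\phi)_\top$ likewise depend only on $g$ preserving $\top$.
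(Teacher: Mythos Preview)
Your proof is correct and follows exactly the same approach as the paper's: define the components on $A$ by $\phi$, the component at $\top$ via terminality of $g(\top)$, and note that naturality is immediate. The paper dismisses naturality as ``trivially satisfied'' where you spell out the case analysis, but the argument is the same.
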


\begin{proof}
	The components of $\tau_\phi$ on objects in $A$ is determined by $\phi$. The component at the terminal object is determined by the unique morphism into the terminal object in $B$. Naturality is trivially satisfied.
\end{proof}

We use the $\ph_\top$ construction to characterise cocones under $D$.

\begin{lemma}
    \label{cocone}
    Let $f \colon D \to A$ be a functor with small domain. Cocones under $f$ are in natural bijection with functors $D_\top \to A$ whose restriction along $j_D \colon D \to D_\top$ is $f$, and their morphisms are in bijection with natural transformations that restrict to $1_f$.
\end{lemma}

\begin{proof}
    The nadir of the cocone is given by evaluating a functor at the terminal object, while each leg is given by evaluating the functor at the unique morphism from the terminal object.
\end{proof}

The key lemma of this section is the following, which gives sufficient conditions for a pseudoalgebra for a relative pseudomonad with codomain $\CAT$ to admit a colimit of a given functor.

\begin{lemma}
    \label{colimits-in-pseudoalgebras}
    Let $\A$ be a full sub-2-category of $\CAT$, let $J$ be the inclusion $\A \hookrightarrow \CAT$, and let $T$ be a $J$-relative pseudomonad. Further suppose that $\algA$ is a $T$-pseudoalgebra and that $D \in \A$ is a category such that
    \begin{enumerate}
        \item $D_\top \in \A$;
        \item $T(D)$ admits a terminal object;
        \item $i_{D_\top} \colon J(D_\top) \to T(D_\top)$ preserves the terminal object.
    \end{enumerate}
    Then any functor $f \colon J(D) \to A$ admits a colimit.
\end{lemma}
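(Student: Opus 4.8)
The plan is to build the colimiting cocone explicitly out of the extension operator of $\algA$, in the manner sketched for $\ph_\bot$ in \cref{adjoin-initial-object}, and then to verify its universal property by a calculation with the pseudoalgebra axioms. By hypothesis (2) the category $T(D)$ has a chosen terminal object $e$; for $d \in D$ write $!_d \colon i_D(d) \to e$ for the unique morphism, and set $c \defeq f^a(e)$ and $\lambda_d \defeq \big(f(d) \xto{(\tilde a_f)_d} f^a(i_D(d)) \xto{f^a(!_d)} f^a(e) = c\big)$. Naturality of $\tilde a_f$ together with the uniqueness of maps into $e$ makes $\lambda = (\lambda_d)_{d \in D}$ a cocone under $f$; by \cref{cocone} it is classified by a functor $\hat\lambda \colon D_\top \to A$ with $\hat\lambda \c j_D = f$ and $\hat\lambda(\top) = c$, which by the universal property of $j_D$ (and the definition of the construction $f \mapsto f^\top$, available since $T(D)$ has the chosen terminal object $e$) is isomorphic to $f^a \c (i_D)^\top$ via the natural isomorphism whose components are $(\tilde a_f)_d$ on $D$ and the identity at $\top$.

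To see that $(\lambda, c)$ is colimiting I would show that for every object $z \in A$ the map $A(c, z) \to \{\,\text{cocones under }f\text{ with nadir }z\,\}$ given by postcomposition with $\lambda$ is a bijection, naturally in $z$. Given such a cocone, classified by $\hat\mu \colon D_\top \to A$ with $\hat\mu \c j_D = f$ and $\hat\mu(\top) = z$, I would extend it to $\hat\mu^a \colon T(D_\top) \to A$ (using hypothesis (1), which guarantees $D_\top \in \A$). Pseudonaturality of the extension operator (\cref{algebras as pseudonat trans}) along $j_D$ gives an invertible $2$-cell $\hat\mu^a \c T(j_D) \iso (\hat\mu \c j_D)^a = f^a$; hypothesis (3) makes $i_{D_\top}(\top)$ terminal in $T(D_\top)$, so there is a unique $t \colon T(j_D)(e) \to i_{D_\top}(\top)$, and pasting $\hat\mu^a(t)$ with the isomorphisms $c \iso \hat\mu^a(T(j_D)(e))$ and $\hat\mu^a(i_{D_\top}(\top)) \iso z$ (the latter from $\tilde a_{\hat\mu}$) produces a morphism $h_\mu \colon c \to z$. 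A diagram chase — using naturality of $\tilde a_f$ and $\tilde a_{\hat\mu}$, the pseudonaturality $2$-cells of $(-)^a$ and of the unit $i$, and uniqueness of morphisms into terminal objects — shows $h_\mu \c \lambda_d = \mu_d$ for each $d$, so $\mu \mapsto h_\mu$ is a section of postcomposition with $\lambda$.

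For injectivity it suffices to prove $h_{h\lambda} = h$ for every $h \colon c \to z$, and the heart of this is the special case $h = 1_c$, i.e.\ $h_\lambda = 1_c$. Here I would use \cref{f-a has ps-morphism structure}: $f^a$ is a pseudomorphism out of the free pseudoalgebra on $D$ with structure $2$-cell $\hat a_{f, -}$, so $\hat\lambda^a \iso (f^a \c (i_D)^\top)^a \iso f^a \c ((i_D)^\top)^*$ via $\hat a_{f, (i_D)^\top}$. One then simplifies $((i_D)^\top)^* \c T(j_D)$ and $((i_D)^\top)^* \c i_{D_\top}$ using $(i_D)^\top \c j_D = i_D$, $(i_D)^\top(\top) = e$, the pseudonaturality of the extension operator and of $i$ for $T$, and the structure $2$-cells $\eta$, $\theta$; the upshot is that $\hat\lambda^a(t)$ is, modulo the canonical isomorphisms, an endomorphism of the terminal object $e$ of $T(D)$, hence the identity. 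For general $h$, the natural transformation $\hat\lambda \tto \widehat{h\lambda}$ with component $h$ at $\top$ and identities on $D$ extends along $(-)^a$; its component at $T(j_D)(e)$ is an isomorphism independent of $h$ (again by pseudonaturality of $(-)^a$, since the transformation is $1_f$ on $D$) and its component at $i_{D_\top}(\top)$ is a conjugate of $h$, so the naturality square at $t$ together with $\hat\lambda^a(t) = 1$ gives $h_{h\lambda} = h$. Hence postcomposition with $\lambda$ is a natural bijection and $(\lambda, c)$ is a colimit of $f$.

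The step I expect to be the main obstacle is the identity $h_\lambda = 1_c$ (equivalently, that $\hat\lambda^a(t)$ reduces to an endomorphism of a terminal object): this is the calculation with the algebra axioms alluded to in \cref{adjoin-initial-object}, and it is the only place where hypotheses (2) and (3) are used simultaneously and where one must unwind the interaction between the pseudoalgebra $2$-cells $\hat a, \tilde a$ and the pseudomonad $2$-cells $\eta, \theta$, together with the derived pseudonaturality of $i$, $(-)^*$, and $(-)^a$. Everything else is routine bookkeeping.
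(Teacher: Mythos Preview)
Your proposal is correct and follows essentially the same approach as the paper: both construct the candidate colimit as $f^a$ evaluated at the terminal object of $T(D)$, build the mediating morphism from a cocone $k$ using $k^a$ applied to the unique map into the terminal $i_{D_\top}(\top)$ in $T(D_\top)$, and isolate the crucial uniqueness step as showing that a certain composite reduces to $f^a$ applied to an endomorphism of the terminal object in $T(D)$. The only cosmetic difference is that you work pointwise with the morphism $t \colon T(j_D)(e) \to i_{D_\top}(\top)$, whereas the paper packages the same map inside a natural transformation via \cref{lax-factorisation-property}; the resulting computations are the same.
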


\begin{proof}
    First, observe that we have a composite functor $f^a \c (i_D)^\top \colon J(D)_\top \to A$, which by \cref{cocone} exhibits a cocone under $f^a \c i_D$. We will show that this cocone is colimiting: this suffices to establish the result, since $f^a \c i_D \iso f$.
    \[\begin{tikzcd}[sep=large]
        {J(D)_\top} & {T(D)} & A \\
        & {J(D)}
        \arrow["{(i_D)^\top}", from=1-1, to=1-2]
        \arrow["{{f^a}}", from=1-2, to=1-3]
        \arrow[""{name=0, anchor=center, inner sep=0}, "{j_{J(D)}}", from=2-2, to=1-1]
        \arrow["{i_D}"{description}, from=2-2, to=1-2]
        \arrow[""{name=1, anchor=center, inner sep=0}, "f"', from=2-2, to=1-3]
        \arrow["{\tilde a_f}"', shorten <=2pt, Rightarrow, from=1, to=1-2]
        \arrow["{=}"{description}, draw=none, from=0, to=1-2]
    \end{tikzcd}\]
	We begin by establishing weak initiality. Let $k \colon J(D)_\top \to A$ be a cocone under $f^a \c i_D$ (so that $k \c j_{J(D)} = f^a i_D$). We form a morphism of cocones
    $\kappa \colon f^a (i_D)^\top \to k$
    as the following composite functor.
    \[\begin{tikzcd}[column sep=7em]
        {f^a (i_D)^\top} && k \\
        {(f^a i_D)^a(i_D)^\top} && {k^a i_{D_\top}} \\
        {(k j_{J(D)})^a (i_D)^\top} & {(k^a i_{D_\top} j_{J(D)})^a (i_D)^\top} & {k^a (i_{D_\top}j_{J(D)})^* (i_D)^\top}
        \arrow["\kappa", dashed, from=1-1, to=1-3]
        \arrow["{(\tilde{a}_f\inv)^a (i_D)^\top}"', from=1-1, to=2-1]
        \arrow[Rightarrow, no head, from=2-1, to=3-1]
        \arrow["{\tilde{a}_k\inv}"', from=2-3, to=1-3]
        \arrow["{(\tilde{a}_k j_{J(D)})^a (i_D)^\top}"', from=3-1, to=3-2]
        \arrow["{\hat{a}_{k, i_{D_\top}j_{J(D)}} (i_D)^\top}"', from=3-2, to=3-3]
        \arrow[from=3-3, to=2-3]
    \end{tikzcd}\]
    The unlabelled 2-cell on the right-hand side is defined by applying \cref{lax-factorisation-property} to the 2-cell below.
    \[\begin{tikzcd}[column sep=huge]
        {J(D)} & {J(D)_\top} \\
        & {T(D)} \\
        {J(D)_\top} & {T(D_\top)}
        \arrow["{j_{J(D)}}", from=1-1, to=1-2]
        \arrow[""{name=0, anchor=center, inner sep=0}, "{i_D}"{description}, from=1-1, to=2-2]
        \arrow["{j_{J(D)}}"', from=1-1, to=3-1]
        \arrow["{{i_D}^\top}", from=1-2, to=2-2]
        \arrow["{\eta\inv_{i_{D_\top} j_{J(D)}}}"'{pos=0.4}, shorten <=6pt, shorten >=6pt, Rightarrow, from=2-2, to=3-1]
        \arrow["{(i_{D_\top} j_{J(D)})^*}", from=2-2, to=3-2]
        \arrow["{i_{D_\top}}"', from=3-1, to=3-2]
        \arrow["{=}"{description, pos=0.4}, draw=none, from=1-2, to=0]
    \end{tikzcd}\]
    To see verify that $\kappa$ is a morphism of cocones we must show $\kappa \c j_{J(D)}$ is the identity. This follows from commutativity of the following diagram, where the boundary is obtained by precomposing the diagram defining $\kappa$ above by $j_{J(D)}$.
    \[\begin{tikzcd}[column sep=large]
        {f^a i_D} && f && {f^a i_D} \\
        {(f^a i_D)^a i_D} &&&& {k j_{J(D)}} \\
        &&&& {k^a i_{D_\top} j_{J(D)}} \\
        \\
        {(k j_{J(D)})^a i_D} && {(k^a i_{D_\top} j_{J(D)})^a i_D} && {k^a (i_{D_\top} j_{J(D)})^* i_D}
        \arrow[""{name=0, anchor=center, inner sep=0}, "{\tilde{a}_f\inv}", from=1-1, to=1-3]
        \arrow["{(\tilde{a}_f\inv)^a i_D}"', from=1-1, to=2-1]
        \arrow["{\tilde{a}_f}", from=1-3, to=1-5]
        \arrow[""{name=1, anchor=center, inner sep=0}, "{\tilde{a}_{f^a i_D}\inv}"{description}, curve={height=12pt}, from=2-1, to=1-5]
        \arrow[Rightarrow, no head, from=2-1, to=5-1]
        \arrow[Rightarrow, no head, from=2-5, to=1-5]
        \arrow["{\tilde{a}_k\inv j_{J(D)}}"', from=3-5, to=2-5]
        \arrow[""{name=2, anchor=center, inner sep=0}, "{\tilde{a}_{k j_{J(D)}}\inv}"{description}, curve={height=-30pt}, from=5-1, to=2-5]
        \arrow["{(\tilde{a}_k j_D)^a i_D}"', from=5-1, to=5-3]
        \arrow["{\tilde{a}_{k^a i_{D_\top} j_{J(D)}}\inv}"{description}, curve={height=-12pt}, from=5-3, to=3-5]
        \arrow[""{name=3, anchor=center, inner sep=0}, "{\hat{a}_{k, i_{D_\top}j_{J(D)}} i_D}"', from=5-3, to=5-5]
        \arrow[""{name=4, anchor=center, inner sep=0}, "{k^a \eta^{-1}_{i_{D_\top} j_{J(D)}}}"', from=5-5, to=3-5]
        \arrow["{\natof\tilde{a}}"{description}, draw=none, from=0, to=1]
        \arrow["{\natof\tilde{a}}"{description}, draw=none, from=2, to=5-3]
        \arrow["{\algeta{A}}"{description}, curve={height=-6pt}, draw=none, from=3, to=4]
    \end{tikzcd}\]

    Finally, we show any morphism of cocones must be equal to this one. To this end, first observe that the following diagram in $\CAT[T(D), A]$ commutes. Note that the composite $i_{D_\top} \c j_{J(D)}$ is well-typed because $J$ is injective-on-objects, so $J(D_\top) = D_\top = (JD)_\top$. We suppress some of the subscripts below for reasons of space.
   \begin{equation}\label{colimit-in-alg-1}
    \begin{tikzcd}
        {f^a} &&&& {(f^a i_D)^a} \\
        {f^a (i_D)^*} \\
        {f^a((i_D)^\top j_{J(D)})^*} &&&& {(f^a (i_D)^\top j_{J(D)})^a} \\
        {f^a(({i_D}^\top)^* i_{D_\top} j_{J(D)})^*} && {(f^a({i_D}^\top)^* i_{D_\top} j_{J(D)})^a} && {((f^a (i_D)^\top)^a i_{D_\top} j_{J(D)})^a} \\
        {f^a ((i_D)^\top)^* (i_{D_\top} j_{J(D)})^*} &&&& {(f^a (i_D)^\top)^a (i_{D_\top} j_{J(D)})^*}
        \arrow["{f^a \theta_D\inv}"', from=1-1, to=2-1]
        \arrow[""{name=0, anchor=center, inner sep=0}, "{(\tilde{a}_f\inv)^a}"', from=1-5, to=1-1]
        \arrow[Rightarrow, no head, from=1-5, to=3-5]
        \arrow[Rightarrow, no head, from=2-1, to=3-1]
        \arrow["{f^a(\eta_{i^\top} j)^*}"', from=3-1, to=4-1]
        \arrow[""{name=1, anchor=center, inner sep=0}, "{\hat{a}_{f, i^\top j}}"{description}, from=3-5, to=3-1]
        \arrow["{(f^a \eta_{i^\top} j)^a}"{description, pos=0.7}, curve={height=12pt}, from=3-5, to=4-3]
        \arrow[""{name=2, anchor=center, inner sep=0}, "{(\tilde{a}_{f^a i^\top} j)^a}", from=3-5, to=4-5]
        \arrow["{f^a \mu_{i^\top, ij}}"', from=4-1, to=5-1]
        \arrow["{\natof \hat a}"{description}, draw=none, from=4-3, to=3-1]
        \arrow["{\hat{a}_{f, (i^\top)^* i j}}"{description}, from=4-3, to=4-1]
        \arrow["{(\hat{a}_{f, i^\top} ij)^a}"{description}, from=4-5, to=4-3]
        \arrow["{\hat{a}_{f^a i^\top, ij} }", from=4-5, to=5-5]
        \arrow[""{name=3, anchor=center, inner sep=0}, "{\hat{a}_{f, i^\top} (ij)^*}", from=5-5, to=5-1]
        \arrow["{\algtheta{A}}"{description}, draw=none, from=1, to=0]
        \arrow["{\algeta{A}}"{description, pos=0.6}, shift left=2, draw=none, from=4-3, to=2]
        \arrow["{\algmu{A}}"{description}, draw=none, from=4-3, to=3]
    \end{tikzcd}
    \end{equation}
    Denoting by $\term$ the terminal object in $T(D)$, which has been assumed to exist, and using that $i_{D_\top}$ preserves the terminal object also by assumption, the following diagram commutes in $A$.
    \begin{equation}\label{colimit-in-alg-2}
    \begin{tikzcd}
        {f^a ((i_D)^\top)^* (i_{D_\top} j_{J(D)})^*(\term)} &&&& {(f^a (i_D)^\top)^a (i_{D_\top} j_{J(D)})^*(\term)} \\
        \\
        {f^a ((i_D)^\top)^*(\term)} &&&& {(f^a (i_D)^\top)^a(\term)} \\
        {f^a ((i_D)^\top)^* i_{D_\top}(\top)} &&&& {(f^a (i_D)^\top)^a i_{D_\top}(\top)} \\
        && {f^a (i_D)^\top(\top)}
        \arrow[""{name=0, anchor=center, inner sep=0}, "{f^a(i^\top)^*(\unit)}"', from=1-1, to=3-1]
        \arrow["{\hat{a}_{f, i^\top} (ij)^*(\term)}"', from=1-5, to=1-1]
        \arrow[""{name=1, anchor=center, inner sep=0}, "{(f^a i^\top)^a(\unit)}", from=1-5, to=3-5]
        \arrow["\iso"', from=3-1, to=4-1]
        \arrow[""{name=2, anchor=center, inner sep=0}, "{\hat{a}_{f,i^\top}(\term)}"{description}, from=3-5, to=3-1]
        \arrow["\iso", from=3-5, to=4-5]
        \arrow[""{name=3, anchor=center, inner sep=0}, "{f^a\eta_{i^\top}\inv(\top)}"', from=4-1, to=5-3]
        \arrow[""{name=4, anchor=center, inner sep=0}, "{\hat{a}_{f, i^\top} i_{D_\top}(\top)}"{description}, from=4-5, to=4-1]
        \arrow[""{name=5, anchor=center, inner sep=0}, "{\tilde{a}_{f i^\top}\inv(\top)}", from=4-5, to=5-3]
        \arrow["{\natof \hat a}"{description}, draw=none, from=0, to=1]
        \arrow["{i_{D_\top} \text{ preserves terminal obj.} }"{description}, draw=none, from=2, to=4]
        \arrow["{\algeta{A}}"{description}, shift left=2, draw=none, from=3, to=5]
    \end{tikzcd}
    \end{equation}
    Evaluating \eqref{colimit-in-alg-1} at $\term \in T(D)$ and pasting on top of \eqref{colimit-in-alg-2}, we obtain a commutative diagram of morphisms $(f^a i_D)^a(\term) \to (f^a(i_D)^\top)(\top) = f^a(\term)$ in $A$. Tracing the anticlockwise route from top-left ($f^a(\term)$) to bottom ($f^a(i_D)^\top(\top) = f^a(\term)$) in this composite diagram, we obtain a morphism of the form
    $f^a(\cdots) \colon f^a(\term) \to f^a(\term)$, which must, the universal property of the terminal object $\term \in T(D)$, be equal to the identity.
    It follows that $(\tilde{a}\inv_f)^a(\term)$ is equal to the clockwise composite from $(f^a i_D)^a(\term)$ to $(f^a(i_D)^\top)(\top)$.

    Now let $\gamma \colon f^a (i_D)^\top \tto k$ be any morphism of cocones under $f^a i_D$ (so that $\gamma \c j_{J(D)}$ is the identity). We must show that the components of $\kappa$ and $\gamma$ on the terminal object $\top \in J(D)_\top$ are equal. For this, observe that the following diagram commutes in $A$.
    \[\begin{tikzcd}[column sep=huge]
        {(f^a i^\top j)^a(\term)} & {(k j)^a(\term)} \\
        {((f^a i^\top)^a ij)^a(\term)} & {(k^a ij)^a(\term)} \\
        {(f^a i^\top)^a (ij)^*(\term)} & {k^a (ij)^*(\term)} \\
        {(f^a i^\top)^a(\term)} & {k^a(\term)} \\
        {(f^a i^\top)^a i(\top)} & {k^a i(\top)} \\
        {f^a i^\top(\top)} & {k(\top)}
        \arrow[""{name=0, anchor=center, inner sep=0}, "{(\gamma j)^a(\term)}", from=1-1, to=1-2]
        \arrow["{(\tilde a j)^a(\term)}"', from=1-1, to=2-1]
        \arrow["{(\tilde a j)^a(\term)}", from=1-2, to=2-2]
        \arrow[""{name=1, anchor=center, inner sep=0}, "{(\gamma^a ij)^a(\term)}"{description}, from=2-1, to=2-2]
        \arrow["{\hat a(\term)}"', from=2-1, to=3-1]
        \arrow["{\hat a(\term)}", from=2-2, to=3-2]
        \arrow[""{name=2, anchor=center, inner sep=0}, "{\gamma^a (ij)^*(\term)}"{description}, from=3-1, to=3-2]
        \arrow["{(f^a i^\top)^a(\unit)}"', from=3-1, to=4-1]
        \arrow["{k^a(\unit)}", from=3-2, to=4-2]
        \arrow[""{name=3, anchor=center, inner sep=0}, "{\gamma^a(\term)}"{description}, from=4-1, to=4-2]
        \arrow[""{name=4, anchor=center, inner sep=0}, "\iso"', from=4-1, to=5-1]
        \arrow[""{name=5, anchor=center, inner sep=0}, "\iso", from=4-2, to=5-2]
        \arrow[""{name=6, anchor=center, inner sep=0}, "{\gamma^a i(\top)}"{description}, from=5-1, to=5-2]
        \arrow["{\tilde a\inv(\top)}"', from=5-1, to=6-1]
        \arrow["{\tilde a\inv(\top)}", from=5-2, to=6-2]
        \arrow[""{name=7, anchor=center, inner sep=0}, "{\gamma(\top)}"', from=6-1, to=6-2]
        \arrow["{\natof \tilde{a}}"{description}, draw=none, from=0, to=1]
        \arrow["{\natof \hat a}"{description}, draw=none, from=1, to=2]
        \arrow["{\natof \gamma^a}"{description}, draw=none, from=2, to=3]
        \arrow["{\natof \gamma^a}"{description}, draw=none, from=4, to=5]
        \arrow["{\natof \tilde a}"{description}, draw=none, from=6, to=7]
    \end{tikzcd}\]
    The top morphism $(\gamma j_{J(D)})^a(\term)$ is the identity, since $\gamma$ is a cocone. By the calculation above, the composite morphism on the left is equal to $(\tilde{a}\inv_f)^a(\term)$.
    It follows that $\gamma(\top)$ is equal to the composite morphism on the right, precomposed with $(\tilde{a_f}\inv)^a( \term)$. But this is exactly the definition of $\kappa(\top)$, as required.
\end{proof}

The fundamental assumption of \cref{colimits-in-pseudoalgebras} is the existence of terminal objects in free pseudoalgebras. The following gives a useful sufficient condition therefor.

\begin{lemma}
    \label{colimit-of-dense-functor}
    A dense functor admits a colimit if and only if its codomain admits a terminal object, in which case the terminal object exhibits the colimit.
\end{lemma}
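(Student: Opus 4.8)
The plan is to reduce the statement to the classical fact that the colimit of the identity functor $1_A \colon A \to A$, if it exists, is precisely the terminal object of $A$ (and, conversely, a terminal object always exhibits such a colimit). It therefore suffices to show that, for a dense functor $d \colon D \to A$, the colimit of $d$ and the colimit of $1_A$ represent the same functor $A \to \Set$; then one exists if and only if the other does, and they coincide — whence both directions of the lemma, together with the final clause that the terminal object exhibits the colimit, follow at once.

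For $a \in A$ write $\mathrm{Cocone}(d, a)$ for the set of cocones under $d$ with vertex $a$, and similarly $\mathrm{Cocone}(1_A, a)$; these are functorial in $a$, and are represented by $\colim d$ and $\colim 1_A$ respectively whenever those colimits exist. Restriction of cocones along $d$ defines a natural transformation $\mathrm{Cocone}(1_A, {-}) \to \mathrm{Cocone}(d, {-})$, and the crux is to show it is invertible. Density enters through the fact that each object $b \in A$ is canonically the colimit of the diagram $d \comma b \xrightarrow{\pi} D \xrightarrow{d} A$. Given a cocone $\lambda \colon d \Rightarrow \Delta a$, its inverse image is the cocone $\widetilde\lambda \colon 1_A \Rightarrow \Delta a$ whose component at $b$ is the morphism $b \to a$ induced, via this canonical colimit, by the cocone sending $(x, u \colon dx \to b)$ to $\lambda_x$; naturality of $\widetilde\lambda$ in $b$, the identity $\widetilde\lambda \cdot d = \lambda$, and uniqueness of such an extension all follow from the universal properties of these canonical colimits. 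Equivalently, one may phrase everything through the restricted Yoneda embedding $A(d{-}, {-}) \colon A \to [D\op, \Set]$, which is fully faithful precisely because $d$ is dense: here $\mathrm{Cocone}(d, a)$ is the set of global sections of the presheaf $A(d{-}, a)$, i.e.\ $[D\op, \Set](\Delta 1, A(d{-}, a))$ with $\Delta 1$ the terminal presheaf, and full faithfulness identifies this with $\mathrm{Cocone}(1_A, a)$.

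The main obstacle is the verification that restriction-then-extension returns the original cocone under $1_A$: this is the step where density is genuinely used, since an arbitrary natural family $(\sigma_b \colon b \to a)_{b \in A}$ is forced to agree with the extension of its own restriction along $d$ exactly because each $\sigma_b$ is determined by its values on the canonical diagram of $b$. For the dense functors actually required in the sequel — the fully faithful units $i_D \colon D \to T(D)$ of free cocompletions — this step is lighter, since the legs of $d \comma d(x_0)$ have the form $d(f)$ and are handled directly by naturality of $\lambda$. Granting this, $\mathrm{Cocone}(d, {-}) \cong \mathrm{Cocone}(1_A, {-})$, so $\colim d$ exists if and only if $\colim 1_A$ does, if and only if $A$ admits a terminal object; and in that case $\colim d$, $\colim 1_A$, and the terminal object all coincide, so the terminal object is the vertex of a colimiting cocone under $d$, as claimed.
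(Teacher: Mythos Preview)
Your argument is correct and rests on the same reduction as the paper: show that for a dense $d \colon D \to A$ one has $\colim d \cong \colim 1_A$, then invoke the classical fact that $\colim 1_A$ is the terminal object. The executions differ. The paper packages this as a one-line computation in the calculus of left extensions: writing density as $d \lx d \cong 1_A$ and colimits as left extensions along the unique functor to $1$, the iteration law gives $\unit_D \lx d = (\unit_A \c d) \lx d \cong \unit_A \lx (d \lx d) \cong \unit_A \lx 1_A$. You instead construct the bijection $\mathrm{Cocone}(d,a) \cong \mathrm{Cocone}(1_A,a)$ by hand, extending a cocone under $d$ via the canonical-colimit presentation of each $b \in A$. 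Your route is more elementary and self-contained; the paper's is slicker but presupposes fluency with Kan extensions.

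One small gap: your ``equivalently'' clause asserts that full faithfulness of the restricted Yoneda embedding $N_d$ alone identifies $[D^{\mathrm{op}},\Set](\Delta 1, N_d a)$ with $\mathrm{Cocone}(1_A,a)$. Full faithfulness only controls hom-sets out of objects in the image of $N_d$, and the terminal presheaf $\Delta 1$ need not lie there. The missing observation is that $\Delta 1 \cong \colim_{b \in A} N_d(b)$ in $[D^{\mathrm{op}},\Set]$ (each corepresentable $A(dx,-)$ has an initial element, so its colimit is a point), after which full faithfulness gives $[D^{\mathrm{op}},\Set](\Delta 1, N_d a) \cong \lim_b A(b,a) = \mathrm{Cocone}(1_A,a)$ as you want. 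This does not affect your main argument via canonical colimits, which stands on its own.
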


\begin{proof}
    Let $j \colon A \to B$ be a functor. The colimit of $j$, if it exists, is given by the left extension $\unit_A \lx j$ of $j$ along the unique functor $\unit_A \colon A \to 1$~\cite[Proposition~3.7.5]{borceux1994handbook1}. The terminal object of $B$, if it exists, is given by the colimit of $1_B$~\cite[Corollary~2.11.6]{borceux1994handbook1}, hence by the left extension $\unit_B \lx 1_B$ of $1_B$ along $\unit_B \colon B \to 1$. We have the following natural isomorphisms, using density of $j$.
    \[\unit_A \lx j \iso (\unit_B j) \lx j \iso \unit_B \lx (j \lx j) \iso \unit_B \lx 1_B \qedhere\]
\end{proof}

\subsection{Free \texorpdfstring{$\Phi$}{Phi}-cocompletions}

We now use the techniques of the previous subsection to prove the main result of this section, which is a characterisation of the bicategories of pseudoalgebras for relative pseudomonads arising from presheaf constructions.

\begin{definition}
    Let $\Phi$ be a class of small categories. Denote by $\Phi\h\COC$ the 2-category whose objects are locally small categories equipped with a choice of $\Phi$-colimits: specifically, an object is a locally small category $A$ along with, for each category $D \in \Phi$ and functor $f \colon D \to A$, a cocone under $f$ that is colimiting. A 1-cell is a functor that sends the colimiting cocones to colimiting cocones (not necessarily the chosen ones). The 2-cells are natural transformations.
\end{definition}

\begin{example}
    \label{Phi-cocompletion}
    Let $\Phi$ be a class of small categories.
    Denote by $J \colon \Cat \ffto \CAT$ the inclusion of the 2-category of small categories into the 2-category of locally small categories. The \emph{$\Phi$-cocompletion} sends a small category $A$ to the smallest full subcategory of $[A\op, \Set]$ closed under representables and $\Phi$-colimits. This exhibits a left $J$-relative pseudoadjoint $\PhiP \colon \Cat \to \Phi\h\COC$ to the forgetful 2-functor $U \colon \Phi\h\COC \to \CAT$.
    \[\begin{tikzcd}
        & {\Phi\h\COC} \\
        \Cat && \CAT
        \arrow[""{name=0, anchor=center, inner sep=0}, "U", from=1-2, to=2-3]
        \arrow[""{name=1, anchor=center, inner sep=0}, "\PhiP", from=2-1, to=1-2]
        \arrow["J"', from=2-1, to=2-3]
        \arrow["\dashv"{anchor=center}, shift right=2, draw=none, from=1, to=0]
    \end{tikzcd}\]
    The unit $\phi_A \colon A \to \PhiP(A)$ is given by the Yoneda embedding. For each small category $X$ and $\Phi$-cocomplete locally small category $A$, the functor
    \[\CAT[X, U(A)] \xfrom{U\ph \c \phi_A} \Phi\h\COC[\PhiP(X), A]\]
    admits a left adjoint equivalence, given by left extension along $\phi_A$~\cite[Theorem~5.35]{kelly1982basic}. We denote the induced $J$-relative pseudomonad by $\PhiP$.
\end{example}

In particular, taking $\Phi$ to be the class of small categories, $\PhiP$ is simply the presheaf construction $P$ of \cref{presheaf-construction-pseudomonad}. We may now instantiate \cref{colimits-in-pseudoalgebras} to the case of relative pseudomonads exhibiting free cocompletions under classes of colimits.

\begin{proposition}
    \label{Phi-pseudoalgebras-are-Phi-cocomplete}
    \label{Phi-pseudomorphisms-are-Phi-cocontinuous}
    Let $\Phi \subseteq \Phi'$ be classes of small categories for which $D \in \Phi$ implies $D_\top \in \Phi'$. Denote by $J \colon \Phi' \ffto \CAT$ the full sub-2-category inclusion of the class $\Phi'$. Every $\PhiP J$-pseudoalgebra is $\Phi$-cocomplete, and every pseudomorphism is $\Phi$-cocontinuous.
\end{proposition}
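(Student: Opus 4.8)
The plan is to reduce both assertions to the key \cref{colimits-in-pseudoalgebras}, applied with $\A \defeq \Phi'$ (regarded as a full sub-2-category of $\CAT$), $J \colon \Phi' \ffto \CAT$ the given inclusion, $T \defeq \PhiP J$, and, for a fixed class member $D \in \Phi$, with diagram shape $D$ itself. Since $\Phi \subseteq \Phi'$ we have $D \in \A$, and the hypothesis that $D \in \Phi$ implies $D_\top \in \Phi'$ is exactly condition~(1) of that lemma. It thus remains to verify conditions~(2) and~(3): that $\PhiP(D)$ admits a terminal object, and that $i_{D_\top} = y_{D_\top} \colon D_\top \to \PhiP(D_\top)$ preserves it.

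Both of these I would deduce from \cref{colimit-of-dense-functor} together with the fact that the unit $y_X \colon X \to \PhiP(X)$ (the corestricted Yoneda embedding) is dense, being the universal arrow exhibiting the free $\Phi$-cocompletion. For~(2): since $D \in \Phi$ and $\PhiP(D)$ admits all $\Phi$-colimits, the colimit of $y_D$ exists in $\PhiP(D)$; as $y_D$ is dense, \cref{colimit-of-dense-functor} then forces $\PhiP(D)$ to have a terminal object. For~(3): $D_\top$ has a terminal object $\top$ by construction, so the colimit of the dense functor $y_{D_\top}$ exists and is $y_{D_\top}(\top)$; by \cref{colimit-of-dense-functor} the terminal object of $\PhiP(D_\top)$ exists and is exhibited by this colimit, so $i_{D_\top}(\top) = y_{D_\top}(\top)$ is terminal, i.e. $i_{D_\top}$ preserves the terminal object. \cref{colimits-in-pseudoalgebras} now applies, showing that every functor $f \colon D \to A$ into a $\PhiP J$-pseudoalgebra $\algA$ has a colimit; letting $D$ range over $\Phi$, this says $A$ is $\Phi$-cocomplete.

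For the pseudomorphism claim I would use the explicit description, from the proof of \cref{colimits-in-pseudoalgebras}, of the colimiting cocone under $f \colon D \to A$: it is $f^a \c (i_D)^\top \colon D_\top \to A$, with nadir $f^a(\term)$ for $\term$ the terminal object of $T(D)$. Given a pseudomorphism $(h, \overline h) \colon \algA \to \algB$, whiskering the invertible 2-cell $\overline h_f \colon (hf)^b \tto h f^a$ by $(i_D)^\top$ yields a natural isomorphism from $(hf)^b \c (i_D)^\top$ — the colimiting cocone under $hf$, by the previous paragraph applied to $\algB$ and $hf$ — to $h \c (f^a \c (i_D)^\top)$, the image under $h$ of the colimiting cocone under $f$. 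The pseudomorphism unit law \eqref{lax-morph-tildes}, together with the defining property of $({-})^\top$ from \cref{lax-factorisation-property}, shows that this isomorphism restricts along $j_D$ to the canonical identifications $hf \iso (hf)^b i_D$ and $hf \iso h f^a i_D$, so it is an isomorphism of cocones under $hf$; hence $h$ carries a colimiting cocone to a colimiting one, i.e. $h$ is $\Phi$-cocontinuous. The main obstacle is verifying conditions~(2) and~(3) of \cref{colimits-in-pseudoalgebras} — terminality in free $\Phi$-cocompletions and its preservation by the units — which is where \cref{colimit-of-dense-functor} and the density of the Yoneda embedding into $\PhiP(D)$ carry the weight; granting these, reading off the colimiting cocone and chasing the pseudomorphism coherence axioms is routine.
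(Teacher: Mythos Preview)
Your proposal is correct and follows essentially the same route as the paper: verify the hypotheses of \cref{colimits-in-pseudoalgebras} using density of $\phi_D$ and \cref{colimit-of-dense-functor}, then read off cocontinuity of pseudomorphisms from the explicit colimiting cocone $f^a \c (i_D)^\top$ and the 2-cell $\overline h_f$. The only cosmetic difference is in condition~(3): the paper simply invokes that the Yoneda embedding preserves limits, whereas you re-derive this by applying \cref{colimit-of-dense-functor} a second time (using that any diagram indexed by a category with a terminal object has the value at that object as its colimit); both arguments are fine.
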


\begin{proof}
    Let $\algA$ be a $\PhiP J$-pseudoalgebra and let $f \colon J(D) \to A$ be a functor. The functor $\phi_D \colon J(D) \to \PhiP(D)$ is dense and admits a colimit since $D \in \Phi$, so $\PhiP(D)$ admits a terminal object by \cref{colimit-of-dense-functor}. Hence, since the Yoneda embedding preserves limits, \cref{colimits-in-pseudoalgebras} implies that $f$ admits a colimit.

    Now let $(g, \overline g) \colon \algA \to \algB$ be a pseudomorphism of $\PhiP J$-pseudoalgebras. Given a small diagram $f \colon D \to A$, we have a natural transformation
    \[\overline h_f \colon (h f)^b {\phi_B}^\top \tto h f^a {\phi_A}^\top\]
    exhibiting a morphism of cocones. By definition of the colimiting cocones in $A$ and $B$ described by \cref{colimits-in-pseudoalgebras}, this morphism restricts at the nadirs to a morphism $\colim f d \tto f \colim d$, which is precisely the canonical morphism of cocones whose invertibility exhibits cocontinuity.
\end{proof}

Consequently, we observe that every pseudoalgebra for a free cocompletion is lax-idempotent. This significantly strengthens \cite[Example~5.4]{fiore2018relative}, which establishes lax-idempotence for free $P$-pseudoalgebras (recalling \cref{lax-idempotent-iff-free-algebras-are}).

\begin{corollary}
    \label{Phi-pseudoalgebras-are-lax-idempotent}
    Under the same assumptions as \cref{Phi-pseudoalgebras-are-Phi-cocomplete}, every $\PhiP J$-pseudoalgebra is lax-idempotent.
\end{corollary}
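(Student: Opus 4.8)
The plan is to reduce, via \cref{lax-idempotent-iff-left-ext}, to the following statement: for each $D \in \Phi'$ and each functor $f \colon J(D) \to A$, the invertible 2-cell $\tilde a_f \colon f \tto f^a \c \phi_D$ exhibits $f^a \colon \PhiP(D) \to A$ as the left extension of $f$ along the unit $\phi_D \colon J(D) \to \PhiP(D)$ of $\PhiP J$ (here $\phi_D$, being the Yoneda embedding into the free $\Phi$-cocompletion, is dense and fully faithful, and all left extensions along it into $A$ exist since $A$ is $\Phi$-cocomplete by \cref{Phi-pseudoalgebras-are-Phi-cocomplete}).

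First I would observe that $f^a$ is $\Phi$-cocontinuous: by \cref{f-a has ps-morphism structure} it carries a canonical pseudomorphism structure from the free $\PhiP J$-pseudoalgebra on $D$ --- whose carrier is $\PhiP(D)$ --- to $\algA$, and every pseudomorphism of $\PhiP J$-pseudoalgebras is $\Phi$-cocontinuous by \cref{Phi-pseudoalgebras-are-Phi-cocomplete}. Next I would invoke the universal property of the free $\Phi$-cocompletion recalled in \cref{Phi-cocompletion} (see \cite[Theorem~5.35]{kelly1982basic}): restriction along $\phi_D$ from the category of $\Phi$-cocontinuous functors $\PhiP(D) \to A$ to the category of all functors $J(D) \to A$ is one direction of an adjoint equivalence whose pseudo-inverse is left extension along $\phi_D$, so in particular its counit $\phi_D \lx (h \c \phi_D) \tto h$ is invertible for every $\Phi$-cocontinuous $h$. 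Applying this with $h = f^a$ and transporting along the isomorphism $\tilde a_f \colon f \iso f^a \c \phi_D$ produces an isomorphism $\theta \colon \phi_D \lx f \iso f^a$.

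The remaining --- and most delicate --- point is to check that under $\theta$ the universal left-extension 2-cell $\lambda_f \colon f \tto (\phi_D \lx f) \c \phi_D$ is carried exactly to $\tilde a_f$, rather than merely to some abstractly isomorphic 2-cell. I would settle this by a short diagram chase combining the naturality of the unit $\lambda$ of the left-extension adjunction (with respect to the morphism $\tilde a_f$) with one of its triangle identities, namely that composing $\lambda$ at $f^a \c \phi_D$ with the counit at $f^a$ whiskered by $\phi_D$ yields the identity; together these show that composing $\lambda_f$ with the whiskering $\theta \c \phi_D$ recovers $\tilde a_f$. Since left-extension 2-cells are stable under composing the extending 1-cell with an isomorphism, it follows that $\tilde a_f$ itself exhibits $f^a$ as $\phi_D \lx f$; as this holds for every $D$ and $f$, \cref{lax-idempotent-iff-left-ext} gives that $\algA$ is lax-idempotent. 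I expect this final bookkeeping of 2-cells to be the main obstacle, the rest being a routine combination of \cref{f-a has ps-morphism structure}, \cref{Phi-pseudoalgebras-are-Phi-cocomplete}, and the classical theory of free cocompletions.
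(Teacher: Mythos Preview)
Your proposal is correct and follows essentially the same route as the paper: both use \cref{f-a has ps-morphism structure} and \cref{Phi-pseudoalgebras-are-Phi-cocomplete} to see that $f^a$ is $\Phi$-cocontinuous, then invoke the universal property of the free $\Phi$-cocompletion (\cite[Theorem~5.35]{kelly1982basic}) to identify $f^a$ with the left extension along $\phi_D$, and conclude via \cref{lax-idempotent-iff-left-ext}. You are more careful than the paper on one point: you explicitly verify, via naturality of the unit $\lambda$ and a triangle identity, that $\tilde a_f$ itself (and not merely an isomorphic 2-cell) is the universal left-extension 2-cell --- the paper asserts this step without comment, but your diagram chase is exactly what is needed and goes through as you describe.
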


\begin{proof}
    Let $\algA$ be a $\PhiP J$-pseudoalgebra. Let $f \colon D \to A$ be a diagram for which $D \in \Phi$. By \cref{Phi-pseudomorphisms-are-Phi-cocontinuous,f-a has ps-morphism structure}, $f^a \colon \PhiP(D) \to A$ is $\Phi$-cocontinuous, and $f^a \phi_A$ is isomorphic to $f$. Since $\PhiP(D)$ is the free $\Phi$-cocompletion of $D$, $f^a$ is therefore the left extension of $f$ along $\phi_D$ by \cite[Theorem~5.35]{kelly1982basic}. Consequently, by \cref{lax-idempotent-iff-left-ext}, $\algA$ is lax-idempotent.
\end{proof}

We now have everything we need in place to prove the main theorem of this section.

\begin{theorem}
    \label{Phi-pseudoalgebras}
    The following relative pseudoadjunction is relatively pseudomonadic, exhibiting $\PsAlg(\PhiP)$ as biequivalent to $\Phi\h\COC$.
    \[\begin{tikzcd}
        & {\Phi\h\COC} \\
        \Cat && \CAT
        \arrow[""{name=0, anchor=center, inner sep=0}, "U", from=1-2, to=2-3]
        \arrow[""{name=1, anchor=center, inner sep=0}, "\PhiP", from=2-1, to=1-2]
        \arrow["J"', from=2-1, to=2-3]
        \arrow["\dashv"{anchor=center}, shift right=2, draw=none, from=1, to=0]
    \end{tikzcd}\]
    More generally, this is true with respect to any $J$ satisfying the assumptions of \cref{Phi-pseudoalgebras-are-Phi-cocomplete}.
\end{theorem}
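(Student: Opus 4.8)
The plan is to identify the desired biequivalence with the comparison pseudofunctor and to check it is essentially surjective on objects and essentially fully faithful, invoking \cite[Theorem~7.4.1]{johnson20212}. For the displayed $J \colon \Cat \ffto \CAT$ the relative pseudoadjunction $\PhiP \adj U$ is precisely the one of \cref{Phi-cocompletion}, which is a resolution of $\PhiP$; for a general $J \colon \Phi' \ffto \CAT$ as in \cref{Phi-pseudoalgebras-are-Phi-cocomplete} one first observes (exactly as in \cref{precomposition}) that restricting $\PhiP \colon \Cat \to \Phi\h\COC$ along $\Phi' \ffto \Cat$ still yields a left $J$-relative pseudoadjoint to $U$, hence a resolution of $\PhiP J$. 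In either case \cref{terminal-resolution} produces a unique mediating morphism of resolutions $K \colon \Phi\h\COC \to \PsAlg(\PhiP J)$, and it is exactly the pseudofunctor whose biequivalence-hood we must establish. A preliminary step is to note that $K$ acts concretely as in \cref{cocomplete-categories-are-P-algebras,cocontinuous-functors-are-P-morphisms,natural-transformations-are-P-transformations}: a choice of $\Phi$-colimits on a locally small category $A$ makes every $f \colon J(D) \to A$ with $D \in \Phi$ admit a left extension along $\phi_D$ (using that $\PhiP(D)$ is the free $\Phi$-cocompletion, \cite[Theorem~5.35]{kelly1982basic}), and the resulting left-extension pseudoalgebra is forced to agree with $K(A)$ by the uniqueness part of \cref{terminal-resolution}, once one checks this concrete assignment defines a morphism of resolutions out of $\Phi\h\COC$ --- which is immediate from \cref{Phi-cocompletion}.

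For essential surjectivity, let $\algA$ be a $\PhiP J$-pseudoalgebra. Its carrier $A$ is $\Phi$-cocomplete by \cref{Phi-pseudoalgebras-are-Phi-cocomplete}, so a choice of $\Phi$-colimits equips it with a pseudoalgebra structure $K(A)$. This structure is lax-idempotent by \cref{lax-idempotent-iff-left-ext}, since its extension operators are left extensions along the units $i_X$; and $\algA$ is lax-idempotent by \cref{Phi-pseudoalgebras-are-lax-idempotent}. Two lax-idempotent pseudoalgebra structures on a common object are equivalent in $\PsAlg(\PhiP J)$ by \cref{lax-pseudoalgebra-structures-are-essentially-unique}, so $\algA$ is equivalent to $K(A)$, whence $K$ is essentially surjective on objects.

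For essential full faithfulness, fix $\Phi$-cocomplete categories $A$ and $B$ and consider $K_{A, B} \colon \Phi\h\COC[A, B] \to \PsAlg(\PhiP J)[K(A), K(B)]$. On objects this is a bijection: a pseudomorphism $K(A) \to K(B)$ has $\Phi$-cocontinuous underlying functor by \cref{Phi-pseudomorphisms-are-Phi-cocontinuous}; conversely, since $K(B)$ is lax-idempotent, \cref{lax-idempotent-iff-unique-lax-morphism-structure} shows every functor $A \to B$ carries at most one lax morphism structure, and for a $\Phi$-cocontinuous functor this unique structure is the invertible one assigned by $K$ (\cref{cocontinuous-functors-are-P-morphisms}); hence $g \mapsto K(g)$ is a bijection from the $\Phi$-cocontinuous functors $A \to B$ --- the objects of $\Phi\h\COC[A, B]$ --- to the pseudomorphisms $K(A) \to K(B)$. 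On morphisms $K_{A, B}$ is fully faithful: the transformation axiom \eqref{transformation} holds automatically for $2$-cells between pseudomorphisms into the lax-idempotent pseudoalgebra $K(B)$, by \cref{2-cells-between-lax-morphisms-of-lax-idempotent-pseudoalgebras-are-transformations}, so transformations between such pseudomorphisms are exactly natural transformations between the underlying functors, which are precisely the $2$-cells of $\Phi\h\COC[A, B]$. Thus $K_{A, B}$ is an isomorphism of categories, $K$ is a biequivalence, and the relative pseudoadjunction is relatively pseudomonadic.

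Given the substantial earlier work --- especially the key \cref{colimits-in-pseudoalgebras} and its consequences \cref{Phi-pseudoalgebras-are-Phi-cocomplete,Phi-pseudoalgebras-are-lax-idempotent} --- this argument is essentially an assembly of results already in hand, and the only genuinely delicate point is the preliminary one: confirming that the abstractly-defined comparison $K$ really does send a chosen-$\Phi$-colimit structure to the concrete left-extension pseudoalgebra, so that the lax-idempotence toolkit applies to it. Everything else is bookkeeping.
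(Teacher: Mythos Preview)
Your proposal is correct and follows essentially the same route as the paper: obtain the comparison pseudofunctor from \cref{terminal-resolution}, prove essential surjectivity by combining \cref{Phi-pseudoalgebras-are-Phi-cocomplete}, \cref{Phi-pseudoalgebras-are-lax-idempotent}, and \cref{lax-pseudoalgebra-structures-are-essentially-unique}, and prove essential full faithfulness via \cref{Phi-pseudomorphisms-are-Phi-cocontinuous}, \cref{lax-idempotent-iff-unique-lax-morphism-structure}, and \cref{2-cells-between-lax-morphisms-of-lax-idempotent-pseudoalgebras-are-transformations}. The paper's version is terser: for full faithfulness it simply observes that both $U \colon \Phi\h\COC \to \CAT$ and $U_T \colon \PsAlg(\PhiP) \to \CAT$ are locally fully faithful with the same essential image (the $\Phi$-cocontinuous functors), whereas you spell out the bijection on 1-cells directly; and your ``preliminary step'' about the concrete action of the comparison is left implicit in the paper.
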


\begin{proof}
    By definition (\cref{Phi-cocompletion}), the relative pseudoadjunction is a resolution of $\PhiP$ in the sense of \cref{resolution}. Consequently, by \cref{terminal-resolution}, there is a unique comparison pseudofunctor ${!_\Phi} \colon \Phi\h\COC \to \PsAlg(\PhiP)$ commuting with the left and right pseudoadjoints. It suffices to show that the comparison is essentially surjective and essentially \ff.

    First, since every $\PhiP$-pseudoalgebra $\algA$ is $\Phi$-cocomplete by \cref{Phi-pseudoalgebras-are-Phi-cocomplete}, the category $A$ is $\Phi$-cocomplete, and thus induces a $\PhiP$-pseudoalgebra $!_\Phi(A)$, which is necessarily lax-idempotent by \cref{Phi-pseudoalgebras-are-lax-idempotent}. Since any two lax-idempotent pseudoalgebra structures on the same object are unique by \cref{lax-pseudoalgebra-structures-are-essentially-unique}, the comparison pseudofunctor is consequently essentially surjective.

    Second, observe that $\Phi\h\COC \to \CAT$ is locally \ff, with its image being the \mbox{$\Phi$-cocontinuous} functors and all natural transformations. By \cref{2-cells-between-lax-morphisms-of-lax-idempotent-pseudoalgebras-are-transformations}, since every $\PhiP$-pseudoalgebra is lax-idempotent, $\PsAlg(\PhiP) \to \CAT$ is also locally \ff, and its 1-cells are $\Phi$-cocontinuous functors by \cref{Phi-pseudomorphisms-are-Phi-cocontinuous}. Thus the comparison pseudofunctor is locally an equivalence.
\end{proof}

While we are primarily interested in \cref{Phi-pseudoalgebras} when $J \colon \Cat \ffto \CAT$ is the inclusion of small categories, it is worth emphasising that the strongest result is obtained taking $\Phi'$ to be the closure of the class $\Phi$ under $\ph_\top$. In general, the smaller the domain of a relative pseudomonad, the more pseudoalgebras there are, because in that case the pseudoalgebras need to admit extensions with respect to fewer 1-cells. The fact that, in the situation of \cref{Phi-pseudoalgebras}, restricting the domain to $\Phi'$ does not change the pseudoalgebras means, intuitively, that the pseudoalgebra structure is determined by `probing' merely with objects in $\Phi'$.

\begin{example}
    \begin{itemize}
        \item Let $\Phi$ be the class of small categories. Then $\PhiP$ is the presheaf construction relative pseudomonad of \cref{presheaf-construction-pseudomonad}, and its pseudoalgebras are precisely the locally small categories admitting small colimits. The free $\PhiP$-pseudoalgebras are precisely the presheaf categories.
        \item Let $\Phi$ be the class of filtered categories. Then $\PhiP$ is the Ind-completion relative pseudomonad~\cite[Example~3.9]{fiore2018relative}, and its pseudoalgebras are precisely the locally small categories admitting filtered colimits. The free $\PhiP$-pseudoalgebras are precisely the finitely accessible categories. Similar remarks apply taking $\Phi$ to be the class of $\kappa$-filtered categories, for a regular cardinal $\kappa$.
        \item Let $\Phi$ be the class of sifted categories. Then $\PhiP$ is the Sind-completion relative pseudomonad, and its pseudoalgebras are precisely the locally small categories admitting sifted colimits. The free $\PhiP$-pseudoalgebras are precisely the generalised varieties in the sense of \cite{adamek2001sifted}.
        \item Let $\Phi$ be the class of small discrete categories. Then $\PhiP$ is the Fam-completion relative pseudomonad, and its pseudoalgebras are precisely the locally small categories admitting coproducts. This resolves a question in \cite[Chapter 4, Footnote~39]{lewicki2020categories}.
        \item Let $\Phi$ be the singleton class $\{ \varnothing \}$. Then $\PhiP$ freely adjoins an initial object, and we recover \cref{adjoin-initial-object}.
        \qedhere
    \end{itemize}
\end{example}

For completeness, we make explicit the relationship between the relative pseudomonad of \cref{Phi-cocompletion} exhibiting the free $\Phi$-cocompletion of \emph{small} categories and pseudomonad exhibiting the free $\Phi$-cocompletion of \emph{locally small} categories.

\begin{definition}
    Let $\Phi$ be a class of small categories. The assignment of a locally small category $A$ to the smallest full subcategory of $[A\op, \Set]$ closed under representables and $\Phi$-colimits defines a pseudomonad $\PhiP_s$ on the 2-category $\CAT$ of locally small categories~\cite[Theorem~5.35]{kelly1982basic}.
\end{definition}

Let $J \colon \Cat \ffto \CAT$ be the inclusion of small categories. For each class $\Phi$ of small categories, we have $\PhiP = \PhiP_s J$, which induces a canonical comparison pseudofunctor $!_J \colon \PsAlg(\PhiP_s) \to \PsAlg(\PhiP)$ by \cref{realisation-comparison}.

\begin{corollary}
    The canonical pseudofunctor $!_J \colon \PsAlg(\PhiP_s) \to \PsAlg(\PhiP)$ is a biequivalence. More generally, this is true with respect to any $J$ satisfying the assumptions of \cref{Phi-pseudoalgebras-are-Phi-cocomplete}.
\end{corollary}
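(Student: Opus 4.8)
The plan is to leverage \cref{Phi-pseudoalgebras}, which identifies $\PsAlg(\PhiP)$ with $\Phi\h\COC$, together with the analogous identification of $\PsAlg(\PhiP_s)$, and then to observe that $!_J$ is forced, by 2-terminality, to be the induced comparison between the two. The $2$-category $\Phi\h\COC$ carries two relevant structures: the $J$-relative pseudoadjunction of \cref{Phi-cocompletion}, which exhibits it as a resolution of $\PhiP$; and the pseudoadjunction whose left adjoint is the free $\Phi$-cocompletion of \emph{locally small} categories and whose right adjoint is the forgetful $2$-functor $U \colon \Phi\h\COC \to \CAT$, which (viewing $\PhiP_s$ as a pseudomonad relative to $\mathrm{id}_{\CAT}$, \ie in extension form) is a resolution of $\PhiP_s$ in the strict sense of \cref{resolution} --- here the strictness of the notion causes no difficulty precisely because $\PhiP_s$ is presented in extension form (\cf{}~the discussion following \cref{resolution}), so that it is induced on the nose rather than up to isomorphism. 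Write $K \colon \Phi\h\COC \to \PsAlg(\PhiP)$ and $K_s \colon \Phi\h\COC \to \PsAlg(\PhiP_s)$ for the unique morphisms of resolutions provided by \cref{terminal-resolution}.

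First I would check that $K_s$ is a biequivalence, by repeating the proof of \cref{Phi-pseudoalgebras} \emph{mutatis mutandis}. Since $\PhiP_s$ is lax-idempotent~\cite[Theorem~5.35]{kelly1982basic}, every $\PhiP_s$-pseudoalgebra is lax-idempotent by \cref{li-implies-ali-for-non-relative} (playing the role of \cref{Phi-pseudoalgebras-are-lax-idempotent}); every $\PhiP_s$-pseudoalgebra has $\Phi$-cocomplete carrier and every pseudomorphism is $\Phi$-cocontinuous (as in \cref{Phi-pseudoalgebras-are-Phi-cocomplete}, taking the ambient class of categories to be all of $\CAT$); conversely every $\Phi$-cocomplete locally small category, being coreflective in its free $\Phi$-cocompletion via left extension along Yoneda, carries a necessarily lax-idempotent $\PhiP_s$-pseudoalgebra structure (as recalled at the start of this section), and every $\Phi$-cocontinuous functor between such carries a pseudomorphism structure by \cref{right-adjoint-is-pseudoalgebra} applied to the resolution $\Phi\h\COC$; and local \ffness{} of $U_{\PhiP_s}$ onto the $\Phi$-cocontinuous functors and all natural transformations follows from \cref{2-cells-between-lax-morphisms-of-lax-idempotent-pseudoalgebras-are-transformations}. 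Hence $K_s$ is essentially surjective and a local equivalence, so a biequivalence by \cite[Theorem~7.4.1]{johnson20212}; likewise $K$ is a biequivalence by \cref{Phi-pseudoalgebras} itself.

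Next I would show $!_J \c K_s = K$. Since $K_s$ is a morphism of resolutions of $\PhiP_s$, it sends the free $\Phi$-cocompletion $\PhiP_s(C)$ (as an object of $\Phi\h\COC$) to the free $\PhiP_s$-pseudoalgebra on $C$, and satisfies $U_{\PhiP_s} \c K_s = U$; since $!_J$ commutes with the free and forgetful pseudofunctors (\cref{realisation-comparison}), and $\PhiP(X) = \PhiP_s(JX)$ as objects of $\Phi\h\COC$, the composite $!_J \c K_s$ therefore sends $\PhiP(X)$ to the free $\PhiP$-pseudoalgebra $F_{\PhiP}(X)$ and satisfies $U_{\PhiP} \c (!_J \c K_s) = U_{\PhiP_s} \c K_s = U$. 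Thus $!_J \c K_s$ is a morphism of resolutions of $\PhiP$ out of $\Phi\h\COC$, so by the uniqueness clause of \cref{terminal-resolution} it coincides with $K$. Choosing a pseudo-inverse $K_s'$ of the biequivalence $K_s$, we obtain $!_J \simeq !_J \c K_s \c K_s' = K \c K_s'$, a composite of biequivalences, whence $!_J$ is a biequivalence. The general case, for $J \colon \Phi' \ffto \CAT$ satisfying the hypotheses of \cref{Phi-pseudoalgebras-are-Phi-cocomplete}, is identical, invoking the general form of \cref{Phi-pseudoalgebras} in place of the special one.

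I expect the main obstacle to be purely a matter of bookkeeping rather than of mathematical substance: confirming that the free-$\Phi$-cocompletion pseudoadjunction on $\Phi\h\COC$ induces $\PhiP_s$ strictly (so that it genuinely is a resolution in the sense of \cref{resolution}), and that the two compatibility conditions making $!_J \c K_s$ a morphism of resolutions hold on the nose; both reduce to the fact, emphasised after \cref{resolution}, that pseudomonads and pseudoadjunctions in extension form have strictly-induced Kleisli and \EM{} structure. Beyond \cref{Phi-pseudoalgebras} and its (identically proven) analogue for $\PhiP_s$, no new input is needed; the only mild subtlety is the use of \cref{right-adjoint-is-pseudoalgebra} to equip $\Phi$-cocontinuous functors with pseudomorphism structure, exactly as in the relative setting.
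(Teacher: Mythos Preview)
Your proposal is correct and follows essentially the same approach as the paper: both argue that the comparison $\Phi\h\COC \to \PsAlg(\PhiP_s)$ is a biequivalence (the paper simply points back to the discussion at the start of the section), and that composing it with $!_J$ yields the comparison $\Phi\h\COC \to \PsAlg(\PhiP)$ of \cref{Phi-pseudoalgebras}, forcing $!_J$ to be a biequivalence. The paper's proof is a two-line sketch of exactly this (\enquote{this biequivalence trivially factors through $!_J$}), whereas you spell out the factorisation via the uniqueness clause of \cref{terminal-resolution} and the bookkeeping for $K_s$; no genuinely different idea is involved.
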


\begin{proof}
    As described in the introduction to the section, $\PsAlg(\PhiP_s)$ is concretely biequivalent to $\Phi\h\COC$, and this biequivalence trivially factors through $!_J$.
\end{proof}

Consequently, following the terminology of \cite[Definition~5.4.2]{arkor2022monadic}, the $\Phi$-cocompletion $\PhiP_s$ is \emph{$J$-ary}, in that restriction along $J$ preserves the 2-categories of pseudoalgebras and pseudomorphisms.

\begin{remark}
    It is likely that a more abstract proof of \cref{Phi-pseudoalgebras} along the lines of \cite[\S4.4]{altenkirch2015monads} or \cite[\S5.4]{arkor2022monadic} is possible, since the inclusion 2-functor $\Cat \ffto \CAT$ is well-behaved in a two-dimensional sense. We defer such an investigation to future work.
\end{remark}

\begin{remark}
    We may also characterise the 2-categories of $\PhiP$-pseudoalgebras and strict/lax morphisms. The strict morphisms are those that send chosen colimiting cocones to chosen colimiting cocones. By \cref{lax-idempotent-iff-unique-lax-morphism-structure}, lax morphisms are simply arbitrary functors, since $\PhiP$-pseudoalgebras are lax-idempotent by \cref{Phi-pseudoalgebras-are-lax-idempotent}. A characterisation of the colax morphisms would likely require a resolution to \cref{are-colax-morphisms-pseudo}.
\end{remark}

\begin{remark}
    Our proof strategy for \cref{Phi-pseudoalgebras} relies crucially upon \cref{colimits-in-pseudoalgebras}, which uses the fact that cocones under functors $f \colon D \to A$ may be represented by functors $D_\top \to A$. This approach is effective for ordinary categories, and likely also for internal categories. However, it is not clear how to extend this technique to capture cocompleteness for enriched categories. Indeed, it seems likely that, although free cocompletions under classes $\Phi$ of enriched weights do induce lax-idempotent relative pseudomonads, their algebras are not $\Phi$-cocomplete in general. We leave the exploration of such questions to future work.
\end{remark}

\subsection{Applications of coherence}
\label{examples-of-coherence}

We conclude this section by giving some promised applications of the coherence theorem of \cref{coherence-theorem} to $\Phi$-cocompletion relative pseudomonads. We first recall an alternative presentation of free cocompletions in terms of discrete fibrations rather than presheaves.

\begin{example}[Discrete fibration construction]
    \label{discrete-fibration-construction}
    Let $X$ be a small category. For each presheaf $p \colon X\op \to \Set$, the projection from its category of elements $\pi \colon \El(p) \to X$ is a discrete fibration over $X$. This assignment is functorial, and defines an adjoint equivalence between the category $P(X)$ of presheaves on $X$ and natural transformations, and the category $Q(X)$ of discrete fibrations over $X$ and commutative triangles. Furthermore, the following triangle commutes, where $X/\ph$ is the functor sending each object $x \in X$ to the slice category $X/x$.
    \[\begin{tikzcd}
        {P(X)} && {Q(X)} \\
        & X
        \arrow["\El", from=1-1, to=1-3]
        \arrow["\equiv"', draw=none, from=1-1, to=1-3]
        \arrow["{y_X}", from=2-2, to=1-1]
        \arrow["{X/\ph}"', from=2-2, to=1-3]
    \end{tikzcd}\]
    Consequently, by \cref{presheaf-construction-pseudoadjunction}, the discrete fibration construction defines a left relative pseudoadjoint to the forgetful 2-functor $U \colon \COC \to \CAT$.
    \[\begin{tikzcd}
        & \COC \\
        \Cat && \CAT
        \arrow[""{name=0, anchor=center, inner sep=0}, "U", from=1-2, to=2-3]
        \arrow[""{name=1, anchor=center, inner sep=0}, "Q", from=2-1, to=1-2]
        \arrow["J"', from=2-1, to=2-3]
        \arrow["\dashv"{anchor=center}, shift right=2, draw=none, from=1, to=0]
    \end{tikzcd}\]
    This in turn defines a $J$-relative pseudomonad $Q$ that is equivalent, in a suitable sense\footnotemark{}, to the presheaf construction.%
    \footnotetext{We leave it to the reader to formulate the appropriate definitions of morphism and transformation of relative pseudomonads necessary to make this statement precise; it will not be necessary for what follows.}
\end{example}

Recall from \cref{Dist} that the bicategory $\Dist$ of categories, distributors, and natural transformations is defined to be the Kleisli bicategory $\Kl(P)$ associated to the presheaf construction (\cref{presheaf-construction-pseudomonad}). This motivates consideration of the Kleisli bicategory associated to the discrete fibration construction.

\begin{definition}
    The bicategory $\TSDFib$ of categories, two-sided discrete fibrations, and span morphisms is defined to be the Kleisli bicategory $\Kl(Q)$ associated to the discrete fibration construction (\cref{discrete-fibration-construction}).
\end{definition}

We note that in the above definition that a 1-cell in $\Kl(Q)$ is not exactly the usual presentation of a two-sided discrete fibration in terms of a span of categories satisfying a fibration property~\cite{street1980fibrations}, but the data is elementarily equivalent, so it is harmless to identify the notions.

The following theorem appears to be folklore; while this intuition is already present in \cite{street1980fibrations}, we were unable to find a proof in the literature, but it follows easily using the techniques developed in \cref{resolutions-and-coherence}.

\begin{theorem}
    $\Dist$ is biequivalent to $\TSDFib$.
\end{theorem}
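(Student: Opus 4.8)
The plan is to reduce the statement to \cref{Kleisli-is-full-image-resolution} by comparing $\Dist$ and $\TSDFib$ inside their common ambient 2-category $\COC$. Recall that $\Dist = \Kl(P)$ (\cref{Dist}) and $\TSDFib = \Kl(Q)$ by definition, where $P$ (the presheaf construction) and $Q$ (the discrete fibration construction) are \emph{distinct} $(\Cat \ffto \CAT)$-relative pseudomonads; in particular bi-initiality of the Kleisli resolution (\cref{initial-resolution}) does not apply directly, there being no single $\Res(T)$ containing both. Instead I would use that each of $P$ and $Q$ arises from a resolution (\cref{resolution}) landing in $\COC$: the presheaf relative pseudoadjunction of \cref{presheaf-construction-pseudoadjunction,presheaf-construction-pseudomonad}, and the discrete fibration relative pseudoadjunction of \cref{discrete-fibration-construction}, each with right pseudoadjoint the forgetful $U \colon \COC \to \CAT$.

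Applying \cref{Kleisli-is-full-image-resolution} to these two resolutions gives biequivalences $\Dist \simeq \Im(P \colon \Cat \to \COC)$ and $\TSDFib \simeq \Im(Q \colon \Cat \to \COC)$. Since $\Im(P) \monoto \COC$ and $\Im(Q) \monoto \COC$ are \ff{} (\cref{full-image}), and a \ff{} pseudofunctor is a biequivalence onto the full sub-bicategory spanned by its essential image, these images are in turn biequivalent to the full sub-2-categories $\COC_P, \COC_Q \subseteq \COC$ spanned, respectively, by the presheaf categories $P(X) = [X\op,\Set]$ for $X\in\Cat$ and the categories of discrete fibrations $Q(X)$ for $X\in\Cat$.

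It remains to prove $\COC_P \simeq \COC_Q$, and this is exactly where the content of \cref{discrete-fibration-construction} enters: the category-of-elements functor $\El \colon P(X) \to Q(X)$ is an adjoint equivalence in $\COC$ for every small $X$, so $\COC_P$ and $\COC_Q$ span the same objects of $\COC$ up to equivalence. Letting $\COC_{PQ} \subseteq \COC$ be the full sub-2-category on $\ob{\COC_P}\cup\ob{\COC_Q}$, the inclusion $\COC_P \hookrightarrow \COC_{PQ}$ is fully faithful by construction and essentially surjective (every $Q(X)$ is equivalent to a presheaf category), hence a biequivalence by \cite[Theorem~7.4.1]{johnson20212}; symmetrically for $\COC_Q \hookrightarrow \COC_{PQ}$. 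Composing all the biequivalences yields $\Dist \simeq \COC_P \simeq \COC_{PQ} \simeq \COC_Q \simeq \TSDFib$, and tracing through the construction shows this composite is, up to the canonical identifications, the identity on objects and realises the expected concrete correspondence carrying a distributor $X \lto Y$ (\ie{} a functor $X \to [Y\op,\Set]$) to the two-sided discrete fibration obtained by forming categories of elements fibrewise.

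I do not expect a serious obstacle: the argument is an assembly of results already established in \cref{resolutions-and-coherence} together with the pointwise adjoint equivalences $\El$ recorded in \cref{discrete-fibration-construction}. The two points requiring care are conceptual rather than computational — one must avoid appealing to an (undefined) notion of equivalence of relative pseudomonads and instead carry out the comparison inside $\COC$, and one must note that a full sub-2-category inclusion is locally an equivalence so that the biequivalence criterion \cite[Theorem~7.4.1]{johnson20212} applies.
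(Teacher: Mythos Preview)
Your proposal is correct and follows essentially the same route as the paper: both argue via \cref{Kleisli-is-full-image-resolution} that $\Dist \simeq \Im(P \colon \Cat \to \COC)$ and $\TSDFib \simeq \Im(Q \colon \Cat \to \COC)$, then use the pointwise equivalences $\El \colon P(X) \simeq Q(X)$ in $\COC$ to conclude that the full images are biequivalent. The paper compresses your last three paragraphs into the single clause ``this process evidently preserves biequivalence'', whereas you spell out the argument via the common full sub-2-category $\COC_{PQ}$; your observation that one cannot invoke bi-initiality directly (since $P$ and $Q$ are distinct relative pseudomonads) is well taken and is exactly why both proofs go through $\COC$ instead.
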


\begin{proof}
    $P$ and $D$ both exhibit left pseudoadjoints to $U \colon \COC \to \CAT$. By \cref{Kleisli-is-full-image-resolution}, their Kleisli bicategories are given by taking the full image factorisations of $P$ and $Q$ respectively, and this process evidently preserves biequivalence.
\end{proof}

We now give a number of examples of the coherence theorem for relative pseudomonads. None are new, but have typically been established on a case-by-case basis; our work shows that they are consequences of a single phenomenon.

\begin{example}
    \Cref{coherence-theorem}, in combination with \cref{restriction-of-Kleisli}, gives various well-known coherence theorems as consequences (\cf~\cite[\S1.5]{lack2010companion}). In each of the following, we denote by $J \colon \Set \ffto \Cat \ffto \CAT$ the inclusion of sets viewed as discrete categories.
    \begin{enumerate}
        \item From \cref{Phi-pseudoalgebras}, the 2-category of pseudoalgebras for the presheaf construction is the 2-category of locally small categories with a choice of small colimits. Thus \cref{coherence-theorem} exhibits a biequivalence between the bicategory $\Dist$ of distributors and the full sub-2-category $\Psh$ of $\COC$ spanned by the presheaf categories. This provides a proof of the claim in \cite[Proposition~4.2.4]{cattani1996presheaf}. Similarly, there is a biequivalence between the bicategory $\TSDFib$ of two-sided discrete fibrations and the full sub-2-category $\DFib$ of $\COC$ spanned by categories of discrete fibrations.
        \item Restricting each of $\Dist$, $\TSDFib$, $\Psh$, and $\DFib$ along $J$, we observe that the following bicategories are biequivalent to one another.
        \begin{enumerate}
            \item The bicategory $\Kl(PJ) \defeq \b{Mat}$ of sets and matrices.
            \item The bicategory $\Kl(QJ) \defeq \b{Span}$ of sets and spans.
            \item The 2-category $\Im(F_{PJ})$ of indexed sets.
            \item The 2-category $\Im(F_{QJ})$ of slice categories.
        \end{enumerate}
        \item Let $\Fam$ be the pseudomonad for free coproduct cocompletion. Since the free coproduct completion of a discrete category is equivalent to its free colimit completion, $\Fam \c J$ is biequivalent to $PJ$. Consequently, $\Kl(\Fam \c J)$, which is biequivalent to $\Im(F_{\Fam \c J})$, is biequivalent to the bicategory of sets and spans, recovering \cite[\S A]{lack2002formal}.
        \item Let $T$ be the $J$-relative 2-monad freely adjoining an initial object. Then $\KlT$ has as objects sets, as 1-cells partial functions, and has a 2-cell from $f$ to $g$ if and only if for all $x$, either $f(x)$ is undefined or $f(x) = g(x)$. It is biequivalent to the 2-category of pointed sets, point-preserving functions, in which there is a 2-cell from $f$ to $g$ if and only if for all $x$, either $f(x)$ is the point or $f(x) = g(x)$.
        \qedhere
    \end{enumerate}
\end{example}

\begin{example}
    From \cref{Kleisli-is-full-image-resolution}, it also follows that the bicategory of distributors is biequivalent to the full image of the left pseudoadjoint of any resolution of the presheaf construction. This recovers, for instance, the perspective of \cite[\S1.3]{gambino2017operads}, who define the bicategory of distributors as the full image of the left pseudoadjoint of the forgetful pseudofunctor $U \colon \b{LP}_L \to \CAT$ from the 2-category $\b{LP}_L$ of locally presentable categories, left-adjoint functors, and natural transformations.
\end{example}

\section{Cocompletion-relative monads as cocontinuous monads}
\label{relative-monads-and-monads}

We conclude with an application of the theory of relative pseudomonads to the theory of relative monads, which makes use of two of our main theorems: the \ffness{} of the embedding of $\Kl(T)$ in $\PsAlg(T)$, and the characterisation of the pseudoalgebras for relative pseudomonads arising from free cocompletions.

For any class $\Phi$ of small categories, and denoting by $J \colon \Cat \ffto \CAT$ the full sub-2-category inclusion of small categories, \cref{Phi-pseudoalgebras} establishes that there exists a $J$-relative pseudomonad $\PhiP$ whose pseudoalgebras are precisely the locally small categories admitting $\Phi$-colimits. Now, let $X$ be a small category and consider its free cocompletion $\phi_X \colon X \ffto \PhiP(X)$ under $\Phi$-colimits. There is a monoidal hom-category $\Kl(\PhiP)(X, X)$, whose underlying category is given by $\CAT[X, \PhiP(X)]$. It was proven in \cite[Theorem~4.7]{arkor2024formal} that the category $\CAT[X, \PhiP(X)]$ is also equipped with skew-multicategory structure $\CAT[\phi_X]$, in which the monoids are precisely the $\phi_X$-relative monads~\cite[Theorem~4.16]{arkor2024formal}. Is it natural to wonder whether these two structures are related: the following lemma shows that they coincide; essentially the same observation appears in \cite[215]{szlachanyi2017tensor}, albeit without the two-dimensional perspective.

\begin{lemma}
    \label{skew-multicategory-is-monoidal}
    Let $\Phi$ be a class of small categories, and let $X$ be a small category. The skew-multicategory $\CAT[\phi_X]$ is represented by the monoidal hom-category $\Kl(\PhiP)[X, X]$.
\end{lemma}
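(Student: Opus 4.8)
The plan is to unwind both structures on the common underlying category $\CAT[X, \PhiP(X)]$ and check that the skew-multicategory $\CAT[\phi_X]$ of \cite[Theorem~4.7]{arkor2024formal} is exactly the skew-multicategory of multimaps underlying the monoidal structure on $\Kl(\PhiP)[X, X]$. Recall that in a Kleisli bicategory, the hom-category $\Kl(\PhiP)[X, X] = \CAT[JX, \PhiP X]$ carries a monoidal structure whose tensor is Kleisli composition $g \otimes f \defeq g^* \c f$ and whose unit is $i_X = \phi_X$, with associators and unitors supplied by $\mu$, $\eta$, and $\theta$; this is just the restriction of the bicategory structure of $\Kl(\PhiP)$ to the single object $X$. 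Every monoidal category $\cl M$ has an underlying (skew-)multicategory with multimaps $\cl M(a_1, \dots, a_n; b) \defeq \cl M(a_1 \otimes (a_2 \otimes (\cdots \otimes a_n)), b)$, composition induced by the tensor, and the skewness reflecting the choice of bracketing; so it suffices to identify $\CAT[\phi_X]$ with this.

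First I would recall the explicit description of $\CAT[\phi_X]$ from \cite[Theorem~4.7]{arkor2024formal}: its $n$-ary multimaps from $(f_1, \dots, f_n)$ to $g$ are natural transformations $f_1^{\phi_X} \c f_2^{\phi_X} \c \cdots \c f_{n-1}^{\phi_X} \c f_n \tto g$, where $(-)^{\phi_X}$ denotes left extension along $\phi_X$, and the nullary multimaps into $g$ are natural transformations $\phi_X \tto g$. Second, I would invoke \cref{Phi-pseudoalgebras-are-lax-idempotent}: every $\PhiP$-pseudoalgebra is lax-idempotent, and in particular the free pseudoalgebra $\PhiP X$ is lax-idempotent, so by \cref{lax-idempotent-iff-free-algebras-are} (equivalently, since $\PhiP$ is lax-idempotent in the sense of \cite{fiore2018relative} by \cref{Phi-pseudoalgebras-are-lax-idempotent} combined with \cref{lax-idempotent-iff-free-algebras-are}) the extension operator $(-)^*_{X,X} \colon \CAT[JX, \PhiP X] \to \CAT[\PhiP X, \PhiP X]$ computes left extension along $i_X = \phi_X$. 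Hence for $f \colon JX \to \PhiP X$ we have $f^* \iso f^{\phi_X}$, naturally; so the $n$-fold Kleisli tensor $f_1^* \c f_2^* \c \cdots \c f_{n-1}^* \c f_n$ is canonically isomorphic to the iterated left extension $f_1^{\phi_X} \c \cdots \c f_{n-1}^{\phi_X} \c f_n$, and the unit $\phi_X$ of the monoidal structure agrees with the unit of $\CAT[\phi_X]$. Under these identifications, the multimaps of the two skew-multicategories coincide as sets of natural transformations, and nullary multimaps match as well.

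Third, I would check that the two composition operations agree. On the monoidal side, composition of multimaps is built from the tensor, the associators, and the identity-on-objects structure; on the side of \cite[Theorem~4.7]{arkor2024formal} it is built from the universal property of left extensions and the relative pseudomonad data of $\PhiP$. The key point is that under the lax-idempotent identification $f^* \iso f^{\phi_X}$, the structural isomorphisms $\mu_{g,f}$ and $\theta_X$ of $\PhiP$ are precisely the comparison cells witnessing that composites of left extensions are left extensions (this is the content of \cref{lax-idempotent-iff-free-algebras-are} together with the proof of \cref{lax-idempotent-algebra-is-left-extension-algebra}, applied to the free pseudoalgebra), so the two prescriptions for composing multimaps literally agree. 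The skewness — that the relevant diagrams commute only up to specified non-invertible coherence data on one side — matches the skew-associativity recorded in \cite[Theorem~4.7]{arkor2024formal}, since both arise from the same non-invertible comparison between a left extension of a composite and a composite of left extensions.

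The main obstacle I expect is not any single deep fact but the bookkeeping: pinning down \cite{arkor2024formal}'s skew-multicategory $\CAT[\phi_X]$ precisely enough to match it coherence-cell-by-coherence-cell with the underlying skew-multicategory of $\Kl(\PhiP)[X, X]$, and in particular verifying that the skew-associator of $\CAT[\phi_X]$ corresponds under the identification $f^* \iso f^{\phi_X}$ to the associator of the Kleisli tensor (i.e.\ to $\mu$ suitably transported). Once the dictionary $f^* \leftrightarrow f^{\phi_X}$, $\eta \leftrightarrow$ (left-extension unit), $\mu \leftrightarrow$ (composite-of-extensions comparison), $\theta \leftrightarrow$ (left extension along an identity-like cell) is fixed using lax-idempotence, the verification is routine, so I would state the correspondence carefully and then defer the diagram chases, remarking that they follow from the universal properties involved.
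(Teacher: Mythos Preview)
Your approach is correct but more laborious than the paper's. The paper first invokes \cite[Theorem~4.29]{arkor2024formal}, which gives sufficient conditions on a functor $j$ for the skew-multicategory $\CAT[j]$ to be represented by \emph{some} monoidal category; since $\phi_X$ is a free cocompletion, these conditions are met. Having secured representability abstractly, the paper then only needs to identify the representing monoidal category with $\Kl(\PhiP)[X, X]$: the underlying categories are the same, the units are both $\phi_X$, and the tensor in each is composition with a left extension along $\phi_X$ --- which \emph{is} the Kleisli composition, by definition of the extension operator of $\PhiP$. This sidesteps exactly the coherence bookkeeping you anticipate as the main obstacle: no associator-matching or skew-comparison is required, because those are absorbed into the cited representation theorem. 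Your direct comparison of the two skew-multicategories would work, but buys nothing over the paper's route; and note that your appeal to lax-idempotence to identify $f^*$ with $f^{\phi_X}$ is unnecessary, since the extension operator of $\PhiP$ is literally \emph{defined} as left extension along $\phi_X$ (this is how the relative pseudomonad is induced from the free-cocompletion relative pseudoadjunction), so the identification is definitional rather than a consequence of lax-idempotence.
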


\begin{proof}
    First, observe that, since $\phi_X \colon X \to \PhiP(X)$ is a cocompletion, it satisfies conditions (1 -- 3) of \cite[Theorem~4.29]{arkor2024formal}, so that the skew-multicategory $\CAT[\phi_X]$ is represented by \emph{some} monoidal category. We show it is monoidally equivalent to $\Kl(\PhiP)[X, X]$. The underlying categories are isomorphic. The monoidal unit in both cases is given by $\phi_X$. The tensor product of $f \colon X \to \PhiP(X)$ and $g \colon A \to \PhiP(X)$ in the former is given by $f \d (\phi_X \plx g)$, the composition of $f$ with the (pointwise) left extension of $g$ along $\phi_X$. This is, by definition, the composition in the Kleisli bicategory~\cite[Theorem~4.1]{fiore2018relative}.
\end{proof}

For the main theorem of this section, we shall need a preparatory lemma.

\begin{lemma}
    \label{Kl-is-laxly-reflective}
    Let $\J$ be a pseudofunctor and let $T$ be a lax-idempotent $J$-relative pseudomonad. Then, for each $X \in \A$, the functor $(V_T)_{X, X} \colon \KlT[X, X] \to \E[TX, TX]$ admits a right adjoint with invertible counit, and this adjunction lifts to the 2-category of monoidal categories, lax monoidal functors, and monoidal natural transformations.
\end{lemma}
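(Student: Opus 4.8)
The plan is to realise $(V_T)_{X,X}$ as a strong monoidal functor possessing an explicit right adjoint supplied by lax-idempotence, and then to transport the monoidal structure along this adjunction by the calculus of mates (doctrinal adjunction).

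First I would identify the right adjoint. By \cref{Kleisli-resolution} we have $\KlT[X,X] = \E[JX,TX]$, and $(V_T)_{X,X}$ is exactly the extension operator $\ph^*_{X,X}\colon \E[JX,TX] \to \E[TX,TX]$. Since $T$ is lax-idempotent, \cref{lax-idempotent-iff-free-algebras-are} provides for each $X$ an adjunction $\ph^*_{X,X} \dashv (\ph \c i_X)$ whose unit is the family $\eta$, so that $\ph \c i_X\colon \E[TX,TX] \to \E[JX,TX] = \KlT[X,X]$ is a right adjoint to $(V_T)_{X,X}$. As every $\eta_f$ is invertible (\cref{relative-pseudomonad}), the unit of this adjunction is invertible; equivalently, $(V_T)_{X,X}$ is fully faithful, exhibiting $\KlT[X,X]$ as a coreflective subcategory of $\E[TX,TX]$.

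Next I would record the monoidal structures: $\KlT[X,X]$ is monoidal under horizontal composition in $\KlT$ with unit $i_X$, and $\E[TX,TX]$ under composition in $\E$ with unit $1_{TX}$. Because $V_T$ is a pseudofunctor (\cite[Lemma~4.3]{fiore2018relative}), its restriction to the hom-category at $X$ is a \emph{strong} monoidal functor: the structure cells are the invertible $2$-cells $\mu_{g,f}\colon (g^* f)^* \to g^* f^*$ and $\theta_X\colon (i_X)^* \to 1_{TX}$, and their coherence is precisely the associativity and unit axioms of \cref{relative-pseudomonad} (so the direction lax versus oplax is immaterial).

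Finally I would lift the adjunction. Regarding $(V_T)_{X,X}$ as a strong, hence oplax, monoidal functor with right adjoint $\ph \c i_X$, the mates of its structure cells equip $\ph \c i_X$ with a canonical lax monoidal structure, and the standard calculus of mates shows that the unit $\eta$ and the counit $\varepsilon$ of $(V_T)_{X,X} \dashv (\ph \c i_X)$ are then monoidal natural transformations; since the triangle identities hold at the level of underlying functors, the adjunction lives in the $2$-category of monoidal categories, lax monoidal functors, and monoidal natural transformations. The whole argument is formal, so I do not expect a genuine obstacle; the one step to be carried out with care is the monoidality of $\eta$ and $\varepsilon$, which could alternatively be obtained by presenting monoidal categories as pseudoalgebras for the identity-relative free monoidal category pseudomonad and quoting \cref{doctrinal-adjunction,adjunctions-lift-to-algebras}.
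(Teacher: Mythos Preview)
Your argument is correct and matches the paper's own proof essentially step for step: identify $(V_T)_{X,X}$ with the extension operator $\ph^*_{X,X}$, obtain the right adjoint $\ph\c i_X$ from lax-idempotence (\cref{lax-idempotent-iff-free-algebras-are}), observe that $(V_T)_{X,X}$ is strong monoidal because $V_T$ is a pseudofunctor, and then lift the adjunction via doctrinal adjunction (\cref{adjunctions-lift-to-algebras}). One minor remark: you correctly note that it is the \emph{unit} $\eta$ that is invertible (so $\KlT[X,X]$ is coreflective in $\E[TX,TX]$), whereas the lemma as stated says ``invertible counit''; the paper's use of the word ``coreflection'' immediately afterwards in \cref{cocontinuous-monads-are-relative-monads} confirms that your reading is the intended one and that the statement contains a slip.
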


\begin{proof}
    Recall from \cref{Kleisli-resolution} that $(V_T)_{X, X}$ is given by $\ph^*_{X, Y} \colon \E[JX, TY] \to \E[TX, TX]$, which admits a right adjoint with invertible counit when $T$ is lax-idempotent by \cref{lax-idempotence-condition}.
    That $(V_T)_{X, X}$ is pseudo monoidal follows from pseudofunctoriality of $V_T$, whence the adjunction lifts to monoidal categories by doctrinal adjunction (\cref{adjunctions-lift-to-algebras}).
\end{proof}

The first half of the following theorem was proven in \cite[Theorem~4.8]{altenkirch2015monads} via explicit calculations; the second half was proven in \cite[Proposition~3.2.5]{arkor2022monadic}. We give a new, more conceptual proof, of both.

\begin{theorem}
    \label{cocontinuous-monads-are-relative-monads}
    Let $\Phi$ be a class of small categories, and let $A$ be a small category. Left extension along $\phi_X \colon X \ffto \PhiP(X)$, the free cocompletion of $X$ under $\Phi$-colimits, defines a coreflection
    \begin{equation}
        \label{RMnd-Mnd-adjunction}
        \RMnd(\phi_X \colon X \ffto \PhiP(X)) \rightleftarrows \Mnd(\PhiP(X))
    \end{equation}
    which restricts to an equivalence of categories
    \begin{equation}
        \label{RMnd-Mnd-equivalence}
        \RMnd(\phi_X \colon X \ffto \PhiP(X)) \equiv \Mnd_\Phi(\PhiP(X))
    \end{equation}
    between the category of $\phi_X$-relative monads, and the category of $\Phi$-cocontinuous monads on $\PhiP(X)$. Furthermore, this equivalence respects the process of taking algebras, in that the following diagram of categories commutes up to natural isomorphism.
    \[\begin{tikzcd}
        {\RMnd(\phi_X \colon X \ffto \PhiP(X))\op} && {\Mnd_\Phi(\PhiP(X))\op} \\
        & {\CAT/\PhiP(X)}
        \arrow["\equiv", from=1-1, to=1-3]
        \arrow[""{name=0, anchor=center, inner sep=0}, "\Alg"', from=1-1, to=2-2]
        \arrow[""{name=1, anchor=center, inner sep=0}, "\Alg", from=1-3, to=2-2]
        \arrow["\iso"{description}, shift left=2, draw=none, from=0, to=1]
    \end{tikzcd}\]
\end{theorem}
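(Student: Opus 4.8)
The plan is to reduce both halves of the statement to formal manipulations with monoids and with resolutions, using the machinery of \cref{resolutions-and-coherence} and the characterisation of pseudoalgebras from \cref{Phi-pseudoalgebras}. Write $\phi_X \colon X \ffto \PhiP X$ for the free $\Phi$-cocompletion, regard $\PhiP$ as a relative pseudomonad relative to $J$, and recall that $\PhiP$ is lax-idempotent by \cref{Phi-pseudoalgebras-are-lax-idempotent} (with \cref{lax-idempotent-iff-free-algebras-are}). By \cref{skew-multicategory-is-monoidal}, $\RMnd(\phi_X \colon X \ffto \PhiP X) = \Mon\big(\Kl(\PhiP)[X, X]\big)$, the monoids in the monoidal hom-category, whereas $\Mnd(\PhiP X) = \Mon\big(\CAT[\PhiP X, \PhiP X]\big)$ for the composition monoidal structure, and $\Mnd_\Phi(\PhiP X) = \Mon\big(\Phi\h\COC[\PhiP X, \PhiP X]\big)$.

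First I would invoke \cref{Kl-is-laxly-reflective}: the functor $(V_{\PhiP})_{X, X} \colon \Kl(\PhiP)[X, X] \to \CAT[\PhiP X, \PhiP X]$, which by \cref{Kleisli-resolution} sends $f \colon X \to \PhiP X$ to $f^* = \phi_X \lx f$, underlies a monoidal adjunction; by \cref{lax-idempotent-iff-free-algebras-are} its unit is the $2$-cell $\eta$, which is invertible (\cref{relative-pseudomonad}), so $(V_{\PhiP})_{X, X}$ is moreover fully faithful. Applying $\Mon(-)$ produces the coreflection \eqref{RMnd-Mnd-adjunction}, whose left-hand functor --- ``left extension along $\phi_X$'' --- is fully faithful; its essential image is the set of monads whose underlying endofunctor lies in the image of $(V_{\PhiP})_{X, X}$, which by the universal property of the free $\Phi$-cocompletion \cite[Theorem~5.35]{kelly1982basic} is exactly the $\Phi$-cocontinuous endofunctors of $\PhiP X$. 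This gives the restricted equivalence \eqref{RMnd-Mnd-equivalence}. (Equivalently, \eqref{RMnd-Mnd-equivalence} follows from the full faithfulness of $I_{\PhiP} \colon \Kl(\PhiP) \to \PsAlg(\PhiP)$ (\cref{comparison-pseudofunctor}) and the biequivalence $\PsAlg(\PhiP) \equiv \Phi\h\COC$ (\cref{Phi-pseudoalgebras}), which identify $\Kl(\PhiP)[X, X]$ with $\Phi\h\COC[\PhiP X, \PhiP X]$ monoidally.)

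For the compatibility with algebras, fix $S \in \RMnd(\phi_X)$ with corresponding $M \in \Mnd_\Phi(\PhiP X)$, so that $M$ has underlying endofunctor $\phi_X \lx S_0$ (with $S_0$ the underlying functor of $S$) and $S$ is canonically isomorphic to the restriction $M\phi_X$. Viewing $S$ as a relative pseudomonad relative to $\phi_X$ between locally discrete $2$-categories and $M$ as one relative to $\mathrm{id}_{\PhiP X}$ (so that pseudoalgebras are Eilenberg--Moore algebras, \cref{relative-monad}), \cref{terminal-resolution} exhibits $\Alg(S)$ and $\Alg(M)$, with their free--forgetful resolutions, as $2$-terminal in $\Res(S)$ and $\Res(M)$ respectively. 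By \cref{precomposition}, the Eilenberg--Moore resolution $F^M \dashv U^M \colon \Alg(M) \to \PhiP X$ restricts along $\phi_X$ to a resolution of $S$. The key step is to check this restricted resolution is itself $2$-terminal in $\Res(S)$: given any resolution $L' \dashv R' \colon \C \to \PhiP X$ of $S$, \cref{right-adjoint-is-pseudoalgebra} equips each $R'c$ with its canonical $S$-algebra structure --- the family $\CAT[\phi_X A, R'c] \to \CAT[S_0 A, R'c]$ induced by the action of $R'$ on homs --- and the universal property of the left extension, via $M(R'c) = \int^{A}\CAT(\phi_X A, R'c)\cdot S_0 A$, converts this into a canonical $M$-algebra structure, so that $c \mapsto \big(R'c, \text{that action}\big)$ is a morphism of resolutions $\C \to \Alg(M)$ over $\PhiP X$; uniqueness follows as in \cref{terminal-resolution} using the one-dimensional case of \cref{resolution-morphism-coincident-algebras} and the faithfulness of $U^M$. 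Terminality then yields an isomorphism $\Alg(M) \iso \Alg(S)$ over $\PhiP X$, necessarily the unique one, hence natural in $S \iso M$, which is precisely the asserted commuting triangle.

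The step I expect to be the main obstacle is that the left-extension bijection just used carries the $S$-algebra axioms to the $M$-algebra axioms and is functorial in the algebra; this is the essential content of \cite[Theorem~4.8]{altenkirch2015monads}, and although it is a short coend computation it is the one genuinely calculational point. A secondary point to handle carefully is that $\Res$ is defined for a relative pseudomonad on the nose (\cref{resolution}), so one must transport along the canonical isomorphism $S \iso M\phi_X$ when regarding the restricted Eilenberg--Moore resolution of $M$ as an object of $\Res(S)$.
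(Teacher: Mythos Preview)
Your argument for the coreflection \eqref{RMnd-Mnd-adjunction} and the equivalence \eqref{RMnd-Mnd-equivalence} is essentially the paper's: apply $\Mon(-)$ to the monoidal coreflection of \cref{Kl-is-laxly-reflective}, then restrict to the image using \cref{comparison-pseudofunctor} and \cref{Phi-pseudoalgebras}. (A small point: you correctly note that the \emph{unit} $\eta$ of the adjunction $\ph^* \dashv \ph \c i_X$ is invertible, which is what makes $(V_{\PhiP})_{X,X}$ fully faithful and hence the adjunction a coreflection.)

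For the compatibility with $\Alg$, however, the paper takes a single-line route: it invokes the relative monadicity theorem \cite[Example~4.8]{arkor2024relative}, which directly says that the forgetful $U^M \colon \Alg(M) \to \PhiP X$ is $\phi_X$-relatively monadic, hence $\Alg(M) \simeq \Alg(S)$ over $\PhiP X$. Your route via $2$-terminality of resolutions is conceptually appealing and stays within the paper's framework, but it is not a genuine shortcut: the step you flag as the obstacle --- showing that the left-extension bijection carries $S$-algebra structures to $M$-algebra structures --- is exactly the content of the algebra-level equivalence you are trying to prove, so you end up redoing the calculation that the relative monadicity theorem packages. Two technical points to watch if you pursue your route: first, the coend formula $M(R'c) = \int^{A} \PhiP X(\phi_X A, R'c) \cdot S_0 A$ need not literally exist in $\PhiP X$ for general $\Phi$, since $\PhiP X$ is only $\Phi$-cocomplete; you should instead use that $M$ is the unique $\Phi$-cocontinuous extension of $S_0$ and that every object of $\PhiP X$ is a $\Phi$-colimit of representables, which gives the required bijection on maps out of $My$ without the coend. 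Second, your citation of \cite[Theorem~4.8]{altenkirch2015monads} is for the monad-level correspondence; the algebra-level statement you need is closer to \cite[Proposition~3.2.5]{arkor2022monadic}.
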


\begin{proof}
    Since $\Phi$ is a lax-idempotent pseudomonad, there there is a lax monoidal coreflection
    \[\Kl(\PhiP)[X, X] \rightleftarrows \CAT[\PhiP(X), \PhiP(X)]\]
    by \cref{Kl-is-laxly-reflective}. The coreflection \eqref{RMnd-Mnd-adjunction} then follows by taking monoids on both sides, which gives relative monads on the left-hand side by \cref{skew-multicategory-is-monoidal}. The equivalence \eqref{RMnd-Mnd-equivalence} follows by restricting from $\CAT[\PhiP(X), \PhiP(X)]$ to $\PsAlg(\PhiP)[\PhiP(X), \PhiP(X)]$ on the right-hand side, using \cref{comparison-pseudofunctor}. That this equivalence commutes with the process of taking algebras follows from the relative monadicity theorem~\cite[Example~4.8]{arkor2024relative}.
\end{proof}

\begin{example}
    Let $\kappa$ be a regular cardinal, and let $X$ be a small category. Take $\Phi$ to be the class of $\kappa$-filtered categories. Then \cref{cocontinuous-monads-are-relative-monads} gives an equivalence between monads relative to the cocompletion of $X$ under $\kappa$-filtered colimits, and $\kappa$-accessible monads (\ie{} monads whose underlying endofunctor preserves $\kappa$-filtered colimits) on $\rm{Ind}_\kappa(X)$. For instance, the category of monads relative to $\b{FinSet} \ffto \Set$, the inclusion of finite sets into sets, is equivalent to the category of finitary monads on $\Set$.
\end{example}

\printbibliography

\end{document}